 \theoremstyle{plain}
 \newtheorem{thm}{\bfseries Theorem}[section]
 \newtheorem{theorem}{\bfseries Theorem}
 \newtheorem{prop}[thm]{\bfseries Proposition}
 \newtheorem{lem}[thm]{\bfseries Lemma}
 \newtheorem{claim}[thm]{Claim}
 \newtheorem{cor}[thm]{\bfseries Corollary}
 \newtheorem{exm}[thm]{\bfseries Example}
 \newtheorem{dfn}[thm]{\bfseries Definition}
 \theoremstyle{remark}
 \newtheorem{rem}[thm]{Remark}
 \numberwithin{equation}{thm}
 \newenvironment{subproof}[1][\proofname]{%
  \begin{proof}[#1]%
}{%
  \end{proof}%
}
  \renewcommand{\leq}{\leqslant}
\renewcommand{\geq}{\geqslant}
\title[A.C.I.M AND KRIEGER-TYPE OF MARKOV SUBSHIFTS]
{On Absolutely Continuous Invariant Measures and Krieger-Type of Markov Subshifts}
\subjclass[2010]{37A20, 37A40, 60J10}
\keywords{non-singular transformations, non-homogeneous Markov chains, absolutely continuous invariant measure, Krieger-type}
\author[Nachi Avraham-Re'em]{\bfseries Nachi Avraham-Re'em}
\address{
Einstein Institute of Mathematics \\
Edmond J. Safra Campus (Givat-Ram)\\
The Hebrew University of Jerusalem\\
9190401\\
Israel}
\email{nachman.avraham@mail.huji.ac.il or nachi.avraham@gmail.com}
\begin{document}

\maketitle

\begin{abstract}
It is shown that for a nonsingular conservative shift on a topologically-mixing Markov subshift with the \textit{Doeblin condition} the only possible absolutely continuous shift-invariant measure is a Markov measure. Moreover, if it is not equivalent to a homogeneous Markov measure then the shift is of Krieger-type $\mathrm{III}_1$. A criteria for equivalence of Markov measures is included.
\end{abstract}

\tableofcontents

\section{Introduction and Main Theorems}

In recent years some general results were obtained about the classification of the Bernoulli shift according to its Krieger-type. The basic problem is classical and goes back to Halmos \cite{halmos1947invariant}: for a given sigma-finite Borel measure $\mu$ on a standard Borel space $X$ and a nonsingular Borel transformation $T:\left(X,\mu\right)\to\left(X,\mu\right)$, determine whether there exists a sigma-finite Borel measure $\nu$ which is both absolutely continuous with respect to $\mu$ and invariant to $T$. Such measure $\nu$ is abbreviated as \emph{a.c.i.m.} (absolutely continuous invariant measure) for $\mu$ and $T$. Hamachi \cite{hamachi1981} showed that there is a Bernoulli shift without a.c.i.m but he did not determine its Krieger-type. It was an open question, famously attributed to Krengel and Weiss \cite{krengel1970transformations, kosloff2014, danilenkosilva2008} (see also the MathSciNet review of Krengel on \cite{MR662470}), whether Krieger-types $\mathrm{II}_{\infty}$ and $\mathrm{III}_{\lambda}$ ($0\leq\lambda\leq1$) can appear in the nonsingular conservative shift. More details on the history of the problem can be found in the survey of Danilenko and Silva \cite{danilenkosilva2008}.

Only in the last few years some general results were discovered on the Bernoulli shift. First, Kosloff \cite{kosloff2014} showed that in the \textit{half-stationary} Bernoulli shift on two states space, when the distribution on all the negative coordinates is $\left(1/2,1/2\right)$, if the shift is nonsingular and conservative then it is either equivalent to a corresponding stationary Bernoulli measure, and then it is of Krieger-type $\mathrm{II}_1$, or that there is no any a.c.i.m. Moreover, in the latter case it is of Krieger-type $\mathrm{III}_1$. This result was later extended by Danilenko and Lemańczyk \cite{danilenko2019} when the distribution of the negative coordinates is $\left(p,1-p\right)$ for some $0<p<1$.

Recently, a significant progress has been achieved for Bernoulli actions of countable groups. Vaes and Wahl \cite{vaes2018bernoulli} formulated a characterization of a countable group to admit a Krieger-type $\mathrm{III}_1$ Bernoulli action in terms of the first $l^2$-cohomology of the group, and proved this characterization for a large family of groups. Björklund and Kosloff \cite{bjorklundKosloff2018} showed that every countable amenable group admits a Krieger-type $\mathrm{III}_1$ Bernoulli action on two states. The recent result of Björklund, Kosloff and Vaes \cite{bjorklund2020ergodicity} confirms the conjecture of Vaes and Wahl, showing that every countable group which is either amenable or has non-trivial first $l^2$-Betti number admits a Bernoulli action of Krieger-type $\mathrm{III}_1$.

In contrast to the Bernoulli shift, very few is known about Markov shift. The ergodicity of a nonsingular conservative Markov subshift was studied by Kosloff \cite{kosloff2019proving} and Danilenko \cite{danilenko2019weak} (see Theorem \ref{Theorem: Theorem A}). The Golden Mean Markov subshift model was used by Kosloff to construct examples of conservative Anosov diffeomorphisms of the torus $\mathbb{T}^2$ without a Lebesgue a.c.i.m. \cite{kosloff2018manifolds,kosloff2014conservativeanosov}. A special case of a half-stationary Markov shift was studied by Danilenko and Lemańczyk \cite{danilenko2019}, and they asked about a general half-stationary Markov shift on two states (see \cite[Problem~(1)]{danilenko2019}). Here we solve the Markov case to a relatively large extent under the \textit{Doeblin condition} and we remove the restrictive assumption of half-stationarity.

\vspace{5mm}

We now introduce our general setting. Let $X=\mathcal{S}^{\mathbb{Z}}$ for a finite state space $\mathcal{S}$ and consider the left-shift $T:X\to X$ defined by $\left(Tx\right)_{n}=x_{n+1}$ for every $n\in\mathbb{Z}$, where $x_n$ denotes the $n^{\text{th}}$ coordinate of $x$. For a $\left\{0,1\right\}$-valued $\left|\mathcal{S}\right|\times\left|\mathcal{S}\right|$-matrix $A$ let the subshift of finite type (SFT) associated to $A$ be the shift-invariant space
$$X_A=\left\{x\in X: A\left(x_n,x_{n+1}\right)=1\,\forall n\in\mathbb{Z}\right\}.$$
We call $A$ the \textit{adjacency matrix} of $X_A$. An SFT $X_A$ is called \textit{topologically-mixing} if $A$ is a primitive matrix, that is there exists a positive integer $M\geq1$ such that all the entries of $A^M$ are positive. Let $\left(X_n:n\in\mathbb{Z}\right)$ be the coordinates random variables of $X_A$, defined by $X_n\left(x\right)=x_n$. A \textit{Markov measure} $\mu$ on an SFT $X_A$ is a probability measure defined as follows. Take a sequence $\left(P_n:n\in\mathbb{Z}\right)$ of \textit{transition matrices}, which are stochastic $\mathcal{S}\times\mathcal{S}$-matrices with the property that $P_n\left(s,t\right)=0$ whenever $A\left(s,t\right)=0$ for all $n\in\mathbb{Z}$ and $s,t\in\mathcal{S}$. By \textit{stochastic matrix} we mean a matrix whose entries are non-negative and each of its rows is summed up to $1$. Take further a sequence $\left(\pi_n:n\in\mathbb{Z}\right)$ of probability distributions on $\mathcal{S}$ which, when relating them as row vectors, satisfy the identities
\begin{equation}
\label{eq:21}
\pi_nP_n=\pi_{n+1}\text{ for all }n\in\mathbb{Z}.
\end{equation}
This defines $\mu$ on cylinders via
$$\mu\left(X_{k+1}=s_{1},\dotsc,X_{k+m}=s_{m}\right)=\pi_{k+1}\left(s_{1}\right)P_{k+1}\left(s_{1},s_{2}\right)\dotsm P_{k+m-1}\left(s_{m-1},s_{m}\right),$$
for all $k\in\mathbb{Z}$, $m\in\mathbb{N}$ and $s_1,\dotsc,s_m\in\mathcal{S}$.
The consistency conditions \eqref{eq:21} ensures that this definition extends uniquely to a Borel measure $\mu$ on $X_A$ such that
$$\pi_n\left(s\right)=\mu\left(X_n=s\right),\quad n\in\mathbb{Z}, s\in\mathcal{S},$$
and
$$P_n\left(s,t\right)=\mu\left(X_{n+1}=t\mid X_n=s,X_{n-1}=s_1,\dotsc,X_{n-k}=s_k\right),$$
for all $t,s,s_1,\dotsc,s_k\in\mathcal{S}$ and $n\in\mathbb{Z}$. This last property is the usual Markov property. We write $\mu_{\left(P_n:n\in\mathbb{Z}\right)}$ for a Markov measure whose sequence of transition matrices is $\left(P_n:n\in\mathbb{Z}\right)$. Let us denote the \textit{reverse} transition matrices of a Markov measure $\mu_{\left(P_n:n\in\mathbb{Z}\right)}$ by
\begin{equation}
\label{eq:22}
\widehat{P}_{n}\left(s,t\right)=\mu\left(X_{n-1}=t\mid X_{n}=s\right)=\frac{\pi_{n-1}\left(t\right)}{\pi_{n}\left(s\right)}P_{n-1}\left(t,s\right),
\end{equation}
for all $n\in\mathbb{Z}$ and $s,t\in\mathcal{S}$ with $\pi_n\left(s\right)>0$. When for some $n\in\mathbb{Z}$ and $s\in\mathcal{S}$ we have $\pi_n\left(s\right)=0$, we let $\widehat{P}_n\left(s,t\right)=0$ for all $t\in\mathcal{S}$. We extend the notation $\widehat{Q}$ also for an $\mathcal{S}\times\mathcal{S}$-stochastic matrix $Q$, by relating it as a constant sequence of transition matrices, which together with the distribution $\lambda$ on $\mathcal{S}$ satisfying $\lambda Q=\lambda$ defines a Markov measure on $\mathcal{S}^{\mathbb{Z}}$.

The following condition of a Markov measure is fundamental to our work. We call it \textit{Doeblin condition} after various conditions of this type formulated by W. Döblin \cite{cohn1981}. Let $\mu=\mu_{\left(P_n:n\in\mathbb{Z}\right)}$ be a Markov measure on an SFT $X_A$. We say that $\mu$ satisfies the Doeblin condition if
\begin{equation}
\label{Doeblin}\tag{$\boldsymbol{D}$}
\exists\delta>0, P_{n}\left(s,t\right)\geq\delta\iff A\left(s,t\right)=1\text{ for all }s,t\in\mathcal{S}\text{ and }n\in\mathbb{Z}.
\end{equation}\

The following result was proved in \cite[Proposition~2.2, Theorem~3.4]{kosloff2019proving}.

\begin{theorem}[Kosloff]
\label{Theorem: Theorem A}
Let $X_A\subset\mathcal{S}^{\mathbb{Z}}$ be a topologically-mixing SFT of $\mathcal{S}^{\mathbb{Z}}$ and $\mu$ be a Markov measure on $X_A$ with the Doeblin condition \ref{Doeblin}. Suppose that the shift is nonsingular with respect to $\mu$. Then if the shift is conservative it is ergodic.
\end{theorem}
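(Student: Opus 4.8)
The plan is to prove ergodicity through the Hopf argument built on the Hurewicz ratio ergodic theorem, using the Doeblin condition to supply the uniform distortion bounds the argument needs and conservativity to make the relevant ergodic sums diverge. Since $T$ is conservative and nonsingular, for every $f,g\in L^1(\mu)$ with $g>0$ the Hurewicz theorem gives, $\mu$-a.e.,
\[
\frac{\sum_{k=0}^{n-1}f(T^kx)\,w_k(x)}{\sum_{k=0}^{n-1}g(T^kx)\,w_k(x)}\xrightarrow[n\to\infty]{}\frac{E_\mu[f\mid\mathcal I]}{E_\mu[g\mid\mathcal I]}(x),
\]
where $w_k=\frac{d\mu\circ T^{k}}{d\mu}$ is the Radon--Nikodym cocycle and $\mathcal I$ is the $\sigma$-algebra of shift-invariant sets. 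Ergodicity amounts to showing that $E_\mu[f\mid\mathcal I]$ is $\mu$-a.e.\ constant for every bounded $f$, so it suffices to prove that this ratio limit does not depend on the point $x$.

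First I would compute $w_1$ from the Markov data, writing it as a product (convergent, by nonsingularity) of the ratios $P_{i}(x_i,x_{i+1})/P_{i+1}(x_i,x_{i+1})$ together with a boundary $\pi$-term. The decisive consequence of the Doeblin condition \ref{Doeblin} is then a \emph{uniform bounded-distortion estimate}: if $x,y\in X_A$ are tail equivalent, i.e.\ differ only inside a finite window, then $w_k(x)/w_k(y)$ stays in a fixed interval $[C^{-1},C]$ for all $k\ge 0$, and in fact converges to the holonomy derivative $\frac{d\mu\circ\phi}{d\mu}(x)$ of the finite coordinate change $\phi$ carrying $x$ to $y$. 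The point is that applying $T^k$ only slides the altered window to position $[-R-k,R-k]$, and since the lower bound $\delta$ in \ref{Doeblin} is uniform in the time index $n$, the distortion of this shifted window is controlled by the same constant independently of $k$ (and tends to $1$ as the window recedes to $-\infty$).

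With this in hand the Hopf argument runs as follows. For tail-equivalent $x,y$ and $f,g$ depending on finitely many coordinates, one has $f(T^kx)=f(T^ky)$ for all large $k$, while $w_k(x)\sim c(x,y)\,w_k(y)$; because conservativity forces $\sum_k w_k=\infty$ (the divergence of ergodic sums), the finitely many discrepant initial terms are negligible in the limit and the constant $c(x,y)$ cancels between numerator and denominator. Hence $E_\mu[f\mid\mathcal I](x)=E_\mu[f\mid\mathcal I](y)$ whenever $x,y$ are tail equivalent, so $h:=E_\mu[f\mid\mathcal I]$ is measurable with respect to the two-sided tail $\sigma$-algebra $\bigcap_{N}\sigma(X_k:\lvert k\rvert\ge N)$. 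It is exactly here that conservativity is indispensable: without divergence of the sums, shift-invariance would not upgrade to tail-invariance. Finally I would invoke the $0$--$1$ law for the tail of a Doeblin, topologically-mixing Markov measure---the uniform forgetting supplied by \ref{Doeblin} together with primitivity of $A$ makes the remote past and remote future asymptotically independent of any fixed cylinder---to conclude that the tail is $\mu$-trivial. Therefore $h$ is a.e.\ constant, $\mathcal I$ is trivial, and $T$ is ergodic.

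The main obstacle is the uniform-in-$k$ control of the cocycle ratio $w_k(x)/w_k(y)$: one must verify that the distortion coming from the displaced window does not accumulate along the orbit, which is precisely what the uniformity of the Doeblin bound in the time index buys. The secondary technical point is establishing the tail $0$--$1$ law in the non-stationary setting, where the usual stationary mixing estimates must be replaced by Doeblin's weak-ergodicity (coupling) bounds.
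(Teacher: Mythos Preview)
The paper does not prove Theorem~\ref{Theorem: Theorem A}; it is quoted from \cite[Proposition~2.2, Theorem~3.4]{kosloff2019proving}. Your Hopf--Hurewicz strategy is very much in the spirit of that reference and of the present paper's Section~4, and the two auxiliary ingredients you isolate are exactly right: the quasi-independence estimate of Proposition~\ref{Proposition: Mixing in MSFT}(3) gives triviality of the two-sided tail, and conservativity makes $\sum_k w_k=\infty$ so that finitely many initial terms are negligible.

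There is, however, a real gap at the key step. You assert that for two-sided tail-equivalent $x,y=\phi(x)$ the ratio $w_k(x)/w_k(y)$ \emph{converges} to the holonomy derivative, with the shifted-window contribution tending to $1$. A direct computation gives
\[
\frac{w_k(x)}{w_k(y)}
=\phi'(x)\cdot\prod_{n=-N-1}^{N}\frac{P_{n-k}(x_n,x_{n+1})}{P_{n-k}(y_n,y_{n+1})},
\]
and the second factor involves the transition matrices $P_m$ at indices $m\to-\infty$. The Doeblin bound keeps it in a fixed compact interval, and nonsingularity (Corollary~\ref{Corollary: Nonsingularity of the Shift}) gives $P_m-P_{m-1}\to0$, but neither forces the factor to converge, let alone to~$1$: in the divergent scenario of Theorem~\ref{Theorem: Theorem B} it genuinely oscillates. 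With only uniform bounds the ratio-ergodic limits at $x$ and at $y$ are trapped between $C^{-2}$ and $C^{2}$ times one another, which is not equality, so the passage from $T$-invariance to tail-invariance is not yet justified.

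The repair, and the point where this paper and Kosloff's diverge from the naive Hopf argument, is to work not with arbitrary finite coordinate substitutions but with holonomies that \emph{exchange} a block at one position with a block at another (the admissible permutations of Lemma~\ref{LemmaAdmissiblePermutations}). For those the two shifted-window factors cancel against each other via $P_m-P_{m-1}\to0$, so $(T^k\circ V)'/(T^k)'\to1$ genuinely holds; one then needs a separate argument that the equivalence relation generated by such exchanges is rich enough. Your outline would be complete once this distinction is made explicit.
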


Danilenko \cite{danilenko2019weak} strengthened this result and showed that in this case the shift is further weakly-mixing, in the sense that its product with every ergodic probability measure preserving transformation is again ergodic.

Our work goes further into the classification of the shift acting on a Markov subshift of finite type into its Krieger-type. Note that by the Poincaré recurrence theorem when the shift is not conservative it can not admit a shift-invariant probability measure. Then in the following Theorems \ref{Theorem: Theorem B}, 
\ref{Theorem: Theorem C}, \ref{Theorem: Theorem D} and \ref{Theorem: Theorem E} we assume that the shift is nonsingular and conservative with respect to the subject measure. See the exact definitions below.

In the case that we call \textit{the divergent scenario} the following theorem fully answers the question of possible Krieger-type of the shift.

\begin{theorem}
\label{Theorem: Theorem B}
Let $X_A\subset\mathcal{S}^{\mathbb{Z}}$ be a topologically-mixing SFT and $\mu=\mu_{\left(P_n:n\in\mathbb{Z}\right)}$ be a Markov measure on $X_A$ with the Doeblin condition \ref{Doeblin}. If the shift is nonsingular and conservative with respect to $\mu$ and the limit $\lim_{\left|n\right|\to\infty}P_{n}$ does not exist, the shift is of Krieger-type $\mathrm{III}_1$.
\end{theorem}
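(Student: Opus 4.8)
My plan is to classify the type through the \emph{ratio set} of the shift and to show that it equals $[0,\infty)$. Since $T$ is assumed nonsingular and conservative, Theorem \ref{Theorem: Theorem A} yields that it is ergodic, so the Krieger-type is governed by the group $E\subseteq\mathbb{R}$ of essential values of the log-Radon--Nikodym cocycle $\rho=\log\frac{d\mu\circ T}{d\mu}$ (extended to the orbit equivalence relation of $T$), with type $\mathrm{III}_1$ holding exactly when $E=\mathbb{R}$. As $E$ is a closed subgroup of $\mathbb{R}$, it suffices to produce essential values forming a nondegenerate interval, equivalently nonzero essential values accumulating at $0$: either forces $E$ to contain a neighbourhood of $0$ and hence to be all of $\mathbb{R}$.

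Next I would extract the structural consequence of the hypotheses. Writing the one-step derivative on cylinders and passing to the limit, $\log\frac{d\mu\circ T}{d\mu}(x)$ is, up to a boundary term in the marginals $\pi_n$, the sum $\sum_{j}\bigl(\log P_{j+1}(x_j,x_{j+1})-\log P_j(x_j,x_{j+1})\bigr)$. Nonsingularity forces this series to converge a.e.\ to a finite limit; since by \ref{Doeblin} every admissible pair $(s,t)$ occurs along a generic orbit with positive frequency while $P_j(s,t)\ge\delta$, convergence of the series forces $P_{j+1}(s,t)-P_j(s,t)\to0$, that is $\lVert P_{n+1}-P_n\rVert\to0$. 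Consequently the set $L$ of subsequential limits of $(P_n)$ as $|n|\to\infty$ is a compact \emph{connected} set of stochastic matrices, and because $\lim_{|n|\to\infty}P_n$ does not exist, $L$ is a nondegenerate continuum.

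I would then realize a continuum of essential values out of $L$. Fix a reference set $A$ cut out by finitely many coordinates. Using primitivity of $A$ (topological mixing) together with \ref{Doeblin}, one splices admissible words and thereby manufactures genuine shift-orbit returns of a positive-measure subset of $A$ to $A$ whose only net effect on the cocycle is a prescribed local surgery performed deep in the tail, around a position $n$ with $P_n\approx Q$ for a chosen $Q\in L$; the Markov property and the uniform comparability of conditional measures furnished by \ref{Doeblin} keep the derivative controlled along the return. Such a return contributes a cocycle increment approximating $\log\frac{Q(a,t)Q(t,b)}{Q(a,s)Q(s,b)}$ for admissible $a,s,t,b$, and a long-range return visiting two tail positions with $P\approx Q$ and $P\approx Q'$ contributes approximately the difference of two such expressions. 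As $Q$ (resp.\ the pair $Q,Q'$) ranges over the continuum $L$, these increments vary continuously and sweep out an interval; choosing $Q,Q'\in L$ close but distinct produces nonzero essential values arbitrarily near $0$. This places a neighbourhood of $0$ inside $E$, whence $E=\mathbb{R}$ and the shift is of type $\mathrm{III}_1$.

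The hard part is the construction of the previous paragraph, in two respects. First, turning an abstract coordinate surgery into an \emph{honest} return of $T$ with a controlled Radon--Nikodym derivative: this is where primitivity and \ref{Doeblin} must be used quantitatively, routing the relevant block through the subshift and invoking conservativity (infinitely many returns) together with a Hurewicz-type description of the ratio set. Second, guaranteeing that the resulting connected set of values is genuinely nondegenerate. The danger is that all matrices of $L$ be mutually related by a coboundary, $\log Q(s,t)=\log Q_0(s,t)+\varphi(s)-\varphi(t)+\kappa$, in which case single-site and single-block surgeries at a fixed tail position are blind to the choice of $Q\in L$. I expect to defeat this by using returns that relocate structure across tail positions carrying \emph{different} transition matrices, so that the position-dependent gauge $\varphi$ no longer telescopes; alternatively one argues that such a global coboundary relation would, after a cohomological change of coordinate, force $\lVert P_n-Q_0\rVert\to0$ and hence the existence of $\lim_{|n|\to\infty}P_n$, contrary to hypothesis. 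Resolving this degenerate sub-case cleanly is, I anticipate, the principal obstacle.
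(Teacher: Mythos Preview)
Your high-level strategy is sound and matches the paper's: exploit nonsingularity to get $\lVert P_{n+1}-P_n\rVert\to0$, deduce that the partial-limit set $L$ is a nondegenerate continuum, and then show that a continuous family of cocycle increments parametrized by $L$ forces the ratio set to be all of $\mathbb{R}$. You also correctly isolate the coboundary degeneracy as the main obstruction to nontriviality of that family; the paper resolves exactly this point via a combinatorial lemma (its Lemma~\ref{Lemma: Combinatorial Lemma}) showing that two distinct primitive stochastic matrices with the same support can always be separated by \emph{some} admissible pair of paths, which is the precise statement you would need.

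The genuine gap is your ``honest shift-orbit return'' step. You assert that one can splice words so as to produce an actual element of the $T$-orbit equivalence relation whose Radon--Nikodym increment is, up to negligible error, the local surgery term. But a block surgery at position $n$ is a \emph{finite-coordinate permutation}, not a power of $T$; to realize it inside $[T]$ you must find an iterate $T^k$ that maps your set back near itself, and then the derivative picks up the entire telescoping sum $\sum_{j}\bigl(\log P_{j-k}-\log P_j\bigr)(x_j,x_{j+1})$ along the full orbit, not just the local piece. Controlling this global contribution in the inhomogeneous Markov setting is precisely what the paper says it could not do directly (see the discussion in \S1.1), and your sketch gives no mechanism for it. Invoking conservativity and a Hurewicz description does not help: those give existence of returns, not control of the derivative along them.

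The paper circumvents this entirely by \emph{not} working with the ratio set of $T$. It introduces the renormalization full-group $\mathscr{R}_A=\mathscr{R}(T;\Pi_A)$ of finite permutations whose Maharam lifts are asymptotic to those of $T$, proves a Hopf-type transfer principle (Theorem~\ref{Theorem: Hopf Argument}) showing that $\mathrm{e}(\mathscr{R}_A)=\mathbb{R}$ implies $\mathrm{e}(T)=\mathbb{R}$, and then computes $\mathrm{e}(\mathscr{R}_A)$ instead. The point is that admissible block-exchange permutations lie in $\mathscr{R}_A$ (Lemma~\ref{LemmaAdmissiblePermutations}) and have \emph{explicit, constant} derivatives on cylinders (Claim~\ref{Claim: Admissible Permutation Derivative}), so the ratio-set computation for $\mathscr{R}_A$ is exactly the clean local calculation you wanted---but now it is legitimate, because you are no longer pretending these permutations are shifts. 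If you want to salvage your direct approach you would have to supply the missing control on the global derivative of genuine $T$-returns; absent that, the renormalization/Hopf reduction is the missing idea.
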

By relating to the limit $\lim_{\left|n\right|\to\infty}P_{n}$ we mean that it exists if, and only if, the limits $\lim_{n\to\infty}P_{n}$ and $\lim_{n\to-\infty}P_{n}$ both exist entrywise and are equal.

The other case that we call \textit{the convergent scenario} is more subtle. We first give a necessary criteria for the conservativeness of the shift. This condition was established in \cite[Lemma~8.6]{danilenko2019} in the special case of half-stationary bistochastic two states case, and here we establish the general case of topologically-mixing Markov SFT with the Doeblin condition.

\begin{theorem}
\label{Theorem: Theorem C}
Let $X_A\subset\mathcal{S}^{\mathbb{Z}}$ be a topologically-mixing SFT and $\mu=\mu_{\left(P_n:n\in\mathbb{Z}\right)}$ be a Markov measure on $X_A$ with the Doeblin condition \ref{Doeblin}. If the shift is nonsingular and conservative with respect to $\mu$ and both limits $\lim_{n\to\infty}P_{n}$ and $\lim_{n\to\infty}P_{-n}$ exist, then they are equal. That is, $\lim_{\left|n\right|\to\infty}P_{n}$ exists.
\end{theorem}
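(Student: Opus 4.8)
The plan is to argue by contradiction: assuming that both one-sided limits $P^{+}:=\lim_{n\to+\infty}P_n$ and $P^{-}:=\lim_{n\to-\infty}P_n$ exist but $P^{+}\neq P^{-}$, I would show that the shift is dissipative, contradicting the standing conservativeness hypothesis. First note that $P^{\pm}$ are again primitive stochastic matrices compatible with $A$ and satisfying \ref{Doeblin} with the same $\delta$, since the inequalities $P_n(s,t)\ge\delta$ and $P_n(s,t)=0$ pass to the limit; hence each has a fully supported stationary distribution $\pi^{\pm}$. Moreover $\pi_n\to\pi^{+}$ as $n\to+\infty$ and $\pi_n\to\pi^{-}$ as $n\to-\infty$, because under \ref{Doeblin} the nonhomogeneous products $\pi_mP_m\cdots P_{n-1}$ contract uniformly (Dobrushin coefficient bounded away from $1$) toward the stationary vector of the limiting transition matrix.

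The key object is the Radon--Nikodym cocycle $\omega_n:=\frac{d\,\mu\circ T^{-n}}{d\mu}=\frac{d\,T^{n}_{*}\mu}{d\mu}$. Since $T$ is the shift, $T^{n}_{*}\mu$ is itself the Markov measure $\mu_{(P_{k+n}:k\in\mathbb Z)}$, and a direct cylinder computation gives, for $\mu$-a.e.\ $x$, $\log\omega_n(x)=\sum_{k\in\mathbb Z}\log\frac{P_{k+n}(x_k,x_{k+1})}{P_k(x_k,x_{k+1})}$, the boundary $\pi$-term disappearing because both indices tend to $-\infty$ together. By Hopf's criterion it suffices to prove $\sum_{n\ge1}\omega_n<\infty$ $\mu$-a.e. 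I would establish this through the Hellinger affinity $\mathcal A_n:=\int\sqrt{\omega_n}\,d\mu=\int\sqrt{d\mu\,d(T^{n}_{*}\mu)}$ and the elementary implication that if $\sum_n\mathcal A_n<\infty$ then, by Tonelli, $\sum_n\sqrt{\omega_n}<\infty$ $\mu$-a.e., whence $\omega_n\to0$ and $\sum_n\omega_n<\infty$ $\mu$-a.e.

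Thus the heart of the matter is to show $\mathcal A_n\le Ce^{-cn}$ for some $c>0$. Writing the affinity of $\mu_{(P_k)}$ and $\mu_{(P_{k+n})}$ as a product of the geometric-mean transfer matrices $R^{(n)}_k(s,t):=\sqrt{P_k(s,t)P_{k+n}(s,t)}$, each $R^{(n)}_k$ is substochastic, and by Cauchy--Schwarz its $s$-th row sum equals $1$ exactly when $P_k(s,\cdot)=P_{k+n}(s,\cdot)$ and is $\le 1-c'$ once these two distributions are separated in total variation. I would then count the ``bulk'' indices: fixing $\varepsilon$ small and the corresponding threshold $L$ with $\|P_j-P^{-}\|<\varepsilon$ for $j\le-L$ and $\|P_j-P^{+}\|<\varepsilon$ for $j\ge L$, every $k$ with $-n+L\le k\le -L$ has $P_k\approx P^{-}$ while $P_{k+n}\approx P^{+}$, so (since $P^{-}\neq P^{+}$ and $\pi^{-}$ has full support) the rows are uniformly separated; there are $m\gtrsim n$ such indices. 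Because \ref{Doeblin} makes the limiting chains uniformly mixing, the defect accumulated in these coordinates propagates, so the norm of the product $R^{(n)}_{-N}\cdots R^{(n)}_{N-1}$ is at most $(1-c'')^{m}$, giving $\mathcal A_n\le Ce^{-cn}$.

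I expect the main obstacle to be precisely this last estimate: converting the pointwise statement ``about $n$ coordinates carry a definite amount of separation between $P^{-}$ and $P^{+}$'' into genuine exponential decay of the affinity of two non-homogeneous Markov measures. The subtlety is that a single small row sum in a substochastic matrix does not on its own contract all vectors, so one must invoke the uniform mixing furnished by \ref{Doeblin} to see that the bulk defects actually multiply. This is cleaner than the alternative second-moment route, where one notes $\mathbb E_\mu[\log\omega_n]=-D(\mu\,\|\,T^{n}_{*}\mu)\lesssim -cn$ by the same counting and then needs a large-deviation bound for the weakly dependent sum $\log\omega_n=\sum_k\log\frac{P_{k+n}}{P_k}(x_k,x_{k+1})$ together with Borel--Cantelli to upgrade the mean estimate to an a.e.\ statement. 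Once $\mathcal A_n\le Ce^{-cn}$ is secured, the contradiction with conservativeness, and hence the equality $P^{+}=P^{-}$, follows at once.
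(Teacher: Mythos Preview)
Your approach is valid in outline but takes a substantially different route from the paper's. The paper avoids the Radon--Nikodym and Hellinger machinery entirely: it invokes a law of large numbers for nonhomogeneous Markov chains (Wen--Weiguo) to show that the forward time averages $\frac{1}{N}\sum_{n=0}^{N-1}\mathbf{1}_{\{(X_n,X_{n+1})=(s_0,t_0)\}}$ converge $\mu$-a.e.\ to $\pi^{+}(s_0)P^{+}(s_0,t_0)$ while the backward averages converge to $\pi^{-}(s_0)P^{-}(s_0,t_0)$, and then uses the elementary observation that whenever $\limsup_{N}\frac{1}{N}S_N^{-}f\le a<b\le\liminf_{N}\frac{1}{N}S_N^{+}f$ on a set of positive measure, $T$ cannot be conservative. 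This sidesteps precisely the obstacle you flag: the paper never has to control products of substochastic matrices or prove any decay of $\mathcal{A}_n$. Your route, if completed, would yield the quantitatively stronger conclusion of total dissipativity with an explicit exponential rate on $\omega_n$, but the step you correctly single out as delicate---turning a row-sum defect present only in \emph{some} rows of $R^{(n)}_k$ into genuine contraction of the full matrix product---does require an additional Perron--Frobenius argument (the entrywise geometric mean $R(s,t)=\sqrt{P^{-}(s,t)P^{+}(s,t)}$ has the same support as $A$, is strictly substochastic in at least one row when $P^{-}\neq P^{+}$, hence has spectral radius strictly below $1$, and the bulk $R^{(n)}_k$ are uniformly close to $R$). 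This is workable but considerably heavier than the paper's two-ingredient argument.
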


Then we can determine the Krieger-type of the shift as follows.

\begin{theorem}
\label{Theorem: Theorem D}
Consider the state space $\mathcal{S}=\left\{0,1\right\}$. Let $X_A\subset\mathcal{S}^{\mathbb{Z}}$ be a topologically-mixing SFT and $\mu=\mu_{\left(P_n:n\in\mathbb{Z}\right)}$ be a Markov measure on $X_A$ with the Doeblin condition \ref{Doeblin}. If the shift is nonsingular and conservative with respect to $\mu$, the Krieger-type of the shift is either $\mathrm{II}_1$ or $\mathrm{III}_1$.\\
Moreover, the shift is of Krieger-type $\mathrm{II}_1$ if, and only if, there exists a stochastic matrix $Q$ such that 
$Q=\lim_{\left|n\right|\to\infty}P_{n}$ and
$$\sum_{n\geq1}\sum_{s,u,v,t\in\mathcal{S}}\left(\sqrt{\widehat{P}_{-n}\left(u,s\right)P_{n}\left(v,t\right)}-\sqrt{\widehat{Q}\left(u,s\right)Q\left(v,t\right)}\right)^{2}<\infty.$$
In this case, the absolutely continuous invariant measure for the shift is the Markov measure defined by $Q$ and the distribution $\lambda$ on $\mathcal{S}$ satisfying $\lambda Q=\lambda$.
\end{theorem}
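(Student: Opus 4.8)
The plan is to split according to whether the two-sided limit $\lim_{|n|\to\infty}P_n$ exists. If it does not exist, then Theorem \ref{Theorem: Theorem B} already gives Krieger-type $\mathrm{III}_1$, and since no stochastic matrix $Q$ can equal the (non-existent) limit, the ``Moreover'' clause asserts nothing in this case; so the dichotomy holds vacuously there. I may therefore assume throughout that $Q:=\lim_{|n|\to\infty}P_n$ exists (the convergent scenario; by Theorem \ref{Theorem: Theorem C} this is the same as requiring both one-sided limits to exist, since they must then agree). From $P_n\to Q$ and the Doeblin condition \ref{Doeblin} one gets $\pi_n\to\lambda$ with $\lambda Q=\lambda$, and hence, via \ref{eq:22}, the reverse transitions converge, $\widehat{P}_n\to\widehat{Q}$. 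The natural candidate for an a.c.i.m.\ is then the homogeneous (stationary) Markov measure $\nu:=\mu_Q$ determined by $Q$ and $\lambda$, which is shift-invariant.

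First I would reduce the type-$\mathrm{II}_1$ question to an equivalence question: the shift is of type $\mathrm{II}_1$ if, and only if, $\mu\sim\nu$. The ``if'' is immediate, because $\nu$ is a shift-invariant probability measure equivalent to $\mu$. For the ``only if'', type $\mathrm{II}_1$ supplies an equivalent invariant probability measure which, by the structural part of this work (the only possible a.c.i.m.\ is a Markov measure) together with the convergent scenario, must coincide with $\nu$; hence $\mu\sim\nu$. Next I would translate $\mu\sim\nu$ into the displayed series. Conditioning on the coordinate $X_0$, the two-sided Markov measure factorizes into two independent non-homogeneous chains: a forward chain governed by $(P_n)_{n\ge1}$ and a backward chain governed by the reverse transitions $(\widehat{P}_{-n})_{n\ge1}$, and $\nu$ factorizes the same way with $Q$ and $\widehat{Q}$. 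Applying the Kakutani-type dichotomy / Hellinger criterion for Markov measures established in this paper, $\mu\sim\nu$ holds precisely when the total squared Hellinger distance of these joint forward--backward increments around the origin is finite, which is exactly
$$\sum_{n\ge1}\sum_{s,u,v,t\in\mathcal{S}}\left(\sqrt{\widehat{P}_{-n}(u,s)P_n(v,t)}-\sqrt{\widehat{Q}(u,s)Q(v,t)}\right)^2<\infty .$$
Thus this series converges iff $\mu\sim\nu$ iff the shift is of type $\mathrm{II}_1$, and in that case the a.c.i.m.\ is $\nu=\mu_Q$, as asserted.

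The main obstacle is the complementary case: when the series diverges (so $\mu\not\sim\nu$) I must prove type $\mathrm{III}_1$, i.e.\ rule out $\mathrm{II}_\infty$, $\mathrm{III}_0$ and $\mathrm{III}_\lambda$ for $0<\lambda<1$. Here I would compute the logarithmic Radon--Nikodym cocycle $\rho_k=\log\frac{d\mu\circ T^k}{d\mu}$ from the Markov data and compare it, increment by increment, against the trivial cocycle of the invariant measure $\nu$, writing $\rho_k$ as a sum of local, uniformly bounded (by \ref{Doeblin}) martingale-type increments whose centered quadratic variation is comparable to the partial sums of the divergent Hellinger series above. Divergence of that series then forces the variance of these sums to tend to infinity, and a martingale central limit theorem for the topologically mixing chain yields asymptotically Gaussian fluctuations of $\rho_k$ with variance $\to\infty$. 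Since a Gaussian limit of unbounded variance cannot be supported on any proper closed subgroup of $\mathbb{R}$ (in logarithmic scale), the essential range of the cocycle is all of $\mathbb{R}$; combined with the ergodicity furnished by Theorem \ref{Theorem: Theorem A}, the ratio set is $[0,\infty)$ and the shift is of type $\mathrm{III}_1$.

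The two delicate points I expect to fight with are: (i) reducing $\rho_k$ to an honest martingale difference sequence (relative to the appropriate filtration of the chain) whose quadratic variation is provably comparable to the Hellinger partial sums, rather than merely dominated by them; and (ii) upgrading ``unbounded Gaussian fluctuations'' to the statement that no nontrivial $\lambda$-lattice can persist in the essential range of the cocycle, since it is exactly this lattice obstruction that separates $\mathrm{III}_1$ from $\mathrm{III}_\lambda$ $(0<\lambda<1)$ and from $\mathrm{III}_0$. Getting (i) to produce a genuine divergence of variance, and not a spurious cancellation, is where the Doeblin condition and the two-sided factorization at the origin should be used most forcefully.
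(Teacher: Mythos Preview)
Your reduction to the convergent scenario and your handling of the type-$\mathrm{II}_1$ equivalence are essentially what the paper does: Corollary \ref{Corollary: Equivalence to Homogeneous} (derived from Theorem \ref{Theorem: Kakutani Criteria}) gives exactly the displayed Hellinger criterion for $\mu\sim\nu=\mu_Q$, and that part is fine.

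The gap is in your $\mathrm{III}_1$ argument. You propose to work directly with the shift cocycle $\rho_k=\log\frac{d\mu\circ T^k}{d\mu}$, prove a CLT for it, and infer that the essential range is all of $\mathbb{R}$. But to show $r\in\mathrm{e}(T)$ you must, for \emph{every} positive-measure set $E$ and every $\epsilon>0$, find $k$ with $\mu\bigl(E\cap T^{-k}E\cap\{|\rho_k-r|<\epsilon\}\bigr)>0$. A CLT only controls the unconditional distribution of $\rho_k$; it says nothing about how $\rho_k$ correlates with membership in an arbitrary cylinder $E$ and its shift $T^{-k}E$, and since $T^k$ moves \emph{all} coordinates there is no decoupling available. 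The paper explicitly flags this obstacle in \S1.1: computing essential values of the shift cocycle directly ``relies on the Bernoullicity and the half-stationarity'' and is ``more involved'' for Markov. Your two ``delicate points'' (i) and (ii) are not technicalities to be cleaned up; they are the actual problem, and your outline does not supply the missing idea.

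The paper's route is genuinely different. It never analyzes $\rho_k$. Instead it passes to the \emph{renormalization full-group} $\mathscr{R}_A$ of admissible permutations $V_k:B_k(i_k)\circlearrowleft B'_k(j_k)$ (Definition \ref{Definition: Renormalization Full-Group}, Lemma \ref{LemmaAdmissiblePermutations}), whose supports lie in coordinates $\{i_k,\dots\}\cup\{j_k,\dots\}$ far from any fixed cylinder $E$. This buys the decoupling you lack: by Proposition \ref{Proposition: Mixing in MSFT}(3), $\mu(F)\ge\eta\,\mu(E)$ uniformly, so Lemma \ref{Lemma: Approximating Essential Values} applies. The CLT (Dobrushin, Theorem \ref{Theorem: Markov CLT}) is then applied not to $\rho_k$ but to the sequence $\log V_k'=D_kY_k$, which is itself a Doeblin Markov chain on $\{-1,0,1\}$ (Claim \ref{Claim4}); one checks $\mathbf{E}(\log V_k')\asymp -D_k^2$ so the mean drifts to $-\infty$, and a stopping-time construction yields $r\in\mathrm{e}(\mathscr{R}_A)$ (Lemma \ref{Lemma: Convergent Case}). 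Finally the Hopf Argument for the Maharam extension (Theorem \ref{Theorem: Hopf Argument}) transfers $\mathrm{e}(\mathscr{R}_A)=\mathbb{R}$ to $\mathrm{e}(T)=\mathbb{R}$. There is also a combinatorial step you did not anticipate: one must choose specific admissible pairs (e.g.\ $[0,0,0]$ versus $[0,1,0]$ in the fullshift) and align signs along a subsequence $(i_k,j_k)$ so that $D_k\to0$ while $\sum D_k^2=\infty$; this is Claim \ref{Claim: Convergent Fullshift} and is where the divergent Hellinger series is actually consumed.
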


Consider the Golden Mean SFT $X_{\boldsymbol{G}}\subset\left\{0,1,2\right\}^{\mathbb{Z}}$ that is defined by the primitive adjacency matrix
$$\boldsymbol{G}=\left(\begin{array}{ccc}
1 & 0 & 1\\
1 & 0 & 1\\
0 & 1 & 0
\end{array}\right).$$

\begin{theorem}
\label{Theorem: Theorem E}
Let $X_{\boldsymbol{G}}\subset\left\{0,1,2\right\}^{\mathbb{Z}}$ be the Golden Mean SFT and $\mu=\mu_{\left(P_n:n\in\mathbb{Z}\right)}$ be a Markov measure on $X_{\boldsymbol{G}}$ with the Doeblin condition \ref{Doeblin}. If the shift is nonsingular and conservative with respect to $\mu$, the Krieger-type of the shift is either $\mathrm{II}_1$ or $\mathrm{III}_1$. These alternatives are determined by the same test of Theorem \ref{Theorem: Theorem D}.
\end{theorem}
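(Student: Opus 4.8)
The plan is to split into the two scenarios and, in the convergent one, to exploit the rigid combinatorics of $\boldsymbol{G}$ in order to reduce the type computation to the two-state analysis of Theorem \ref{Theorem: Theorem D}. If $\lim_{|n|\to\infty}P_n$ does not exist then Theorem \ref{Theorem: Theorem B} applies verbatim to $X_{\boldsymbol{G}}$ and yields type $\mathrm{III}_1$, so I may assume that $Q:=\lim_{|n|\to\infty}P_n$ exists; by Theorem \ref{Theorem: Theorem C} this limit is the same whether approached through $n\to+\infty$ or $n\to-\infty$, so the convergent scenario is genuinely two-sided. The candidate invariant measure is the homogeneous Markov measure $\mu_Q$ attached to $Q$ together with the distribution $\lambda$ satisfying $\lambda Q=\lambda$.

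The structural point is that $\boldsymbol{G}$ forces $x_{n+1}=1$ whenever $x_n=2$, so the Doeblin condition \ref{Doeblin} forces $P_n(2,1)=1$ for every $n$; dually, the symbol $1$ is always preceded by $2$, so $\widehat{P}_n(1,2)=1$ for every $n$. Hence the only genuinely random transitions are those leaving the states $0$ and $1$ (into $\{0,2\}$) and, time-reversed, those entering $0$ and $2$ (from $\{0,1\}$); it is exactly this ``two choice states'' feature, absent from a general topologically-mixing $3$-state SFT, that makes the Golden Mean reducible to Theorem \ref{Theorem: Theorem D}. Geometrically a typical point is a concatenation of the symbol $0$ and the block $21$, so that inducing the shift on $Y=\{x:x_0\in\{0,2\}\}$ recodes $(X_{\boldsymbol{G}},T)$ as a tower of height $1$ over $0$ and height $2$ over $21$ above a two-symbol full shift. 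Since the Krieger-type is invariant under inducing on a set of positive measure, it suffices to understand the induced system.

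I expect the main obstacle here: the induced two-symbol measure is \emph{not} a Markov measure, because $P_n$ varies with $n$ while the $n$-coordinate of the $j$-th block is the path-dependent sum of the preceding heights, so the induced transition probabilities depend on the past and Theorem \ref{Theorem: Theorem D} cannot be quoted off the shelf. I would circumvent this by computing the Radon--Nikodym cocycle $\log\frac{d\mu\circ T}{d\mu}$ directly on $X_{\boldsymbol{G}}$: after telescoping it is a sum of logarithms of ratios of forward and reverse transition probabilities, and every factor coming from a forced transition, $P_n(2,1)=1$ or $\widehat{P}_n(1,2)=1$, equals $1$ and drops out. What survives is a cocycle supported on the choice transitions out of $\{0,1\}$ and into $\{0,2\}$, which is structurally identical to the cocycle of a two-state Markov shift with the Doeblin condition; consequently the essential-value/ratio-set computation driving Theorem \ref{Theorem: Theorem D} --- ruling out types $\mathrm{II}_\infty$ and $\mathrm{III}_\lambda$ for $\lambda<1$ and forcing the dichotomy $\mathrm{II}_1$ versus $\mathrm{III}_1$ --- goes through unchanged.

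Finally I would invoke the general equivalence criterion for Markov measures (the Hellinger--Kakutani dichotomy recorded in the paper): $\mu$ and $\mu_Q$ are either equivalent or mutually singular, and equivalent precisely when the displayed series converges. Since the forced transitions satisfy $P_n(2,\cdot)=Q(2,\cdot)$ and $\widehat{P}_n(1,\cdot)=\widehat{Q}(1,\cdot)$, they contribute only vanishing summands, so the series is literally the one of Theorem \ref{Theorem: Theorem D} with $\mathcal{S}=\{0,1,2\}$ and collapses to its effective two-state content. If it converges then $\mu\sim\mu_Q$, the shift preserves the probability $\mu_Q$, and conservativity with Theorem \ref{Theorem: Theorem A} gives ergodicity, so the type is $\mathrm{II}_1$ and $\mu_Q$ is the advertised a.c.i.m.; uniqueness of the a.c.i.m.\ among Markov measures identifies it. If the series diverges then $\mu\perp\mu_Q$, no equivalent invariant probability exists, and the surviving choice-cocycle of the previous paragraph forces type $\mathrm{III}_1$. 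The delicate part will be the clean telescoping of the cocycle with the cancellation of the forced transitions, and verifying that the surviving choice-cocycle satisfies exactly the hypotheses under which the essential-value argument of Theorem \ref{Theorem: Theorem D} was carried out.
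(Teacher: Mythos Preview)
Your high-level decomposition --- divergent scenario via Theorem~\ref{Theorem: Theorem B}, convergent scenario via the equivalence criterion for $\mathrm{II}_1$, and otherwise $\mathrm{III}_1$ --- matches the paper exactly, and your observation that $P_n(2,1)=1$, $\widehat{P}_n(1,2)=1$ collapse the effective randomness to two ``choice'' states is the right structural insight. But the step where the real work lies is handled only by assertion, and the mechanism you invoke is not the one actually used.

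The gap is in the $\mathrm{III}_1$ case of the convergent scenario. You propose to compute the \emph{shift} cocycle $\log\frac{d\mu\circ T}{d\mu}$, telescope away the forced transitions, and then say the essential-value computation of Theorem~\ref{Theorem: Theorem D} ``goes through unchanged.'' But the paper's proof of Theorem~\ref{Theorem: Theorem D} does not analyze the shift cocycle directly --- as the introduction explains, that route is intractable in the Markov case. Instead it passes to the renormalization full-group $\mathscr{R}_A$, constructs admissible permutations, and applies Lemma~\ref{Lemma: Convergent Case} (the CLT-based ratio-set lemma) together with the Hopf argument Theorem~\ref{Theorem: Hopf Argument}. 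Your inducing idea is a good heuristic but, as you correctly note, the induced base measure is not Markov, and your circumvention does not say what replaces Lemma~\ref{Lemma: Convergent Case} or how to construct the admissible configurations on $X_{\boldsymbol{G}}$.

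What the paper actually does is construct explicit admissible pairs of length~$4$ in $X_{\boldsymbol{G}}$ that implement your $0\leftrightarrow 21$ recoding at the block level: in Case~1 ($\sum_{n\ge1}(p_n-q)^2=\infty$) the pairs $([0,0,0,0],[0,2,1,0])$ and $([0,0,0,2],[0,2,1,2])$ play the role your length-$3$ pairs $([0,0,0],[0,1,0])$ and $([0,0,1],[0,1,1])$ played in the fullshift; in Case~2 ($\sum(p_n-q)^2<\infty$, so $\sum(p'_n-q')^2=\infty$) the pair $([1,0,0,2],[1,2,1,2])$ is used. One must also extend the sign-control on $(p_n-q)$ from two consecutive indices to three (because the blocks span one more coordinate), and then verify the hypotheses $D_k\to 0$, $\sum D_k^2=\infty$ of Lemma~\ref{Lemma: Convergent Case} exactly as in the two-state proof. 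This is the content you would need to supply in place of ``goes through unchanged.''
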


\subsection{About the Proof}

In the first works \cite{kosloff2014,danilenko2019} the authors proved the case of a half-stationary Bernoulli shift by computing the ratio set of the shift using the appropriate cocycle. However, this method relies on the Bernoullicity and the half-stationarity of the shift, and in the general Markov case we found the computation of the essential values of this cocycle to be more involved. In later works \cite{kosloff2019proving,danilenko2019weak,bjorklund2020ergodicity} it has been found useful to study the ergodicity of the shift by the action of the permutations that change only finitely many coordinates. This approach applied by Björklund, Kosloff and Vaes \cite{bjorklund2020ergodicity} for amenable groups, using a ratio ergodic theorem by Danilenko \cite{danilenko2019weak}, to replace the computation of the ratio set of the Bernoulli shift by the computation of the ratio set of the finite permutations action. However, also in this approach the Bernoullicity plays a crucial role in two aspects. The first is that the cocycle of the shift satisfies a special identity with respect to finite permutations (see \cite[Lemma~3.1]{bjorklund2020ergodicity}) and this identity no longer holds in the Markov case. The second is that the finite permutations action is ergodic with respect to Bernoulli measures. This is far from being true in general and in the Markov case it is not true even when the shift is measure-preserving. See Example 3 of Blackwell--Freedman \cite{blackwell1964}. In particular, the action of the finite permutations when is not ergodic does not fall under the Krieger classification.

Here we place the above approach for amenable groups in a more general context. We develop a notion of \textit{Renormalization Full-Group} (Definition \ref{Definition: Renormalization Full-Group}) of one action of countable group with respect to another action of countable group, where the latter satisfies a metric property with respect to the former. This metric property is the Maharam extension-version of the notion of equivalence underlying the well-known Hopf Argument. We then establish a version of Hopf Argument for the Maharam extension (Theorem \ref{Theorem: Hopf Argument}), which allows one to study the ratio set of the first action by the ratio set of the corresponding renormalization full-group action. Our use of this renormalization process can be viewed, in a sense, as replacing the computation of the ratio set of groups with a notion of past and future, like the shift, with the computation of the ratio set of some symmetry group.

\subsection*{Acknowledgement}

I would like to express my deep gratitude to my advisor, Zemer Kosloff, for his patient, generous guidance and for the help in this research. Many important insights in this work are inspired by oral discussions with him. I also want to thank the anonymous referee for their careful reading and for many valuable suggestions that improved the paper.

\section{Preliminaries}

In this work all the measurable spaces are standard Borel spaces and all the measures are Borel and sigma-finite. Two measures $\nu$ and $\mu$ on a standard Borel space $X$ are called \textit{equivalent} if each of $\nu$ and $\mu$ is absolutely continuous with respect to the other. An \textit{automorphism} of a measurable space $\left(X,\mu\right)$ is a bi-measurable invertible transformation $V$ of $X$ onto $X$, which is \textit{nonsingular} with respect to $\mu$; that is, $\mu$ and $\mu\circ V^{-1}$ are equivalent measures. The automorphisms group of $\left(X,\mu\right)$ is denoted by $\mathrm{Aut}\left(X,\mu\right)$. When there is no confusion we write
$$V'\left(x\right)=\frac{d\mu\circ V}{d\mu}\left(x\right)\in L^1\left(X,\mu\right),\quad V\in\mathrm{Aut}\left(X,\mu\right).$$
Let $\Gamma$ be a countable group. We write $\Gamma\curvearrowright\left(X,\mu\right)$ for a group homomorphism $T:\Gamma\to\mathrm{Aut}\left(X,\mu\right)$. When there is no confusion we write $\gamma x$ for $T\left(\gamma\right)\left(x\right)$. Such action is called \textit{ergodic} if for every Borel set $E\subset X$, if $\gamma E\subset E$ for all $\gamma\in\Gamma$ then either $\mu\left(E\right)=0$ or $\mu\left(X\backslash E\right)=0$. It is called \textit{conservative} if for every Borel set $E\subset X$ with $\mu\left(E\right)>0$ there exists $\gamma\in\Gamma$ not the identity with $\mu\left(E\cap\gamma E\right)>0$. Note that for a non-atomic measure, ergodicity is stronger then conservativeness. Also note that nonsingularity, ergodicity and conservativeness are invariant properties under equivalence of measures.

Let $\left(X,\mu\right)$ be a nonatomic standard measure space and $\Gamma\curvearrowright\left(X,\mu\right)$ be a nonsingular ergodic action. Suppose that there exists a measure $\nu$ on $X$ which is both absolutely continuous with respect to $\mu$ and invariant under the action $\Gamma\curvearrowright\left(X,\nu\right)$. Such measure $\nu$ is called a.c.i.m. (absolutely continuous invariant measure) for $\Gamma\curvearrowright\left(X,\mu\right)$. In that case the action is said to be of Krieger-type $\mathrm{II}_1$ or of Krieger-type $\mathrm{II}_{\infty}$, depending on whether its a.c.i.m. is finite or infinite (this does not depend on the choice of the a.c.i.m. by the ergodicity). If the action does not admit an a.c.i.m. it is said to be of Krieger-type $\mathrm{III}$.

\subsection*{The Full-Group, Orbital Cocycles and Essential Values}

A Borel equivalence relation $\mathcal{R}$ is a Borel subset of $X\times X$ for which $x\sim y\iff \left(x,y\right)\in\mathcal{R}$ is an equivalence relation. For a Borel set $E\subset X$ we write $\mathcal{R}\left(E\right)$ for the $\mathcal{R}$-saturation $\left\{y\in X:\exists x\in E, \left(x,y\right)\in\mathcal{R}\right\}$ of $E$. For $x\in X$ write $\mathcal{R}\left(x\right)$ for $\mathcal{R}\left(\left\{x\right\}\right)$.

Such $\mathcal{R}$ is called \textit{countable} if $\mathcal{R}\left(x\right)$ is a countable set for $\mu$-almost every $x\in X$. It is called \textit{nonsingular} if $\mu\left(\mathcal{R}\left(E\right)\right)=0$ whenever $\mu\left(E\right)=0$. A fundamental type of Borel countable equivalence relation is the \textit{orbital equivalence relation} $\mathcal{O}_{\Gamma}$ of a countable group action $\Gamma\curvearrowright\left(X,\mu\right)$. This equivalence relation consists of all $\left(x,\gamma x\right)$ for $x\in X$ and $\gamma\in\Gamma$. By the Feldman--Moore Theorem \cite{feldman1977} every nonsingular countable Borel equivalence relation $\mathcal{R}$ is the orbital equivalence relation of some (non-unique) countable group of automorphisms $\mathrm{FM}\left(\mathcal{R}\right)\curvearrowright\left(X,\mu\right)$.

The \textit{full-group} $\left[\mathcal{R}\right]$ of $\mathcal{R}$ consists of all $V\in\mathrm{Aut}\left(X,\mu\right)$ such that $\left(x,Vx\right)\in\mathcal{R}$ for $\mu$-almost every $x\in X$. The \textit{pseudo full-group} $\left[\left[\mathcal{R}\right]\right]$ of $\mathcal{R}$ consists of all nonsingular one-to-one Borel transformations $V:D\to V\left(D\right)$ for some Borel domain $D\subset X$, such that $\left(x,Vx\right)\in\mathcal{R}$ for $\mu$-almost every $x\in D$. We write $\left[\Gamma\right]$ and $\left[\left[\Gamma\right]\right]$ for $\left[\mathcal{O}_{\Gamma}\right]$ and $\left[\left[\mathcal{O}_{\Gamma}\right]\right]$, respectively. An \textit{orbital cocycle}, or                                                                          simply \textit{cocycle}, for a Borel equivalence relation $\mathcal{R}$ is a function $\varphi:\mathcal{R}\to\mathbb{R}$ for which there exists $X_0\subset X$ of $\mu$-full measure such that for all $\left(x,y\right),\left(y,z\right)\in\left(X_0\times X_0\right)\cap\mathcal{R}$ it holds that
$$\varphi\left(x,z\right)=\varphi\left(x,y\right)+\varphi\left(y,z\right).$$
We write $\varphi_V\left(x\right)=\varphi\left(x,Vx\right)$ for every $V\in\left[\left[\mathcal{R}\right]\right]$ and $x\in X_0$. For a nonsingular Borel equivalence relation $\mathcal{R}$ on $\left(X,\mu\right)$ there is a fundamental orbital cocycle called the (log) \textit{Radon--Nikodym cocycle}. This can be defined for every choice of $\Gamma=\mathrm{FM}\left(\mathcal{R}\right)$ by
$$\varphi_{\gamma}\left(x\right)=\log\frac{d\mu\circ\gamma}{d\mu}\left(x\right)\in L^1\left(X,\mu\right),\quad\gamma\in\Gamma.$$
This definition does not depend on the choice of $\mathrm{FM}\left(\mathcal{R}\right)$ up to a $\mu$-null set.

A number $r\in\mathbb{R}$ is called an \textit{essential value} for $\Gamma\curvearrowright\left(X,\mu\right)$, if for every Borel set $E$ with $\mu\left(E\right)>0$ and every $\epsilon>0$ there exists $V\in\left[\left[\Gamma\right]\right]$ such that 
$$\mu\left(E\cap V^{-1}E\cap\left\{\left|\varphi_{V}-r\right|<\epsilon\right\}\right)>0.$$

The following lemma is useful to compute essential values. It can be found in several formulations in \cite[Lemma~2.1]{choksi1987}\cite[Lemma~1.1]{danilenko2019}\cite[Lemma~7]{kosloff2018manifolds}.

\begin{lem}
\label{Lemma: Approximating Essential Values}
Let $\Gamma\curvearrowright\left(X,\mu\right)$ be a countable group of automorphisms and let $\varphi$ be its Radon--Nikodym cocycle. Let $\mathcal{C}$ be a $\mu$-dense countable algebra in the Borel sigma-algebra. Then a number $r\in\mathbb{R}$ is an essential value for $\Gamma\curvearrowright\left(X,\mu\right)$ if there exists $\eta>0$ depending only on $r$, such that the following condition holds.
\begin{quote}
For every $\epsilon>0$ and every $C\in\mathcal{C}$ with $\mu\left(C\right)>0$ there exists $F\subset C$ and $V\in\left[\left[\Gamma\right]\right]$, such that $V:F\to V\left(F\right)\subset C$ and $\mu\left(F\right)\geq\eta\mu\left(C\right)$ and $\left|\varphi_{V}\left(x\right)-r\right|<\epsilon$ for all $x\in F$.
\end{quote}
\end{lem}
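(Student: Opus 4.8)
The plan is to verify the definition of essential value directly. Given an arbitrary Borel set $E$ with $\mu(E)>0$ and an $\epsilon>0$, I must produce $V\in\left[\left[\Gamma\right]\right]$ with $\mu\bigl(E\cap V^{-1}E\cap\{|\varphi_V-r|<\epsilon\}\bigr)>0$. The idea is to replace $E$ by a well-chosen algebra set $C\in\mathcal{C}$ on which $E$ has large relative measure, apply the hypothesis to $C$ to obtain $F$ and $V$, and then discard from $F$ the small portions where $x$ or $Vx$ escapes $E$.

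First I would carry out a relative-density step. Since $\mathcal{C}$ is $\mu$-dense, for any prescribed $\theta>0$ I can choose $C\in\mathcal{C}$ with $\mu(E\triangle C)<\delta$; then $\mu(C\setminus E)\le\delta$ and $\mu(C)\ge\mu(E)-\delta$, so taking $\delta<\theta\mu(E)/(1+\theta)$ yields $\mu(C)>0$ and $\mu(C\setminus E)<\theta\mu(C)$. The parameter $\theta$ will be fixed only at the end, depending on $\eta$, $r$ and $\epsilon$.

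Next I would invoke the hypothesis on this $C$ with the same $\epsilon$, obtaining $F\subset C$ and $V\in\left[\left[\Gamma\right]\right]$ with $V(F)\subset C$, $\mu(F)\ge\eta\mu(C)$, and $|\varphi_V-r|<\epsilon$ on $F$. I then form the good set $F'=\{x\in F:\ x\in E\ \text{and}\ Vx\in E\}$, which by construction lies inside $E\cap V^{-1}E\cap\{|\varphi_V-r|<\epsilon\}$. It remains to bound the two bad sets. The set $B_1=F\setminus E\subset C\setminus E$ satisfies $\mu(B_1)<\theta\mu(C)$ immediately. For $B_2=\{x\in F:Vx\notin E\}$ I would push forward by $V$: since $V(F)\subset C$ we have $V(B_2)\subset C\setminus E$, so $\mu(V(B_2))<\theta\mu(C)$; and because $\varphi_V=\log\frac{d\mu\circ V}{d\mu}$ is bounded on $F$, the derivative $e^{\varphi_V}$ lies in $(e^{r-\epsilon},e^{r+\epsilon})$, so $V$ distorts $\mu$ by a bounded multiplicative factor and $\mu(B_2)\le e^{\epsilon-r}\mu(V(B_2))<e^{\epsilon-r}\theta\mu(C)$.

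Combining these bounds gives $\mu(F')\ge\bigl(\eta-\theta(1+e^{\epsilon-r})\bigr)\mu(C)$, which is strictly positive once $\theta$ is chosen with $\theta(1+e^{\epsilon-r})<\eta$; this choice is legitimate precisely because $\eta$ is allowed to depend only on $r$ (and the final threshold on $\theta$ depends only on $\eta,r,\epsilon$). Thus $\mu(F')>0$ delivers the essential-value inequality for $E$ and $\epsilon$, proving $r$ is an essential value. I expect the only delicate point to be the estimate on $B_2$: one must transfer the escape set through $V$ and use the two-sided bound on the Radon--Nikodym derivative coming from $|\varphi_V-r|<\epsilon$, which is exactly where the cocycle control is spent a second time, beyond its role in the final conclusion. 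The relative-density step, by contrast, is routine once the symmetric-difference approximation is unwound.
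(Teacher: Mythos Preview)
Your argument is correct and is precisely the standard proof of this lemma. Note, however, that the paper does not itself prove Lemma~\ref{Lemma: Approximating Essential Values}: it merely cites \cite[Lemma~2.1]{choksi1987}, \cite[Lemma~1.1]{danilenko2019}, and \cite[Lemma~7]{kosloff2018manifolds}. Your proof is exactly the argument one finds in those references: approximate $E$ by an algebra set $C$ of high relative density, apply the hypothesis to $C$, and use the two-sided bound $e^{r-\epsilon}<V'<e^{r+\epsilon}$ on $F$ to control the measure of the set where $Vx$ escapes $E$. The only cosmetic point is that, depending on the convention for $\mu\circ V$, the constant in the estimate $\mu(B_2)\le e^{\epsilon-r}\mu(V(B_2))$ may come out as $e^{\epsilon+r}$ instead; either way it depends only on $r$ and $\epsilon$, so the choice of $\theta$ goes through unchanged.
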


\subsection*{Krieger's Ratio Set and The Maharam Extension}

The collection of all essential values for the Radon--Nikodym cocycle of $\Gamma\curvearrowright\left(X,\mu\right)$ is called \textit{the Krieger ratio set} following \cite{krieger1970araki} (see also Schmidt's monograph \cite[Chapter~3]{schmidt1977}), or simply \textit{the ratio set}, and is denoted by $\mathrm{e}\left(\Gamma,\mu\right)$. When there is no confusion we write $\mathrm{e}\left(\Gamma\right)$ for $\mathrm{e}\left(\Gamma,\mu\right)$. Observe that $\mathrm{e}\left(\Gamma,\nu\right)=\mathrm{e}\left(\Gamma,\mu\right)$ whenever $\nu$ and $\mu$ are equivalent measures. It is well-known that the ratio set is not empty if, and only if, the action is conservative, and that the ratio set is a closed additive subgroup of $\mathbb{R}$. Hence, the ratio set of a conservative action is one of the following:
$$\left\{0\right\},\,\mathbb{R},\text{ or }\left\{n\log\lambda:n\in\mathbb{Z}\right\}\text{ for some }0<\lambda<1.$$
The ratio set has been defined by Krieger in order to classify nonsingular ergodic actions of type $\mathrm{III}$ into types $\mathrm{III}_{\lambda}$, $0\leq\lambda\leq 1$ as follows: Type $\mathrm{III}_0$ corresponds to ratio set that contains, in an appropriate sense, infinite values, and we do not deal with this here; type $\mathrm{III}_1$ corresponds to ratio set $\mathrm{e}\left(\Gamma,\varphi\right)=\mathbb{R}$; and, type $\mathrm{III}_{\lambda}$ for $0<\lambda<1$ corresponds to ratio set $\mathrm{e}\left(\Gamma,\varphi\right)=\left\{n\log\lambda:n\in\mathbb{Z}\right\}$ for $0<\lambda<1$, respectively. For more information on the ratio set and its role as an invariant of orbital equivalence we refer to \cite{hamachi1975,hamachi1981,katznelson1991classification}.

Let $\Gamma\curvearrowright\left(X,\mu\right)$ be a countable group of automorphisms. Consider the space $\widetilde{X}=X\times\mathbb{R}$ with the measure $d\widetilde{\mu}\left(x,t\right)=d\mu\left(x\right)\mathrm{exp}\left(t\right)dt$. The \textit{Maharam extension} of $\Gamma\curvearrowright\left(X,\mu\right)$ is the action of $\Gamma$ on $\left(\widetilde{X},\widetilde{\mu}\right)$ defined by
$$\widetilde{\gamma}\left(x,t\right):=\left(\gamma x,t-\log\frac{d\mu\circ\gamma}{d\mu}\left(x\right)\right),\quad\gamma\in\Gamma.$$
The Maharam extension is an infinite sigma-finite measure-preserving action and we denote this action by $\widetilde{\Gamma}\curvearrowright\left(\widetilde{X},\widetilde{\mu}\right)$. By a well-known theorem of Maharam (for transformations) \cite{maharam1964} \cite[Chapter~3.4]{aaronson1997} and Schmidt (for general countable groups) \cite[Theorem~5.5]{schmidt1977}, the Maharam extension of a conservative action is conservative. The Maharam extension of an ergodic countable group of automorphisms $\Gamma\curvearrowright\left(X,\mu\right)$ is itself ergodic if, and only if, $\Gamma\curvearrowright\left(X,\mu\right)$ is of type $\mathrm{III}_1$ \cite[Corollary~5.4]{schmidt1977}, \cite[Corollary~8.2.5]{aaronson1997}.

\section{Notations and Asymptotic Symbols}

We use the following common notations and abbreviations. The function $\mathrm{sign}\left(x\right)$ is $+1$ if $x$ is a non-negative number and $-1$ if $x$ is a negative number. For a random variable $Y$ with distribution $\mu$ we write $\mathbf{E}_{\mu}\left(Y\right)$ for its mean and $\mathbf{V}_{\mu}\left(Y\right)$ for its variance. We abbreviate the mean by $\mathbf{E}\left(Y\right)$ and the variance by $\mathbf{V}\left(Y\right)$ when there is no confusion. For an SFT $X_A$ and a set $I\subset\mathbb{Z}$ we write $\sigma\left(X_n:n\in I\right)$ for the sigma-algebra generated by cylinders supported on the coordinates of $I$. The operation $\ast$ will be used for concatenation of finite sequences as follows. For finite sequences $B=\left(b_1,\dotsc,b_L\right)$ and $B'=\left(b'_1,\dotsc,b'_{L'}\right)$ we let $B\ast B'$ be the finite sequence $\left(b_1,\dotsc,b_L,b'_1,\dotsc,b'_{L'}\right)$.

For a sequence $\left(a_n:n\geq1\right)$ in a metric space $\mathbb{M}$, we denote by $\mathcal{L}\left(a_{n}:n\geq1\right)$ the set of all partial limits of $\left(a_n:n\geq1\right)$ in $\mathbb{M}$.

We use asymptotic symbols similar to the Vinogradov notations as follows. For sequences $\left(a_{n}:n\geq1\right)$ and $\left(b_{n}:n\geq1\right)$ of numbers write
$$a_{n}\preccurlyeq b_{n}\iff\exists C>0\text{ with }\left|a_{n}\right|\leq C\left|b_{n}\right|\text{ for all }n\geq1.$$
Write also
$$a_{n}\asymp b_{n}\iff a_{n}\preccurlyeq b_{n}\text{ and }b_{n}\preccurlyeq a_{n}.$$
This defines an equivalence relation on sequences of numbers.

We use extensively the basic approximation
$$\frac{a-b}{a}<\log\left(a/b\right)<\frac{a-b}{b}\quad\text{for all }a,b>0.$$
Restricting ourselves to numbers in an interval $\left[c,C\right]$ for $0<c<C<\infty$, one can derive that for sequences $\left(a_{n}:n\geq1\right)$ and $\left(b_{n}:n\geq1\right)$ contained in $\left[c,C\right]$,
\begin{equation}
\label{Fact: Log Approximation}
\log\left(a_{n}/b_{n}\right)\asymp a_{n}-b_{n}.
\end{equation}
In particular, if for all $n\geq1$ we have $c\leq b_n\leq a_n\leq C$ then
$$\sum_{n\geq1}\log\left(a_{n}/b_{n}\right)=\infty\iff\sum_{n\geq1}\left(a_{n}-b_{n}\right)=\infty.$$

\section{Renormalization and The Hopf Argument}

Let $\left(X,\mu\right)$ be a standard measure space, $G$ be a countable group and $T_G\curvearrowright\left(X,\mu\right)$ be an action of $G$ by automorphisms. Fix a metric $\mathrm{d}$ on $X$ that induces its standard Borel structure. For a Borel countable equivalence relation $\mathcal{R}\subset X\times X$ we say that an element $\left(x,y\right)\in\mathcal{R}$ is an \textit{asymptotic pair} for $T_G$ if
$$\mathrm{d}\left(T_{g}\left(x\right),T_{g}\left(y\right)\right)\xrightarrow[g\to\infty]{}0.$$
The collection $\mathcal{R}\left(T_G\right)\subset\mathcal{R}$ of all asymptotic pairs for $T_{G}$ is a Borel sub-equivalence relation of $\mathcal{R}$ and in particular it is countable. Note that this is the notion of equivalence underlying the well-known Hopf Argument \cite{coudene2007hopf}. The term "asymptotic pair" is the common name for the analogous notion in topological dynamics \cite{blanchard2002li,chung2015homoclinic}.

Let $\Gamma\curvearrowright\left(X,\mu\right)$ be another countable group of automorphisms. We say that $\Gamma$ is \textit{asymptotic} for $T_G$ if $\left(x,\gamma x\right)\in\mathcal{O}_{\Gamma}$ is an asymptotic pair for $T_G$ for $\mu$-a.e. $x\in X$ and every $\gamma\in\Gamma$.

\begin{dfn} (Renormalization Full-Group)
\label{Definition: Renormalization Full-Group}
Let $T_G$ and $\Gamma$ be actions of countable groups of automorphisms and suppose that $\Gamma$ is asymptotic for $T_G$. Consider the Maharam extensions $\widetilde{T_{G}}\curvearrowright\left(\widetilde{X},\widetilde{\mu}\right)$ and $\widetilde{\Gamma}\curvearrowright\left(\widetilde{X},\widetilde{\mu}\right)$. Define the \emph{renormalization full-group} of $T_G$ with respect to $\Gamma$ to be
$$\mathscr{R}\left(T_{G};\Gamma\right):=\left\{ V\in\left[\Gamma\right]:\left(\left(x,t\right),\widetilde{V}\left(x,t\right)\right)\in\mathcal{O}_{\widetilde{\Gamma}}\left(\widetilde{T_G}\right)\text{ for }\widetilde{\mu}\text{-a.e. }\left(x,t\right)\in\widetilde{X}\right\},$$
where we take the metric on $\widetilde{X}$ to be the product of the chosen metric $\mathrm{d}$ on $X$ with the standard distance on $\mathbb{R}$. In a similar manner, we define the \emph{renormalization pseudo full-group} to be the set of all elements in $\left[\left[\Gamma\right]\right]$ satisfying the same condition as in the renormalization full-group. We will abbreviate the renormalization pseudo full-group by $\mathscr{R}\left(T_G;\left[\left[\Gamma\right]\right]\right)$.
\end{dfn}

There is a simple description of this object. As $\Gamma$ is asymptotic for $T_G$ the equivalence relation $\mathcal{O}_{\widetilde{\Gamma}}\left(\widetilde{T_G}\right)\subset\mathcal{O}_{\widetilde{\Gamma}}$ consists of all $\left(\left(x,t\right),\widetilde{V}\left(x,t\right)\right)\in\mathcal{O}_{\widetilde{\Gamma}}$ for some $V\in\left[\Gamma\right]$ such that
\begin{align*}
&\left(t-\log\left(T_{g}\right)'\left(x\right)\right)-\left(t-\log V\left(x\right)-\log\left(T_{g}\right)'\left(Vx\right)\right)\\
&\qquad\qquad\qquad\qquad\qquad\qquad\qquad=\log\frac{\left(T_{g}\circ V\right)'\left(x\right)}{\left(T_{g}\right)'\left(x\right)}\xrightarrow[g\to\infty]{}0.
\end{align*}
The occurrence of this condition does not depend on the second variable $t$. We then see that an element $V\in\left[\Gamma\right]$ belongs to $\mathscr{R}\left(T_G;\Gamma\right)$ if, and only if,
$$\frac{\left(T_{g}\circ V\right)'\left(x\right)}{\left(T_{g}\right)'\left(x\right)}\xrightarrow[g\to\infty]{}1\text{ for }\mu\text{-a.e. }x\in X.$$
To see that $\mathscr{R}\left(T_G;\Gamma\right)$ is a subgroup, recall that by the chain rule
$$\frac{\left(T_g\circ V^{-1}\right)'\left(x\right)}{\left(T_g\right)'\left(x\right)}=\frac{\left(T_g\right)'\left(V^{-1}x\right)}{\left(T_g\circ V\right)'\left(V^{-1}x\right)},\quad g\in G, V\in\left[\Gamma\right],$$
and
$$\frac{\left(T_g\circ V\circ W\right)'\left(x\right)}{\left(T_g\right)'\left(x\right)}=\frac{\left(T_g\circ V\right)'\left(Wx\right)}{\left(T_g\right)'\left(Wx\right)}\frac{\left(T_g\circ W\right)'\left(x\right)}{\left(T_g\right)'\left(x\right)},\quad g\in G, V,W\in\left[\Gamma\right].$$
This also shows that the renormalization pseudo full-group is a pseudo group.

The renormalization full-group may be uncountable, but its orbital equivalence relation $\mathcal{O}_{\mathscr{R}\left(T_G;\Gamma\right)}$ is countable as a sub relation of $\mathcal{O}_{\Gamma}$. It is also Borel since
\begin{align}
\label{eq:25}
\mathcal{O}_{\mathscr{R}\left(T_G;\Gamma\right)}=\bigcup_{\gamma\in\Gamma}O_{\gamma},
\end{align}
where $O_{\gamma}$ for $\gamma\in\Gamma$ is the set of all $\left(x,\gamma x\right)\in X\times X$ for which
$$\frac{\left(T_{g}\circ\gamma\right)'\left(x\right)}{\left(T_{g}\right)'\left(x\right)}\xrightarrow[g\to\infty]{}1.$$
Then by the Feldman--Moore Theorem we can consider the ratio set $\mathrm{e}\left(\mathscr{R}\left(T_G;\Gamma\right)\right)$.

\begin{exm}[Bernoulli Shift]
\label{Example:BernoulliShift}
Let $X=\mathcal{S}^{\mathbb{Z}}$ for some finite set $\mathcal{S}$ and suppose that $G=\mathbb{Z}$ acting by the shift $T$ and that $\Gamma=\Pi$ is the group of of all permutations of $\mathbb{Z}$ that change only finitely many elements. This group acts naturally on $\left(X,\mu\right)$ by letting $\pi\in\Pi$ be the automorphism defined by $\left(\pi x\right)_n=x_{\pi\left(n\right)}$ for all $n\in\mathbb{Z}$ and $x\in X$. It is clear that $\Gamma$ is asymptotic for $T$ for the metric
$$\mathrm{d}\left(x,y\right)=2^{-\inf\left\{n\in\mathbb{N}:x_n\neq y_n\right\}}$$
on $X$. Consider a product measure $\mu=\prod_{n\in\mathbb{Z}}\mu_{n}$ on $X$ and suppose that $\mu$ satisfies the Doeblin condition \ref{Doeblin}. We claim that if the shift is nonsingular with respect to $\mu$ then the renormalization full-group $\mathscr{R}\left(T;\Pi\right)$ of the shift with respect to the finite permutations is $\left[\Pi\right]$ itself. First note that the shift satisfies
$$\left(T^{n}\right)'\left(x\right)=\prod_{k\in\mathbb{Z}}\frac{\mu_{k-n}\left(x_{k}\right)}{\mu_{k}\left(x_{k}\right)},\quad n\in\mathbb{Z}.$$
Let $V:=V_{a,b}\in\left[\Pi\right]$ for some $a,b\in\mathbb{Z}$ be the transposition defined by $\left(Vx\right)_{a}=x_{b}$, $\left(Vx\right)_{b}=x_{a}$ and $\left(Vx\right)_{k}=x_{k}$ for any other $k\in\mathbb{Z}$. Then we have the formula
$$V'\left(x\right)=\frac{\mu_{a}\left(x_{b}\right)\mu_{b}\left(x_{a}\right)}{\mu_{a}\left(x_{a}\right)\mu_{b}\left(x_{b}\right)},$$
so one can see that
$$\frac{\left(T^{n}\circ V\right)'\left(x\right)}{\left(T^{n}\right)'\left(x\right)}=\frac{\left(T^{n}\right)'\left(Vx\right)}{\left(T^{n}\right)'\left(x\right)}V'\left(x\right)=\frac{\mu_{a-n}\left(x_{b}\right)}{\mu_{a-n}\left(x_{a}\right)}\frac{\mu_{b-n}\left(x_{a}\right)}{\mu_{b-n}\left(x_{b}\right)}.$$
Assuming that the shift is nonsingular with respect to $\mu$, we later see in Corollary \ref{Corollary: Nonsingularity of the Shift} that it satisfies $\mu_{n}\left(s\right)-\mu_{n-1}\left(s\right)\xrightarrow[\left|n\right|\to\infty]{}0$ for all $s\in\mathcal{S}$. Using the Doeblin condition we conclude that $\mu_{n}\left(s\right)/\mu_{n-1}\left(s\right)\xrightarrow[\left|n\right|\to\infty]{}1$ for all $s\in\mathcal{S}$ which implies that $\mu_{a-n}\left(s\right)/\mu_{b-n}\left(s\right)\xrightarrow[\left|n\right|\to\infty]{}1$ for fixed $a,b\in\mathbb{Z}$. This shows that $V\in\mathscr{R}\left(T;\Pi\right)$.
\end{exm}

\begin{thm}[The Hopf Argument for the Maharam Extension]\
\label{Theorem: Hopf Argument}
Let $\left(X,\mu\right)$ be a standard Borel probability space. Let $G$ be a countable \emph{amenable} group and let $T_G\curvearrowright\left(X,\mu\right)$ be a \emph{conservative} action of $G$ by automorphisms. Let $\Gamma\curvearrowright\left(X,\mu\right)$ be a countable group of automorphisms and assume that $\Gamma$ is asymptotic for $T_G$.\\
If the renormalization full-group satisfies $\mathrm{e}\left(\mathscr{R}\left(T_G;\Gamma\right)\right)=\mathbb{R}$ then also $\mathrm{e}\left(T_G\right)=\mathbb{R}$.\\
In particular, if $T_G$ is ergodic and $\mathrm{e}\left(\mathscr{R}\left(T_G;\Gamma\right)\right)=\mathbb{R}$ then $T_G$ is of type $\mathrm{III}_{1}$.
\end{thm}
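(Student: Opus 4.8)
The plan is to reduce the statement to the standard description of the ratio set through the Maharam extension, the one genuinely new ingredient being a Hopf-type argument carried out inside the (infinite, measure-preserving) Maharam extension of $T_G$. Throughout write $\widetilde{T_G}\curvearrowright\left(\widetilde X,\widetilde\mu\right)$ for this extension; it is measure-preserving and, being the extension of a conservative action, is conservative by Maharam's theorem. Let $F_s\left(x,t\right)=\left(x,t+s\right)$ be the translation flow on $\widetilde X$, which commutes with every Maharam extension. I would invoke the classical fact (Schmidt \cite{schmidt1977}, Aaronson \cite{aaronson1997}) that for any conservative countable group of automorphisms $\Lambda\curvearrowright\left(X,\mu\right)$ one has
\[
\mathrm e\left(\Lambda\right)=\left\{r\in\mathbb R:\ F_r A=A\ \left(\mathrm{mod}\ \widetilde\mu\right)\ \text{for every }\widetilde\Lambda\text{-invariant }A\subset\widetilde X\right\}.
\]
Applying this to $\Lambda=\mathscr R\left(T_G;\Gamma\right)$ (whose orbital relation is countable and Borel by \eqref{eq:25}, so that $\mathrm e\left(\mathscr R\left(T_G;\Gamma\right)\right)$ is defined), the hypothesis $\mathrm e\left(\mathscr R\left(T_G;\Gamma\right)\right)=\mathbb R$ says precisely that $F_r$ preserves every $\widetilde{\mathscr R\left(T_G;\Gamma\right)}$-invariant set for all $r$. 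Applying it to $\Lambda=T_G$, to conclude $\mathrm e\left(T_G\right)=\mathbb R$ it then suffices to prove the inclusion of invariant $\sigma$-algebras $\mathcal I\left(\widetilde{T_G}\right)\subseteq\mathcal I\left(\widetilde{\mathscr R\left(T_G;\Gamma\right)}\right)$ modulo $\widetilde\mu$; that is, every $\widetilde{T_G}$-invariant set is $\widetilde{\mathscr R\left(T_G;\Gamma\right)}$-invariant.

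This inclusion is exactly where the Hopf argument and amenability enter. Fix a strictly positive continuous $h_0\in L^1\left(\widetilde\mu\right)$ and abbreviate Følner sums by $S_{F_n}\left(\phi\right)\left(z\right)=\sum_{g\in F_n}\phi\left(\widetilde{T_g}z\right)$ along a tempered Følner sequence $F_n\uparrow G$. For $f_0\in C_c\left(\widetilde X\right)$ I set $\hat f_0:=\lim_n S_{F_n}\left(f_0\right)/S_{F_n}\left(h_0\right)$; this limit exists $\widetilde\mu$-a.e. and equals $\mathbf E\left(f_0\mid\mathcal I\right)/\mathbf E\left(h_0\mid\mathcal I\right)$ by the ratio ergodic theorem for conservative actions of amenable groups (here the amenability of $G$ is used; Danilenko \cite{danilenko2019weak}). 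Since $\widetilde{T_G}$ is conservative and $h_0>0$, the denominators diverge, $S_{F_n}\left(h_0\right)\left(z\right)\to\infty$ a.e. The crucial claim is that $\hat f_0\left(z\right)=\hat f_0\left(w\right)$ whenever $\left(z,w\right)$ is an asymptotic pair for $\widetilde{T_G}$. Because $f_0$ is compactly supported and $h_0$ is continuous and bounded below on a neighborhood of $\mathrm{supp}\,f_0$, uniform continuity together with $\mathrm d\left(\widetilde{T_g}z,\widetilde{T_g}w\right)\to0$ yields, for every $\epsilon>0$, a finite $K\subset G$ with $\left|f_0\left(\widetilde{T_g}w\right)-f_0\left(\widetilde{T_g}z\right)\right|\le\epsilon\,h_0\left(\widetilde{T_g}z\right)$ and $\left|h_0\left(\widetilde{T_g}w\right)-h_0\left(\widetilde{T_g}z\right)\right|\le\epsilon\,h_0\left(\widetilde{T_g}z\right)$ for all $g\notin K$. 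Dividing the resulting estimates for $S_{F_n}\left(f_0\right)\left(w\right)-S_{F_n}\left(f_0\right)\left(z\right)$ and $S_{F_n}\left(h_0\right)\left(w\right)-S_{F_n}\left(h_0\right)\left(z\right)$ by $S_{F_n}\left(h_0\right)\left(z\right)\to\infty$ makes the contribution of $K$ negligible and bounds the rest by $\epsilon$, giving $\hat f_0\left(w\right)=\hat f_0\left(z\right)$. This domination by $h_0$ is what replaces the naive additive Hopf estimate, which fails in infinite measure.

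Finally I would assemble the pieces. Each $\hat f_0$ is $\widetilde{T_G}$-invariant and, by the previous claim, constant on asymptotic pairs of $\widetilde{T_G}$; since every $V\in\mathscr R\left(T_G;\Gamma\right)$ satisfies $\left(\left(x,t\right),\widetilde V\left(x,t\right)\right)\in\mathcal O_{\widetilde\Gamma}\left(\widetilde{T_G}\right)$ by definition, $\hat f_0$ is $\widetilde V$-invariant, hence $\widetilde{\mathscr R\left(T_G;\Gamma\right)}$-invariant. Letting $f_0$ range over a countable $\left\|\cdot\right\|_1$-dense subfamily of $C_c\left(\widetilde X\right)$, the functions $\hat f_0$ generate $\mathcal I\left(\widetilde{T_G}\right)$ modulo $\widetilde\mu$ (the conditional expectations $\mathbf E\left(f_0\mid\mathcal I\right)$ are dense in $L^1$ of the invariant $\sigma$-algebra and $\mathbf E\left(h_0\mid\mathcal I\right)>0$), which yields the desired inclusion and therefore $\mathrm e\left(T_G\right)=\mathbb R$. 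The ``in particular'' clause is then immediate: if $T_G$ is moreover ergodic, then $\mathrm e\left(T_G\right)=\mathbb R$ is by definition type $\mathrm{III}_1$ (equivalently, the Maharam extension is ergodic, as recalled in the preliminaries). The main obstacle I anticipate is the second paragraph: securing the ratio ergodic theorem in the infinite-measure Maharam extension and verifying the domination inequality uniformly near $\mathrm{supp}\,f_0$, so that the asymptotic relation can be transported through the ratio averages.
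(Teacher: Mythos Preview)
Your proposal is correct and follows essentially the same approach as the paper: reduce via the periods description of the ratio set (the paper's Lemma~\ref{Lemma: Renormalization 2}) to the inclusion $\mathcal{I}\bigl(\widetilde{T_G}\bigr)\subseteq\mathcal{I}\bigl(\widetilde{\mathscr{R}(T_G;\Gamma)}\bigr)$, and prove this inclusion by a Hopf-type argument with Danilenko's ratio ergodic theorem (the paper's Lemma~\ref{Lemma: Renormalization 1}). The paper differs only in technical implementation---it passes to an equivalent probability measure $\widehat{\mu}$ and uses product test functions $\phi(x)\varphi(t)$ rather than your $C_c$ functions with a reference $h_0$, and it spells out the non-ergodic case of the periods characterization via ergodic decomposition and Bowen's theorem, a subtlety your citation to Schmidt glosses over.
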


\begin{rem}
The renormalization process can be described in terms of equivalence relations using \eqref{eq:25} in a similar way introduced by Danilenko in \cite{danilenko2019weak}. There can be found a translation of the following Lemma \ref{Lemma: Renormalization 1} to \cite[Theorem~2.3]{danilenko2019weak} under appropriate assumptions, by passing to an equivalent probability measure. For the sake of completeness we present here a self-contained proof.
\end{rem}

We formulate two lemmas that together imply Theorem \ref{Theorem: Hopf Argument}. The first lemma is a refinement of \cite[Lemma 3.1]{bjorklund2020ergodicity}.

\begin{lem}
\label{Lemma: Renormalization 1}
Let $T_G$ and $\Gamma$ be as in Theorem \ref{Theorem: Hopf Argument}. Then every $\widetilde{T_G}$-invariant $L^1\left(\widetilde{X},\widetilde{\mu}\right)$-function is also $\widetilde{\mathscr{R}\left(T_G;\Gamma\right)}$-invariant.
\end{lem}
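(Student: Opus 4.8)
The plan is to run a Hopf argument inside the measure-preserving conservative action $\widetilde{T_G}\curvearrowright(\widetilde{X},\widetilde{\mu})$, using a ratio ergodic theorem for amenable groups (such as the one in \cite{danilenko2019weak}, along a tempered Følner sequence $(F_n)$ furnished by amenability) together with the only feature of the renormalization full-group that is relevant here: by the description following Definition \ref{Definition: Renormalization Full-Group}, an element $V\in\mathscr{R}(T_G;\Gamma)$ is characterized by $\mathrm{d}(\widetilde{T_g}(x,t),\widetilde{T_g}\widetilde{V}(x,t))\to 0$ as $g\to\infty$ for $\widetilde{\mu}$-a.e. $(x,t)$. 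Let $\mathcal{I}$ be the $\sigma$-algebra of $\widetilde{T_G}$-invariant sets, so that a $\widetilde{T_G}$-invariant $f\in L^\infty(\widetilde{\mu})$ is precisely a bounded $\mathcal{I}$-measurable function; the task is to show $f\circ\widetilde{V}=f$ for every such $V$.

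First I would fix bounded uniformly continuous functions $\psi,\rho\ge 0$ on $\widetilde{X}$ that vanish outside a slab $X\times[-R,R]$; since $\mu$ is a probability measure this slab has finite $\widetilde{\mu}$-mass. Put $N_n(p)=\sum_{g\in F_n}\psi(\widetilde{T_g}p)$ and $D_n(p)=\sum_{g\in F_n}\rho(\widetilde{T_g}p)$. On the $\widetilde{T_G}$-invariant set $\{\mathbf{E}_{\widetilde{\mu}}[\rho\mid\mathcal{I}]>0\}$ the ratio ergodic theorem gives $N_n/D_n\to\Phi_\psi:=\mathbf{E}_{\widetilde{\mu}}[\psi\mid\mathcal{I}]/\mathbf{E}_{\widetilde{\mu}}[\rho\mid\mathcal{I}]$ a.e., while conservativity of $\widetilde{T_G}$ (Maharam's theorem) forces $D_n\to\infty$ a.e. there. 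The same theorem applied at the point $\widetilde{V}p$ gives $\widetilde{N_n}/\widetilde{D_n}\to\Phi_\psi(\widetilde{V}p)$, where $\widetilde{N_n},\widetilde{D_n}$ are the analogous sums with $\widetilde{T_g}p$ replaced by $\widetilde{T_g}\widetilde{V}p$. The goal then reduces to showing $\Phi_\psi\circ\widetilde{V}=\Phi_\psi$.

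The core step, which I expect to be the main obstacle, is to prove $|\widetilde{N_n}-N_n|/D_n\to 0$ and $|\widetilde{D_n}-D_n|/D_n\to 0$ a.e.; granting these, $\widetilde{N_n}/\widetilde{D_n}$ and $N_n/D_n$ share the limit $\Phi_\psi(p)$, forcing $\Phi_\psi(\widetilde{V}p)=\Phi_\psi(p)$. Consider $\mathcal{N}_n:=\sum_{g\in F_n}|\psi(\widetilde{T_g}\widetilde{V}p)-\psi(\widetilde{T_g}p)|\ge|\widetilde{N_n}-N_n|$. A summand is nonzero only when $\widetilde{T_g}p$ or $\widetilde{T_g}\widetilde{V}p$ lies in the slab $X\times[-R,R]$; since $\mathrm{d}(\widetilde{T_g}p,\widetilde{T_g}\widetilde{V}p)\to 0$, for $g$ outside a finite (depending on $p$) set this confines $\widetilde{T_g}p$ to the enlarged slab $W:=X\times[-R-1,R+1]$, of finite $\widetilde{\mu}$-mass, and bounds the summand by the modulus of continuity $\omega_\psi(\mathrm{d}(\widetilde{T_g}p,\widetilde{T_g}\widetilde{V}p))$. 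Hence for every $\epsilon>0$, enlarging the finite exceptional set so that this modulus stays below $\epsilon$, one gets $\mathcal{N}_n\le\epsilon\sum_{g\in F_n}\mathbf{1}_W(\widetilde{T_g}p)+O(1)$. Dividing by $D_n$, using $D_n\to\infty$ and the ratio ergodic theorem for the pair $(\mathbf{1}_W,\rho)$, gives $\limsup_n \mathcal{N}_n/D_n\le\epsilon\,\mathbf{E}_{\widetilde{\mu}}[\mathbf{1}_W\mid\mathcal{I}]/\mathbf{E}_{\widetilde{\mu}}[\rho\mid\mathcal{I}]$, and letting $\epsilon\to 0$ finishes the numerator estimate; the denominator estimate is identical with $\rho$ in place of $\psi$. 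The subtlety is precisely that a naive term-by-term bound fails because in infinite measure $D_n=o(|F_n|)$, so the gain comes entirely from the bounded support of $\psi$, which limits the number of contributing indices to order $D_n$ rather than $|F_n|$.

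Finally I would pass from the $\Phi_\psi$ to an arbitrary invariant $f\in L^\infty(\mathcal{I})$. Working on $\{\mathbf{E}_{\widetilde{\mu}}[\rho\mid\mathcal{I}]>0\}$, the function $f\rho$ lies in $L^1(\widetilde{\mu})$, so I choose uniformly continuous $\psi_k$ supported in a slab with $\psi_k\to f\rho$ in $L^1$ (such functions are $L^1$-dense on the finite-measure slab); contractivity of conditional expectation gives $\mathbf{E}_{\widetilde{\mu}}[\psi_k\mid\mathcal{I}]\to\mathbf{E}_{\widetilde{\mu}}[f\rho\mid\mathcal{I}]=f\,\mathbf{E}_{\widetilde{\mu}}[\rho\mid\mathcal{I}]$ in $L^1$, hence $\Phi_{\psi_k}\to f$ in $L^1(\mathbf{E}_{\widetilde{\mu}}[\rho\mid\mathcal{I}]\,d\widetilde{\mu})$ and, along a subsequence, $\widetilde{\mu}$-a.e. on $\{\mathbf{E}_{\widetilde{\mu}}[\rho\mid\mathcal{I}]>0\}$. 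Each $\Phi_{\psi_k}$ is $\widetilde{V}$-invariant by the previous step, and $\widetilde{V}$ is nonsingular, so the a.e. limit $f$ satisfies $f\circ\widetilde{V}=f$ $\widetilde{\mu}$-a.e. on that set. Letting $\rho$ range over uniformly continuous bump functions supported in slabs increasing to $1$, the sets $\{\mathbf{E}_{\widetilde{\mu}}[\rho\mid\mathcal{I}]>0\}$ exhaust $\widetilde{X}$ up to a null set, so $f\circ\widetilde{V}=f$ $\widetilde{\mu}$-a.e. on $\widetilde{X}$. As $V\in\mathscr{R}(T_G;\Gamma)$ was arbitrary, $f$ is $\widetilde{\mathscr{R}(T_G;\Gamma)}$-invariant, which is the assertion of the lemma.
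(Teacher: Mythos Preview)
Your argument is correct and takes a genuinely different route from the paper's. Both proofs invoke Danilenko's ratio ergodic theorem and exploit uniform continuity of test functions together with the asymptotic-pair property of $V\in\mathscr{R}(T_G;\Gamma)$, but they organize the analysis differently. The paper replaces $\widetilde{\mu}$ by the equivalent \emph{probability} measure $d\widehat{\mu}(x,t)=\tfrac12 e^{-|t|}\,d\mu(x)\,dt$, so that $\widetilde{T_G}$ becomes a nonsingular conservative action and the ratio ergodic theorem takes the weighted form $\sum_{g}\widetilde{T_g}'\cdot L\circ\widetilde{T_g}\,\big/\,\sum_g\widetilde{T_g}'\to\mathbf{E}(L\mid\widetilde{\mathcal{I}})$; the key step is then multiplicative, namely $\widetilde{T_g}'(\widetilde{V}p)/\widetilde{T_g}'(p)\to(V'(x))^{-1}$, which lets one swap the weights at $p$ and $\widetilde{V}p$. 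You instead stay in the infinite measure-preserving setting, pick a compactly supported $\rho$ as the denominator, and control the \emph{number} of contributing terms via the slab trick $\mathcal{N}_n\le\epsilon\sum_g\mathbf{1}_W(\widetilde{T_g}p)+O(1)$ --- the classical Hopf mechanism --- which avoids tracking any Radon--Nikodym derivatives at the cost of the exhaustion in $\rho$ at the end. One small point worth making explicit: your estimate $\widetilde{D_n}/D_n\to 1$ together with $D_n\to\infty$ on $E_\rho:=\{\mathbf{E}[\rho\mid\mathcal{I}]>0\}$ shows a posteriori that $\widetilde{V}$ preserves $E_\rho$ mod null (since $\rho=0$ a.e.\ on the invariant set $E_\rho^{\mathsf c}$, forcing $D_n\equiv 0$ there), and this is what justifies comparing $\Phi_\psi(\widetilde{V}p)$ with $\Phi_\psi(p)$ and then passing the $\widetilde{V}$-invariance of $\Phi_{\psi_k}$ to the limit $f$.
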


The second lemma is a refinement of the well-known fact that an ergodic action is of type $\mathrm{III}_1$ if, and only if, its Maharam extension is ergodic.

\begin{lem}
\label{Lemma: Renormalization 2}
Let $\Gamma\curvearrowright\left(X,\mu\right)$ be a countable group of automorphisms on a standard Borel probability space. Then $\mathrm{e}\left(\Gamma\right)=\mathbb{R}$ if, and only if, every $\widetilde{\Gamma}$-invariant function
$F\in L^1\left(\widetilde{X},\widetilde{\mu}\right)$ is of the form $F\left(x,t\right)=f\left(x\right)$ for $\widetilde{\mu}$-a.e. $\left(x,t\right)\in\widetilde{X}$.\\
Of course, when $\Gamma$ is ergodic then the only such functions are the constant functions.
\end{lem}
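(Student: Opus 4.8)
The plan is to reduce both implications to a single statement about the \emph{ergodic components} of $\Gamma\curvearrowright\left(X,\mu\right)$---namely that $\bar\mu$-almost every component is of Krieger-type $\mathrm{III}_1$---and to route the ergodic case through the cited fact that the Maharam extension of an ergodic action is ergodic if, and only if, the action is of type $\mathrm{III}_1$. First I would dispose of the dissipative case: if the action is not conservative then $\mathrm{e}\left(\Gamma\right)=\emptyset$, so the left-hand side fails, while a dissipative Maharam extension carries $t$-dependent invariant functions, so the right-hand side fails too; thus I may assume conservativity. I then invoke the nonsingular ergodic decomposition: a factor map $\pi\colon X\to Y$ onto the space of ergodic components with a disintegration $\mu=\int_Y\mu_y\,d\bar\mu\left(y\right)$ into $\Gamma$-quasi-invariant probability measures on which $\Gamma$ acts ergodically, where crucially the Radon--Nikodym cocycle is unchanged, $\frac{d\mu_y\circ\gamma}{d\mu_y}=\frac{d\mu\circ\gamma}{d\mu}$ for $\bar\mu$-a.e.\ $y$ (since each $\gamma$, and every $V\in\left[\left[\Gamma\right]\right]$, moves points inside $\Gamma$-orbits it preserves the fibers of $\pi$, and likewise $\widetilde\Gamma$ on $\widetilde X$; correspondingly $\widetilde\mu=\int_Y\widetilde{\mu_y}\,d\bar\mu\left(y\right)$).

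With this in place the right-hand side is immediate. A $\widetilde\Gamma$-invariant $F\in L^\infty\left(\widetilde X,\widetilde\mu\right)$ of the form $F\left(x,t\right)=f\left(x\right)$ forces $f$ to be $\Gamma$-invariant, hence $\pi$-measurable, i.e.\ constant on $\bar\mu$-a.e.\ component; conversely the $\pi$-measurable functions are exactly the $t$-independent invariant ones. Hence the right-hand side is equivalent to saying that for $\bar\mu$-a.e.\ $y$ the only $\widetilde\Gamma$-invariant functions in $L^\infty\left(\widetilde X,\widetilde{\mu_y}\right)$ are the constants. As $\Gamma$ acts ergodically on each $\left(X,\mu_y\right)$, this says precisely that the Maharam extension of each component is ergodic, which by the cited theorem means that $\bar\mu$-a.e.\ component is of type $\mathrm{III}_1$.

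It remains to match the left-hand side to the same condition, and this is where the real work lies: I claim $\mathrm{e}\left(\Gamma,\mu\right)=\mathbb R$ if, and only if, $\mathrm{e}\left(\Gamma,\mu_y\right)=\mathbb R$ for $\bar\mu$-a.e.\ $y$. For the forward direction I argue by contradiction: if the components with $\mathrm{e}\left(\Gamma,\mu_y\right)\subsetneq\mathbb R$ had positive measure, then on them each ratio set is a proper closed subgroup of $\mathbb R$, hence Lebesgue-null, and a Fubini argument over $\left(y,r\right)$ yields a single $r$ failing to be an essential value on a positive-measure set $Y_1$ of components; choosing the blocking data (a witnessing set and $\epsilon$ from the definition of essential value) measurably on $Y_1$ and passing to a common $\epsilon$, I assemble a global witness $E\subset X$ supported over $Y_1$, and since every $V\in\left[\left[\Gamma\right]\right]$ restricts to each component the defining intersection integrates to zero, contradicting $\mathrm{e}\left(\Gamma,\mu\right)=\mathbb R$. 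For the converse, given $r$, a set $E$ with $\mu\left(E\right)>0$, and $\epsilon>0$, I use that $\mu_y\left(E\right)>0$ on a positive-measure set of components, apply $r\in\mathrm{e}\left(\Gamma,\mu_y\right)$ there to get per-component elements of $\left[\left[\Gamma\right]\right]$, and glue them (selecting the $\Gamma$-valued transfer map measurably, which is feasible as $\Gamma$ is countable) into one $V\in\left[\left[\Gamma\right]\right]$; per-component positivity integrates to global positivity, so $r\in\mathrm{e}\left(\Gamma,\mu\right)$. Combining, the left-hand side is equivalent to ``$\bar\mu$-a.e.\ component is of type $\mathrm{III}_1$'', which is exactly the condition obtained for the right-hand side.

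The main obstacle is measure-theoretic rather than conceptual: justifying the nonsingular ergodic decomposition with an unchanged Radon--Nikodym cocycle, the measurability in $y$ of the ratio set $\mathrm{e}\left(\Gamma,\mu_y\right)$, and above all the two measurable gluing steps that turn families of per-component blocking data, respectively per-component pseudo-full-group elements, into a single global one. For ergodic $\Gamma$ the decomposition is trivial and the statement collapses to the cited theorem, so all the content of the refinement sits in these selection arguments. A decomposition-free alternative would be to prove, for each $r$, that $r\in\mathrm{e}\left(\Gamma\right)$ if and only if every $\widetilde\Gamma$-invariant $L^\infty$ function is invariant under the translation $\theta_r\left(x,t\right)=\left(x,t+r\right)$, by a direct slice analysis of invariant sets in $\widetilde X$ in the manner of Schmidt, and then let $r$ range over $\mathbb R$ (using that $t$-independence is invariance under all $\theta_r$); but the base-point shift built into the cocycle makes the rigidity half of that equivalence rest on the very same ergodicity input, so I would keep the ergodic decomposition as the backbone.
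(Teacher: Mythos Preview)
Your approach is correct in outline and shares the paper's backbone---the nonsingular ergodic decomposition with a common Radon--Nikodym cocycle---but the paper routes the argument more economically in two places, and handles one direction entirely differently.

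First, the implication you call ``standard'' (if $\mathrm{e}(\Gamma)\neq\mathbb{R}$ then some $\widetilde{\Gamma}$-invariant set is not $t$-independent) is done in the paper by a direct construction with no decomposition: given $r\notin\mathrm{e}(\Gamma)$, take a witness $E_0\subset X$ and $\epsilon>0$ so that $-\log\gamma'(x)\notin(r,r+\epsilon)$ on $E_0$ for all $\gamma$, and saturate $E_0\times(0,\epsilon/2)$ under $\widetilde{\Gamma}$; the resulting invariant set misses the strip $X\times(r+\epsilon/2,r+\epsilon)$. This avoids one of your measurable-selection steps entirely.

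Second, for the hard implication the paper also passes to ergodic components, but instead of re-proving that $\mathrm{e}(\Gamma,\mu)=\mathbb{R}$ forces $\mathrm{e}(\Gamma,\mu_y)=\mathbb{R}$ for a.e.\ $y$ via Fubini and measurable gluing, it simply cites Bowen's theorem \cite[Theorem~2.1]{bowen2014}, which gives exactly this. Then, rather than invoking ``Maharam extension ergodic $\Leftrightarrow$ type $\mathrm{III}_1$'' on each component, the paper uses Schmidt's identification of the ratio set with the \emph{periods set} \cite[Theorem~5.2]{schmidt1977}: since $\mathrm{e}(\Gamma,\kappa)=\mathbb{R}$, every $\widetilde{\Gamma}$-invariant set is $S_r$-invariant for all $r$, so each vertical slice $E_x$ is fully translation-invariant and hence $\emptyset$ or $\mathbb{R}$ modulo Lebesgue. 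This yields $E=E'\times\mathbb{R}$ directly, bypassing your second gluing step as well.

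So the ``main obstacles'' you flag---measurability of $y\mapsto\mathrm{e}(\Gamma,\mu_y)$ and the two gluing constructions---are precisely what the paper avoids by leaning on Bowen and Schmidt. Your alternative sketch at the end (translation invariance via Schmidt) is in fact closer to what the paper actually does.
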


\begin{proof}[Proof of Theorem \ref{Theorem: Hopf Argument} assuming Lemmas \ref{Lemma: Renormalization 1} and \ref{Lemma: Renormalization 2}]\
By Lemma \ref{Lemma: Renormalization 2} we need to show that if every $\widetilde{\Gamma}$-invariant function $F:\widetilde{X}\to\mathbb{R}$ is of the form $F\left(x,t\right)=f\left(x\right)$ in $L^1\left(\widetilde{\mu}\right)$, then the same holds for every $\widetilde{T_G}$-invariant function. This is straightforward from Lemma \ref{Lemma: Renormalization 1}.
\end{proof}

\begin{proof}[Proof of Lemma \ref{Lemma: Renormalization 1}]\ The main idea of the proof is similar to that of \cite[Lemma~3.1]{bjorklund2020ergodicity}. Denote by $\widetilde{\mathcal{I}}$ the sigma-algebra of Borel $\widetilde{T}_{G}$-invariant sets. We show that for every $V\in\mathscr{R}\left(T_G;\Gamma\right)$ there exists a positive function $v\left(x\right)$, such that for every $F\in L^1\left(\widetilde{X},\widetilde{\mu}\right)$ we have that
\begin{equation}
\label{eq:6}
v\left(x\right)^{-1}\mathbf{E}\left(F\left(x,t\right)\mid\widetilde{\mathcal{I}}\right)\leq\mathbf{E}\left(F\mid\widetilde{\mathcal{I}}\right)\circ\widetilde{V}\left(x,t\right)\leq v\left(x\right)\mathbf{E}\left(F\left(x,t\right)\mid\widetilde{\mathcal{I}}\right)
\end{equation}
for $\widetilde{\mu}$-a.e. $\left(x,t\right)\in\widetilde{X}$. It will follow that if $A\in\widetilde{\mathcal{I}}$ so that the function $F\left(x,t\right)=\mathbf{1}_{A}\left(x,t\right)$ is $\widetilde{T}_{G}$-invariant, then for every $V\in\mathscr{R}\left(T_{G};\Gamma\right)$ we have
$$v\left(x\right)^{-1}F\left(x,t\right)\leq F\left(\widetilde{V}\left(x,t\right)\right)\leq v\left(x\right)F\left(x,t\right)$$
for $\widetilde{\mu}$-a.e. $\left(x,t\right)\in\widetilde{X}$. Since $F$ is taking only the values 0 and 1 it will follow that $F\left(\widetilde{V}\left(x,t\right)\right)=F\left(x,t\right)$ so that $A$ is $\widetilde{V}$-invariant. Now every $\widetilde{T}_G$-invariant $L^1\left(\widetilde{X},\widetilde{\mu}\right)$-function is an $L^1\left(\widetilde{X},\widetilde{\mu}\right)$-limit and hence $\widetilde{\mu}$-a.e. limit of of a sequence of $\widetilde{T}_G$-invariant simple functions, so proving \eqref{eq:6} will finish the proof of Lemma \ref{Lemma: Renormalization 1}.

We prove \eqref{eq:6}. Instead of the usual infinite measure $\widetilde{\mu}$ on $\widetilde{X}$ we consider the equivalent probability measure $\widehat{\mu}$ on $\widetilde{X}$ defined by
$$d\widehat{\mu}\left(x,t\right)=d\mu\left(x\right)e\left(t\right)dt,\text{ where }e\left(t\right):=\exp\left(-\left|t\right|\right)/2.$$
Then the Maharam extension $\widetilde{T}_{G}$ is nonsingular and conservative with respect to $\widehat{\mu}$ as well. We also have that $L^1\left(\widetilde{X},\widetilde{\mu}\right)\subset L^1\left(\widetilde{X},\widehat{\mu}\right)$, so it is enough to prove \eqref{eq:6} for $L^1\left(\widetilde{X},\widehat{\mu}\right)$. Consider the class $\mathcal{L}\subset L^{1}\left(\widetilde{X},\widehat{\mu}\right)$ of functions of the form $L\left(x,t\right)=\phi\left(x\right)\varphi\left(t\right)$, where $\phi\in L^{1}\left(X,\mu\right)$ and $\varphi\in L^{1}\left(\mathbb{R},e\left(t\right)dt\right)$ are both uniformly continuous bounded function satisfying $\inf_{x\in X}\phi\left(x\right)>0$ and $\inf_{t\in\mathbb{R}}\varphi\left(t\right)>0$. Then the linear space generated by $\mathcal{L}$ is dense in $L^{1}\left(\widetilde{X},\widehat{\mu}\right)$. Passing to subsequences converging $\widehat{\mu}$-a.e. and using the continuity of the conditional expectation, it is enough to establish \eqref{eq:6} for the functions of $\mathcal{L}$.

Given any $L\left(x,t\right)=\phi\left(x\right)\varphi\left(t\right)\in\mathcal{L}$, by the ratio ergodic theorem of \cite[Theorem~0.4]{danilenko2019weak} for the Maharam extension $\widetilde{T}_{G}$, there exists for $L$ an increasing sequence of finite sets $G_{1}\subset G_{2}\subset\dotsc$ whose union is $G$, such that
\begin{equation}
\mathbf{E}\left(L\left(x,t\right)\mid\widetilde{\mathcal{I}}\right)=\lim_{N\to\infty}\frac{\sum_{g\in G_{N}}\widetilde{T}_{g}'\left(x,t\right)L\left(\widetilde{T}_{g}\left(x,t\right)\right)}{\sum_{g\in G_{N}}\widetilde{T}_{g}'\left(x,t\right)}
\end{equation}
and
\begin{equation}
\label{eq:7}
\mathbf{E}\left(L\mid\widetilde{\mathcal{I}}\right)\circ\widetilde{V}\left(x,t\right)=\lim_{N\to\infty}\frac{\sum_{g\in G_{N}}\widetilde{T}_{g}'\left(\widetilde{V}\left(x,t\right)\right)L\left(\widetilde{T}_{g}\left(\widetilde{V}\left(x,t\right)\right)\right)}{\sum_{g\in G_{N}}\widetilde{T}_{g}'\left(\widetilde{V}\left(x,t\right)\right)}
\end{equation}
for $\widetilde{\mu}$-a.e. $\left(x,t\right)\in\widetilde{X}$, where here and in the rest of this proof the notation $\widetilde{T}_{g}'$ refers to the Radon-Nikodym derivative with respect to $\widehat{\mu}$.

We first claim that
\begin{equation}
\label{eq:8}
\frac{L\left(\widetilde{T}_{G}\left(\widetilde{V}\left(x,t\right)\right)\right)}{L\left(\widetilde{T}_{G}\left(x,t\right)\right)}=\frac{\phi\left(T_{g}\left(Vx\right)\right)}{\phi\left(T_{g}\left(x\right)\right)}\frac{\varphi\left(t-\log\left(T_{g}\circ V\right)'\left(x\right)\right)}{\varphi\left(t-\log T_{g}'\left(x\right)\right)}\xrightarrow[g\to\infty]{}1
\end{equation}
for $\widetilde{\mu}$-a.e. $\left(x,t\right)\in\widetilde{X}$. The first factor converges to 1 as $g\to\infty$ for $\mu$-a.e. $x\in X$, since $\left(x,Vx\right)$ is an asymptotic pair and by the choice of $\phi$. The second factor also converges to 1 as $g\to\infty$ for $\widehat{\mu}$-a.e. $\left(x,t\right)\in\widetilde{X}$, since $V\in\mathscr{R}\left(T_{G};\Gamma\right)$ and by the choice of $\varphi$.

We also claim that there are positive functions $v_{0}\left(x\right)$ and $v_{1}\left(x\right)$ depending only on $V$, such that
\begin{equation}
\label{eq:9}
v_{0}\left(x\right)\leq\liminf_{g\to\infty}\frac{\widetilde{T}_{g}'\left(\widetilde{V}\left(x,t\right)\right)}{\widetilde{T}_{g}'\left(x,t\right)}\leq\limsup_{g\to\infty}\frac{\widetilde{T}_{g}'\left(\widetilde{V}\left(x,t\right)\right)}{\widetilde{T}_{g}'\left(x,t\right)}\leq v_{1}\left(x\right).
\end{equation}
for $\widetilde{\mu}$-a.e. $\left(x,t\right)\in\widetilde{X}$. The reason for that is the following. It is straightforward to verify that the Radon-Nikodym derivatives with respect to $\widehat{\mu}$ take the form
$$\frac{\widetilde{T}_{g}'\left(\widetilde{V}\left(x,t\right)\right)}{\widetilde{T}_{g}'\left(x,t\right)}=\frac{T_{g}'\left(Vx\right)}{T_{g}'\left(x\right)}\frac{e\left(t-\log T_{g}'\left(Vx\right)\right)}{e\left(t-\log T_{g}'\left(x\right)\right)}\text{ for }\widehat{\mu}\text{-a.e. }\left(x,t\right)\in\widetilde{X}.$$
Since $V\in\mathscr{R}\left(T_{G};\Gamma\right)$ we have that
$$\frac{T_{g}'\left(Vx\right)}{T_{g}'\left(x\right)}\xrightarrow[g\to\infty]{}\left(V'\left(x\right)\right)^{-1}\text{ for }\mu\text{-a.e. }x\in X.$$
Using the bound $\exp\left(\left|s\right|\right)^{-1}\leq e\left(t+s\right)/e\left(t\right)\leq\exp\left(\left|s\right|\right)$ we have that
$$\frac{e\left(t-\log T_{g}'\left(Vx\right)\right)}{e\left(t-\log T_{g}'\left(x\right)\right)}\leq\exp\left(\left|\log\frac{T_{g}'\left(Vx\right)}{T_{g}'\left(x\right)}\right|\right)\xrightarrow[g\to\infty]{}\exp\left(\left|\log V'\left(x\right)\right|\right),$$
and that
$$\frac{e\left(t-\log T_{g}'\left(Vx\right)\right)}{e\left(t-\log T_{g}'\left(x\right)\right)}\geq\exp\left(\left|\log\frac{T_{g}'\left(Vx\right)}{T_{g}'\left(x\right)}\right|\right)^{-1}\xrightarrow[g\to\infty]{}\exp\left(\left|\log\left(V'\left(x\right)\right)\right|\right)^{-1}.$$
Then \eqref{eq:9} holds for
$$v_{0}\left(x\right):=\left(V'\left(x\right)\right)^{-1}\exp\left(\left|\log V'\left(x\right)\right|\right)^{-1}\text{ and }v_{1}\left(x\right):=\left(V'\left(x\right)\right)^{-1}\exp\left(\left|\log V'\left(x\right)\right|\right).$$
Recall that by the conservativeness of $\widetilde{T}_{G}$ we have
$$\lim_{N\to\infty}\sum_{g\in G_{N}}\widetilde{T}_{g}'\left(x,t\right)=\infty\text{ for }\widehat{\mu}\text{-a.e. }\left(x,t\right)\in\widetilde{X},$$
so that plugging \eqref{eq:8} and \eqref{eq:9} into \eqref{eq:7}, we obtain that \eqref{eq:6} holds for the positive function $v\left(x\right)=v_{1}\left(x\right)/v_{0}\left(x\right)$.
\end{proof}

Before we prove Lemma \ref{Lemma: Renormalization 2}, let us mention some basic facts about the ergodic decomposition and its relation to the Maharam extension. Let $\Gamma\curvearrowright\left(X,\mu\right)$ be a countable group of automorphisms. As described by Bowen following Zimmer \cite{bowen2014}, the ergodic decomposition of this action is the standard Borel space $\left(\mathscr{E},\nu\right)$, where $\mathscr{E}$ is the space of all $\Gamma$-nonsingular $\Gamma$-ergodic probability measures on $X$ and $\nu$ is a Borel measure on $\mathscr{E}$ that satisfies
$$\mu\left(\cdot\right)=\intop_{\mathscr{E}}\kappa\left(\cdot\right)d\nu\left(\kappa\right).$$
Moreover, there exists a regular choice of Radon--Nikodym cocycle with respect to the ergodic decomposition in the following sense. There exists a Borel function $\varphi_{\gamma}\left(x\right):\Gamma\times X\to\mathbb{R}$ that satisfies the cocycle identity
\begin{equation}
\label{eq:2}
\varphi_{\gamma_{1}}\left(x\right)+\varphi_{\gamma_{2}}\left(\gamma_{1}x\right)=\varphi_{\gamma_{1}\gamma_{2}}\left(x\right),\quad\forall \gamma_{1},\gamma_{2}\in\Gamma,\forall x\in X,
\end{equation}
such that
\begin{equation}
\label{eq:3}
\varphi_{\gamma}\left(x\right)=\log\frac{d\kappa\circ \gamma}{d\kappa}\left(x\right),\quad\forall \gamma\in\Gamma,\,\nu\text{-a.e. }\kappa\in\mathscr{E},\,\kappa\text{-a.e. }x\in X.
\end{equation}
Fix such a cocycle and a corresponding $\nu$-full measure set $\mathscr{E}_{0}\subset\mathscr{E}$. Consider the Maharam extension $\widetilde{\Gamma}\curvearrowright\left(\widetilde{X},\widetilde{\mu}\right)$ and the Maharam extensions $\widetilde{\Gamma}\curvearrowright\left(\widetilde{X},\widetilde{\kappa}\right)$ for the ergodic components $\kappa\in \mathscr{E}_0$. Note that the property
\begin{equation}
\label{eq:10}
\widetilde{\mu}\left(\cdot\right)=\intop_{\mathscr{E}_{0}}\widetilde{\kappa}\left(\cdot\right)d\nu\left(\kappa\right)
\end{equation}
can be easily verified on sets in $\mathcal{B}\left(X\right)\times\mathcal{B}\left(\mathbb{R}\right)$ using the Fubini theorem, thus it holds for all Borel sets of $\widetilde{X}$.

\begin{proof}[Proof of Lemma \ref{Lemma: Renormalization 2}]\
We prove the Lemma for indicator functions, showing that $\mathrm{e}\left(\Gamma\right)=\mathbb{R}$ if, and only if, every $\widetilde{\Gamma}\curvearrowright\left(\widetilde{X},\widetilde{\mu}\right)$-invariant set $E\subset\widetilde{X}$ with $\widetilde{\mu}\left(E\right)>0$ is of the form $E=E'\times\mathbb{R}\mod\widetilde{\mu}$ for some $E'\subset X$.

One implication is standard: Suppose that $r\in\mathbb{R}\backslash\mathrm{e}\left(\Gamma,\mu\right)$. Then there exists $\epsilon>0$ and some $E_0\subset X$ with $\mu\left(E_0\right)>0$, such that $-\log\gamma'\left(x\right)\notin\left(r,r+\epsilon\right)$ for $\mu$-a.e. $x\in E_0$ for every $\gamma\in\Gamma$. Consider the $\widetilde{\Gamma}\curvearrowright\left(\widetilde{X},\widetilde{\mu}\right)$-invariant set
$$E:=\bigcup_{\gamma\in\Gamma}\widetilde{\gamma}\left(E_0\times\left(0,\epsilon/2\right)\right)\subset\widetilde{X}.$$
Note that for $\widetilde{\mu}$-a.e. $\left(x,t\right)\in E_0\times\left(0,\epsilon/2\right)$ and every $\gamma\in\Gamma$,
$$\mathrm{proj}_{\mathbb{R}}\left(\widetilde{\gamma}\left(x,t\right)\right)=t-\log\gamma'\left(x\right)\notin\left(r+\epsilon/2,r+\epsilon\right).$$
Hence $\mathrm{proj}_{\mathbb{R}}\left(E\right)\cap\left(r+\epsilon/2,r+\epsilon\right)=\emptyset$ so that $E$ can not be of the form $E=E'\times\mathbb{R}$.

For the other implication, assume that $\mathrm{e}\left(\Gamma,\mu\right)=\mathbb{R}$ and let $E\subset\widetilde{X}$ be a $\widetilde{\Gamma}\curvearrowright\left(\widetilde{X},\widetilde{\mu}\right)$-invariant Borel set. For $x\in X$ let $E_{x}=\left\{ t\in\mathbb{R}:\left(x,t\right)\in E\right\}$. Let $\lambda$ be the Lebesgue measure on $\mathbb{R}$. Fix a cocycle $\varphi$ satisfying \eqref{eq:2} and \eqref{eq:3} and a corresponding $\nu$-full measure set of ergodic components $\mathscr{E}_0\subset\mathscr{E}$. Note that since $E$ is $\widetilde{\Gamma}\curvearrowright\left(\widetilde{X},\widetilde{\mu}\right)$-invariant, using formula \eqref{eq:10} we have that
$$0=\widetilde{\mu}\left(E\triangle\widetilde{\gamma}^{-1}E\right)=\intop_{\mathscr{E}_0}\widetilde{\kappa}\left(E\triangle\widetilde{\gamma}^{-1}E\right)d\nu\left(\kappa\right),\quad \gamma\in\Gamma,$$
so for every $\gamma\in\Gamma$ there is a $\nu$-full measure set $\mathscr{E}_{\gamma}\subset\mathscr{E}_{0}$ such that $\widetilde{\kappa}\left(E\triangle\widetilde{\gamma}^{-1}E\right)=0$ for every $\kappa\in\mathscr{E}_{\gamma}$. Letting the $\nu$-full measure set $\mathscr{E}_{1}:=\bigcap_{\gamma\in\Gamma}\mathscr{E}_{\gamma}\subset\mathscr{E}_0$, we get that $E$ is $\widetilde{\Gamma}\curvearrowright\left(\widetilde{X},\widetilde{\kappa}\right)$-invariant for every $\kappa\in\mathscr{E}_1$.

By Bowen's theorem \cite[Theorem 2.1]{bowen2014} and our assumption, the ratio set of $\nu$-a.e. $\kappa\in\mathscr{E}$ satisfies $\mathrm{e}\left(\Gamma,\kappa\right)=\mathrm{e}\left(\Gamma,\mu\right)=\mathbb{R}$. Let $\mathscr{E}_2\subset \mathscr{E}_1$ be a $\nu$-full measure set satisfying this property. By Schmidt's theorem \cite[Theorem 5.2]{schmidt1977} \cite[Theorem 8.2.4]{aaronson1997}, the ratio set of an ergodic action is the same as its \textit{periods set}, which means that in our case for every $\kappa\in\mathscr{E}_2$,
$$\mathbb{R}=\mathrm{e}\left(\Gamma,\kappa\right)=\left\{ r\in\mathbb{R}:S_{r}F=F\,\text{ mod }\widetilde{\kappa},\,\forall \widetilde{\Gamma}\curvearrowright\left(\widetilde{X},\widetilde{\kappa}\right)\text{-invariant set }F\subset\widetilde{X}\right\},$$
where for $r\in\mathbb{R}$, $S_r\left(x,t\right)=\left(x,t+r\right)$. Then for every $\kappa\in\mathscr{E}_{2}$ we have that $S_rE=E\mod\widetilde{\kappa}$ for every $r\in\mathbb{R}$, hence $E_{x}=E_{x}-r\mod\lambda$ for every $r\in\mathbb{R}$. As the Lebesgue measure $\lambda$ is translation-invariant, it has the property that every pair of positive Lebesgue measure sets $A_{0}$ and $A_{1}$ admits a positive length interval $I$ such that $\lambda\left(A_{0}\cap\left(A_{1}-r\right)\right)>0$ for every $r\in I$, so it is impossible that both $\lambda\left(E_{x}\right)>0$ and $\lambda\left(\mathbb{R}\backslash E_{x}\right)>0$. That is, $E_{x}\in\left\{ \emptyset,\mathbb{R}\right\} \mod\lambda$ for $\kappa$-a.e. $x\in X$ for every $\kappa\in\mathscr{E}_2$ which is a $\nu$-full measure set. By the Fubini theorem and the above ergodic decomposition, this is equivalent to $E=E'\times\mathbb{R}\mod\widetilde{\mu}$ where $E'$ is the set of all $x\in X$ for which $E_x=\mathbb{R}\mod\lambda$, so the proof is complete.
\end{proof}

 \section{Markov Subshift of Finite Type (MSFT)}

A \textit{Markov Subshift of Finite Type} (Markov SFT or MSFT) is a measure space $\left(X_A,\mu\right)$ where $X_A$ is an SFT on a finite state space $\mathcal{S}$ with adjacency matrix $A$, and $\mu$ is a Markov measure on $X_A$ which is compatible with $A$ in the sense that if the transition matrices of $\mu$ are $\left(P_{n}:n\in\mathbb{Z}\right)$ then
$$P_{n}\left(s,t\right)>0\iff A\left(s,t\right)=1,\quad s,t\in\mathcal{S}, n\in\mathbb{Z}.$$
For a pair of integers $n<m$ denote
$$P^{\left(n,m\right)}=P_n\cdot\dotsm\cdot P_m,$$
which is a row-stochastic $\left|\mathcal{S}\right|\times\left|\mathcal{S}\right|$-matrix that has the interpretation
$$P^{\left(n,m\right)}\left(s,t\right)=\mu\left(X_{m+1}=t\mid X_n=s\right),\quad s,t\in\mathcal{S}.$$
Note that if $\left(X_A,\mu\right)$ is a topologically-mixing MSFT with $A^M>0$ then
$$P^{\left(n,n+M-1\right)}\left(s,t\right)>0,\quad n\in\mathbb{Z}, s,t\in\mathcal{S}.$$

The following proposition will be used constantly in our work and we include its proof in Appendix \ref{Appendix: Mixing Properties of Markov Measures}.

\begin{prop}
\label{Proposition: Mixing in MSFT}
Let $\left(X_A,\mu\right)$ be a topologically-mixing MSFT with $A^M>0$, that satisfies the Doeblin condition \eqref{Doeblin} for $\delta>0$. Let $\left(P_n:n\in\mathbb{Z}\right)$ be the transition matrices of $\mu$ and let $\left(\pi_n:n\in\mathbb{Z}\right)$ be the coordinates distributions of $\mu$. Then the following properties hold.
\begin{enumerate}
	\item For every $n\in\mathbb{Z}$,
	$$\delta^{M}\leq\pi_{n}\left(s\right)\leq1-\delta^{M},\quad s\in\mathcal{S}.$$
	\item For every integer $N\geq M$ and $n\in\mathbb{Z}$,
	$$\delta^{M}\leq P^{\left(n,n+N\right)}\left(s,t\right)\leq1-\delta^{M},\quad s,t\in\mathcal{S}.$$
	\item There exists a constant $C\left(\delta,M\right)\in\left(0,1\right)$ depending only on $\delta$ and $M$, such that for every $n,m\in\mathbb{Z}$ and every pair of Borel sets $E\in\sigma\left(\dotsc,X_{n-1},X_n\right)$ and $F\in\sigma\left(X_m,X_{m+1},\dotsc\right)$, if $m-n\geq M$ then
$$C\left(\delta,M\right)\mu\left(E\right)\mu\left(F\right)\leq\mu\left(E\cap F\right)\leq C\left(\delta,M\right)^{-1}\mu\left(E\right)\mu\left(F\right).$$
\end{enumerate}
\end{prop}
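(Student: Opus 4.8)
The plan is to prove the three assertions in order, each flowing from the Doeblin condition \eqref{Doeblin} together with the topological-mixing assumption $A^M > 0$, and to reduce everything to uniform positivity of products of $M$ consecutive transition matrices. Throughout, the guiding principle is that under the Doeblin condition a product $P^{(n,n+N)}$ over a block of length at least $M$ is a strictly positive stochastic matrix with entries bounded away from $0$ uniformly in $n$, and this uniform positivity is exactly what produces both the bounds in (1)--(2) and the quantitative mixing in (3).

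For assertion (2), I would first establish the lower bound. Under \eqref{Doeblin}, whenever $A(s,t)=1$ we have $P_n(s,t) \ge \delta$, and since $A^M > 0$ every pair $(s,t)$ is connected by a path of length exactly $M$; multiplying out $M$ consecutive transition matrices along such a path and using $P_n(s,t)\ge\delta$ on each admissible step gives $P^{(n,n+M-1)}(s,t) \ge \delta^M$ for all $s,t$. For $N \ge M$ one writes $P^{(n,n+N)} = P^{(n,n+M-1)}\cdot P^{(n+M,n+N)}$ and uses that the first factor already has all entries $\ge \delta^M$ while the second is stochastic, so each entry of the product is at least $\delta^M \cdot 1 = \delta^M$ after summing a convex combination; the point is that a stochastic matrix left-multiplied by one whose every entry exceeds $\delta^M$ preserves that lower bound row by row. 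The upper bound $P^{(n,n+N)}(s,t) \le 1 - \delta^M$ follows because the row sums to $1$ and the remaining entries in that row are each at least $\delta^M$, so the mass on any single column cannot exceed $1 - (|\mathcal{S}|-1)\delta^M \le 1 - \delta^M$; here one uses that $A^M > 0$ guarantees at least one competing positive entry in the row. Assertion (1) is then the special case recovered from (2) by recalling $\pi_n(s) = \mu(X_n = s)$: conditioning $\pi_n$ on the state $M$ steps earlier via $\pi_n(s) = \sum_u \pi_{n-M}(u) P^{(n-M,n-1)}(u,s)$ and applying the just-proved two-sided bound on $P^{(n-M,n-1)}(u,s)$ squeezes $\pi_n(s)$ into $[\delta^M, 1-\delta^M]$, since $\sum_u \pi_{n-M}(u) = 1$.

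The heart of the matter is assertion (3), the quantitative mixing with the constant $C(\delta,M)$. I would exploit the Markov structure to factor $\mu(E \cap F)$ through the states at the two boundary coordinates: with $E \in \sigma(\dots,X_{n-1},X_n)$ and $F \in \sigma(X_m,X_{m+1},\dots)$ and $m - n \ge M$, the Markov property lets one write
$$\mu(E \cap F) = \sum_{s,t} \mu(E \cap \{X_n = s\})\, P^{(n,m-1)}(s,t)\, \mu(F \mid X_m = t),$$
and similarly $\mu(E) = \sum_s \mu(E \cap \{X_n=s\})$ and $\mu(F) = \sum_t \pi_m(t)\,\mu(F\mid X_m = t)$. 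The entire question becomes comparing the transition factor $P^{(n,m-1)}(s,t)$ against the ``decoupled'' product $\pi_m(t)$; by part (2) applied with $N = m-1-n \ge M-1$ (and absorbing the boundary step if needed), both $P^{(n,m-1)}(s,t)$ and $\pi_m(t)$ lie in $[\delta^M, 1-\delta^M]$, so their ratio is pinched between $\delta^M/(1-\delta^M)$ and $(1-\delta^M)/\delta^M$. Substituting this two-sided ratio bound into the factorization and summing over $s,t$ yields the claimed inequalities with $C(\delta,M)$ taken to be this ratio (or its reciprocal), depending only on $\delta$ and $M$.

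The main obstacle I anticipate is handling the off-by-$M$ bookkeeping cleanly in part (3): the Doeblin positivity only kicks in over blocks of length at least $M$, so one must be careful that the gap $m - n \ge M$ is genuinely enough to route the transition factor through a fully positive matrix, and that the events $E$ and $F$ are separated by this positive block rather than abutting it. A secondary subtlety is that $E$ and $F$ are not cylinders but arbitrary sets in the respective tail $\sigma$-algebras, so I would first verify the factorization on cylinder generators (where the Markov property is explicit) and then extend to the generated $\sigma$-algebras by a monotone-class or approximation argument, checking that the uniform constants survive the limit. Since these are exactly the kind of routine but delicate estimates that the paper defers, I would place the detailed verification in Appendix \ref{Appendix: Mixing Properties of Markov Measures} as indicated, and keep the main-text statement as the clean package (1)--(3).
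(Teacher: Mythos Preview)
Your proposal is correct and follows essentially the same route as the paper's Appendix~\ref{Appendix: Mixing Properties of Markov Measures}: parts (1)--(2) are handled identically, and for (3) both you and the paper reduce to the ratio bound $\delta^{M}/(1-\delta^{M})$ by sandwiching a transition factor and a marginal in $[\delta^{M},1-\delta^{M}]$. The only cosmetic difference is that the paper conditions on a single intermediate coordinate $X_{m-1}$ in two stages, whereas you factor through both endpoints $(X_n,X_m)$ at once; the resulting constant $C(\delta,M)=\delta^{M}/(1-\delta^{M})$ is the same, and your anticipated off-by-$M$ and monotone-class caveats are exactly the routine checks the paper suppresses.
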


Here we establish a deterministic criteria for equivalence of Markov measures which will be fundamental to our work. The Hahn-Lebesgue decomposition of one Markov measure with respect to another is known but unlike the Kakutani dichotomy in product measures, in general it is not a 0-1 event and there is no a deterministic criteria to distinguish between the alternatives. For one-sided Markov chains some authors assumed tail triviality as well as other regularity assumptions to establish such a deterministic criteria; see for instance \cite{lodkin1971absolute, lepage1975likelihood, danilenko2019}. Here we take a different approach, which under the assumption of the Doeblin condition makes no reference to tail triviality. We provide the proof of Theorem \ref{Theorem: Kakutani Criteria} as well as a detailed background in Appendix \ref{Appendix: Criteria for Equivalence of MM}.

\begin{thm}
\label{Theorem: Kakutani Criteria}
Let $\nu=\nu_{\left(P_n:n\in\mathbb{Z}\right)}$ and $\mu=\mu_{\left(Q_n:n\in\mathbb{Z}\right)}$ be Markov measures on a topologically-mixing SFT $X_A$, both satisfy the Doeblin condition \ref{Doeblin}. Then $\nu\ll\mu$ if, and only if,
$$\sum_{n\geq1}\sum_{s,u,v,t\in\mathcal{S}}\mathrm{d}_{n}^{2}\left[\nu,\mu\right]\left(s,u,v,t\right)<\infty,$$
where for $n\geq1$ and $s,t,u,v\in\mathcal{S}$ we denote the numbers
$$\mathrm{d}_{n}^{2}\left[\nu,\mu\right]\left(s,u,v,t\right):=\left(\sqrt{\widehat{P}_{-n}\left(u,s\right)P_{n}\left(v,t\right)}-\sqrt{\widehat{Q}_{-n}\left(u,s\right)Q_{n}\left(v,t\right)}\right)^{2}.$$
In particular, since $\mathrm{d}_{n}^{2}\left[\nu,\mu\right]\left(s,u,v,t\right)=\mathrm{d}_{n}^{2}\left[\mu,\nu\right]\left(s,u,v,t\right)$ for all $n\geq1$ and $s,u,v,t\in\mathcal{S}$, it follows that $\nu\ll\mu\iff\mu\ll\nu$ so that every such two measures are either equivalent or that none of them is absolutely continuous with respect to the other.
\end{thm}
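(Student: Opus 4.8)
The plan is to work on the two-sided filtration $\mathcal{F}_n=\sigma\left(X_{-n},\dotsc,X_n\right)$, whose union generates the Borel sigma-algebra of $X_A$, and to compare $\nu$ and $\mu$ through their \emph{Hellinger affinities} $\rho_n:=\sum_{w}\sqrt{\nu\left(X_{W_n}=w\right)\mu\left(X_{W_n}=w\right)}$, where $W_n=\left\{-n,\dotsc,n\right\}$ and the sum runs over $A$-admissible words $w$. Writing $L_n:=d\left(\nu|_{\mathcal{F}_n}\right)/d\left(\mu|_{\mathcal{F}_n}\right)$ for the likelihood-ratio martingale, we have $\rho_n=\mathbf{E}_\mu\left(\sqrt{L_n}\right)$, and two classical facts organise the argument: $\rho_n$ decreases to $\rho_\infty=\mathbf{E}_\mu\left(\sqrt{L_\infty}\right)$ with $\rho_\infty=0\iff\nu\perp\mu$, while $\nu\ll\mu$ is equivalent to uniform integrability of $\left(L_n\right)$ under $\mu$. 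Since $\mathrm{d}_n^2\left[\nu,\mu\right]=\mathrm{d}_n^2\left[\mu,\nu\right]$, it suffices to prove that the displayed series converges if and only if $\nu\ll\mu$; the asserted dichotomy then follows by exchanging the roles of $\nu$ and $\mu$.

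The engine is a telescoping formula from the Markov property. Conditionally on $\mathcal{F}_n$, the two coordinates added when passing to $W_{n+1}$, namely $X_{-n-1}$ and $X_{n+1}$, are independent, with laws $\widehat{P}_{-n}\left(X_{-n},\cdot\right)$ and $P_n\left(X_n,\cdot\right)$ under $\nu$ and $\widehat{Q}_{-n}\left(X_{-n},\cdot\right),Q_n\left(X_n,\cdot\right)$ under $\mu$. A direct computation gives $\mathbf{E}_\mu\left(\sqrt{L_{n+1}}\mid\mathcal{F}_n\right)=\sqrt{L_n}\,r_n\left(X_{-n},X_n\right)$, where
$$r_n\left(u,v\right)=\Big(\sum_s\sqrt{\widehat{P}_{-n}\left(u,s\right)\widehat{Q}_{-n}\left(u,s\right)}\Big)\Big(\sum_t\sqrt{P_n\left(v,t\right)Q_n\left(v,t\right)}\Big)$$
is the affinity of the product conditional laws, and expanding the square shows $1-r_n\left(u,v\right)=\tfrac12\sum_{s,t}\mathrm{d}_n^2\left[\nu,\mu\right]\left(s,u,v,t\right)$. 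Thus $\sqrt{L_n}$ is a nonnegative $\mu$-supermartingale, $\rho_{n+1}=\mathbf{E}_{\theta_n}\left(r_n\right)\rho_n$ with $\theta_n$ the probability on $W_n$-words with density proportional to $\sqrt{\nu|_{\mathcal{F}_n}\cdot\mu|_{\mathcal{F}_n}}$, and positivity of $\rho_\infty$ is governed by convergence of $\sum_n\mathbf{E}_{\theta_n}\left(1-r_n\right)$.

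For the divergent direction I would show that if the series diverges then $\rho_\infty=0$, whence $\nu\perp\mu$ and in particular $\nu\not\ll\mu$. The point is that the $\theta_n$-marginal of $\left(X_{-n},X_n\right)$ is bounded below, uniformly in $n$, on every admissible pair of states: indeed $\theta_n$ is, up to normalisation, the law of a non-homogeneous chain whose transfer matrices $M_k\left(s,t\right)=\sqrt{P_k\left(s,t\right)Q_k\left(s,t\right)}$ have exactly the support of $A$ and entries $\geq\delta$ on that support, so Proposition \ref{Proposition: Mixing in MSFT} applies (after the Perron normalisation) and yields $\theta_n\left(X_{-n}=u,X_n=v\right)\asymp1$ for all admissible $u,v$ once $2n\geq M$. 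Consequently $\mathbf{E}_{\theta_n}\left(1-r_n\right)\asymp\sum_{s,u,v,t}\mathrm{d}_n^2\left[\nu,\mu\right]$ (inadmissible transitions contribute $0$ on both sides), so divergence of the series forces $\sum_n\mathbf{E}_{\theta_n}\left(1-r_n\right)=\infty$, i.e. $\prod_n\rho_{n+1}/\rho_n=0$.

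For the convergent direction the difficulty is to upgrade non-singularity to genuine absolute continuity \emph{without} invoking tail triviality, and here I would argue directly on the log-likelihood. By the Doeblin condition and part (1) of Proposition \ref{Proposition: Mixing in MSFT}, the reverse matrices $\widehat{P}_{-n},\widehat{Q}_{-n}$ are also bounded away from $0$ on the support of $A$, so the increments $\log\left(L_{n+1}/L_n\right)$ are uniformly bounded, and both their conditional means (signed Kullback--Leibler terms) and conditional variances are comparable to $\sum_{s,u,v,t}\mathrm{d}_n^2\left[\nu,\mu\right]$. When the series converges, the martingale-plus-compensator (three-series) argument then gives $\mu$-a.s. convergence of $\log L_n$ to a finite limit, so $L_\infty\in\left(0,\infty\right)$ $\mu$-a.s.; running the identical estimate under $\nu$ (legitimate because the series is symmetric) shows $\sup_n\log L_n<\infty$ $\nu$-a.s., which is exactly uniform integrability of $\left(L_n\right)$ under $\mu$ since $\mathbf{E}_\mu\left(L_n\mathbf{1}_{\left\{L_n>K\right\}}\right)=\nu\left(L_n>K\right)$. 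Hence $\mathbf{E}_\mu\left(L_\infty\right)=1$, i.e. $\nu\ll\mu$, and by symmetry $\mu\ll\nu$. The main obstacle, as indicated, is precisely this last step: positivity of the affinity alone does not give absolute continuity, and it is the two-sided three-series estimate — made possible by the uniform ellipticity coming from \eqref{Doeblin} — that both circumvents tail triviality and pins the criterion to the symmetric series $\sum_n\sum_{s,u,v,t}\mathrm{d}_n^2\left[\nu,\mu\right]$.
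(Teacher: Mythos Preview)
Your route is genuinely different from the paper's. The paper does not work with the Hellinger affinities $\rho_n$ or the auxiliary measure $\theta_n$ at all; instead it invokes the Kabanov--Lipcer--Shiryaev theorem as a black box, which gives directly the \emph{pathwise} criterion
\[
\nu\ll\mu\iff\sum_{n\ge1}\bigl(1-\mathbf{E}_\mu(\sqrt{\mathrm{M}_n}\mid\mathcal{A}_{n-1})\bigr)<\infty\quad\nu\text{-a.e.},
\]
and the same conditional computation you carry out identifies the summand with $\tfrac12\sum_{s,t}\mathrm{d}_n^2[\nu,\mu](s,X_{-(n-1)},X_{n-1},t)$. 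The convergent direction is then immediate (the random series is dominated by the deterministic one), and the divergent direction is handled by a short probabilistic lemma: if $\sum a_n=\infty$ and $\liminf\mathbb{P}(A_n)>0$ then $\sum a_n\mathbf{1}_{A_n}=\infty$ with positive probability. The events are $A_n=\{X_{-(n-1)}=u_0,\,X_{n-1}=v_0\}$, whose $\nu$-probability is bounded below directly by Proposition~\ref{Proposition: Mixing in MSFT}, with no normalization needed. What your approach buys is self-containment (no KLS) and, in the divergent case, the stronger conclusion $\nu\perp\mu$ in one stroke; what the paper's approach buys is that the lower bound on the relevant marginal is for $\nu$ itself, so Proposition~\ref{Proposition: Mixing in MSFT} applies verbatim.

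There is one soft spot in your divergent direction. You assert that $\theta_n(X_{-n}=u,X_n=v)\asymp1$ because ``Proposition~\ref{Proposition: Mixing in MSFT} applies (after the Perron normalisation)''. Proposition~\ref{Proposition: Mixing in MSFT} is stated for Markov measures with \emph{stochastic} transition matrices satisfying the Doeblin condition, and the geometric-mean transfer matrices $M_k(s,t)=\sqrt{P_k(s,t)Q_k(s,t)}$ are only substochastic. Turning $\theta_n$ into a genuine inhomogeneous Markov chain requires an $n$-dependent Doob transform $\tilde M_k(s,t)=M_k(s,t)h_{k+1}(t)/h_k(s)$ with $h_k=M_k\cdots M_{n-1}\mathbf{1}$, and the Doeblin constant for $\tilde M_k$ depends on the oscillation $\max_s h_k(s)/\min_s h_k(s)$; you need a Birkhoff--Hilbert contraction argument (using primitivity of $A$ and the uniform lower bound $M_k\ge\delta$ on $\mathrm{supp}\,A$) to bound this ratio uniformly in $k$ and $n$. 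This is doable, but it is not an application of Proposition~\ref{Proposition: Mixing in MSFT} as written. The paper sidesteps this entirely by working with $\nu$ rather than $\theta_n$. Your convergent direction via the three-series argument and the identity $\mathbf{E}_\mu(L_n\mathbf{1}_{\{L_n>K\}})=\nu(L_n>K)$ is correct and is a nice alternative to quoting KLS.
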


\begin{cor}
\label{Corollary: Nonsingularity of the Shift}
Let $\left(X_A,\mu\right)$ be a topologically-mixing MSFT that satisfies the Doeblin condition. The coefficients for the nonsingularity of the shift $T$ are
$$\mathrm{d}_{n}^{2}\left[\mu,\mu\circ T^{-1}\right]\left(s,u,v,t\right)=\left(\sqrt{\widehat{P}_{-n}\left(u,s\right)P_{n}\left(v,t\right)}-\sqrt{\widehat{P}_{-\left(n+1\right)}\left(u,s\right)P_{n+1}\left(v,t\right)}\right)^{2}.$$
Thus, when the shift is nonsingular, using the stochasticity of the matrices $P_n$ and $\widehat{P}_n$ for all $n\in\mathbb{Z}$ we get by Theorem \ref{Theorem: Kakutani Criteria} that
$$P_{n-1}\left(v,t\right)-P_{n}\left(v,t\right)\xrightarrow[n\to\infty]{}0,\quad v,t\in\mathcal{S}$$
and
$$\widehat{P}_{-n}\left(u,s\right)-\widehat{P}_{-\left(n+1\right)}\left(u,s\right)\xrightarrow[n\to\infty]{}0,\quad s,u\in\mathcal{S}.\qedhere$$
\end{cor}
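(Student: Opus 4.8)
The plan is to apply Theorem~\ref{Theorem: Kakutani Criteria} to the pair $\left(\mu,\mu\circ T^{-1}\right)$, since the shift $T$ is nonsingular with respect to $\mu$ precisely when $\mu$ and $\mu\circ T^{-1}$ are equivalent measures. The first task is to recognise $\mu\circ T^{-1}$ as a Markov measure on $X_A$ and to express its transition matrices through those of $\mu$. Because $T$ carries coordinate $n$ to coordinate $n+1$, a direct computation on cylinders gives $\left(\mu\circ T^{-1}\right)\left(X_n=s\right)=\pi_{n+1}\left(s\right)$ and forward transition matrices $P_{n+1}$; feeding this into the reverse-transition formula~\eqref{eq:22} expresses the reverse matrices of $\mu\circ T^{-1}$ as the appropriate index-shift of $\widehat{P}$. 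As these are merely a reindexing of the matrices of $\mu$, the measure $\mu\circ T^{-1}$ again satisfies the Doeblin condition~\eqref{Doeblin} with the same $\delta$, so Theorem~\ref{Theorem: Kakutani Criteria} legitimately applies; substituting the matrices into the definition of $\mathrm{d}_n^2\left[\mu,\mu\circ T^{-1}\right]$ then yields the displayed coefficient formula. This step is pure bookkeeping, the only care being to track the directions in which the past index $-n$ and the future index $n$ move when the window is shifted.

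With the coefficients identified, Theorem~\ref{Theorem: Kakutani Criteria} says that equivalence of $\mu$ and $\mu\circ T^{-1}$ is the same as $\sum_{n\geq1}\sum_{s,u,v,t\in\mathcal{S}}\mathrm{d}_n^2\left[\mu,\mu\circ T^{-1}\right]\left(s,u,v,t\right)<\infty$, so in particular every summand tends to $0$. Writing the summand as $\left(\sqrt{A_n}-\sqrt{B_n}\right)^2$ with $A_n=\widehat{P}_{-n}\left(u,s\right)P_n\left(v,t\right)$ and $B_n$ the shifted product, I would invoke Proposition~\ref{Proposition: Mixing in MSFT}: the distributions $\pi_n$ lie in $\left[\delta^M,1-\delta^M\right]$ and all transition entries are bounded above, so $A_n$ and $B_n$ are uniformly bounded. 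Hence $\sqrt{A_n}-\sqrt{B_n}\to0$ upgrades, through the factorisation $A_n-B_n=\left(\sqrt{A_n}-\sqrt{B_n}\right)\left(\sqrt{A_n}+\sqrt{B_n}\right)$, to $A_n-B_n\to0$ for each fixed quadruple $\left(s,u,v,t\right)$.

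The final step isolates the two matrices using row-stochasticity. The key observation is that the summand controls only the product $\widehat{P}_{-n}\left(u,s\right)P_n\left(v,t\right)$ and not the two factors separately, so one must separate variables. Summing $A_n-B_n\to0$ over $s\in\mathcal{S}$ and using that the reverse transitions are stochastic in their second coordinate, $\sum_{s}\widehat{P}_{-n}\left(u,s\right)=1$ for every $n$, collapses the reverse factors to $1$ and leaves $P_n\left(v,t\right)-P_{n+1}\left(v,t\right)\to0$, which is the first claimed limit after relabelling $n$. Symmetrically, summing over $t\in\mathcal{S}$ and using $\sum_{t}P_n\left(v,t\right)=1$ collapses the forward factors and produces the stated limit for $\widehat{P}$; finiteness of $\mathcal{S}$ ensures these finite sums preserve the limits. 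I expect the only genuine obstacle to be the index bookkeeping of the first step, together with choosing in this last step the correct coordinate to sum over so as to peel off each matrix individually.
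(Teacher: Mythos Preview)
Your proposal is correct and follows exactly the approach indicated in the paper: the corollary is stated there without a separate proof, simply as the specialisation of Theorem~\ref{Theorem: Kakutani Criteria} to the pair $\left(\mu,\mu\circ T^{-1}\right)$ together with the remark that stochasticity of $P_n$ and $\widehat{P}_n$ yields the two limits. Your write-up is a faithful expansion of precisely that argument, including the summation over $s$ (respectively $t$) to collapse one factor via row-stochasticity.
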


\begin{cor}
\label{Corollary: Equivalence to Homogeneous}
Let $\left(X_A,\mu\right)$ be a MSFT and let $\left(P_n:n\in\mathbb{Z}\right)$ be the sequence of transition matrices of $\mu$. Then a necessary condition for $\mu$ to be equivalent to a homogeneous Markov measure $\nu$ defined by a matrix $Q$, is that
$$\lim_{\left|n\right|\to\infty}P_{n}=Q.$$
If this holds, then $\mu$ is equivalent to $\nu$ if, and only if,
$$\sum_{n\geq1}\sum_{s,u,v,t\in\mathcal{S}}\left(\sqrt{\widehat{P}_{-n}\left(u,s\right)P_{n}\left(v,t\right)}-\sqrt{\widehat{Q}\left(u,s\right)Q\left(v,t\right)}\right)^2<\infty.$$
\end{cor}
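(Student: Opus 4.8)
The plan is to read off both statements from the equivalence criterion of Theorem~\ref{Theorem: Kakutani Criteria}, the only genuine work lying in upgrading the one–sided information it provides into the two–sided limit $\lim_{|n|\to\infty}P_n=Q$. I work under the standing Doeblin condition, which is exactly what makes Theorem~\ref{Theorem: Kakutani Criteria} and Proposition~\ref{Proposition: Mixing in MSFT} available.

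The quantitative equivalence (the ``if this holds'' clause) is almost immediate. Assuming $\lim_{|n|\to\infty}P_n=Q$, each $P_n$ satisfies $P_n(s,t)\geq\delta\iff A(s,t)=1$, so the limit $Q$ inherits this property and $\nu=\nu_{(Q)}$ is a Doeblin homogeneous Markov measure on $X_A$. For such $\nu$ one has $Q_n=Q$ for all $n$, and since its marginals are constant, $\pi_n\equiv\lambda$, formula~\eqref{eq:22} shows that its reverse matrices are constant as well, $\widehat{Q}_{-n}=\widehat{Q}$. Substituting $Q_n=Q$ and $\widehat{Q}_{-n}=\widehat{Q}$ into the series $\sum_n\sum_{s,u,v,t}\mathrm{d}_n^2[\mu,\nu]$ of Theorem~\ref{Theorem: Kakutani Criteria} reproduces the displayed sum verbatim, which yields the asserted equivalence.

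For the necessary condition, suppose $\mu\sim\nu$. As $\mu$ has full support on $X_A$ under the Doeblin condition, so does $\nu$, which forces $Q$ to have the same zero pattern as $A$, hence $\nu$ is Doeblin; Theorem~\ref{Theorem: Kakutani Criteria} then applies and the displayed series converges. In particular its $n$-th slice tends to $0$, so after squaring, $\widehat{P}_{-n}(u,s)\,P_n(v,t)\to\widehat{Q}(u,s)\,Q(v,t)$ for every $s,u,v,t$. Summing this over $t$ and using that the rows of $P_n$ and $Q$ sum to $1$ gives $\widehat{P}_{-n}(u,s)\to\widehat{Q}(u,s)$; summing instead over $s$ and using that the rows of $\widehat{P}_{-n}$ and $\widehat{Q}$ sum to $1$ gives $P_n(v,t)\to Q(v,t)$. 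Thus $P_n\to Q$ and $\widehat{P}_{-n}\to\widehat{Q}$ as $n\to+\infty$.

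The main obstacle is that the criterion controls only the forward transitions at positive times and the reverse transitions at negative times, while the conclusion also demands $P_n\to Q$ as $n\to-\infty$. I would bridge this gap through the one–dimensional marginals. Here Proposition~\ref{Proposition: Mixing in MSFT}(2) is decisive: it confines every block product $P^{(n,n+N)}$ ($N\geq M$) to $[\delta^M,1-\delta^M]$, giving a uniform Dobrushin contraction coefficient $\rho:=1-|\mathcal{S}|\delta^M<1$ over blocks of length $M$. Combining this contraction with the vanishing perturbation $\|\lambda P^{(n,n+M)}-\lambda\|\to0$ (which holds since $P^{(n,n+M)}\to Q^{M+1}$ and $\lambda Q=\lambda$) yields $\pi_n\to\lambda$ by a standard contraction-plus-perturbation estimate; applying the same argument to the reverse matrices, which are themselves Doeblin and converge to $\widehat{Q}$, gives $\pi_{-n}\to\lambda$. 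Finally, using the uniform lower bound $\pi_n\geq\delta^M$ of Proposition~\ref{Proposition: Mixing in MSFT}(1) I would invert~\eqref{eq:22}, writing $P_{-n-1}(s,u)=\widehat{P}_{-n}(u,s)\,\pi_{-n}(u)/\pi_{-n-1}(s)$, and pass to the limit to obtain $P_{-n-1}(s,u)\to\widehat{Q}(u,s)\lambda(u)/\lambda(s)=Q(s,u)$. Hence $P_n\to Q$ as $n\to-\infty$ as well, completing $\lim_{|n|\to\infty}P_n=Q$.
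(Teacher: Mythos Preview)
The paper states this corollary without proof, treating it as a direct consequence of Theorem~\ref{Theorem: Kakutani Criteria}. Your argument is correct and makes explicit the one step that is not entirely immediate: the criterion only yields $P_n\to Q$ and $\widehat{P}_{-n}\to\widehat{Q}$ as $n\to+\infty$, and one must still convert the latter into $P_{-n}\to Q$. Your route through the marginals---using the uniform Dobrushin contraction furnished by Proposition~\ref{Proposition: Mixing in MSFT}(2) together with the perturbation $\lambda P^{(n,n+M)}\to\lambda$ to force $\pi_{\pm n}\to\lambda$, and then inverting~\eqref{eq:22}---is the natural way to close this gap and is carried out soundly. The reverse transition matrices do inherit a Doeblin bound (with constant at least $\delta^{M+1}$, from $\pi_{n-1}(t)\geq\delta^{M}$, $\pi_n(s)\leq 1$, and $P_{n-1}(t,s)\geq\delta$) with respect to the primitive matrix $A^{T}$, so the same contraction argument applies to the reverse chain as you claim. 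In short, your proof supplies exactly the details the paper omits.
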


\subsection{Renormalization in MSFT}

Let $\left(X_A,\mu\right)$ be a topologically-mixing MSFT that satisfies the Doeblin condition. Consider the action of $G=\mathbb{Z}$ by the shift $T:\left(X_A,\mu\right)\to\left(X_A,\mu\right)$. Let $\Pi$ be the group of all permutations of $\mathbb{Z}$ that change only finitely many coordinates, and consider the equivalence relation that consists of all $\left(x,y\right)\in X_A\times X_A$ for which $y=\pi x$ for some $\pi\in\Pi$. This is a Borel countable equivalence relation, so by the Feldman--Moore Theorem it is the orbital equivalence relation of a countable group $\Pi_{A}$ of nonsingular automorphisms of $X_A$. The renormalization full-group $\mathscr{R}\left(T;\Pi_A\right)$ is usually a proper subgroup of $\left[\Pi_A\right]$. We write $\mathscr{R}_A$ and $\left[\left[\mathscr{R}_A\right]\right]$ for the renormalization full-group $\mathscr{R}\left(T;\Pi_A\right)$ and for the renormalization pseudo full-group $\mathscr{R}\left(T;\left[\left[\Pi_A\right]\right]\right)$, respectively.

Here we identify a collection of elements of $\left[\Pi_A\right]$ inside $\mathscr{R}\left(T;\Pi_A\right)$.

A \textit{block} $B$ in an SFT $X_{A}$ is a finite sequence $B=\left[b_1,\dotsc,b_L\right]$ of symbols from $\mathcal{S}$, such that $A\left(b_l,b_{l+1}\right)=1$ for $1\leq l\leq L-1$. For such $B$ we write $L=\mathrm{Length}\left(B\right)$. For a block $B$ in $X_{A}$ with $L=\mathrm{Length}\left(B\right)$ and for $i\in\mathbb{Z}$, we have the corresponding cylinder
$$B\left(i\right):=\left\{x\in X_A:\left[x_{i},\dotsc,x_{i+L-1}\right]=B\right\} \subset X_{A}.$$
A pair $\left(B,B'\right)$ of two blocks in $X_A$ is called an \textit{admissible pair} in $X_{A}$ with length $L\geq1$, and we write $\mathrm{Length}\left(B,B'\right)=L$, if it satisfies the following properties.
\begin{enumerate}
	\item $\mathrm{Length}\left(B\right)=\mathrm{Length}\left(B'\right)=L$.
	\item $B$ and $B'$ have the same first symbol.
	\item $B$ and $B'$ have the same last symbol.
\end{enumerate}
To avoid trivialities we always assume that $B\neq B'$. In particular we always have $\mathrm{Length}\left(B,B'\right)\geq3$. An example for admissible pair $\left(B,B'\right)$ in the Golden Mean SFT is the one of length $L=4$ defined by $B=\left[0,0,0,0\right]$ and $B'=\left[0,2,1,0\right]$.

\begin{dfn}
\label{Definition: Admissible Configuration}
Let $X_A$ be a topologically-mixing SFT with $A^M>0$ for some $M\geq1$. An \emph{admissible configuration} in $X_A$ is a sequence $\left(B_{k}\left(i_{k}\right),B_{k}'\left(j_{k}\right)\right)$, $k\geq1$, built out of the following ingredients.
\begin{itemize}
	\item A sequence $\left(B_{k},B_{k}'\right)$, $k\geq1$, of admissible pairs in $X_A$ with some $L\geq1$ such that
	$$\mathrm{Length}\left(B_k,B_k'\right)\leq L\quad\text{for all }k\geq1.$$
	\item A sequence $\left(j_{k}:k\geq1\right)$ of positive integers with
	$$j_{k+1}-j_{k}\geq L+M,\quad k\geq1.$$
	\item A sequence $\left(i_{k}:k\geq1\right)$ of negative integers with
	$$i_{k}-i_{k+1}\geq L+M,\quad k\geq1.$$
\end{itemize}
\end{dfn}

\begin{dfn}
Let $\mu$ be a Markov measure on $X_A$ defined by $\left(P_n:n\in\mathbb{Z}\right)$. Let $\left(B,B'\right)$ be an admissible pair in $X_A$ and $i,j\in\mathbb{Z}$ with $\left|i-j\right|\geq\mathrm{Length}\left(B,B'\right)$. Denote 
$$E_{i,j}:=B\left(i\right)\cap B'\left(j\right)\text{ and }E'_{i,j}:=B'\left(i\right)\cap B\left(j\right).$$
We define two types of elements of the pseudo full-group $\left[\left[\Pi_{A}\right]\right]$.
\begin{itemize}
	\item The corresponding \emph{ asymmetric admissible permutation} is of the form
	$$V:E_{i,j}\to E'_{i,j}$$
	and is defined to exchange the block $B$ in the coordinates $\left\{i,\dotsc,i+L-1\right\}$ with the block $B'$ in the coordinates $\left\{j,\dotsc,j+L-1\right\}$. We write such element by
$$V:B\left(i\right)\rightleftharpoons B'\left(j\right)\in\left[\left[\Pi_{A}\right]\right].$$
	\item The corresponding \emph{symmetric admissible permutation} is of the form
	$$V:E_{i,j}\cup E'_{i,j}\to E_{i,j}\cup E'_{i,j}$$
	and is defined on $E_{i,j}$ by $V:B\left(i\right)\rightleftharpoons B'\left(j\right)$ and on $E'_{i,j}$ by $V:B'\left(i\right)\rightleftharpoons B\left(j\right)$. We write such element by
$$V:B\left(i\right)\circlearrowleft B'\left(j\right)\in\left[\left[\Pi_{A}\right]\right].$$
\end{itemize}
\end{dfn}

Note that if we define admissible permutations to be the identity mappings outside of their domains, then symmetric admissible permutations remain one-to-one while asymmetric admissible permutations are no longer one-to-one.

Let us establish a notation. Given a Markov measure $\mu$ on $X_A$ defined by $\left(P_n:n\in\mathbb{Z}\right)$, for a block $B=\left[b_1,b_2,\dotsc,b_L\right]$ and $i\in\mathbb{Z}$ we write
$$P_i\left(B\right):=P_i\left(b_1,b_2\right)\dotsm P_{i+L-1}\left(b_{L-1},b_L\right).$$
The following formula is a direct computation using the properties of admissible permutations and of the Radon--Nikodym derivative.

\begin{claim}
\label{Claim: Admissible Permutation Derivative}
For an asymmetric admissible permutation $V:B\left(i\right)\rightleftharpoons B'\left(j\right)$ we have
$$V'\left(x\right)=\frac{P_{i}\left(B'\right)P_{j}\left(B\right)}{P_{i}\left(B\right)P_{j}\left(B'\right)}\quad\text{for }x\in B\left(i\right)\cap B'\left(j\right).$$
In particular, $V'$ is taking exactly one value on $B\left(i\right)\cap B'\left(j\right)$ and this value depends only on the coordinates of its admissible pair.\\
Similarly, for a symmetric admissible permutation $V:B\left(i\right)\circlearrowleft B'\left(j\right)$ we have
$$V'\left(x\right)=\begin{cases}
{\displaystyle \frac{P_{i}\left(B'\right)P_{j}\left(B\right)}{P_{i}\left(B\right)P_{j}\left(B'\right)}} & \text{for }x\in B\left(i\right)\cap B'\left(j\right)\\
\\
{\displaystyle \frac{P_{i}\left(B\right)P_{j}\left(B'\right)}{P_{i}\left(B'\right)P_{j}\left(B\right)}} & \text{for }x\in B'\left(i\right)\cap B\left(j\right)
\end{cases}.$$
\end{claim}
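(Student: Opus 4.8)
The plan is to compute the Radon--Nikodym derivative $V'(x) = \frac{d\mu\circ V}{d\mu}(x)$ directly from the definition, exploiting the fact that an admissible permutation $V:B(i)\rightleftharpoons B'(j)$ acts on a point $x \in E_{i,j} = B(i)\cap B'(j)$ by swapping the symbols in the coordinate window $\{i,\dots,i+L-1\}$ with those in $\{j,\dots,j+L-1\}$, so that $Vx \in E'_{i,j} = B'(i)\cap B(j)$, while leaving all other coordinates fixed. First I would fix $x \in B(i)\cap B'(j)$ and express $V'(x)$ as a limit of ratios $\frac{\mu(C_m)}{\mu(V^{-1}C_m)}$ over a refining sequence of cylinders $C_m \ni Vx$ shrinking to $Vx$; equivalently, since $V$ maps cylinders to cylinders in a measure-theoretically clean way, I would directly compare $\mu$ on a long cylinder containing $Vx$ against $\mu$ on its $V$-preimage containing $x$.

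The key observation is that the Markov cylinder probabilities factor according to \eqref{eq:21} and the definition of $\mu$ on cylinders, and that $B$ and $B'$ share the same first and last symbols (properties (2) and (3) of an admissible pair). Because the endpoints agree, the distribution factors $\pi_{i}(\cdot)$ and $\pi_{j}(\cdot)$ and all transition factors \emph{outside} the two windows $\{i,\dots,i+L-1\}$ and $\{j,\dots,j+L-1\}$ are identical for $x$ and $Vx$: the ``boundary'' transitions connecting each window to the rest of the configuration depend only on the first and last symbols, which are preserved under the swap. Hence in the ratio $\mu$ on the $Vx$-cylinder over $\mu$ on the $x$-cylinder, everything cancels except the internal transition products $P_i(B)$, $P_i(B')$, $P_j(B)$, $P_j(B')$, yielding
$$
V'(x) = \frac{P_i(B')\,P_j(B)}{P_i(B)\,P_j(B')}.
$$
Since this expression involves only the symbols of $B$ and $B'$ and the indices $i,j$, it is constant on $E_{i,j}$ and depends only on the admissible pair, which gives the ``in particular'' clause.

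For the symmetric permutation $V:B(i)\circlearrowleft B'(j)$ the same computation applies on each piece of the domain separately: on $E_{i,j}=B(i)\cap B'(j)$ it agrees with the asymmetric case, and on $E'_{i,j}=B'(i)\cap B(j)$ it is the derivative of the inverse swap $B'(i)\rightleftharpoons B(j)$, which by the same cancellation (with the roles of $B$ and $B'$ interchanged) equals the reciprocal $\frac{P_i(B)P_j(B')}{P_i(B')P_j(B)}$. I do not expect any serious obstacle here; the only point requiring care is the bookkeeping of \emph{which} transition factors cancel. The cleanest way to make this rigorous is to take the enclosing cylinder to run over coordinates from some $i-r$ to some $j+L-1+r$ (using $|i-j|\geq L$ so the windows are disjoint and correctly ordered), write out both cylinder measures using the product formula, and verify termwise that every factor outside the two windows, together with the two matching endpoint-distribution factors, is identical in numerator and denominator. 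The agreement of the shared first and last symbols is precisely what makes the boundary transitions cancel, so the main subtlety is simply confirming that properties (2) and (3) of an admissible pair are exactly what is needed, which I would state as the reason the cancellation goes through.
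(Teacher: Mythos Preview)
Your proposal is correct and matches the paper's approach: the paper itself does not give a detailed proof but simply states that the formula ``is a direct computation using the properties of admissible permutations and of the Radon--Nikodym derivative,'' and your plan of comparing cylinder measures and exploiting the shared first and last symbols of the admissible pair to cancel all boundary and exterior factors is precisely that computation spelled out.
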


For an admissible configuration $\left(B_{k}\left(i_{k}\right),B_{k}'\left(j_{k}\right)\right)$, $k\geq1$, we denote by $\left(D_k:k\geq1\right)$ the sequence of numbers
\begin{align}
\label{eq:15}
D_{k}:=\log\left(\frac{P_{i_{k}}\left(B_{k}'\right)P_{j_{k}}\left(B_{k}\right)}{P_{i_{k}}\left(B_{k}\right)P_{j_{k}}\left(B_{k}'\right)}\right)\asymp P_{i_{k}}\left(B_{k}'\right)P_{j_{k}}\left(B_{k}\right)-P_{i_{k}}\left(B_{k}\right)P_{j_{k}}\left(B_{k}'\right),
\end{align}
where the approximation is by the approximation of the logarithm in \ref{Fact: Log Approximation}.

\begin{lem}
\label{LemmaAdmissiblePermutations}
Let $\left(X_A,\mu\right)$ be a topologically-mixing MSFT that satisfies the Doeblin condition \ref{Doeblin} and suppose that the shift is nonsingular with respect to $\mu$. Taking the metric on $X_A$ as in Example \ref{Example:BernoulliShift}, we have that every symmetric admissible permutation belongs to $\mathscr{\mathscr{R}}_A$. Similarly, every asymmetric admissible permutation belongs to $\left[\left[\mathscr{R}_A\right]\right]$.
\end{lem}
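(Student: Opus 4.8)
The plan is to verify directly the defining condition of the renormalization (pseudo) full-group spelled out after Definition~\ref{Definition: Renormalization Full-Group}. Each admissible permutation already lies in $\left[\Pi_A\right]$ (symmetric case) or $\left[\left[\Pi_A\right]\right]$ (asymmetric case) by construction, and $\Pi_A$ is asymptotic for the shift $T$ for the chosen metric by the very same argument as in Example~\ref{Example:BernoulliShift}. Hence the only thing left to check is that for (almost) every $x$ in the domain of $V$,
$$\frac{\left(T^{n}\circ V\right)'\left(x\right)}{\left(T^{n}\right)'\left(x\right)}\xrightarrow[\left|n\right|\to\infty]{}1.$$

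First I would put this ratio in closed form. Observe that $\mu\circ T^{n}$ is again a Markov measure on $X_A$ satisfying the Doeblin condition with the same $\delta$, whose transition matrices are the shifted sequence $\left(P_{k+n}:k\in\mathbb{Z}\right)$, so the same blocks $\left(B,B'\right)$ form an admissible pair for $\mu\circ T^{n}$ as well. Since by the chain rule $\tfrac{(T^{n}\circ V)'}{(T^{n})'}=\tfrac{d\,(\mu\circ T^{n})\circ V}{d\,(\mu\circ T^{n})}$ is exactly the Radon--Nikodym derivative of $V$ with respect to $\mu\circ T^{n}$, Claim~\ref{Claim: Admissible Permutation Derivative} applied to $\mu\circ T^{n}$ gives, on $E_{i,j}=B(i)\cap B'(j)$,
$$\frac{\left(T^{n}\circ V\right)'\left(x\right)}{\left(T^{n}\right)'\left(x\right)}=\frac{P_{i+n}\left(B'\right)P_{j+n}\left(B\right)}{P_{i+n}\left(B\right)P_{j+n}\left(B'\right)},$$
with the reciprocal expression on $E'_{i,j}$ (this is the only place the symmetric/asymmetric distinction enters). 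Because the gap $\left|i-j\right|$ is a fixed finite number, it remains to show that this product of finitely many transition entries, at indices tending to $\pm\infty$, converges to $1$.

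For $n\to+\infty$ I would factor the ratio as $\tfrac{P_{i+n}(B')}{P_{j+n}(B')}\cdot\tfrac{P_{j+n}(B)}{P_{i+n}(B)}$. Each quotient of block-products is a product of $L-1$ quotients of entries $P_{i+n+l-1}(\cdot,\cdot)/P_{j+n+l-1}(\cdot,\cdot)$ whose indices differ by the fixed amount $j-i$; by Corollary~\ref{Corollary: Nonsingularity of the Shift} the consecutive differences $P_{m-1}-P_{m}$ vanish as $m\to\infty$, so telescoping over the bounded gap makes each such pair of entries asymptotically equal, and the uniform lower bound $\delta$ from the Doeblin condition upgrades this to the ratios tending to $1$. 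For $n\to-\infty$ the corollary controls only the reverse matrices, so I would first pass to them: telescoping the marginal factors via \eqref{eq:22}, and using that $B$ and $B'$ share their first and last symbols so the boundary $\pi$-factors cancel, one gets
$$\frac{P_{i+n}\left(B'\right)}{P_{i+n}\left(B\right)}=\prod_{l=1}^{L-1}\frac{\widehat{P}_{i+n+l}\left(b'_{l+1},b'_{l}\right)}{\widehat{P}_{i+n+l}\left(b_{l+1},b_{l}\right)}.$$
Regrouping the whole expression so that each comparison is again between two reverse-transition entries with identical arguments and indices differing by the fixed gap $j-i$, one argues exactly as before, now using the reverse statement of Corollary~\ref{Corollary: Nonsingularity of the Shift} that $\widehat{P}_{m-1}-\widehat{P}_{m}$ vanishes as $m\to-\infty$, together with the Doeblin-type lower bound on $\widehat{P}$ furnished by Proposition~\ref{Proposition: Mixing in MSFT}. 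In both directions every factor tends to $1$, hence so does the finite product. The asymmetric permutation thereby lands in $\left[\left[\mathscr{R}_A\right]\right]$; for the symmetric permutation the identical computation holds on $E'_{i,j}$ (with reciprocal ratios) and trivially off $E_{i,j}\cup E'_{i,j}$, and since that map is a genuine automorphism in $\left[\Pi_A\right]$ it lies in $\mathscr{R}_A$.

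The main obstacle I anticipate is the $n\to-\infty$ regime, since nonsingularity supplies asymptotic information only about the forward transitions $P_{m}$ at $+\infty$ and the reverse transitions $\widehat{P}_{m}$ at $-\infty$, so the two ends cannot be handled symmetrically. The structural input that makes the left end work is the "same endpoints" property of an admissible pair: it is precisely what forces the marginal $\pi$-factors to cancel when the forward block-products are rewritten through \eqref{eq:22}, whereas without it the conversion to reverse matrices would leave uncontrolled marginal terms. The remaining ingredients—telescoping over the fixed gap $\left|i-j\right|$ and passing from vanishing differences to ratios via the uniform Doeblin bound—are routine.
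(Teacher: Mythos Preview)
Your argument is correct and follows the same route as the paper: write the ratio $\left(T^{n}\circ V\right)'/\left(T^{n}\right)'$ explicitly as a finite product of transition entries and show each factor tends to $1$ via Corollary~\ref{Corollary: Nonsingularity of the Shift} together with the Doeblin lower bound. Your handling of the tail where the indices go to $-\infty$---rewriting the block-products through $\widehat{P}$ via \eqref{eq:22} and using the same-endpoints property $b_1=b_1'$, $b_L=b_L'$ to cancel the marginal $\pi$-factors---is in fact more careful than the paper's, which groups the factors as $P_{j-n+l-1}\left(s,t\right)/P_{i-n+l-1}\left(s,t\right)$ and invokes the corollary for both directions at once, whereas that corollary as stated controls the forward matrices only at $+\infty$ and the reverse matrices only at $-\infty$. (One harmless slip: the transition matrices of $\mu\circ T^{n}$ are $\left(P_{k-n}\right)_{k}$ rather than $\left(P_{k+n}\right)_{k}$, so your displayed ratio should carry $i-n$, $j-n$; since you take $\left|n\right|\to\infty$ this does not affect the argument.)
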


\begin{proof}
Since any symmetric admissible permutation is defined by two asymmetric admissible permutations on disjoint domains, it is enough to consider only the asymmetric case. Let $V:B\left(i\right)\rightleftharpoons B'\left(j\right)$, when $B=\left[b_1,b_2\dotsc,b_L\right]$ and $B'=\left[b_1',b_2',\dotsc,b_L'\right]$ with $b_1=b_1'$ and $b_L=b_L'$. Note that for every $n\in\mathbb{Z}$ and $x\in B\left(i\right)\cap B'\left(j\right)$, $T^n\left(x\right)$ and $T^n\left(Vx\right)$ differ only in the coordinates $\left\{j-n,\dotsc,j-n+L-1\right\}$ and $\left\{i-n,\dotsc,i-n+L-1\right\}$, then since $b_1=b_1'$ and $b_L=b_L'$ we have
$$\frac{\left(T^{n}\circ V\right)'\left(x\right)}{\left(T^{n}\right)'\left(x\right)}=\frac{d\mu\circ T^{n}\circ V}{d\mu\circ T^{n}}\left(x\right)=\prod_{l=1}^{L}\frac{P_{j-n+l-1}\left(b_{l},b_{l+1}\right)}{P_{j-n+l-1}\left(b_{l}',b_{l+1}'\right)}\cdot\frac{P_{i-n+l-1}\left(b_{l}',b_{l+1}'\right)}{P_{i-n+l-1}\left(b_{l},b_{l+1}\right)}.$$
By the Doeblin condition and Corollary \ref{Corollary: Nonsingularity of the Shift}, for every $s,t\in\mathcal{S}$ with $A\left(s,t\right)=1$ and every $1\leq l\leq L$,
$$\frac{P_{j-n+l-1}\left(s,t\right)}{P_{i-n+l-1}\left(s,t\right)}\asymp 1+\left(P_{j-n+l-1}\left(s,t\right)-P_{i-n+l-1}\left(s,t\right)\right)\xrightarrow[\left|n\right|\to\infty]{}1.$$
As the length of the product is bounded by $L$ uniformly in $n$, this shows that $\frac{\left(T^{n}\circ V\right)'\left(x\right)}{\left(T^{n}\right)'\left(x\right)}\xrightarrow[\left|n\right|\to\infty]{}1$ for $\mu$-a.e. $x\in X_A$.
\end{proof}

\section{Proof of the Divergent Scenario}

Here we prove Theorem \ref{Theorem: Theorem B}.

\begin{lem}
\label{Lemma: Partial Limits Essential Values}
For every admissible configuration $\left(B\left(i_k\right),B'\left(j_k\right)\right)$, $k\geq1$, the set $\mathcal{L}\left(D_{k}:k\geq1\right)$ of partial limits of $\left(D_k:k\geq1\right)$ that was defined in \eqref{eq:15} is contained in the ratio set $\mathrm{e}\left(\mathscr{R}_A\right)$. In particular, if this partial limits set contains a positive length interval, or at least two numbers independent over the rationals, then $\mathrm{e}\left(\mathscr{\mathscr{R}}_{A}\right)=\mathbb{R}$.
\end{lem}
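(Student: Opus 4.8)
The plan is to show that each partial limit $r \in \mathcal{L}(D_k : k\geq 1)$ is an essential value for $\mathscr{R}_A$ by applying the approximation criterion of Lemma \ref{Lemma: Approximating Essential Values}. The key fact I would exploit is that by Lemma \ref{LemmaAdmissiblePermutations} every symmetric admissible permutation belongs to $\mathscr{R}_A$, and by Claim \ref{Claim: Admissible Permutation Derivative} its Radon--Nikodym derivative takes exactly the value $D_k$ (in log form) on the set $E_{i_k,j_k} = B(i_k)\cap B'(j_k)$, depending only on the coordinates of the admissible pair. So I already have a supply of group elements in $\mathscr{R}_A$ whose cocycle values are precisely the $D_k$; the task is purely to verify that these can be arranged to satisfy the density/positive-proportion hypothesis of Lemma \ref{Lemma: Approximating Essential Values}.

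**First I would** fix $r \in \mathcal{L}(D_k:k\geq 1)$, choose a subsequence along which $D_k \to r$, and take $\eta$ in Lemma \ref{Lemma: Approximating Essential Values} to be a constant depending only on the Doeblin $\delta$ (and on $L$, $M$). Given a cylinder $C \in \mathcal{C}$ with $\mu(C) > 0$ and $\epsilon > 0$, I would choose $k$ large enough from the subsequence that $|D_k - r| < \epsilon$ and that the supporting coordinates $i_k, j_k$ of the admissible pair lie outside the coordinates on which $C$ depends (possible since $|i_k|, j_k \to \infty$ in an admissible configuration). I then set $V$ to be the symmetric admissible permutation $B_k(i_k) \circlearrowleft B_k'(j_k)$, restricted to $F := C \cap E_{i_k,j_k}$. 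Since $V$ moves only the coordinates in $\{i_k,\dots,i_k+L-1\} \cup \{j_k,\dots,j_k+L-1\}$, which are disjoint from those defining $C$, we have $V(F) \subset C$, and $\varphi_V(x) = D_k$ is constant on $F$ with $|\varphi_V(x) - r| < \epsilon$ there.

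**The main obstacle** is establishing the lower bound $\mu(F) \geq \eta\,\mu(C)$, i.e.\ that the event $E_{i_k,j_k}$ carries a definite proportion of $C$ uniformly. This is exactly where the Doeblin condition and the mixing estimates of Proposition \ref{Proposition: Mixing in MSFT} enter: the separation requirements $j_{k+1}-j_k \geq L+M$ and $i_k - i_{k+1} \geq L+M$ in the admissible configuration, together with part (3) of Proposition \ref{Proposition: Mixing in MSFT}, allow me to decouple the cylinder $B_k(i_k)\cap B_k'(j_k)$ from $C$ up to a multiplicative constant $C(\delta,M)$, while parts (1)--(2) bound the probability of the block pattern below by a positive constant depending only on $\delta$, $L$. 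Multiplying these gives $\mu(E_{i_k,j_k} \mid C) \geq \eta$ with $\eta = \eta(\delta, L, M) > 0$ independent of $C$ and $k$, as required. With the hypothesis of Lemma \ref{Lemma: Approximating Essential Values} verified, $r$ is an essential value, so $\mathcal{L}(D_k:k\geq 1) \subset \mathrm{e}(\mathscr{R}_A)$.

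**Finally**, the ``in particular'' clause is immediate from the structure of the ratio set. Since $\mathrm{e}(\mathscr{R}_A)$ is a closed additive subgroup of $\mathbb{R}$, it is either $\{0\}$, a discrete group $\lambda\mathbb{Z}$, or all of $\mathbb{R}$. A positive-length interval of partial limits cannot fit in $\{0\}$ or in any $\lambda\mathbb{Z}$, and two partial limits rationally independent cannot both lie in a single $\lambda\mathbb{Z}$; hence in either case the only possibility is $\mathrm{e}(\mathscr{R}_A) = \mathbb{R}$.
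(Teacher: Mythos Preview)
Your proposal is correct and follows essentially the same route as the paper's proof: pick $k$ along a subsequence with $D_k$ close to $r$ and with $i_k,j_k$ far outside the support of the test cylinder, use the admissible permutation supported on those coordinates, and apply Proposition~\ref{Proposition: Mixing in MSFT} to obtain the uniform lower bound $\mu(F)\geq\eta\,\mu(C)$ required by Lemma~\ref{Lemma: Approximating Essential Values}. Two small remarks: (i) the paper uses the \emph{asymmetric} admissible permutation $V:B_K(i_K)\rightleftharpoons B_K'(j_K)$, which already lies in $\left[\left[\mathscr{R}_A\right]\right]$ by Lemma~\ref{LemmaAdmissiblePermutations}; your choice of the symmetric one restricted to $E_{i_k,j_k}$ amounts to the same map on $F$, so either works; (ii) the gap conditions $j_{k+1}-j_k\geq L+M$ and $i_k-i_{k+1}\geq L+M$ from the admissible configuration are not what drives the decoupling from $C$ here---what you actually use (and state correctly earlier) is that $i_k\to -\infty$, $j_k\to +\infty$, so for $k$ large the blocks sit at distance $\geq M$ from the support of $C$, whence Proposition~\ref{Proposition: Mixing in MSFT}(3) applies twice to separate $B_k(i_k)$, $C$, and $B_k'(j_k)$.
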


\begin{proof}
Let $r\in\mathcal{L}\left(D_{k}:k\geq1\right)$. Let $0<\epsilon<\min\left\{\left|r\right|,\delta\right\}$, where $\delta>0$ is the constant of the Doeblin condition. Let $E\in\sigma\left(X_k:\left|k\right|\leq N\right)$ for some $N\geq1$. Fix some large $K\geq1$ such that
$$j_{K}>N+M,\text{ }i_K<-N-M \text{ and }\left|D_K-r\right|<\epsilon,$$
where $M$ is such that $A^M>0$. Let
$$F:=B_K\left(i_{K}\right)\cap B_K'\left(j_{K}\right)\cap E\subset E$$
and consider the asymmetric admissible permutation $V:B_{K}\left(i_{K}\right)\rightleftharpoons B'_{K}\left(j_{k}\right)$. Then $V$ is a mapping of the form $V:F\to E$ and by Claim \ref{Claim: Admissible Permutation Derivative} it satisfies
$$\log V'\left(x\right)=D_K\in\left(r-\epsilon,r+\epsilon\right)\quad\text{for }x\in F.$$
Finally, since $j_K-N>M$ and $i_K+N<M$, we apply Proposition \ref{Proposition: Mixing in MSFT} twice to get
\begin{align*}
\mu\left(F\right)
&\geq C\left(\delta,M\right)^{2}\mu\left(B_K\left(i_{K}\right)\right)\mu\left(B_K'\left(j_{K}\right)\right)\mu\left(E\right)\\
& \qquad \geq C\left(\delta,M\right)^{2}\delta^{2\left(M+L\right)}\mu\left(E\right),
\end{align*}
where we used that in general, for every admissible block $B=\left[b_{1},\dotsc,b_{L}\right]$ and every $i\in\mathbb{Z}$, by Proposition \ref{Proposition: Mixing in MSFT} we have
\begin{align}
\label{eq:4}
\begin{split}
\mu\left(B\left(i\right)\right)
& =\pi_{i}\left(b_{1}\right)P_{i}\left(b_{1},b_{2}\right)\cdot\dotsm\cdot P_{i+L-1}\left(b_{L-1},b_{L}\right)\geq \delta^{M+L}.
\end{split}
\end{align}
This shows that the condition for extending $r$ to be an essential value of $\mathrm{e}\left(\mathscr{\mathscr{R}}_{A}\right)$ as in the Lemma \ref{Lemma: Approximating Essential Values}
is fulfilled for $\eta:=C\left(\delta,M\right)^{2}\delta^{2\left(M+L\right)}>0$.
\end{proof}

\begin{lem}
\label{Lemma: Connected Image}
Let $\mathcal{S}'\subset\mathcal{S}\times\mathcal{S}$ be some set with cardinality $d'$. Consider the set $\mathcal{L}\left(P'_n:n\geq1\right)$ of partial limits of the sequence
$$P'_n:=\left(P_n\left(s,t\right):\left(s,t\right)\in\mathcal{S}'\right)\in\left[\delta,1-\delta\right]^{d'},\quad n\geq1.$$
Then the image of every continuous real-valued function on $\mathcal{L}\left(P'_n:n\geq1\right)$ is a compact, connected set.
\end{lem}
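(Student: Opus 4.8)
The plan is to prove slightly more than the stated compactness: I will show that the partial-limit set $\mathcal{L}\left(P'_n:n\geq1\right)$ is both compact \emph{and} connected as a subset of $\mathbb{R}^{d'}$, so that the continuous image of this compact connected set under any real-valued continuous function is a compact interval, in particular a compact set. Compactness of $\mathcal{L}\left(P'_n:n\geq1\right)$ is immediate: the sequence lies in the compact cube $\left[\delta,1-\delta\right]^{d'}$, and the set of subsequential limits of any sequence in a compact metric space is nonempty and closed, hence compact.

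For connectedness the decisive input is that consecutive terms of the sequence get arbitrarily close. Indeed, by Corollary \ref{Corollary: Nonsingularity of the Shift} the nonsingularity of the shift forces $P_{n+1}\left(s,t\right)-P_{n}\left(s,t\right)\xrightarrow[n\to\infty]{}0$ for every $\left(s,t\right)\in\mathcal{S}'$, and since $\mathcal{S}'$ is finite this yields $\left\|P'_{n+1}-P'_{n}\right\|\xrightarrow[n\to\infty]{}0$ in $\mathbb{R}^{d'}$. I would then invoke the classical fact that the subsequential-limit set of a bounded sequence with vanishing consecutive increments is connected.

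To keep the argument self-contained I would prove this classical fact by contradiction. Suppose $\mathcal{L}\left(P'_n:n\geq1\right)$ splits as a disjoint union $L_1\sqcup L_2$ of two nonempty compact sets; being compact and disjoint, they satisfy $\mathrm{dist}\left(L_1,L_2\right)=3\rho>0$. Let $U_1,U_2$ be the open $\rho$-neighborhoods of $L_1,L_2$, so that $\mathrm{dist}\left(U_1,U_2\right)\geq\rho$. A short compactness argument shows that only finitely many terms $P'_n$ lie outside $U_1\cup U_2$ (otherwise they would accumulate to a limit point outside $L_1\cup L_2$), while both $U_1$ and $U_2$ contain infinitely many terms because each of $L_1,L_2$ is nonempty and consists of limit points. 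Consequently the sequence passes between $U_1$ and $U_2$ infinitely often, producing infinitely many indices $n$ with $\left\|P'_{n+1}-P'_{n}\right\|\geq\mathrm{dist}\left(U_1,U_2\right)\geq\rho$, contradicting $\left\|P'_{n+1}-P'_{n}\right\|\to0$.

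Having established that $\mathcal{L}\left(P'_n:n\geq1\right)$ is compact and connected, the conclusion is routine: a continuous real-valued function maps a compact connected set onto a compact connected subset of $\mathbb{R}$, that is, a (possibly degenerate) closed bounded interval, which is in particular compact. The only genuinely nontrivial step is the connectedness, whose crux is the vanishing of consecutive increments supplied by Corollary \ref{Corollary: Nonsingularity of the Shift}; I expect this to be the main point to get right, while compactness and the behavior of continuous images are standard. I note that this connectedness is exactly what makes the lemma useful downstream, since together with Lemma \ref{Lemma: Partial Limits Essential Values} it lets one conclude that whenever the relevant continuous function is nonconstant on $\mathcal{L}\left(P'_n:n\geq1\right)$ the image contains a positive length interval, forcing $\mathrm{e}\left(\mathscr{R}_A\right)=\mathbb{R}$.
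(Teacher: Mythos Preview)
Your proof is correct and rests on the same two ingredients as the paper's: the vanishing of consecutive increments $\left\|P'_{n}-P'_{n-1}\right\|\to 0$ supplied by Corollary~\ref{Corollary: Nonsingularity of the Shift}, and the elementary fact that a bounded sequence with vanishing increments has a connected set of partial limits. The paper organizes the argument slightly differently: it first pushes the sequence forward through $f$, using the identity $f\left(\mathcal{L}\left(P'_n\right)\right)=\mathcal{L}\left(f\left(P'_n\right)\right)$ together with uniform continuity to obtain $f\left(P'_n\right)-f\left(P'_{n-1}\right)\to 0$, and then invokes the one-dimensional version of the elementary fact. You instead establish connectedness of $\mathcal{L}\left(P'_n\right)$ directly in $\mathbb{R}^{d'}$ and only then apply $f$. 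Your route has the minor advantage that it explicitly yields connectedness of $\mathcal{L}\left(P'_n\right)$ itself, which is exactly the form in which the lemma is used in the proof of Theorem~\ref{Theorem: Theorem B}.
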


\begin{proof}
A continuous real-valued function $f$ on $\mathcal{L}\left(P'_n:n\geq1\right)$ satisfies
$$f\left(\mathcal{L}\left(P'_{n}:n\geq1\right)\right)=\mathcal{L}\left(f\left(P'_{n}\right):n\geq1\right).$$
By Corollary \ref{Corollary: Nonsingularity of the Shift} we have
$$\mathrm{d}\left(P'_n,P'_{n-1}\right)\xrightarrow[n\to\infty]{}0$$ for some Euclidean metric $\mathrm{d}$ on $\left[\delta,1-\delta\right]^{d'}$, and since $f$ is uniformly continuous we have 
$$f\left(P'_n\right)-f\left(P'_{n-1}\right)\xrightarrow[n\to\infty]{}0.$$
Then the lemma follows from the following elementary fact. The partial limits set of a sequence $\left(p_n:n\geq1\right)$ of numbers with the property $p_n-p_{n-1}\xrightarrow[n\to\infty]{}0$ is a compact, connected set.
\end{proof}

\begin{lem}
\label{Lemma: Combinatorial Lemma}
Let $\mathcal{S}$ be a finite set. Let $P$ and $Q$ be a pair of different irreducible and aperiodic stochastic $\left|\mathcal{S}\right|\times\left|\mathcal{S}\right|$-matrices such that
$$P\left(s,t\right)=0\iff Q\left(s,t\right)=0,\quad s,t\in\mathcal{S}.$$
Then there is $L\geq1$ and a pair of elements $\alpha$ and $\beta$ in $\mathcal{S}$, as well as a pair of finite paths $\left[b_{1},\dots,b_{L}\right]$ and $\left[b'_{1},\dots,b'_{L}\right]$ in $\mathcal{S}$ that are admissible for $P$ (and $Q$), such that
$$\frac{P\left(\alpha,b_{1}\right)\dotsm P\left(b_{L},\beta\right)}{P\left(\alpha,b'_{1}\right)\dotsm P\left(b'_{L},\beta\right)}\neq\frac{Q\left(\alpha,b_{1}\right)\dotsm Q\left(b_{L},\beta\right)}{Q\left(\alpha,b'_{1}\right)\dotsm Q\left(b'_{L},\beta\right)}.$$
\end{lem}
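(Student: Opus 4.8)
The plan is to argue by contradiction. Suppose no such $L$, $\alpha$, $\beta$ and pair of paths exist. Writing $P\left(\left[s_0,\dots,s_k\right]\right)=P\left(s_0,s_1\right)\cdots P\left(s_{k-1},s_k\right)$ for the weight of an admissible path, and similarly for $Q$, the negation states precisely that for any two admissible paths $\gamma,\gamma'$ sharing the same endpoints and the same length one has $P\left(\gamma\right)/P\left(\gamma'\right)=Q\left(\gamma\right)/Q\left(\gamma'\right)$, equivalently $P\left(\gamma\right)/Q\left(\gamma\right)=P\left(\gamma'\right)/Q\left(\gamma'\right)$. Thus the quantity $P\left(\gamma\right)/Q\left(\gamma\right)$ depends only on the endpoints and the length of $\gamma$; I would denote by $\Phi_n\left(\alpha,\beta\right)$ its common value over all length-$n$ admissible paths from $\alpha$ to $\beta$.

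First I would sum over paths. Since $P\left(\gamma\right)=\Phi_n\left(\alpha,\beta\right)Q\left(\gamma\right)$ for every length-$n$ path $\alpha\to\beta$, summing over all of them yields $P^n\left(\alpha,\beta\right)=\Phi_n\left(\alpha,\beta\right)Q^n\left(\alpha,\beta\right)$, where $P^n,Q^n$ are the matrix powers. By irreducibility and aperiodicity (primitivity) there is $M$ with $P^M,Q^M>0$, so for $n\geq M$ every entry is positive and $\Phi_n\left(\alpha,\beta\right)=P^n\left(\alpha,\beta\right)/Q^n\left(\alpha,\beta\right)$. I would also record the multiplicativity obtained by splitting a length-$\left(n+1\right)$ path into a length-$n$ path followed by one edge: $\Phi_{n+1}\left(\alpha,\gamma\right)=\Phi_n\left(\alpha,\beta\right)P\left(\beta,\gamma\right)/Q\left(\beta,\gamma\right)$ for every edge $\left(\beta,\gamma\right)$.

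Next I would pass to the limit $n\to\infty$. By Perron--Frobenius for primitive stochastic matrices, $P^n\left(\alpha,\beta\right)\to p\left(\beta\right)$ and $Q^n\left(\alpha,\beta\right)\to q\left(\beta\right)$, where $p,q$ are the unique strictly positive stationary distributions of $P,Q$. Hence $\Phi_n\left(\alpha,\beta\right)\to h\left(\beta\right):=p\left(\beta\right)/q\left(\beta\right)>0$, independent of $\alpha$, and letting $n\to\infty$ in the multiplicativity relation produces the coboundary identity $P\left(\beta,\gamma\right)=\frac{h\left(\gamma\right)}{h\left(\beta\right)}Q\left(\beta,\gamma\right)$ for every edge $\left(\beta,\gamma\right)$ (and both sides vanish on non-edges).

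Finally I would exploit stochasticity. Summing the coboundary identity over $\gamma$ and using $\sum_\gamma P\left(\beta,\gamma\right)=1$ gives $\sum_\gamma h\left(\gamma\right)Q\left(\beta,\gamma\right)=h\left(\beta\right)$, that is $Qh=h$ with $h$ strictly positive, viewing $h$ as a column vector. Since $Q$ is stochastic and irreducible, Perron--Frobenius forces the only positive right $1$-eigenvector to be constant, so $h$ is constant; equivalently $p=q$, whereupon the coboundary identity collapses to $P=Q$, contradicting $P\neq Q$. I expect the main obstacle to be the conceptual translation of the purely combinatorial hypothesis into the coboundary relation: one must read off the constancy of $P\left(\gamma\right)/Q\left(\gamma\right)$, sum it into the matrix powers, and justify the passage to the limit, using primitivity both for the existence of paths of every large length and for the convergence of the powers. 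Once the coboundary form is in hand, the stochasticity-plus-Perron--Frobenius endgame is routine.
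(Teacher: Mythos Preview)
Your proof is correct and substantially more direct than the paper's. Both argue by contradiction from the same hypothesis, namely that $P\left(\gamma\right)Q\left(\gamma'\right)=Q\left(\gamma\right)P\left(\gamma'\right)$ for every admissible pair $\left(\gamma,\gamma'\right)$, but the routes diverge sharply from there. The paper interprets this identity measure-theoretically: it introduces the two product measures $\mu_P\otimes\mu_Q$ and $\mu_Q\otimes\mu_P$ on $X_A\times X_A$, observes that the hypothesis forces them to agree on the class $\mathfrak{A}$ of ``diagonal'' cylinders $B\left(n\right)\times B'\left(n\right)$, then invokes ergodicity of the product shift, a stopping-time argument, and Dynkin's $\pi$--$\lambda$ theorem to upgrade this to equality on the full Borel $\sigma$-algebra, from which $\pi_P=\pi_Q$ and $P=Q$ are read off. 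You instead stay entirely within finite-dimensional linear algebra: summing the hypothesis over paths yields $P^n\left(\alpha,\beta\right)=\Phi_n\left(\alpha,\beta\right)Q^n\left(\alpha,\beta\right)$, Perron--Frobenius gives the limit $\Phi_n\left(\alpha,\beta\right)\to h\left(\beta\right)=p\left(\beta\right)/q\left(\beta\right)$, the one-step multiplicativity passes to the coboundary relation $P\left(\beta,\gamma\right)=h\left(\gamma\right)h\left(\beta\right)^{-1}Q\left(\beta,\gamma\right)$, and stochasticity plus the simplicity of the Perron eigenvalue force $h$ to be constant. Your argument avoids all the measure-theoretic machinery and isolates exactly the structural content of the hypothesis --- that $P$ and $Q$ are cohomologous via a positive potential --- which the paper's approach obscures. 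The paper's method does have the virtue of being self-contained within the ergodic-theoretic framework already in play, but yours is shorter, more transparent, and arguably the ``right'' proof.
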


Before the proof we establish some notations. For a matrix $P$ and a block $B=\left[b_{1},\dots,b_{L}\right]$ we write
$$P\left(B\right)=P\left(b_{1},b_{2}\right)\dotsm P\left(b_{L-1},b_{L}\right).$$
For a stochastic matrix $P$, consider the topologically-mixing SFT $X_A$ of $\mathcal{S}^{\mathbb{Z}}$ where $A$ is the $\left\{0,1\right\}$-valued $\left|\mathcal{S}\right|\times\left|\mathcal{S}\right|$-matrix defined by
$$A\left(s,t\right)=1\iff P\left(s,t\right)>0,\quad s,t\in\mathcal{S}.$$
For such $A$ and $P$ denote by $\mu_{P}$ the homogeneous Markov measure on $X_A$ defined by $P$ and its stationary distribution. For every $\alpha$ and $\beta$ in $\mathcal{S}$ and every integers $n<m$ denote by $\mathfrak{B}_{A}^{\left[n,m\right]}\left(\alpha,\beta\right)$ the finite collection of all $A$-admissible blocks on the coordinates $\left\{n,\dotsc,m\right\}$, who take the form $\left[\alpha,s_{n+1},\dotsc,s_{m-1},\beta\right]$ for some $s_{n+1},\dotsc,s_{m-1}$ in $\mathcal{S}$.

\begin{proof}[Proof of Lemma \ref{Lemma: Combinatorial Lemma}]
Suppose toward a contradiction that the assertion in the lemma is false. This means that
\begin{equation}
\label{eq:12}
P\left(B\right)Q\left(B'\right)=Q\left(B\right)P\left(B'\right)\text{ for every admissible pair }\left(B,B'\right)\text{ in }X_A,
\end{equation}
where $A$ is the adjacency matrix corresponding to $P$ (and $Q$). Consider the space $X_{A}\times X_{A}\subset\mathcal{S}^{\mathbb{Z}}\times\mathcal{S}^{\mathbb{Z}}$ with the Borel sigma-algebra $\mathcal{B}\left(X_{A}\times X_{A}\right)$ and let
$$\mathfrak{A}=\left\{ B\left(n\right)\times B'\left(n\right):\left(B,B'\right)\text{ is an admissible pair in }X_A\text{ and }n\in\mathbb{Z}\right\}.$$
We consider the two trivial ways to define joining on $X_A\times X_A$:
$$\mu_{\left(P,Q\right)}:=\mu_{P}\otimes\mu_{Q}\text{ and }\mu_{\left(Q,P\right)}:=\mu_{Q}\otimes\mu_{P}.$$
Then assumption \eqref{eq:12} means that $\mu_{\left(P,Q\right)}=\mu_{\left(Q,P\right)}$ on $\mathfrak{A}$.

\begin{claim}
\label{Claim1}
The product of the shifts
$$T\times T:X_{A}\times X_{A}\to X_{A}\times X_{A}$$
is ergodic with respect to $\mu_{\left(P,Q\right)}$.
\end{claim}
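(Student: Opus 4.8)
The plan is to prove the stronger statement that $T\times T$ is \emph{mixing} with respect to $\mu_{\left(P,Q\right)}=\mu_{P}\otimes\mu_{Q}$; ergodicity then follows at once. It is essential here to work with mixing rather than merely ergodicity: the product of two ergodic measure-preserving transformations need not be ergodic, whereas the product of two mixing transformations is always mixing. So the whole point is to upgrade each coordinate from ergodicity to mixing and then exploit the product structure.

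First I would recall that each factor is mixing. Since $P$ is irreducible and aperiodic, the associated adjacency matrix $A$ is primitive, and by the Perron--Frobenius theorem $P^{n}$ converges, exponentially fast, to the rank-one stochastic matrix all of whose rows equal the stationary distribution $\lambda_{P}$ of $\mu_{P}$. A routine computation on cylinders then shows that the homogeneous Markov shift $\left(X_{A},\mu_{P},T\right)$ is mixing, and the same applies verbatim to $\left(X_{A},\mu_{Q},T\right)$. (The decoupling of the distant past from the distant future used here is precisely of the type quantified, in the non-homogeneous setting, by Proposition \ref{Proposition: Mixing in MSFT}(3).)

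Next I would establish mixing of $T\times T$ by testing only on cylinder rectangles. Finite disjoint unions of sets of the form $C_{1}\times C_{2}$, with $C_{1},C_{2}$ cylinders in $X_{A}$, form an algebra that is dense in $\mathcal{B}\left(X_{A}\times X_{A}\right)$ with respect to $\mu_{\left(P,Q\right)}$, so it suffices to verify the mixing condition for pairs of such rectangles. For $C=C_{1}\times C_{2}$ and $D=D_{1}\times D_{2}$ the product structure gives
$$\mu_{\left(P,Q\right)}\bigl(\left(T\times T\right)^{-n}C\cap D\bigr)=\mu_{P}\bigl(T^{-n}C_{1}\cap D_{1}\bigr)\,\mu_{Q}\bigl(T^{-n}C_{2}\cap D_{2}\bigr),$$
and as $n\to\infty$ each factor on the right converges by the mixing of the corresponding coordinate, yielding
$$\mu_{\left(P,Q\right)}\bigl(\left(T\times T\right)^{-n}C\cap D\bigr)\xrightarrow[n\to\infty]{}\mu_{\left(P,Q\right)}\left(C\right)\,\mu_{\left(P,Q\right)}\left(D\right).$$
Passing from rectangles to the generating algebra and then to arbitrary Borel sets by a standard approximation proves that $T\times T$ is mixing, hence ergodic with respect to $\mu_{\left(P,Q\right)}$.

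This argument is essentially standard, so I do not expect a serious obstacle. The only point requiring care is the reduction step: one must check that it genuinely suffices to verify the mixing condition on the generating family of cylinder rectangles rather than on all Borel sets. This is handled by the usual approximation of indicator functions (or $L^{2}$ functions) by elements of the dense algebra, together with the uniform $L^{\infty}$-bound on the transfer operator, so that the error terms introduced by the approximation are controlled uniformly in $n$.
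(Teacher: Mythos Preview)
Your proposal is correct and follows essentially the same approach as the paper: the paper's proof simply notes that the shift is strongly mixing with respect to each of $\mu_P$ and $\mu_Q$ (citing a reference), and then invokes the standard fact that the product of two mixing transformations is ergodic. You have supplied the details the paper omits, but the strategy is identical.
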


\begin{subproof}
It is well-known \cite[Corollary~1.1]{sarig2009lecture} that for an irreducible and aperiodic stochastic matrix $P$ the shift is (strongly-)mixing with respect to the Markov measure $\mu_P$. In our case, since the shift $T$ is mixing with respect to both $\mu_P$ and $\mu_Q$ it follows that $T\times T$ is ergodic with respect to $\mu_{\left(P,Q\right)}$ and the claim follows.
\end{subproof}

\begin{claim}
\label{Claim2}
$\mathfrak{A}$ is generating $\mathcal{B}\left(X_{A}\times X_{A}\right)$ up to $\mu_{\left(P,Q\right)}$-null sets.
\end{claim}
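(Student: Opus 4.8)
The plan is to prove that the $\mu_{(P,Q)}$-completion $\mathcal{F}$ of $\sigma(\mathfrak{A})$ is all of $\mathcal{B}(X_A\times X_A)$, working modulo $\mu_{(P,Q)}$-null sets throughout. Since cylinder rectangles generate $\mathcal{B}(X_A\times X_A)$, since each cylinder rectangle $C(a)\times C'(a')$ is a finite disjoint union of rectangles over the common window $[\min(a,a'),\max(a+\mathrm{Length}(C)-1,a'+\mathrm{Length}(C')-1)]$, and since a common-window rectangle $C(a)\times C'(a)$ equals the finite intersection $\bigcap_{i}\{x_i=C_i,\ y_i=C'_i\}$, it suffices to show that every ``single site'' set $\{(x,y):x_n=s,\ y_n=t\}$ lies in $\mathcal{F}$, for all $n\in\mathbb{Z}$ and $s,t\in\mathcal{S}$. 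I abbreviate such a set by $\{x_n=s,y_n=t\}$.

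The one real obstacle is that every element $B(n)\times B'(n)$ of $\mathfrak{A}$ forces $x$ and $y$ to agree at the two shared endpoints of the admissible pair $(B,B')$, so any set assembled from $\mathfrak{A}$ inherits a coupling $x=y$ at certain sites. The key fact that dissolves this coupling is a recurrence statement. Under $\mu_{(P,Q)}=\mu_P\otimes\mu_Q$ the process $(x_j,y_j)_{j\in\mathbb{Z}}$ is a stationary Markov chain on $\mathcal{S}\times\mathcal{S}$ with transition matrix $P\otimes Q$, which is primitive because $P$ and $Q$ are irreducible and aperiodic. As an irreducible recurrent finite chain it meets the diagonal $\{(a,a):a\in\mathcal{S}\}$ infinitely often both as $j\to+\infty$ and as $j\to-\infty$, almost surely. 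Hence both $\bigcup_{k\geq1}\{x_{n+k}=y_{n+k}\}$ and $\bigcup_{k\geq1}\{x_{n-k}=y_{n-k}\}$ have full $\mu_{(P,Q)}$-measure.

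With this I treat the off-diagonal case $s\neq t$ first. For $k,k'\geq1$ set $V_{k',k}:=\{x_n=s,\ y_n=t,\ x_{n-k'}=y_{n-k'},\ x_{n+k}=y_{n+k}\}$. Then $V_{k',k}\in\mathcal{F}$, because it is precisely the union, over all admissible pairs $(\tilde B,\tilde B')$ on the window $[n-k',n+k]$ with $\tilde B_n=s$, $\tilde B'_n=t$, and common endpoints at $n-k'$ and $n+k$, of the sets $\tilde B(n-k')\times\tilde B'(n-k')\in\mathfrak{A}$; here $s\neq t$ guarantees $\tilde B\neq\tilde B'$, so each summand is a genuine admissible pair. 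Taking $\bigcup_{k\geq1}$ and using the right-hand diagonal recurrence gives $\{x_n=s,y_n=t,x_{n-k'}=y_{n-k'}\}$ mod null; then taking $\bigcup_{k'\geq1}$ and using the left-hand recurrence yields $\{x_n=s,y_n=t\}$ mod null.

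For the diagonal case $s=t$ the scheme needs one adjustment, since realizing $x_n=y_n=s$ is exactly what a shared endpoint does. Here I use windows $[n,n+k]$ whose first site is $n$, and let $R_k$ be the union over admissible pairs $(\tilde B,\tilde B')$ on $[n,n+k]$ with first symbol $s$ and common last symbol, so that $R_k=\{x_n=y_n=s,\ x_{n+k}=y_{n+k}\}\setminus\{x|_{[n,n+k]}=y|_{[n,n+k]}\}\in\mathcal{F}$. Two independent chains issued from the common value $s$ differ at some finite time almost surely, yet still return to the diagonal infinitely often, so $\bigcup_{k\geq1}R_k=\{x_n=s,y_n=s\}$ mod null. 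This exhausts all cases and gives $\mathcal{F}=\mathcal{B}(X_A\times X_A)$ mod null. The crux of the whole argument, and the only place the hypotheses on $P$ and $Q$ are used, is the diagonal recurrence, which lets the forced endpoint couplings be unioned away by pushing the shared endpoints to $\pm\infty$.
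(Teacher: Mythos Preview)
Your proposal is correct. Both your argument and the paper's hinge on the same key fact—that under $\mu_{(P,Q)}$ the coupled process $(x_j,y_j)$ hits the diagonal $\{(a,a):a\in\mathcal{S}\}$ infinitely often almost surely in both time directions—but justify it differently: the paper invokes Claim~\ref{Claim1} (ergodicity of $T\times T$) together with the pointwise ergodic theorem, whereas you argue directly from primitivity of $P\otimes Q$ and recurrence of finite-state Markov chains, which is slightly more elementary. In terms of presentation, the paper decomposes an arbitrary cylinder rectangle $C_0\times C_1$ in one stroke via the random stopping times $\tau_\pm$ marking the first diagonal hit beyond the support of the cylinder; you instead reduce to single-site sets $\{x_n=s,\,y_n=t\}$ and then split into the cases $s\neq t$ and $s=t$. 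Your explicit treatment of the diagonal case $s=t$—where the requirement $B\neq B'$ in the definition of $\mathfrak{A}$ actually bites and must be discharged by the observation that the two independent chains a.s.\ separate before returning to the diagonal—is a genuine point of care that the paper's stopping-time argument leaves implicit. The cost of your route is the extra case split; the gain is that you avoid the ergodic theorem and make the $B\neq B'$ exclusion transparent.
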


\begin{subproof}
Consider a basic cylinder $C_{0}\times C_{1}\in\mathcal{B}\left(X_{A}\right)\times\mathcal{B}\left(X_{A}\right)$ supported on the coordinates $\left\{-N,\dotsc,N\right\}\times\left\{-N,\dotsc,N\right\}$ for some $N\geq1$. Define stopping times $\tau_{+}$ and $\tau_{-}$ on $X_A\times X_A$ by
$$\tau_{+}\left(x,y\right)=\inf\left\{ n>N:x_{n}=y_{n}\right\} \text{ and }\tau_{-}\left(x,y\right)=\inf\left\{ n>N:x_{-n}=y_{-n}\right\}.$$
By Claim \ref{Claim1} $T\times T$ is ergodic with respect to $\mu_{\left(P,Q\right)}$, so by the pointwise ergodic theorem we have that
$$\lim_{K\to\infty}\frac{1}{K}\sum_{k=0}^{K-1}\mathbf{1}_{\left\{ x_{k}=y_{k}\right\} }=\mu_{\left(P,Q\right)}\left(x_{0}=y_{0}\right)>0\text{ for }\mu_{\left(P,Q\right)}\text{-a.e. }\left(x,y\right)\in X_{A}\times X_{A}.$$
This shows that $\tau_{+}<\infty$, $\mu_{\left(P,Q\right)}$-a.e. and similarly, by the ergodicity of $T^{-1}\times T^{-1}$, also $\tau_{-}<\infty$, $\mu_{\left(P,Q\right)}$-a.e. Observe that for every $s$ and $t$ in $\mathcal{S}$ and every choice of $B_{0}$ and $B_{1}$ in $\mathfrak{B}_{A}^{\left[\tau_{-},\tau_{+}\right]}\left(s,t\right)$ it holds that
$$\left(C_{0}\cap B_{0}\right)\times\left(C_{1}\cap B_{1}\right)\in\mathfrak{A}\text{ for }\mu_{\left(P,Q\right)}\text{-a.e. }\left(x,y\right)\in X_{A}\times X_{A}.$$
This shows that
$$C_{0}\times C_{1}=\bigcup_{s,t\in\mathcal{S}}\bigcup_{n<-N,N<m}\bigcup_{B_{0},B_{1}\in\mathfrak{B}_{A}^{\left[\tau_{-}=n,\tau_{+}=m\right]}\left(s,t\right)}\left(C_{0}\cap B_{0}\right)\times\left(C_{1}\cap B_{1}\right)$$
up to a $\mu_{\left(P,Q\right)}$-null set. This completes the proof of the claim.
\end{subproof}

\begin{claim}
\label{Claim3}
Every finite intersection of elements in $\mathfrak{A}$ is a \emph{disjoint} union of finitely many elements of $\mathfrak{A}$.
\end{claim}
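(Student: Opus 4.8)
The plan is to exploit that every element $B(n)\times B'(n)$ of $\mathfrak{A}$ is a product of two cylinders supported on the \emph{same} coordinate window $\{n,\dots,n+L-1\}$, with $L=\mathrm{Length}(B,B')$. Hence a finite intersection of elements of $\mathfrak{A}$, say of $B_1(n_1)\times B_1'(n_1),\dots,B_r(n_r)\times B_r'(n_r)$, factors as a product $C\times C'$ with $C=\bigcap_i B_i(n_i)$ and $C'=\bigcap_i B_i'(n_i)$; the two cylinders are supported on the common finite set $S:=\bigcup_i\{n_i,\dots,n_i+L_i-1\}$, because $B_i$ and $B_i'$ share the starting coordinate $n_i$ and the length $L_i$. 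If either factor is empty the intersection is the empty union and there is nothing to prove, so I would assume both are nonempty and let $\beta,\beta'\colon S\to\mathcal{S}$ be the patterns that $C$ and $C'$ prescribe.

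I would first record two features of these patterns. Since each $(B_i,B_i')$ is an admissible pair with $B_i\neq B_i'$, the patterns $\beta$ and $\beta'$ disagree somewhere on $\{n_i,\dots,n_i+L_i-1\}$, so in particular $\beta\neq\beta'$ on $S$. Next, writing $N:=\min S$ and $M:=\max S$, the coordinate $N$ is the starting coordinate of some block $B_{i_0}$, and $M$ is the final coordinate of some block $B_{j_0}$; the matching of first (resp.\ last) symbols in the admissible pairs $(B_{i_0},B_{i_0}')$ and $(B_{j_0},B_{j_0}')$ then yields $\beta_N=\beta'_N$ and $\beta_M=\beta'_M$.

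The main step is gap-filling. I would pass to the contiguous window $\{N,\dots,M\}$ and expand each factor as a finite disjoint union of full cylinders on this window: $C=\bigsqcup_{w}w(N)$, where $w$ ranges over the $A$-admissible words on $\{N,\dots,M\}$ restricting to $\beta$ on $S$, and similarly $C'=\bigsqcup_{w'}w'(N)$ for completions $w'$ of $\beta'$. Both families are finite and nonempty, since $C$ and $C'$ are nonempty and every point of $C$ (resp.\ $C'$) restricts to such a completion. Distributing the product gives
$$C\times C'=\bigsqcup_{w,w'}\bigl(w(N)\times w'(N)\bigr),$$
a finite, pairwise disjoint union.

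It then remains to verify that each term belongs to $\mathfrak{A}$, which is where the only real care is needed. Every completion $w$ and $w'$ has length $M-N+1$; their first symbols are $\beta_N=\beta'_N$ and their last symbols are $\beta_M=\beta'_M$, so the two endpoint conditions of an admissible pair survive the filling of the gaps; and $w\neq w'$ because they already differ on $S$, so no term degenerates into a diagonal (non-admissible) pair. Thus each $(w,w')$ is an admissible pair and $w(N)\times w'(N)\in\mathfrak{A}$, completing the argument. The point most likely to be mishandled by a careless argument is exactly this preservation of the first/last-symbol conditions, which is secured by the endpoint equalities $\beta_N=\beta'_N$, $\beta_M=\beta'_M$, together with $\beta\neq\beta'$ to rule out the diagonal.
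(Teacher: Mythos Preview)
Your proof is correct and follows essentially the same gap-filling idea as the paper: both arguments decompose each factor into cylinders over the full contiguous window by inserting all $A$-admissible words in the gaps, and then verify that the resulting pairs inherit the matching first/last symbols from the outermost admissible pairs and remain distinct. The only organizational difference is that the paper treats the case of two elements (splitting into the overlapping/adjacent case versus the gap case) and leaves the general finite intersection to induction, whereas you handle all $r$ elements simultaneously by passing directly to the union $S$ of supports; your explicit identification of $N=\min S$ and $M=\max S$ as endpoints of some $B_{i_0}$ and $B_{j_0}$ is exactly what secures the endpoint conditions in one stroke.
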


\begin{subproof}
For some integers  $n_1\leq n_2$, let
$$\mathbf{B}_{1}:=B_{1}\left(n_{1}\right)\times B'_{1}\left(n_{1}\right)\text{ of length }L_{1}$$
and
$$\mathbf{B}_{2}:=B_{2}\left(n_{2}\right)\times B'_{2}\left(n_{2}\right)\text{ of length }L_{2}$$
be elements in $\mathfrak{A}$ such that $\mathbf{B}_{1}\cap\mathbf{B}_{2}$ is non-empty. If the sets of coordinates
$$\left\{n_{1},\dots,n_{1}+L_{1}\right\}\text{ and }\left\{n_{2},\dots,n_{2}+L_{2}\right\}$$
are not disjoint and $\mathbf{B}_{1}\cap\mathbf{B}_{2}$ is non-empty then simply $\mathbf{B}_{1}\cap\mathbf{B}_{2}\in\mathfrak{A}$. The same holds also if $n_1+L_1=n_2-1$.
Assume then that $n_1+L_1<n_2-1$. In this case we can write
$$B_{1}\left(n_{1}\right)\cap B_{2}\left(n_{2}\right)=\bigcup_{s,t\in\mathcal{S}}\bigcup_{B\in\mathfrak{B}_{A}^{\left[n_{1}+L_{1},n_{2}-1\right]}\left(s,t\right)}B_{1}\left(n_{1}\right)\ast B\ast B_{2}\left(n_{2}\right),$$
where some of the concatenated blocks may be empty. Of course, we can write $B'_{1}\left(n_{1}\right)\cap B'_{2}\left(n_{2}\right)$ in a similar way. Observe that
$$\left(B_{1}\left(n_{1}\right)\ast B\ast B_{2}\left(n_{2}\right),B'_{1}\left(n_{1}\right)\ast B'\ast B'_{2}\left(n_{2}\right)\right)$$
is an admissible pair on the coordinates $\left\{ n_{1},\dots,n_{2}+L_{2}\right\}$ for every choice of $B$ and $B'$ in $\mathfrak{B}_{A}^{\left[n_{1}+L_{1},n_{2}-1\right]}\left(s,t\right)$. Thus, as
$$\mathbf{B}_{1}\cap\mathbf{B}_{2}=\left(B_{1}\left(n_{1}\right)\cap B_{2}\left(n_{2}\right)\right)\times\left(B'_{1}\left(n_{1}\right)\cap B'_{2}\left(n_{2}\right)\right),$$
we conclude that it is a disjoint union of finitely many elements of $\mathfrak{A}$ and the proof of Claim \ref{Claim3} is complete.
\end{subproof}

\medskip

We will now prove that $\mu_{\left(P,Q\right)}=\mu_{\left(Q,P\right)}$ as measures on $\mathcal{B}\left(X_{A}\times X_{A}\right)$. Our argument is based on the Dynkin's π-λ theorem and we will follow the terminology of \cite[Chapter~II, $\mathsection$2]{shiryaev2013p}. Let
$$\mathcal{F}=\left\{ E\in\mathcal{B}\left(X_A\times X_A\right):\mu_{\left(P,Q\right)}\left(E\right)=\mu_{\left(Q,P\right)}\left(E\right)\right\}.$$
Then $\mathcal{F}$ is a d-system containing $\mathfrak{A}$. By Claim \ref{Claim3} if $\mathbf{B}_{1}$ and $\mathbf{B}_{2}$ are in $\mathfrak{A}$ then $\mathbf{B}_{1}\cap\mathbf{B}_{2}\in\mathcal{F}$. Thus, the π-system $\pi\left(\mathfrak{A}\right)$ generated by $\mathfrak{A}$, which consists of all finite intersections of elements of $\mathfrak{A}$, is also contained in $\mathcal{F}$. Then by the Dynkin's π-λ Theorem the sigma-algebra generated by $\pi\left(\mathfrak{A}\right)$ is contained in $\mathcal{F}$. By Claim \ref{Claim2} this sigma-algebra is $\mathcal{B}\left(X_{A}\times X_{A}\right)$ so that $\mathcal{B}\left(X_{A}\times X_{A}\right)=\mathcal{F}$.

Finally, to complete the proof of Lemma \ref{Lemma: Combinatorial Lemma}, we get a contradiction by showing that $P=Q$. For every $s\in\mathcal{S}$ let
$$\mathbf{B}=\bigcup_{t\in\mathcal{S}}\mathbf{B}_{t}\text{ for }\mathbf{B}_{t}=\left\{ \left(x,y\right)\in X_A\times X_A:x_{0}=s,y_{0}=t\right\},\, t\in\mathcal{S}.$$
Then $\mu_{\left(P,Q\right)}\left(\mathbf{B}\right)=\pi_{P}\left(s\right)$ and $\mu_{\left(Q,P\right)}\left(\mathbf{B}\right)=\pi_Q\left(s\right)$ so that $\pi_P\left(s\right)=\pi_Q\left(s\right)$ for all $s\in\mathcal{S}$. Next, for every $s,t\in\mathcal{S}$ let
$$\mathbf{B}=\bigcup_{u\in\mathcal{S}}\mathbf{B}_{u}\text{ for }\mathbf{B}_{u}=\left\{ \left(x,y\right)\in X_{A}\times X_{A}:\left(x_{0},y_{0}\right)=\left(s,t\right),\,\left(x_{1},y_{1}\right)=\left(s,u\right)\right\},\, u\in\mathcal{S}.$$
Then $\mu_{\left(P,Q\right)}\left(\mathbf{B}\right)=\pi_{P}\left(s\right)P\left(s,t\right)\pi_{Q}\left(s\right)$ and $\mu_{\left(Q,P\right)}\left(\mathbf{B}\right)=\pi_{Q}\left(s\right)Q\left(s,t\right)\pi_{P}\left(s\right)$. As $\pi_P\left(s\right)=\pi_Q\left(s\right)$ we see that $P\left(s,t\right)=Q\left(s,t\right)$.
\end{proof}

We are now in a position to prove Theorem \ref{Theorem: Theorem B}.

\begin{proof}[Proof of Theorem \ref{Theorem: Theorem B}]
By Theorem \ref{Theorem: Theorem A} we know that under the conditions of Theorem \ref{Theorem: Theorem B} the shift on $\left(X_A,\mu\right)$ is ergodic. Thus, by our Hopf Argument \ref{Theorem: Hopf Argument} if we show that $\mathrm{e}\left(\mathscr{R}_A\right)=\mathbb{R}$ for the renormalization full-group $\mathscr{R}_A:=\mathscr{R}\left(T;\Pi_A\right)$ it will follow that the shift is of type $\mathrm{III}_1$.

We consider the case where $\left(P_n:n\geq1\right)$ does not converge regardless the convergence of $\left(P_{-n}:n\geq1\right)$, and the other case is being similar. For the rest of the proof we fix an arbitrary sequence $i_{k}\xrightarrow[k\to\infty]{}-\infty$ of coordinates that satisfies $i_{k}-i_{k+1}\xrightarrow[k\to\infty]{}\infty$, such that $P_{i_{k}}\xrightarrow[k\to\infty]{}R$ for some arbitrary stochastic matrix $R$. For every partial limit $P_{j_{k}}\xrightarrow[k\to\infty]{}P$ for some $j_{k}\xrightarrow[k\to\infty]{}\infty$ and for every admissible pair of the form
\begin{equation}
\label{eq:13}
\left(B,B'\right)\text{ for }B=\left[b_{1},b_{2},\dots,b_{L-1},b_{L}\right],\,B'=\left[b_{1},b_{2}',\dots,b_{L-1}',b_{L}\right],
\end{equation}
assuming without loss of generality that $j_{k}-j_{k-1}>L+M$ for all $k\geq1$, the sequence of admissible permutations $V_{k}:B\left(i_{k}\right)\rightleftharpoons B'\left(j_{k}\right)$ satisfies
$$V_{k}'\left(x\right)=\frac{P_{i_{k}}\left(B'_{k}\right)P_{j_{k}}\left(B_{k}\right)}{P_{i_{k}}\left(B_{k}\right)P_{j_{k}}\left(B'_{k}\right)}\xrightarrow[k\to\infty]{}\frac{R\left(B'\right)}{R\left(B\right)}\cdot\frac{P\left(B\right)}{P\left(B'\right)},\quad x\in B\left(i_{k}\right)\cap B'\left(j_{k}\right),$$
where in this convergence we used the nonsingularity of the shift and Corollary \ref{Corollary: Nonsingularity of the Shift} to see that for every fixed $l\in\mathbb{Z}$ it holds that
$$\lim_{k\to\infty}\frac{P_{i_{k}+l}\left(s,t\right)}{P_{i_{k}}\left(s,t\right)}=1\text{ and }\lim_{k\to\infty}\frac{P_{j_{k}+l}\left(s,t\right)}{P_{j_{k}}\left(s,t\right)}=1\text{ for all }s,t\in\mathcal{S}.$$
Letting $c:=\log\left(R\left(B'\right)/R\left(B\right)\right)$ we see by Lemma \ref{Lemma: Partial Limits Essential Values} that
$$c+\log\frac{P\left(B\right)}{P\left(B'\right)}\in\mathrm{e}\left(\mathscr{R}\left(T;\Pi_{A}\right)\right).$$
Note that by the nonsingularity of the shift and Lemma \ref{Lemma: Connected Image}, for every admissible pair $\left(B,B'\right)$ of the form of \eqref{eq:13} the set of partial limits
$$\mathcal{L}\left(\left(P_{n}\left(b_{1},b_{2}\right),\dots,P_{n+L-2}\left(b_{L-1},b_{L}\right),P_{n}\left(b_{1},b_{2}'\right),\dots,P_{n+L-2}\left(b_{L-1}',b_{L}\right)\right):n\geq1\right)$$
is a compact, connected subset of $\left[\delta,1-\delta\right]^{2\left(L-1\right)}$. Denoting this partial limits set by $\mathcal{L}\left(B,B'\right)$, we see that the image $F\left(\mathcal{L}\left(B,B'\right)\right)$ of the set $\mathcal{L}\left(B,B'\right)$ under the continuous function $F:\left[\delta,1-\delta\right]^{2\left(L-1\right)}\to\mathbb{R}$ defined by
$$F:\left(r_{1},\dots,r_{L-1},r_{1}',\dots,r_{L-1}'\right)\mapsto\log\frac{r_{1}\dotsm r_{L-1}}{r_{1}'\dotsm r_{L-1}'},$$
is a compact, connected set of $\mathbb{R}$, which is simply a compact interval. By the above argument we have that $c+F\left(\mathcal{L}\left(B,B'\right)\right)\subset\mathrm{e}\left(\mathscr{R}_A\right)$. We then only need to show that there can be found some $L\geq3$ and an admissible pair $\left(B,B'\right)$ of length $L$ such that $F\left(\mathcal{L}\left(B,B'\right)\right)$ is an interval of positive length or, equivalently, that $F$ is not constant on $\mathcal{L}\left(B,B'\right)$. This is straightforward from Lemma \ref{Lemma: Combinatorial Lemma}.
\end{proof}

\section{Proof of the Necessary Condition for Conservativeness}

Here we prove Theorem \ref{Theorem: Theorem C}. First let us establish a general simple necessary condition for conservativeness. Let $\left(X,\mathcal{B},\mu\right)$ be a standard probability space and $T:X\to X$ an invertible bi-measurable transformation. Denote the ergodic sums of a function $f$ on $X$ by
$$S_{N}^{+}f\left(x\right)=\sum_{n=0}^{N-1}f\left(T^{n}x\right)\text{ and }S_{N}^{-}f\left(x\right)=\sum_{n=0}^{N-1}f\left(T^{-n}x\right)\text{ for }N\geq1.$$

\begin{lem}
\label{Lemma: Right-Left Limits}
In the above setting, if there is a real-valued function $f$ on $X$ such that for some numbers $a<b$ it holds that
$$\limsup_{N\to\infty}\frac{1}{N}S_{N}^{-}f\leq a<b\leq\liminf_{N\to\infty}\frac{1}{N}S_{N}^{+}f$$
on a set of $\mu$-positive measure, then $T$ is not conservative.
\end{lem}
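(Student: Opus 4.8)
The plan is to argue by contradiction: assuming $T$ is conservative, I will use recurrence to compare the forward average at a point with the backward average at a return point of the same orbit, and the gap $a<b$ will produce a contradiction. The computation driving everything is a telescoping identity. For any $x$ and any $m\geq 1$, writing $y=T^{m}x$, one has $S_{m}^{-}f(y)=\sum_{j=1}^{m}f(T^{j}x)$ whereas $S_{m}^{+}f(x)=\sum_{j=0}^{m-1}f(T^{j}x)$, so that
$$S_{m}^{+}f(x)-S_{m}^{-}f(T^{m}x)=f(x)-f(T^{m}x).$$
Thus the two one-sided Birkhoff sums along a single orbit coincide up to two boundary terms, and after dividing by $m$ their difference is $O(1/m)$, provided $f$ stays bounded at the endpoints.

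The genuine difficulty is that the hypotheses are pointwise: for $x$ in the positive-measure set $E$ on which the inequalities hold, $\tfrac1N S_{N}^{+}f(x)$ exceeds $b$ only once $N$ passes a threshold depending on $x$, and similarly for the backward average. To combine the forward estimate at $x$ with the backward estimate at the \emph{moving} target $T^{m}x$ I must uniformize. Fix $\epsilon\in\bigl(0,\tfrac{b-a}{2}\bigr)$ and for $N_{0}\geq1$ set
$$E_{N_{0}}^{+}=\Bigl\{x\in E:\ \tfrac1N S_{N}^{+}f(x)\geq b-\epsilon\ \text{for all }N\geq N_{0}\Bigr\},\qquad E_{N_{0}}^{-}=\Bigl\{x\in E:\ \tfrac1N S_{N}^{-}f(x)\leq a+\epsilon\ \text{for all }N\geq N_{0}\Bigr\}.$$
These families increase in $N_{0}$ and exhaust $E$ up to a null set, so I may pick $N_{*}$ with $\mu(E_{N_{*}}^{+}),\mu(E_{N_{*}}^{-})>\tfrac34\mu(E)$; their intersection $G:=E_{N_{*}}^{+}\cap E_{N_{*}}^{-}$ then has $\mu(G)>\tfrac12\mu(E)>0$. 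Since $f$ is finite $\mu$-a.e., I may further choose $R>0$ with $\mu(G')>0$ for $G':=G\cap\{|f|\leq R\}$. On $G'$ both one-sided bounds hold for \emph{every} $N\geq N_{*}$ and $|f|\leq R$.

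Finally I invoke conservativeness through the recurrence theorem: almost every point of the positive-measure set $G'$ returns to $G'$ infinitely often, in particular with arbitrarily large return times. Pick $x\in G'$ and a return time $m$ with $T^{m}x\in G'$ and $m\geq\max\bigl\{N_{*},\,2R/(b-a-2\epsilon)\bigr\}$. Applying the forward bound at $x$ and the backward bound at $y=T^{m}x$ (both legitimate since $x,y\in G'$ and $m\geq N_{*}$), together with the telescoping identity and $|f|\leq R$ on $G'$, yields
$$b-a-2\epsilon\ \leq\ \tfrac1m\bigl(S_{m}^{+}f(x)-S_{m}^{-}f(y)\bigr)=\frac{f(x)-f(y)}{m}\ \leq\ \frac{2R}{m}\ <\ b-a-2\epsilon,$$
a contradiction. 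Hence $T$ is not conservative. The only delicate point is the uniformization step — replacing the pointwise $\liminf/\limsup$ hypotheses by a single positive-measure set on which both averages obey the bounds simultaneously for all $N\geq N_{*}$ and on which $f$ is bounded; once this is secured, recurrence together with the telescoping identity closes the argument at once.
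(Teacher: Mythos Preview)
Your proof is correct and follows the same core idea as the paper: compare the forward Birkhoff sum at $x$ with the backward Birkhoff sum at a forward iterate of $x$, and use the gap $a<b$ to rule out recurrence. The paper's argument is marginally more streamlined in one respect. Instead of shifting by $m$ and obtaining $S_{m}^{+}f(x)-S_{m}^{-}f(T^{m}x)=f(x)-f(T^{m}x)$, the paper shifts by $N-1$ and uses the \emph{exact} identity $S_{N}^{-}f(T^{N-1}x)=S_{N}^{+}f(x)$, which eliminates the boundary terms entirely. This lets the paper dispense with your restriction to $\{|f|\leq R\}$ and the choice of $m\geq 2R/(b-a-2\epsilon)$: once the uniformizing set $E_{0}$ (your $G$) is fixed, one sees immediately that $T^{N-1}x\notin E_{0}$ for all $N\geq N_{0}$, so $E_{0}$ is a positive-measure set to which points never return after time $N_{0}-1$, and Halmos' recurrence theorem finishes. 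Your contradiction-via-recurrence packaging and the paper's direct exhibition of a non-recurrent set are equivalent; the only economy you give up is having to control $f(x)-f(T^{m}x)$, which the index shift $m\mapsto m-1$ would have made unnecessary.
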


\begin{proof}
Let $\epsilon=\left(b-a\right)/3$ and fix $N_0$ large enough such that the set
$$E_{0}:=\bigcap_{N\geq N_0}\left\{ \frac{1}{N}S_{N}^{-}f\leq a+\epsilon<b-\epsilon\leq\frac{1}{N}S_{N}^{+}f\right\}$$
is of $\mu$-positive measure. Then for every $x\in E_{0}$ and every $N\geq N_0$,
$$\frac{1}{N}S_{N}^{-}f\left(T^{N-1}x\right)=\frac{1}{N}S_{N}^{+}f\left(x\right)\geq b-\epsilon>a+\epsilon,$$
showing that $T^{N}x\notin E_{0}$ for all but at most finitely many positive integers $N$. Then by Halmos' Recurrence Theorem \cite[Chapter~1.1]{aaronson1997} $E_{0}$ is a $\mu$-positive measure set which is not in the conservative part of the shift.
\end{proof}

\begin{thm}[Wen--Weiguo \cite{wen1996, wen2004}]
\label{Theorem: Markov LLN}
Let $\left(X_{n}:n\geq0\right)$ be a non-homogeneous one-sided Markov chain and $\left(f_{n}:n\geq0\right)$ be a bounded sequence of functions on $\mathcal{S}\times\mathcal{S}$. Then
$$\frac{1}{N}\sum_{n=0}^{N-1}\left(f_{n}\left(X_{n},X_{n+1}\right)-\mathbf{E}\left(f_{n}\left(X_{n},X_{n+1}\right)\mid X_{n}\right)\right)\xrightarrow[N\to\infty]{\mathrm{a.e.}}0.$$
\end{thm}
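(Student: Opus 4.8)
The plan is to recognize the centered summands as a uniformly bounded martingale difference sequence and then deduce the strong law via Kronecker's lemma. Fix the filtration $\mathcal{F}_{n}=\sigma\left(X_{0},\dots,X_{n}\right)$ for $n\geq0$ and set
$$Y_{n}:=f_{n}\left(X_{n},X_{n+1}\right)-\mathbf{E}\left(f_{n}\left(X_{n},X_{n+1}\right)\mid X_{n}\right).$$
First I would observe that $Y_{n}$ is $\mathcal{F}_{n+1}$-measurable, and that the Markov property makes the centering exact with respect to the entire past: conditionally on $X_{n}$ the variable $X_{n+1}$ is independent of $\left(X_{0},\dots,X_{n-1}\right)$, so $\mathbf{E}\left(f_{n}\left(X_{n},X_{n+1}\right)\mid\mathcal{F}_{n}\right)=\mathbf{E}\left(f_{n}\left(X_{n},X_{n+1}\right)\mid X_{n}\right)$ and hence $\mathbf{E}\left(Y_{n}\mid\mathcal{F}_{n}\right)=0$. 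Thus $\left(Y_{n}:n\geq0\right)$ is a martingale difference sequence for $\left(\mathcal{F}_{n}\right)$ and the partial sums $M_{N}:=\sum_{n=0}^{N-1}Y_{n}$ form a martingale. Since $\left(f_{n}\right)$ is a bounded sequence of functions, there is a constant $C$ with $\left|f_{n}\right|\leq C$ for all $n$, whence $\left|Y_{n}\right|\leq2C$ uniformly.

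Next I would pass to the weighted martingale $W_{N}:=\sum_{n=0}^{N-1}Y_{n}/\left(n+1\right)$, whose increments $Y_{n}/\left(n+1\right)$ remain martingale differences. Its conditional quadratic variation is deterministically summable because the differences are uniformly bounded:
$$\sum_{n\geq0}\mathbf{E}\left(\left(Y_{n}/\left(n+1\right)\right)^{2}\mid\mathcal{F}_{n}\right)\leq\sum_{n\geq0}\frac{\left(2C\right)^{2}}{\left(n+1\right)^{2}}<\infty.$$
By the $L^{2}$-martingale convergence theorem in its conditional-variance form, namely that a martingale converges $\mu$-almost everywhere on the event where its predictable quadratic variation is finite, the sequence $W_{N}$ converges $\mu$-a.e. as $N\to\infty$.

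Finally I would invoke Kronecker's lemma with the weights $b_{n}=n+1\uparrow\infty$: the almost-everywhere convergence of $\sum_{n}Y_{n}/\left(n+1\right)$ forces $\frac{1}{N}\sum_{n=0}^{N-1}Y_{n}\to0$ $\mu$-a.e., which is precisely the assertion. The argument is essentially routine, and there is no genuine obstacle, since the uniform bound $\left|Y_{n}\right|\leq2C$ makes the quadratic-variation series summable without any integrability hypothesis beyond boundedness. The only points requiring care are the identification of the martingale difference structure from the Markov property, so that centering by $\mathbf{E}\left(\cdot\mid X_{n}\right)$ indeed yields conditional mean zero against the full past, and the correct application of the martingale convergence theorem on the full-measure set where the conditional variances accumulate.
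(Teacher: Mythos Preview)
Your proposal is correct and follows essentially the same approach as the paper: recognize that $\xi_{n}=f_{n}\left(X_{n},X_{n+1}\right)-\mathbf{E}\left(f_{n}\left(X_{n},X_{n+1}\right)\mid X_{n}\right)$ is a bounded martingale difference sequence for the natural filtration and then invoke the strong law of large numbers for martingales. The paper simply cites \cite[Theorem~2.19]{hall2014} for the latter step, while you spell out its standard proof via the weighted martingale $\sum Y_{n}/(n+1)$, the $L^{2}$-martingale convergence theorem, and Kronecker's lemma.
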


Once we observe that
$$\xi_{n}:=f_{n}\left(X_{n},X_{n+1}\right)-\mathbf{E}\left(f_{n}\left(X_{n},X_{n+1}\right)\mid X_{n}\right),\quad n\geq1,$$
is a sequence of martingale differences for the natural filtration, Theorem \ref{Theorem: Markov LLN} follows from the Law of Large Numbers for martingales \cite[Theorem~2.19]{hall2014}.

Applying Theorem \ref{Theorem: Markov LLN} to the functions $\mathbf{1}_{\left\{ X_{n+1}=t_{0}\right\} }$, $t_{0}\in\mathcal{S}$ and to the functions $\mathbf{1}_{\left\{ \left(X_{n},X_{n+1}\right)=\left(s_{0},t_{0}\right)\right\} }$, $s_{0},t_{0}\in\mathcal{S}$ we get the following.

\begin{cor}
\label{Corollary: Markov LLN}
In the conditions of Theorem \ref{Theorem: Markov LLN}, we have
\begin{equation}
\frac{1}{N}\sum_{n=0}^{N-1}\left(\mathbf{1}_{\left\{ X_{n+1}=t_{0}\right\} }-P_{n}\left(X_{n},t_{0}\right)\right)\xrightarrow[N\to\infty]{\mathrm{a.e.}}0,\, t_0\in\mathcal{S},
\end{equation}
and
\begin{equation}
\frac{1}{N}\sum_{n=0}^{N-1}\left(\mathbf{1}_{\left\{ \left(X_{n},X_{n+1}\right)=\left(s_{0},t_{0}\right)\right\} }-\mathbf{1}_{\left\{ X_{n}=s_{0}\right\} }P_{n}\left(X_n,t_{0}\right)\right)\xrightarrow[N\to\infty]{\mathrm{a.e.}}0,\, s_0,t_0\in\mathcal{S}.
\end{equation}
\end{cor}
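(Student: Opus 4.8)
The plan is to obtain both limits as direct specializations of Theorem \ref{Theorem: Markov LLN}; the only substantive point is to identify the one-step conditional expectations $\mathbf{E}\left(f_n\left(X_n,X_{n+1}\right)\mid X_n\right)$ for the two natural choices of bounded $f_n$ on $\mathcal{S}\times\mathcal{S}$, after which the two displayed convergences are mere substitutions. Note that the forward ergodic sums $\frac{1}{N}\sum_{n=0}^{N-1}$ appearing in the statement match the one-sided chain $\left(X_n:n\geq0\right)$ to which the theorem applies, so no reindexing is needed.

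First I would prove the first limit. Fix $t_0\in\mathcal{S}$ and take $f_n\left(s,t\right)=\mathbf{1}_{\left\{t=t_0\right\}}$, a function on $\mathcal{S}\times\mathcal{S}$ depending only on the second coordinate and bounded by $1$ uniformly in $n$. Then $f_n\left(X_n,X_{n+1}\right)=\mathbf{1}_{\left\{X_{n+1}=t_0\right\}}$, and by the Markov property the conditional expectation is the one-step transition probability,
\begin{equation*}
\mathbf{E}\left(\mathbf{1}_{\left\{X_{n+1}=t_0\right\}}\mid X_n\right)=\mu\left(X_{n+1}=t_0\mid X_n\right)=P_n\left(X_n,t_0\right).
\end{equation*}
Feeding this into the conclusion of Theorem \ref{Theorem: Markov LLN} gives exactly the first displayed limit.

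For the second limit I would fix $s_0,t_0\in\mathcal{S}$ and take $f_n\left(s,t\right)=\mathbf{1}_{\left\{\left(s,t\right)=\left(s_0,t_0\right)\right\}}=\mathbf{1}_{\left\{s=s_0\right\}}\mathbf{1}_{\left\{t=t_0\right\}}$, again bounded by $1$. Since the factor $\mathbf{1}_{\left\{X_n=s_0\right\}}$ is $\sigma\left(X_n\right)$-measurable it pulls out of the conditional expectation, and using the previous computation,
\begin{equation*}
\mathbf{E}\left(\mathbf{1}_{\left\{\left(X_n,X_{n+1}\right)=\left(s_0,t_0\right)\right\}}\mid X_n\right)=\mathbf{1}_{\left\{X_n=s_0\right\}}\,\mathbf{E}\left(\mathbf{1}_{\left\{X_{n+1}=t_0\right\}}\mid X_n\right)=\mathbf{1}_{\left\{X_n=s_0\right\}}\,P_n\left(X_n,t_0\right).
\end{equation*}
Substituting into Theorem \ref{Theorem: Markov LLN} yields the second displayed limit. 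There is no genuine obstacle here, as the statement is a formal corollary; the only point requiring care is the conditional-expectation computation, where one uses the Markov property for the transition to $t_0$ together with the $\sigma\left(X_n\right)$-measurability of $\mathbf{1}_{\left\{X_n=s_0\right\}}$ to factor it out.
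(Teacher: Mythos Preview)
Your proof is correct and follows exactly the paper's approach: the paper simply states that the corollary follows by applying Theorem \ref{Theorem: Markov LLN} to the functions $\mathbf{1}_{\{X_{n+1}=t_0\}}$ and $\mathbf{1}_{\{(X_n,X_{n+1})=(s_0,t_0)\}}$, and you have spelled out the conditional-expectation computations that make this work.
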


In the following discussion it will be useful to use the notation
$$W_{N}\approx W'_{N}\iff W_{N}-W'_{N}\xrightarrow[N\to\infty]{\mathrm{a.e.}}0,$$
which defines an equivalence relation on the collection of all sequences of random variables on a specified probability space.

The following proposition was proved by Wen and Weiguo \cite[Theorem~2]{wen2004} in the context of $m^{\text{th}}$ order Markov chains. In the following we provide a simplified version of their proof.

\begin{prop} (Wen--Weiguo)
\label{Lemma: Markov LLN}
Let $\left(X_{n}:n\geq 0\right)$ be a Markov chain with the distribution defined by $\left(\pi_{n},P_{n}:n\geq 0\right)$. If $P_{n}\left(s,t\right)\xrightarrow[n\to\infty]{}P\left(s,t\right)$ for all $s,t\in\mathcal{S}$ for an irreducible and aperiodic stochastic matrix $P$ with stationary distribution $\pi$, then
\begin{equation}
\label{eq:23}
\frac{1}{N}\sum_{n=0}^{N-1}\mathbf{1}_{\left\{ X_{n}=t_{0}\right\}}\approx\pi\left(t_{0}\right),\quad t_{0}\in\mathcal{S}
\end{equation}
and
\begin{equation}
\label{eq:24}
\frac{1}{N}\sum_{n=0}^{N-1}\mathbf{1}_{\left\{ \left(X_{n},X_{n+1}\right)=\left(s_{0},t_{0}\right)\right\}}\approx\pi\left(s_{0}\right)P\left(s_{0},t_{0}\right),\quad s_{0},t_{0}\in\mathcal{S}.
\end{equation}
\end{prop}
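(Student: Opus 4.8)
The plan is to derive both limits from Corollary \ref{Corollary: Markov LLN} by replacing the non-homogeneous transition probabilities $P_n$ with their common limit $P$, and then exploiting the uniqueness of the stationary distribution of an irreducible aperiodic matrix. Throughout I view the empirical occupation frequencies $L_N\left(s\right):=\frac{1}{N}\sum_{n=0}^{N-1}\mathbf{1}_{\left\{X_n=s\right\}}$ as a random row vector $L_N=\left(L_N\left(s\right):s\in\mathcal{S}\right)$ in the probability simplex on $\mathcal{S}$, so that \ref{eq:23} is precisely the assertion $L_N\to\pi$ componentwise a.e.

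First I would establish \ref{eq:23}. Re-indexing the first display of Corollary \ref{Corollary: Markov LLN} turns $\frac{1}{N}\sum_{n=0}^{N-1}\mathbf{1}_{\left\{X_{n+1}=t_0\right\}}$ into $L_N\left(t_0\right)$ up to the boundary term $\frac{1}{N}\left(\mathbf{1}_{\left\{X_N=t_0\right\}}-\mathbf{1}_{\left\{X_0=t_0\right\}}\right)$, which is bounded by $1/N$ and hence vanishes; this yields $L_N\left(t_0\right)\approx\frac{1}{N}\sum_{n=0}^{N-1}P_n\left(X_n,t_0\right)$. Since $\mathcal{S}$ is finite the convergence $P_n\left(\cdot,t_0\right)\to P\left(\cdot,t_0\right)$ is uniform, so $P_n\left(X_n,t_0\right)-P\left(X_n,t_0\right)\xrightarrow[n\to\infty]{}0$ boundedly and its Cesàro average vanishes. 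Expanding $P\left(X_n,t_0\right)=\sum_s\mathbf{1}_{\left\{X_n=s\right\}}P\left(s,t_0\right)$ then gives $L_N\left(t_0\right)\approx\left(L_NP\right)\left(t_0\right)$, that is $L_N-L_NP\xrightarrow[N\to\infty]{\mathrm{a.e.}}0$ componentwise.

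The hard part will be passing from $L_N\approx L_NP$ to $L_N\to\pi$, and this is exactly where the irreducibility and aperiodicity of $P$ enter. I would argue deterministically at a fixed sample point outside the relevant null set: there $L_N$ ranges in the compact simplex, so every subsequence has a further subsequence $L_{N_k}\to w$ for some probability vector $w$; by continuity of $v\mapsto vP$ we get $L_{N_k}P\to wP$, and combining with $L_N-L_NP\to0$ forces $w=wP$. Since $P$ is irreducible (and aperiodic) its only stationary probability vector is $\pi$, so $w=\pi$. As every subsequential limit equals $\pi$, compactness gives $L_N\to\pi$, proving \ref{eq:23}.

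Finally I would obtain \ref{eq:24} in the same spirit. The second display of Corollary \ref{Corollary: Markov LLN}, together with the observation that $P_n\left(X_n,t_0\right)=P_n\left(s_0,t_0\right)$ on $\left\{X_n=s_0\right\}$, gives $\frac{1}{N}\sum_{n=0}^{N-1}\mathbf{1}_{\left\{\left(X_n,X_{n+1}\right)=\left(s_0,t_0\right)\right\}}\approx\frac{1}{N}\sum_{n=0}^{N-1}\mathbf{1}_{\left\{X_n=s_0\right\}}P_n\left(s_0,t_0\right)$. Replacing $P_n\left(s_0,t_0\right)$ by $P\left(s_0,t_0\right)$ costs only a vanishing Cesàro error, again because the indicators are bounded by $1$ and $P_n\left(s_0,t_0\right)\to P\left(s_0,t_0\right)$, leaving $P\left(s_0,t_0\right)L_N\left(s_0\right)$, which tends to $\pi\left(s_0\right)P\left(s_0,t_0\right)$ by the already proved \ref{eq:23}. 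The only points needing care are that each replacement $P_n\to P$ is uniform (immediate from finiteness of $\mathcal{S}$) and that every intermediate error is a genuine $\approx$-equivalence, i.e. converges to $0$ a.e. rather than merely in mean.
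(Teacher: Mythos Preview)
Your argument is correct. Both you and the paper first establish the relation $L_N\approx L_NP$ from Corollary \ref{Corollary: Markov LLN} and a Ces\`aro replacement of $P_n$ by $P$, and both then derive \ref{eq:24} from \ref{eq:23} by the same manipulation. The only genuine difference is in the passage from $L_N\approx L_NP$ to $L_N\to\pi$: the paper iterates the relation to obtain $L_N\approx L_NP^{k}$ for every fixed $k$ and then invokes the Markov convergence theorem $P^{k}\left(s,t_0\right)\to\pi\left(t_0\right)$, whereas you use compactness of the simplex and pass to subsequential limits, concluding via the uniqueness of the stationary distribution. Your route is marginally more economical in that it uses only irreducibility of $P$ (uniqueness of $\pi$) and not aperiodicity, which the paper's convergence-theorem step does require; on the other hand the paper's iteration avoids the subsequence extraction and gives a slightly more quantitative picture. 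Either way the proof goes through without any gap.
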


\begin{proof}
By Corollary \ref{Corollary: Markov LLN} and the Cesaro convergence for all $s_0,t_0\in\mathcal{S}$ it holds
$$\frac{1}{N}\sum_{n=0}^{N-1}\mathbf{1}_{\left\{ \left(X_{n},X_{n+1}\right)=\left(s_{0},t_{0}\right)\right\} } \approx P\left(s_{0},t_{0}\right)\frac{1}{N}\sum_{n=0}^{N-1}\mathbf{1}_{\left\{ X_{n}=s_{0}\right\} }.$$
Hence \eqref{eq:23} implies \eqref{eq:24}. To establish \eqref{eq:23}, fix $t_0\in\mathcal{S}$ and observe that by Corollary \ref{Corollary: Markov LLN} and the Cesaro convergence we have
$$\frac{1}{N}\sum_{n=0}^{N-1}\mathbf{1}_{\left\{ X_{n+1}=t_{0}\right\} } \approx\sum_{s\in\mathcal{S}}P\left(s,t_{0}\right)\frac{1}{N}\sum_{n=0}^{N-1}\mathbf{1}_{\left\{X_{n+1}=s\right\}}.$$
Denoting the $k$-fold product of $P$ by $P^k$ we get recursively that for every $k\geq1$,
$$\frac{1}{N}\sum_{n=0}^{N-1}\mathbf{1}_{\left\{ X_{n+1}=t_{0}\right\} }\approx\sum_{s\in\mathcal{S}}P^{k}\left(s,t_{0}\right)\frac{1}{N}\sum_{n=0}^{N-1}\mathbf{1}_{\left\{ X_{n+1}=s\right\} }.$$
Since $P$ is irreducible and aperiodic, by the convergence theorem for homogeneous Markov chains $P^{k}\left(s,t_0\right)\xrightarrow[k\to\infty]{}\pi\left(t_0\right)$ for all $s\in\mathcal{S}$ and \eqref{eq:23} follows.
\end{proof}

\begin{proof}[Proof of Theorem \ref{Theorem: Theorem C}]\
Let
$$P=\lim_{n\to\infty}P_{-n}\text{ and }Q=\lim_{n\to\infty}P_{n},$$
and denote their stationary distributions by $\pi$ and $\lambda$, respectively. Since the SFT $X_A$ is topologically-mixing so that all the entries of $A^M$ are positive, and since the matrices $\left(P_n:n\in\mathbb{Z}\right)$ satisfy the Doeblin condition, it is clear that all the entries of $P^M$ and $Q^M$ are positive, so that $P$ and $Q$ are irreducible and aperiodic. It follows from Proposition \ref{Lemma: Markov LLN} that
$$\frac{1}{N}\sum_{n=0}^{N-1}\mathbf{1}_{\left\{ X_{n}=s_{0}\right\} }\approx\lambda\left(s_{0}\right),\quad s_{0}\in\mathcal{S}.$$
Note that the reversed sequence $\left(X_{0},X_{-1},X_{-2}\dotsc\right)$ is a Markov chain with the transition matrices $\left(\pi_{n},\widehat{P}_{n}:n\leq0\right)$, where
$$\widehat{P}_{n}\left(s,t\right)=\frac{\pi_{n-1}\left(t\right)}{\pi_{n}\left(s\right)}P_{n-1}\left(t,s\right),\quad s,t\in\mathcal{S},\, n\leq0.$$
This sequence converges to $\widehat{P}\left(s,t\right):=\frac{\pi\left(t\right)}{\pi\left(s\right)}P\left(t,s\right)$. Note also that $P$ and $\widehat{P}$ share the same stationary distribution $\pi$ so by Proposition \ref{Lemma: Markov LLN} we have
$$\frac{1}{N}\sum_{n=0}^{N-1}\mathbf{1}_{\left\{ X_{-n}=s_{0}\right\} }\approx\pi\left(s_{0}\right),\quad s_{0}\in\mathcal{S}.$$
If $\pi\left(s_{0}\right)\neq\lambda\left(s_{0}\right)$ for some $s_{0}\in\mathcal{S}$, then applying Lemma \ref{Lemma: Right-Left Limits} to the function $f=\mathbf{1}_{\left\{ X_{0}=s_{0}\right\} }$ shows that the shift is not conservative. Assume then that $\pi=\lambda$. Fix $s_{0},t_{0}\in\mathcal{S}$ and let $f=\mathbf{1}_{\left\{ \left(X_{0},X_{1}\right)=\left(s_{0},t_{0}\right)\right\} }.$ By Proposition \ref{Lemma: Markov LLN} we have
$$\frac{1}{N}S_{N}^{+}f=\frac{1}{N}\sum_{n=0}^{N-1}\mathbf{1}_{\left\{ \left(X_{n},X_{n+1}\right)=\left(s_{0},t_{0}\right)\right\} }\approx\lambda\left(s_{0}\right)Q\left(s_{0},t_{0}\right)=\pi\left(s_{0}\right)Q\left(s_{0},t_{0}\right).$$
By the reasoning mentioned above, we can apply Proposition \ref{Lemma: Markov LLN} to the reversed chain to get that
$$\frac{1}{N}S_{N}^{-}f=\frac{1}{N}\sum_{n=0}^{N-1}\mathbf{1}_{\left\{ \left(X_{-n+1},X_{-n}\right)=\left(t_{0},s_{0}\right)\right\} }\approx\pi\left(t_{0}\right)\widehat{P}\left(t_{0},s_{0}\right)=\pi\left(s_{0}\right)P\left(s_{0},t_{0}\right).$$
Thus, if $P\left(s_{0},t_{0}\right)\neq Q\left(s_{0},t_{0}\right)$ for some $s_{0},t_{0}\in\mathcal{S}$, applying Lemma \ref{Lemma: Right-Left Limits} to the function $f=\mathbf{1}_{\left\{\left(X_{0},X_{1}\right)=\left(s_{0},t_{0}\right)\right\}}$ shows that the shift is not conservative.
\end{proof}

\section{Proofs of the Convergent Scenarios}

We start with a sufficient condition for the Central Limit Theorem (CLT).

\begin{thm} (Dobrushin)
\label{Theorem: Markov CLT}
Let $\left(Y_{n}:n\geq1\right)$ be a non-homogeneous Markov chain that its distribution satisfies the Doeblin condition \ref{Doeblin}. Let $\left(f_{n}:n\geq1\right)$ be a uniformly bounded sequence of real-valued functions. If
$$\sum_{n\geq1}\mathbf{V}\left(f_{n}\left(Y_{n}\right)\right)=\infty,$$
then the sequence $\left(f_n\left(Y_n\right):n\geq1\right)$ satisfies
$$\frac{S_{N}-\mathbf{E}\left(S_{N}\right)}{\sqrt{\mathbf{V}\left(S_{N}\right)}}\xrightarrow[N\to\infty]{\mathrm{d}}\mathcal{N},$$
where $S_{N}:=\sum_{n=1}^{N}f_{n}\left(Y_{n}\right)$ for $N\geq1$ and $\mathcal{N}$ is the standard normal distribution.
\end{thm}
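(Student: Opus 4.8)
The plan is to realize the centered partial sum as a martingale difference array with uniformly bounded, geometrically decaying increments, and then to invoke the martingale central limit theorem for triangular arrays \cite{hall2014}. The engine driving every estimate is the Doeblin condition: by Proposition \ref{Proposition: Mixing in MSFT} the $M$-step transition matrices have all entries bounded below by $\delta^{M}$, so the Dobrushin contraction coefficient over $M$ steps is at most $1-\delta^{M}<1$. Consequently there is $\rho\in\left(0,1\right)$ such that for every bounded $g$ and all $m\geq k$ one has $\left|\mathbf{E}\left(g\left(Y_m\right)\mid Y_k=s\right)-\mathbf{E}\left(g\left(Y_m\right)\mid Y_k=s'\right)\right|\preccurlyeq\rho^{m-k}$ uniformly in $s,s'$; this geometric loss of memory is what replaces independence.

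After centering each $f_n$ (which changes neither $\mathbf{V}\left(f_n\left(Y_n\right)\right)$ nor $\mathbf{V}\left(S_N\right)$) I would set $\mathcal{F}_k=\sigma\left(Y_1,\dots,Y_k\right)$ and define the array
\[
Z_{N,k}:=\mathbf{E}\left(S_N\mid\mathcal{F}_k\right)-\mathbf{E}\left(S_N\mid\mathcal{F}_{k-1}\right),\qquad 1\leq k\leq N,
\]
so that $S_N-\mathbf{E}\left(S_N\right)=\sum_{k=1}^{N}Z_{N,k}$ is a sum of martingale differences for $\left(\mathcal{F}_k\right)$. The Markov property collapses each conditional expectation to a function of a single coordinate, giving
\[
Z_{N,k}=\sum_{m=k}^{N}\left(\mathbf{E}\left(f_m\left(Y_m\right)\mid Y_k\right)-\mathbf{E}\left(f_m\left(Y_m\right)\mid Y_{k-1}\right)\right).
\]
The contraction estimate bounds the $m$-th summand by $\preccurlyeq\rho^{m-k}$, so the $Z_{N,k}$ are uniformly bounded with geometrically decaying tails; in particular $Z_{N,k}$ differs from the local martingale difference $f_k\left(Y_k\right)-\mathbf{E}\left(f_k\left(Y_k\right)\mid Y_{k-1}\right)$ only by a correction that is controlled by summing a geometric series.

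It then remains to verify the two hypotheses of the martingale CLT with the normalization $\sigma_N^2:=\mathbf{V}\left(S_N\right)=\sum_{k=1}^{N}\mathbf{E}\left(Z_{N,k}^2\right)$. The conditional Lindeberg condition is immediate once $\sigma_N\to\infty$, since the increments are bounded by a constant independent of $N$. For the normalized quadratic variation I would show $\frac{1}{\sigma_N^2}\sum_{k=1}^{N}\mathbf{E}\left(Z_{N,k}^2\mid\mathcal{F}_{k-1}\right)\to1$ in probability, using Corollary \ref{Corollary: Markov LLN} and the law of large numbers for martingales \cite{hall2014} to replace the conditional second moments by their averages along the chain, the geometrically decaying cross terms contributing a negligible amount after division by $\sigma_N^2\to\infty$.

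The main obstacle is the variance lower bound $\sigma_N^2\to\infty$, which is exactly where the hypothesis $\sum_{n\geq1}\mathbf{V}\left(f_n\left(Y_n\right)\right)=\infty$ is consumed. Because each $f_n$ depends on a single coordinate, no dynamical coboundary cancellation can occur, and the Doeblin lower bound on $P_n$ forces the conditional variance to dominate: one expects $\mathbf{E}\left(\mathbf{Var}\left(f_n\left(Y_n\right)\mid Y_{n-1}\right)\right)\geq c\,\mathbf{V}\left(f_n\left(Y_n\right)\right)$ for a constant $c=c\left(\delta\right)>0$, since the contraction prevents $Y_{n-1}\mapsto\mathbf{E}\left(f_n\left(Y_n\right)\mid Y_{n-1}\right)$ from retaining all of the oscillation of $f_n$. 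Summing this and matching it against $\sigma_N^2$ through the martingale decomposition — and absorbing the bounded boundary terms $h\left(Y_0\right),h\left(Y_N\right)$ produced by the telescoping — would yield $\sigma_N^2\geq c'\sum_{n=1}^{N}\mathbf{V}\left(f_n\left(Y_n\right)\right)\to\infty$. With $\sigma_N\to\infty$ in hand, the martingale CLT delivers $\left(S_N-\mathbf{E}\left(S_N\right)\right)/\sigma_N\xrightarrow[N\to\infty]{\mathrm{d}}\mathcal{N}$, completing the proof.
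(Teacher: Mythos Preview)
The paper does not prove this theorem: it records that the statement is a special case of Dobrushin's CLT \cite{dobrushin1956central}, refers to the proof in \cite{sethuraman2005}, and checks that in their notation the constants $C_n$ are uniformly bounded (since the $f_n$ are) and the ergodic coefficients $\alpha_n$ lie in $\left[2\delta,1\right]$ under the Doeblin condition.

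Your martingale-array strategy is in fact the modern route to Dobrushin's theorem, and is essentially how Sethuraman and Varadhan proceed, but your sketch hand-waves the two steps where all the real work lies.

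First, the variance lower bound $\sigma_N^2\succcurlyeq\sum_{n\leq N}\mathbf{V}\left(f_n\left(Y_n\right)\right)$ is the heart of the theorem, and ``one expects $\mathbf{E}\left(\mathbf{Var}\left(f_n\left(Y_n\right)\mid Y_{n-1}\right)\right)\geq c\,\mathbf{V}\left(f_n\left(Y_n\right)\right)$'' is not a proof. Even granting this pointwise inequality, passing from a lower bound on the sum of conditional variances to a lower bound on $\mathbf{V}\left(S_N\right)$ --- which involves covariance terms of both signs --- is precisely Dobrushin's variance inequality, and in \cite{sethuraman2005} it occupies its own section.

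Second, the convergence of the normalized conditional quadratic variation cannot be read off Corollary~\ref{Corollary: Markov LLN}: that result controls $\frac{1}{N}$-averages of martingale differences, whereas here the normalization is $\sigma_N^2$, which may be $o\left(N\right)$, and the summands $\mathbf{E}\left(Z_{N,k}^2\mid\mathcal{F}_{k-1}\right)$ are themselves $N$-dependent functions of $Y_{k-1}$ with no limiting ergodic average in the non-homogeneous setting. One must instead show directly that $\mathbf{Var}\left(\sum_k\mathbf{E}\left(Z_{N,k}^2\mid\mathcal{F}_{k-1}\right)\right)=o\left(\sigma_N^4\right)$ via a second-moment computation using the contraction, which is again a substantial piece of \cite{sethuraman2005}.

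So your outline is the right skeleton, but the paper is justified in simply citing the finished theorem rather than reproducing these two arguments.
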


This formulation is a special case of a sufficient condition for CLT established by Dobrushin \cite{dobrushin1956central}. See the formulation and the proof by Sethuraman and Varadhan \cite{sethuraman2005}. In their notations, the constants $C_n$ are uniformly bounded as $\left(f_{n}:n\geq1\right)$ is uniformly bounded, and the ergodic coefficients $\alpha_n$ are all in $\left[2\delta,1\right]$ by the Doeblin condition.

The following lemma is a Markovian version of what is sometimes called Araki--Woods Lemma \cite[Chapter~2]{danilenko2019}.
 
\begin{lem}
\label{Lemma: Convergent Case}
Let $\left(X_A,\mu\right)$ be a topologically-mixing MSFT that satisfies the Doeblin condition \ref{Doeblin}. If there is an admissible configuration $\left(B_k\left(i_k\right),B'_k\left(j_k\right)\right)$, $k\geq1$, (recall Definition \ref{Definition: Admissible Configuration}) such that for the corresponding sequence $\left(D_{k}:k\geq1\right)$ defined in \eqref{eq:15} it holds that
$$D_{k}\xrightarrow[k\to\infty]{}0\quad\text{and}\quad\sum_{k\geq1}D_{k}^{2}=\infty,$$
then $\mathrm{e}\left(\mathscr{\mathscr{R}}_{A}\right)=\mathbb{R}$ for the renormalization full-group $\mathscr{\mathscr{R}}_{A}:=\mathscr{\mathscr{R}}\left(T;\Pi_{A}\right)$.
\end{lem}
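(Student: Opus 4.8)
The plan is to prove that every real number $r$ lies in the ratio set $\mathrm{e}\left(\mathscr{R}_A\right)$, which forces $\mathrm{e}\left(\mathscr{R}_A\right)=\mathbb{R}$. The building blocks are the symmetric admissible permutations attached to the given configuration: by Lemma~\ref{LemmaAdmissiblePermutations} each $V_k:B_k\left(i_k\right)\circlearrowleft B_k'\left(j_k\right)$ belongs to $\mathscr{R}_A$, and by Claim~\ref{Claim: Admissible Permutation Derivative} its cocycle $\varphi_{V_k}$ equals $+D_k$ on $E_{i_k,j_k}$ and $-D_k$ on $E'_{i_k,j_k}$. Since the gap conditions in Definition~\ref{Definition: Admissible Configuration} place the active windows of the $V_k$ on pairwise disjoint coordinates, any finite product of them is again in $\mathscr{R}_A$ and has cocycle equal to the corresponding signed sum of the $D_k$. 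I would verify $r\in\mathrm{e}\left(\mathscr{R}_A\right)$ via the criterion of Lemma~\ref{Lemma: Approximating Essential Values}: it suffices to produce a single $\eta>0$ so that for every $\epsilon>0$ and every cylinder $C$ with $\mu\left(C\right)>0$ there are $F\subset C$ with $\mu\left(F\right)\geq\eta\mu\left(C\right)$ and $V\in\left[\left[\mathscr{R}_A\right]\right]$ mapping $F$ into $C$ with $\left|\varphi_V-r\right|<\epsilon$ on $F$.

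Fix such a $C$, supported on coordinates $\left\{-N,\dotsc,N\right\}$, and discard the finitely many indices $k$ for which $i_k$ or $j_k$ lies within distance $L+M$ of this window; for the surviving indices each $V_k$ acts entirely outside $\left\{-N,\dotsc,N\right\}$ and hence preserves $C$. For a point $x$ write $\chi_k\left(x\right)=+1$ on $E_{i_k,j_k}$, $\chi_k\left(x\right)=-1$ on $E'_{i_k,j_k}$ and $\chi_k\left(x\right)=0$ otherwise, and consider the walk $S_m\left(x\right)=\sum_{k\leq m}\chi_k\left(x\right)D_k$. I would build $V$ as a point-dependent product of the $V_k$, applying $V_k$ only when it pushes the accumulated cocycle toward $r$ and freezing a point once its running sum first enters $\left(r-\epsilon,r+\epsilon\right)$; the set $F$ consists of the points that reach the target before a truncation index $K_1$. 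By Claim~\ref{Claim: Admissible Permutation Derivative} the cocycle of the resulting $V$ is precisely this stopped value, so $\left|\varphi_V-r\right|<\epsilon$ on $F$ by construction, and since every applied $V_k$ acts outside the window of $C$ we get $V\left(F\right)\subset C$.

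The quantitative heart of the matter is the lower bound $\mu\left(F\right)\geq\eta\mu\left(C\right)$, and this is exactly where the hypotheses $D_k\to0$ and $\sum_k D_k^2=\infty$ enter. Using Proposition~\ref{Proposition: Mixing in MSFT} together with the Doeblin condition, the increments $\chi_k D_k$ behave like those of an almost-independent random walk: each sign is taken with probability bounded below by a power of $\delta$, so the conditional variances accumulate in proportion to the $D_k^2$ and the total variance diverges, while the step sizes tend to $0$. I would then apply Dobrushin's central limit theorem (Theorem~\ref{Theorem: Markov CLT}) to this walk: asymptotic normality with variance tending to infinity, combined with the smallness of the steps, forces the walk to cross level $r$ and in fact to stop inside $\left(r-\epsilon,r+\epsilon\right)$ before the truncation index $K_1$ with probability bounded below by some $\eta>0$ depending only on $r$, not on $C$ or $\epsilon$. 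A final application of Proposition~\ref{Proposition: Mixing in MSFT} transfers this hitting probability to the conditional measure inside $C$, giving $\mu\left(F\right)\geq\eta\mu\left(C\right)$.

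I expect the main obstacle to be the bookkeeping that realizes the stopped, point-dependent product $V$ as a genuine one-to-one element of $\left[\left[\mathscr{R}_A\right]\right]$: each $V_k$ flips the state $\chi_k$ of the block it touches, so the adaptive swapping rule and the partition of the domain by the selected index set must be arranged so that the images stay disjoint and $V$ remains invertible, all while keeping its cocycle equal to the stopped walk. Once this injectivity is secured and the uniform hitting bound $\eta$ is established through the central limit theorem, Lemma~\ref{Lemma: Approximating Essential Values} yields $r\in\mathrm{e}\left(\mathscr{R}_A\right)$; as $r\in\mathbb{R}$ was arbitrary, $\mathrm{e}\left(\mathscr{R}_A\right)=\mathbb{R}$.
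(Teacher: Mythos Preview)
Your outline matches the paper's proof in all essential respects: symmetric admissible permutations $V_k\in\mathscr{R}_A$ with $\log V_k'=\chi_k D_k$ (your $\chi_k$ is the paper's $Y_k$), a stopped point-dependent product to realise $V\in\left[\left[\mathscr{R}_A\right]\right]$, Dobrushin's CLT for the walk $S_m=\sum_{k\le m}\chi_k D_k$ (the paper first checks that $\left(Y_k\right)$ is itself a $\{-1,0,1\}$-valued Markov chain with a Doeblin bound, so that Theorem~\ref{Theorem: Markov CLT} applies), and Proposition~\ref{Proposition: Mixing in MSFT} to convert the hitting estimate into $\mu\left(F\right)\ge\eta\,\mu\left(C\right)$. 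The injectivity you flag as the main obstacle is handled in the paper exactly as you anticipate, by reconstructing the stopping index $\mathrm{k}\left(x\right)$ and the set of applied indices from $Vx$.

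There is, however, one genuine gap. You propose to apply $V_k$ ``only when it pushes the accumulated cocycle toward $r$'' and then to justify hitting level $r$ via the CLT. The paper does \emph{not} select by sign: it applies every $V_k$ with $\chi_k\neq 0$ up to the first time the \emph{full} walk falls below $r$, and the key computation you are missing is that the walk has strong negative drift, namely $\mathbf{E}\left(\log V_k'\right)=D_k\mathbf{E}\left(Y_k\right)\asymp -D_k^2$, so that $\mathbf{E}\left(S_{k_0}^K\right)\to -\infty$. This is why the paper restricts to $r<0$ (which suffices since the ratio set is a group) and why the CLT then gives $\liminf_K\mathbb{P}\left(S_{k_0}^K<r\right)\ge 1/2$ uniformly in $k_0$, and hence a uniform $\eta$. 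Without the drift computation, the CLT alone does not force the walk to reach an arbitrary level with probability bounded away from zero (indeed for $r>0$ the negative drift would defeat you). And if instead you keep the sign-selective construction, Dobrushin's CLT is the wrong tool for the resulting monotone partial sums; you would need a separate argument that $\sum_k\left|D_k\right|\mathbf{1}_{\{\chi_k=\pm 1\}}=\infty$ almost surely, which is feasible but is not what you wrote.
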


\begin{proof}
Consider the sequence of symmetric admissible permutations
$$V_{k}:B_{k}\left(i_{k}\right)\circlearrowleft B_{k}'\left(j_{k}\right),\quad k\geq1.$$
Let the random variables
\begin{equation}
\label{eq:14}
Y_{k}\left(x\right):=\mathbf{1}_{B_{k}\left(i_{k}\right)\cap B_{k}'\left(j_{k}\right)}\left(x\right)-\mathbf{1}_{B'_{k}\left(i_{k}\right)\cap B_{k}\left(j_{k}\right)}\left(x\right),\quad k\geq1,
\end{equation}
so that according to Claim \ref{Claim: Admissible Permutation Derivative} and the notation in \eqref{eq:15},
$$\log V_k'\left(x\right)=D_kY_k\left(x\right),\quad k\geq1.$$

\begin{claim}
\label{Claim4}
The sequence $\left(Y_{k}:k\geq1\right)$ defined in \eqref{eq:14} is a one-sided Markov chain on the state space $\left\{-1,0,1\right\}$, with respect to the distribution induced from $\mu$ in the obvious way. Moreover, if $\mu$ satisfies the Doeblin condition for $\delta>0$ then the distribution of $\left(Y_k:k\geq1\right)$ satisfies the Doeblin condition for some $\delta'>0$.
\end{claim}

\begin{subproof}
Since the distribution $\mu$ of $\left(X_n:n\in\mathbb{Z}\right)$ satisfies the Markov property, it follows from \cite[Remark~10.9]{georgii2011} that $\mu$ satisfies the \textit{Markov field} property, namely
$$\sigma\left(X_{k}:\left|k\right|>n\right)\text{ conditioned on }\sigma\left(X_{-n},X_{n}\right)\text{ is independent on }\sigma\left(X_{k}:\left|k\right|<n\right)$$
for every $n\geq1$ with respect to $\mu$. This readily implies that the distribution of $\left(Y_k:k\geq1\right)$ satisfies the Markov property.

To see that the Markov chain $\left(Y_k:k\geq1\right)$ satisfies Doeblin condition we use Proposition \ref{Proposition: Mixing in MSFT}, and that by the construction of an admissible configuration, $i_{k}-i_{k+1}$ and $j_{k+1}-j_{k}$, as well as $j_k-i_k$, are all greater then $L+M$ for every $k\geq1$. First recall that for every $A$-admissible block $B$ of length $L$, for every $n\in\mathbb{Z}$ it holds that $\delta^{LM}\leq\mu\left(B\left(n\right)\right)\leq\left(1-\delta^{M}\right)^{L}$. We then get that
\begin{align*}
\mathbb{P}\left(Y_{k}=1\right)
&=\mu\left(B_{k}\left(i_{k}\right)\cap B_{k}'\left(j_{k}\right)\right)\\
&\geq C\left(\delta,M\right)\mu\left(B_{k}\left(i_{k}\right)\right)\mu\left(B_{k}'\left(j_{k}\right)\right)\\
&\geq C\left(\delta,M\right)\delta^{2LM},
\end{align*}
and similarly $\mathbb{P}\left(Y_{k}=-1\right)\geq C\left(\delta,M\right)\delta^{2LM}$. Also we get that
\begin{align*}
\mathbb{P}\left(Y_{k}=0\right)
&\geq\mu\left(B_{k}\left(i_{k}\right)^{\mathsf{c}}\cap B_{k}\left(j_{k}\right)^{\mathsf{c}}\right)\\
&\geq C\left(\delta,M\right)\mu\left(B_{k}\left(i_{k}\right)^{\mathsf{c}}\right)\mu\left(B_{k}\left(j_{k}\right)^{\mathsf{c}}\right)\\
&\geq C\left(\delta,M\right)\left(1-\left(1-\delta^{M}\right)^{L}\right)^{2}.
\end{align*}
Then let $0<\eta\leq1/2$ such that for all $k\geq1$ and $a\in\left\{ -1,0,1\right\}$ it holds that $\mathbb{P}\left(Y_{k}=a\right)\geq\eta$. Considering the transition probabilities, using the same considerations we see that for all $a,b\in\left\{-1,0,1\right\}$,
$$\mathbb{P}\left(Y_{k+1}=b,Y_{k}=a\right)\geq C\left(\delta,M\right)^{3}\eta^{4},$$
hence
$$\mathbb{P}\left(Y_{k+1}=b\mid Y_{k}=a\right)\geq\delta':=\frac{C\left(\delta,M\right)^{2}\eta^{3}}{1-\eta},$$
which concludes that the Markov chain $\left(Y_k:k\geq1\right)$ satisfies the Doeblin condition for $\delta'>0$
depending only on the constants $\delta$, $M$ and $L$. This completes the proof of Claim \ref{Claim4}.
\end{subproof}

\begin{claim}
\label{Claim5}
The sequence $\left(\log V_k':k\geq1\right)$ satisfies the central limit theorem.
\end{claim}

\begin{subproof}
As we mentioned, we have the identity $\log V_k'\left(x\right)=D_kY_k\left(x\right)$ for all $k\geq1$, where $\left(D_k:k\geq1\right)$ is a convergent sequence of numbers. It follows by Claim \ref{Claim4} that $\left(\log V_k':k\geq1\right)$ is a Markov chain that satisfies the Doeblin condition. To use Theorem \ref{Theorem: Markov CLT}, note that for every $k\geq1$ the events $B_{k}\left(i_{k}\right)\cap B_{k}'\left(j_{k}\right)$ and $B_{k}'\left(i_{k}\right)\cap B_{k}\left(j_{k}\right)$ are disjoint, so by Proposition \ref{Proposition: Mixing in MSFT} we have that
\begin{align*}
\mathbf{V}\left(Y_{k}\right)&
=\mu\left(B_{k}\left(i_{k}\right)\cap B_{k}'\left(j_{k}\right)\right)\left(1-\mu\left(B_{k}\left(i_{k}\right)\cap B_{k}'\left(j_{k}\right)\right)\right)\\
&\quad+\mu\left(B_{k}'\left(i_{k}\right)\cap B_{k}\left(j_{k}\right)\right)\left(1-\mu\left(B_{k}'\left(i_{k}\right)\cap B_{k}\left(j_{k}\right)\right)\right)\\
&\quad+2\mu\left(B_{k}\left(i_{k}\right)\cap B_{k}'\left(j_{k}\right)\right)\mu\left(B_{k}'\left(i_{k}\right)\cap B_{k}\left(j_{k}\right)\right)\\
&\geq\mu\left(B_{k}\left(i_{k}\right)\cap B_{k}'\left(j_{k}\right)\right)\geq C\left(\delta,M\right)\delta^{2LM},
\end{align*}
hence
$$\mathbf{V}\left(\log V_{k}'\right)=D_{k}^2\mathbf{V}\left(Y_k\right)\asymp D_{k}^2.$$
By the assumption in the Lemma we conclude that $\sum_{k\geq1}\mathbf{V}\left(\log V_{k}'\right)=\infty$, hence the sequence $\left(\log V_k':k\geq1\right)$ satisfies the condition of Theorem \ref{Theorem: Markov CLT}. This completes the proof of Claim \ref{Claim5}.
\end{subproof}

\begin{claim}
\label{Claim6}
For integers $1\leq k\leq K$ denote
$$S_{k}^{K}\left(x\right)=\sum_{i=k}^{K}\log V_{i}'\left(x\right).$$
Fix some $k_0\geq1$. Then for every $r<0$ it holds that
$$\liminf_{K\to\infty}\mathbb{P}\left(S_{k_{0}}^{K}<r\right)\geq\lim_{K\to\infty}\mathbb{P}\left(S_{k_{0}}^{K}<\mathbf{E}\left(S_{k_{0}}^{K}\right)\right)=1/2.$$
\end{claim}

\begin{subproof}
The second equality is a straightforward corollary of the CLT as in Claim \ref{Claim5}. For the first inequality, it is enough to show that $\mathbf{E}\left(S_{k_{0}}^{K}\right)\xrightarrow[K\to\infty]{}-\infty$. Note that since every $\left(B_k,B'_k\right)$ is an admissible pair, if we denote the mutual first symbol by $b_0$ and the mutual last symbol by $b_1$, then we have that
$$\mu\left(B_{k}\left(i_{k}\right)\cap B'_{k}\left(j_{k}\right)\right)=\pi_{i_{k}}\left(b_{0}\right)P_{i_{k}}\left(B_{k}\right)P^{\left(i_{k}+L-1,j_k\right)}\left(b_{1},b_{0}\right)P_{j_{k}}\left(B'_{k}\right)$$
and
$$\mu\left(B'_{k}\left(i_{k}\right)\cap B{}_{k}\left(j_{k}\right)\right)=\pi_{i_{k}}\left(b_{0}\right)P_{i_{k}}\left(B'_{k}\right)P^{\left(i_{k}+L-1,j_k\right)}\left(b_{1},b_{0}\right)P_{j_{k}}\left(B_{k}\right).$$
It then follows that
\begin{align*}
\mathbf{E}\left(\log V_{k}'\right)
&= D_k\mathbf{E}\left(Y_{k}\right)\\
&= D_{k}\left(\mu\left(B_{k}\left(i_{k}\right)\cap B'_{k}\left(j_{k}\right)\right)-\mu\left(B'_{k}\left(i_{k}\right)\cap B{}_{k}\left(j_{k}\right)\right)\right)\\
&\asymp D_{k}\left(P_{i_{k}}\left(B_{k}\right)P_{j_{k}}\left(B'_{k}\right)-P_{i_{k}}\left(B'_{k}\right)P_{j_{k}}\left(B_{k}\right)\right)\\
&\asymp-D_{k}^{2},
\end{align*}
where the first approximation is by the above calculation and Proposition \ref{Proposition: Mixing in MSFT}, and the second approximation was mentioned in \eqref{eq:15}. By the assumption in the Lemma we conclude that $\mathbf{E}\left(S_{k_{0}}^{K}\right)\xrightarrow[K\to\infty]{}-\infty$, which completes the proof of Claim \ref{Claim6}.
\end{subproof}

We are now ready to establish that $\mathrm{e}\left(\mathscr{R}_A\right)=\mathbb{R}$. Since the ratio set is an additive subgroup of $\mathbb{R}$ it is enough to show that it contains every negative number. Let $r<0$ and $0<\epsilon<\left|r\right|$. Let $E\in\sigma\left(X_k:\left|k\right|\leq N\right)$ for some $N\geq1$. Find positive integers $k_{0}\leq K_{0}$ to satisfy the following properties.
\begin{enumerate}
	\item $i_{k_{0}}+N\leq-M$ and $j_{k_{0}}-N\geq M$;
	\item $\left|\log V_k'\left(x\right)\right|<\epsilon$ everywhere for all $k\geq k_{0}$; and
	\item $\mu\left(S_{k_{0}}^{K_{0}}<r\right)\geq1/4$.
\end{enumerate}
The first property clearly holds for every large $k_0$. The second property holds for every large $k_0$ since $D_k\xrightarrow[k\to\infty]{}0$. The third property holds for every large $k_0$ and every $K_0$ which is large enough with respect to the choice of $k_0$ by Claim \ref{Claim6}.

Consider the set $F:=E\cap\left\{ S_{k_{0}}^{K_{0}}<r\right\} \subset E$. We now define $V\in\left[\left[\mathscr{R}_A\right]\right]$ of the form $V:F\to E$ with $\log V'\left(x\right)\in\left(r-\epsilon,r+\epsilon\right)$ for all $x\in F$. For $x\in F$ let
$$\text{k}\left(x\right):=\inf\left\{ k\geq k_{0}:S_{k_{0}}^{k}\left(x\right)<r\right\} \leq K_{0}$$
and
$$\mathrm{K}\left(x\right):=\left\{k_0\leq k\leq\mathrm{k}\left(x\right):Y_k\left(x\right)\neq0\right\}\subset\left\{k_0,\dotsc,\mathrm{k}\left(x\right)\right\}.$$
Define $Vx$ for $x\in F$ to be the composition of all $V_kx$ for $k\in\mathrm{K}\left(x\right)$.

Recall that $j_{k+1}-j_k$ and $i_k-i_{k+1}$ are both greater then $L+M$ for all $k\geq1$, and in particular the coordinates that are being changed by the $V_k$'s are distinct, so that $Vx$ is a well-defined transformation with domain in $X_A$. Also note that $Vx\in E$ for all $x\in F$, since $i_{k_{0}}+N\leq-M$ and $j_{k_{0}}-N\geq M$ while $E\in\sigma\left(X_k:\left|k\right|\leq N\right)$. We show that $V$ is one-to-one on $F$. Assume that $Vx=Vy$ for $x,y\in F$. If $\mathrm{k}\left(x\right)<\mathrm{k}\left(y\right)$, since $Vx=Vy$ implies that $x_k=y_k$ for all $\left|k\right|\leq\mathrm{k}\left(x\right)$, we get that
$$S_{k_{0}}^{\mathrm{k}\left(x\right)}\left(y\right)=S_{k_{0}}^{\mathrm{k}\left(x\right)}\left(x\right)<r,$$
a contradiction to the definition of $\mathrm{k}\left(y\right)$. By the symmetric reasoning it is also impossible that $\mathrm{k}\left(x\right)>\mathrm{k}\left(y\right)$, hence $\mathrm{k}\left(x\right)=\mathrm{k}\left(y\right)$. Then we see that for every $k_0\leq k\leq \mathrm{k}\left(x\right)=\mathrm{k}\left(y\right)$,
\begin{align*}
	& x\in B_{k}\left(i_{k}\right)\cap B_{k}'\left(j_{k}\right)\\
	& \quad\iff Vy=Vx\in B_{k}'\left(i_{k}\right)\cap B_{k}\left(j_{k}\right)\\
	&\quad\iff y\in B_{k}\left(i_{k}\right)\cap B_{k}'\left(j_{k}\right),
\end{align*}
and similarly
$$x\in B_{k}'\left(i_{k}\right)\cap B_{k}\left(j_{k}\right)\iff y\in B_{k}'\left(i_{k}\right)\cap B_{k}\left(j_{k}\right).$$
It follows that $\mathrm{K}\left(x\right)=\mathrm{K}\left(y\right)$. Finally, since each of the $V_{k}$'s is one-to-one and since $Vx=Vy$ is the composition of all $V_{k}$'s for $k\in\mathrm{K}\left(x\right)=\mathrm{K}\left(y\right)$, we see that $x=y$ so that $V$ is one-to-one on $F$. We also see that for every $x\in F$,
$$\log V'\left(x\right)=\sum_{k\in\mathrm{K}\left(x\right)}\log V_{k}'\left(x\right)=S_{k_{0}}^{\mathrm{k}\left(x\right)}\left(x\right)\in\left(r-\epsilon,r\right),$$
by the definition of $\text{k}\left(x\right)$ and since $\left|\log V_k'\left(x\right)\right|<\epsilon$ for $k\geq k_{0}$. This establishes the condition for $r$ to be essential value for all sets supported on finitely many coordinates. As this collection forms a countable algebra that is dense in the Borel sigma-algebra, in order to finish we establish the condition of Lemma \ref{Lemma: Approximating Essential Values}. Note that since $i_{k_{0}}+N\leq-M$ and $j_{k_{0}}-N\geq M$ while $E\in\sigma\left(X_k:\left|k\right|\leq N\right)$, by Proposition \ref{Proposition: Mixing in MSFT} we have that
$$\mu\left(F\right)\geq C\left(\delta,M\right)^2\mu\left(S_{k_{0}}^{K_{0}}<r\right)\mu\left(E\right)\geq\frac{C\left(\delta,M\right)^2}{4}\mu\left(E\right).$$
Thus, $\eta:=C\left(\delta,M\right)^2/4$ satisfies the condition of Lemma \ref{Lemma: Approximating Essential Values} and $r\in\mathrm{e}\left(\mathscr{\mathscr{R}}_{A}\right)$.
\end{proof}

We are now in a position to prove Theorems \ref{Theorem: Theorem D} and \ref{Theorem: Theorem E}. By Theorem \ref{Theorem: Theorem A} we know that under the conditions of Theorems \ref{Theorem: Theorem D} and \ref{Theorem: Theorem E} the shift is ergodic. Thus, by our Hopf Argument \ref{Theorem: Hopf Argument} if we show that $\mathrm{e}\left(\mathscr{R}_A\right)=\mathbb{R}$ for the renormalization full-group $\mathscr{R}_A=\mathscr{R}\left(T;\Pi_A\right)$ it will follow that the shift is of type $\mathrm{III}_1$.

Note that if
$$\sum_{n\geq1}\sum_{s,t\in\mathcal{S}}\left(\sqrt{P_n\left(s,t\right)}-\sqrt{Q\left(s,t\right)}\right)^{2}<\infty$$
then by Theorem \ref{Theorem: Kakutani Criteria} we can assume that $P_n=Q$ for all $n\geq1$ without changing the equivalence class of the measure. In a similar way, if
$$\sum_{n\geq1}\sum_{s,t\in\mathcal{S}}\left(\sqrt{\widehat{P}_{-n}\left(s,t\right)}-\sqrt{\widehat{Q}\left(s,t\right)}\right)^{2}<\infty$$
we can assume without loss of generality that $\widehat{P}_{-n}=\widehat{Q}$ for $n\geq1$. Then if both of those series are finite, $\mu$ is equivalent to the homogeneous Markov measure defined by $Q$ and the shift is of type $\mathrm{II}_1$. Thus, if the shift is not of type $\mathrm{II}_1$ then one of the above series diverges. We consider the case where the first series diverges,
$$\sum_{n\geq1}\sum_{s,t\in\mathcal{S}}\left(\sqrt{Q\left(s,t\right)}-\sqrt{P_{n}\left(s,t\right)}\right)^{2}=\infty,$$
and the other case is being similar.

\begin{proof}[Proof of Theorem \ref{Theorem: Theorem D}]

Let us consider first the fullshift $T:\left(X,\mu\right)\to\left(X,\mu\right)$ where $X=\left\{ 0,1\right\}^{\mathbb{Z}}$ and $\mu$ is the Markov measure defined by the transition matrices
$$P_{n}=\left(\begin{array}{cc}
p_{n} & 1-p_{n}\\
p'_{n} & 1-p'_{n}
\end{array}\right),\quad n\in\mathbb{Z}.$$
Assume that $\lim_{\left|n\right|\to\infty}P_{n}=Q$ for
$$Q=\left(\begin{array}{cc}
q & 1-q\\
q' & 1-q'
\end{array}\right).$$
If the shift is not of type $\mathrm{II}_{1}$ we assume without loss of generality that
\begin{align*}
&\sum_{n\geq1}\left(\sqrt{p_{n}}-\sqrt{q}\right)^{2}+\left(\sqrt{1-p_{n}}-\sqrt{1-q}\right)^{2}\\
&\qquad+\sum_{n\geq1}\left(\sqrt{p'_{n}}-\sqrt{q'}\right)^{2}+\left(\sqrt{1-p'_{n}}-\sqrt{1-q'}\right)^{2}=\infty.
\end{align*}
By the Doeblin condition the square roots do not affect this divergence so that
$$\sum_{n\geq1}\left(p_{n}-q\right)^{2}+\sum_{n\geq1}\left(p'_{n}-q'\right)^{2}=\infty.$$
We consider the case of $\sum_{n\geq1}\left(p_{n}-q\right)^{2}=\infty$ regardless $\sum_{n\geq1}\left(p'_{n}-q'\right)^{2}$. The other case can be treated symmetrically.

We now construct an admissible configuration that is satisfying the conditions of Lemma \ref{Lemma: Convergent Case}. Let
$$I:=\left\{ n\geq1:\mathrm{sign}\left(p_{n}-q\right)=\mathrm{sign}\left(p_{n+1}-q\right)\right\}.$$
Then one can easily see that $\left(p_{n}-p_{n+1}\right)^{2}\geq\left(p_{n}-q\right)^{2}$ for all $n\notin I$ so by the nonsingularity of the shift, using Corollary \ref{Corollary: Nonsingularity of the Shift} we get that
$$\sum_{n\notin I}\left(p_{n}-q\right)^{2}\leq\sum_{n\notin I}\left(p_{n}-p_{n+1}\right)^{2}<\infty.$$
It follows that
$$\sum_{n\in I}\left(p_{n}-q\right)^{2}=\infty.$$
Then we can find a subsequence $\left(j_{k}:k\geq1\right)\subset I$ satisfying
\begin{itemize}
	\item $j_{k}-j_{k-1}\geq3$ for all $k\geq1$;
	\item $\mathrm{sign}\left(p_{j_{k}}-q\right)=\mathrm{sign}\left(p_{j_{k}+1}-q\right)$ is constant for $k\geq1$; and
	\item $\sum_{k\geq1}\left(p_{j_{k}}-q\right)^{2}=\infty.$
\end{itemize}
Write $s:=\mathrm{sign}\left(p_{j_k}-q\right)$ for any $k\geq1$. Since $p_{n}\xrightarrow[n\to-\infty]{}q$ there is a sequence $\left(i_{k}:k\geq1\right)$ of negative integers satisfying
\begin{itemize}
	\item $i_{k}-i_{k+1}\geq3$ for all $k\geq1$;
	\item $\mathrm{sign}\left(p_{j_{k}}-p_{i_{k}}\right)=s$ for all $k\geq1$; and
	\item $\sum_{k\geq1}\left(p_{j_{k}}-p_{i_{k}}\right)^{2}=\infty.$
\end{itemize}
Consider the admissible pairs
$$\left(B_{0},B_{0}'\right)\text{ for }B_{0}=\left[0,0,0\right],\,B_{0}'=\left[0,1,0\right],$$
and
$$\left(B_{1},B_{1}'\right)\text{ for }B_{1}=\left[0,0,1\right],\,B_{1}'=\left[0,1,1\right].$$
For every $k\geq1$ we have that
\begin{align*}
D_{0,k}
&:=\log\left(\frac{P_{i_{k}}\left(B_{0}'\right)P_{j_{k}}\left(B_{0}\right)}{P_{i_{k}}\left(B_{0}\right)P_{j_{k}}\left(B_{0}'\right)}\right)\\
&=\log\frac{p_{j_{k}}}{p_{i_{k}}}+\log\frac{1-p_{i_{k}}}{1-p_{j_{k}}}+\log\frac{p_{j_{k}+1}}{p_{i_{k}+1}}+\log\frac{p'_{i_{k}+1}}{p'_{j_{k}+1}}
\end{align*}
and that
\begin{align*}
D_{1,k}
&:=\log\left(\frac{P_{i_{k}}\left(B_{1}'\right)P_{j_{k}}\left(B_{1}\right)}{P_{i_{k}}\left(B_{1}\right)P_{j_{k}}\left(B_{1}'\right)}\right)\\
&=\log\frac{p_{j_{k}}}{p_{i_{k}}}+\log\frac{1-p_{i_{k}}}{1-p_{j_{k}}}+\log\frac{1-p_{j_{k}+1}}{1-p_{i_{k}+1}}+\log\frac{1-p'_{i_{k}+1}}{1-p'_{j_{k}+1}}.
\end{align*}
Define for $k\geq1$,
$$g\left(k\right)=\begin{cases}
0 & \mathrm{sign}\left(\log\frac{p'_{i_{k}+1}}{p'_{j_{k}+1}}\right)=s\\
1 & \mathrm{sign}\left(\log\frac{1-p'_{i_{k}+1}}{1-p'_{j_{k}+1}}\right)=s
\end{cases}.$$

\begin{claim}
\label{Claim: Convergent Fullshift}
Let $D_k:=D_{g\left(k\right),k}$ for $k\geq1$. Then
$$D_k\xrightarrow[k\to\infty]{}0\quad\text{and }\sum_{k\geq1}D_k^2=\infty.$$
\end{claim}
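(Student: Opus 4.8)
The plan is to pin down $D_k$ as a single sign-$s$ quantity comparable in size to $a_k:=p_{j_k}-p_{i_k}$, up to a square-summable error. The convergence $D_k\to 0$ needs nothing delicate: since $\lim_{|n|\to\infty}P_n=Q$, all of $p_{i_k},p_{j_k},p_{i_k+1},p_{j_k+1}\to q$ and $p'_{i_k+1},p'_{j_k+1}\to q'$, so each of the four logarithmic ratios making up $D_{0,k}$ and $D_{1,k}$ tends to $0$; hence $D_k=D_{g(k),k}\to0$ whatever the values $g(k)$ are.

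For the divergence, the one input I would add beyond the stated conclusion of Corollary \ref{Corollary: Nonsingularity of the Shift} is that nonsingularity gives square-summable first differences, $\sum_{n}(p_{n+1}-p_n)^2<\infty$. This is the finiteness of the Kakutani sum underlying that corollary once one separates the factor $\sqrt{P_n}$ from the (Doeblin-)bounded reverse-transition factors and then discards the square roots, again using the Doeblin condition. Combined with the construction, which gives $\mathrm{sign}(a_k)=s$ and $\sum_k a_k^2=\infty$, and with the approximation \ref{Fact: Log Approximation}, every ``off-by-one'' discrepancy will be square-summable.

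The core step is to show that for \emph{both} $g\in\{0,1\}$ the $p$-part $P_k^{(g)}:=D_{g,k}-W_k^{(g)}$, where $W_k^{(0)}:=\log\frac{p'_{i_k+1}}{p'_{j_k+1}}$ and $W_k^{(1)}:=\log\frac{1-p'_{i_k+1}}{1-p'_{j_k+1}}$, has the form (sign-$s$ term of magnitude $\asymp|a_k|$) $+$ (square-summable remainder). Using $\log\frac{p_{j_k+1}}{p_{i_k+1}}=\log\frac{p_{j_k}}{p_{i_k}}+\eta_k$ and $\log\frac{1-p_{j_k+1}}{1-p_{i_k+1}}=\log\frac{1-p_{j_k}}{1-p_{i_k}}+\eta_k'$ with $\sum_k\eta_k^2,\sum_k(\eta_k')^2<\infty$ (these follow from \ref{Fact: Log Approximation} and the square-summability above), the $g=0$ case collapses to $2\log\frac{p_{j_k}}{p_{i_k}}+\log\frac{1-p_{i_k}}{1-p_{j_k}}+\eta_k$, a sum of two logarithms \emph{both} of sign $s$ and size $\asymp|a_k|$, while in the $g=1$ case the two $\log(1-\,\cdot\,)$ terms cancel \emph{identically}, leaving $\log\frac{p_{j_k}}{p_{i_k}}+\eta_k'$. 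Thus in either case $P_k^{(g)}=S_k+\varepsilon_k$ with $\mathrm{sign}(S_k)=s$, $|S_k|\ge c|a_k|$, and $\sum_k\varepsilon_k^2<\infty$. I expect this $g=1$ cancellation to be the crux: a crude use of \ref{Fact: Log Approximation} would let the off-by-one term $\log\frac{1-p_{j_k+1}}{1-p_{i_k+1}}$ (of sign $-s$) appear to dominate and cancel the main term, and it is only its exact pairing with $\log\frac{1-p_{i_k}}{1-p_{j_k}}$ that prevents this.

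It remains to assemble these. By the definition of $g(k)$ the retained $p'$-term $W_k:=W_k^{(g(k))}$ has sign $s$ (when $p'_{i_k+1}=p'_{j_k+1}$ it simply vanishes, which is harmless), so $S_k$ and $W_k$ reinforce: $|S_k+W_k|=|S_k|+|W_k|\ge c|a_k|$, whence $|D_k|\ge|S_k+W_k|-|\varepsilon_k|\ge c|a_k|-|\varepsilon_k|$. Split the index set into $J:=\{k:|\varepsilon_k|\le \tfrac{c}{2}|a_k|\}$ and its complement. On $J$ one has $D_k^2\ge\tfrac{c^2}{4}a_k^2$; on the complement $a_k^2\le\tfrac{4}{c^2}\varepsilon_k^2$, so $\sum_{k\notin J}a_k^2\le\tfrac{4}{c^2}\sum_k\varepsilon_k^2<\infty$. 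Since $\sum_k a_k^2=\infty$ this forces $\sum_{k\in J}a_k^2=\infty$, and therefore $\sum_{k\ge1}D_k^2\ge\tfrac{c^2}{4}\sum_{k\in J}a_k^2=\infty$, completing the claim.
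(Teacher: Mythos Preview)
Your proof is correct and follows essentially the same route as the paper: isolate the $p'$-term via the choice of $g(k)$ so that it has sign $s$, decompose the remaining $p$-part as a sign-$s$ term of magnitude $\asymp|a_k|$ plus a square-summable remainder coming from nonsingularity (Theorem \ref{Theorem: Kakutani Criteria}), and then assemble by the standard splitting on $\{|\varepsilon_k|\le \tfrac{c}{2}|a_k|\}$. The only cosmetic difference is that the paper handles the case $g(k)=0$ by arguing directly that all four summands already carry sign $s$ (tacitly using that the $i_k$ can be chosen so that $\mathrm{sign}(p_{j_k+1}-p_{i_k+1})=s$ as well), and reserves the cancellation-plus-remainder decomposition for $g(k)=1$; your uniform treatment of both values of $g$ through the $\eta_k,\eta_k'$ remainders is arguably tidier and sidesteps that extra sign constraint on the choice of $(i_k)$.
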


\begin{subproof}[Proof of Claim \ref{Claim: Convergent Fullshift}]
It is clear that $D_k\xrightarrow[k\to\infty]{}0$. We prove the second part. By the approximation in \ref{Fact: Log Approximation} we have that
$$\log\frac{p_{j_{k}}}{p_{i_{k}}}\asymp\log\frac{1-p_{i_{k}}}{1-p_{j_{k}}}\asymp\log\frac{p_{j_{k}+1}}{p_{i_{k}+1}}\asymp p_{j_{k}}-p_{i_{k}}.$$
By the definition of $I$ we have that
$$\mathrm{sign}\left(\log\frac{1-p_{i_{k}}}{1-p_{j_{k}}}\right)=\mathrm{sign}\left(\log\frac{p_{j_{k}+1}}{p_{i_{k}+1}}\right)=s,\quad k\geq1.$$
It follows that for $g\left(k\right)=0$ we have
$$D_{k}=D_{0,k}\succcurlyeq p_{i_{k}}-p_{j_{k}},$$
so in case of $\sum_{g\left(k\right)=0}\left(p_{j_{k}}-p_{i_{k}}\right)^{2}=\infty$ we have
$$\sum_{k\geq1}D_{k}^{2}\geq\sum_{g\left(k\right)=0}D_{k}^{2}=\infty.$$
In case of $\sum_{g\left(k\right)=0}\left(p_{j_{k}}-p_{i_{k}}\right)^{2}<\infty$ we must have $\sum_{g\left(k\right)=1}\left(p_{j_{k}}-p_{i_{k}}\right)^{2}=\infty$. For $g\left(k\right)=1$ the general term of the sequence $D_k=D_{1,k}$ is the sum of the general term of the sequence
$$\log\frac{p_{j_{k}}}{p_{i_{k}}}+\log\frac{1-p'_{i_{k}+1}}{1-p'_{j_{k}+1}}\succcurlyeq p_{j_{k}}-p_{i_{k}}$$
and the general term of the sequence
$$\log\frac{1-p_{i_{k}}}{1-p_{j_{k}}}+\log\frac{1-p_{j_{k}+1}}{1-p_{i_{k}+1}}\asymp\left(p_{i_{k}+1}-p_{i_{k}}\right)+\left(p_{j_{k}}-p_{j_{k}+1}\right),$$
which is square-summable by the nonsingularity of the shift as in Corollary \ref{Corollary: Nonsingularity of the Shift}. It follows that also in this case we have
$$\sum_{k\geq1}D_{k}^{2}\geq\sum_{g\left(k\right)=1}D_{k}^{2}=\infty,$$
so the proof of Claim \ref{Claim: Convergent Fullshift} is complete.
\end{subproof}

By Claim \ref{Claim: Convergent Fullshift} we see that the admissible configuration $\left(B_{g\left(k\right)}\left(i_k\right),B'_{g\left(k\right)}\left(j_k\right)\right)$, $k\geq1$, satisfies the conditions of Lemma \ref{Lemma: Convergent Case} so that $\mathrm{e}\left(\mathscr{\mathscr{R}}_{A}\right)=\mathbb{R}$. This completes the proof of Theorem \ref{Theorem: Theorem D} for the fullshift.

\vspace{5mm}

Let us now consider a subshift on two states. The primitive adjacency matrices in this case, except from the fullshift, are
$$\left(\begin{array}{cc}
1 & 1\\
1 & 0
\end{array}\right)\text{ and }\left(\begin{array}{cc}
0 & 1\\
1 & 1
\end{array}\right).$$
The treatment in these two subshifts is similar, and we consider the first one. Let the transition matrices
$$P_{n}=\left(\begin{array}{cc}
p_{n} & 1-p_{n}\\
1 & 0
\end{array}\right),\quad n\in\mathbb{Z}.$$
Assume that $\lim_{\left|n\right|\to\infty}P_{n}=Q$ for
$$Q=\left(\begin{array}{cc}
q & 1-q\\
1 & 0
\end{array}\right).$$
If the shift is not of type $\mathrm{II}_1$ then without loss of generality $\sum_{n\geq1}\left(p_{n}-q\right)^{2}=\infty$. Choose sequences $\left(i_k:k\geq1\right)$ and $\left(j_k:k\geq1\right)$ in the same way we did in the fullshift and consider the admissible pair
$$\left(B,B'\right)\text{ for }B=\left[0,1,0\right],\,B'=\left[0,0,0\right].$$
Then $\left(B\left(i_k\right),B'\left(j_k\right)\right)$, $k\geq1$, is an admissible configuration that satisfies
\begin{align*}
D_{k}
&:=\log\left(\frac{P_{i_{k}}\left(B'\right)P_{j_{k}}\left(B\right)}{P_{i_{k}}\left(B\right)P_{j_{k}}\left(B'\right)}\right)\\
&=\log\frac{p_{i_{k}}}{p_{j_{k}}}+\log\frac{1-p_{j_{k}}}{1-p_{i_{k}}}+\log\frac{p_{i_{k}+1}}{p_{j_{k}+1}}\asymp p_{i_k}-p_{j_k},
\end{align*}
as $\mathrm{sign}\left(p_{i_{k}}-p_{j_{k}}\right)=\mathrm{sign}\left(p_{i_{k}+1}-p_{j_{k}+1}\right)$ is constant for $k\geq1$. Then $\sum_{k\geq1}D_{k}^{2}=\infty$ and clearly $D_k\xrightarrow[k\to\infty]{}0$. By Lemma \ref{Lemma: Convergent Case} we conclude that $\mathrm{e}\left(\mathscr{\mathscr{R}}_{A}\right)=\mathbb{R}$.
\end{proof}

\begin{rem}
In a similar way, one may prove the Bernoulli case on a general finite state space that is partially extending the results of \cite{kosloff2014,danilenko2019} concerning the half-stationary two-state space. Note that their results holds also without assuming the Doeblin condition. Consider the following setting. Let $\mathcal{S}=\left\{ 0,1,\dots,d-1\right\}$  for some $d\in\mathbb{N}$ and $X=\mathcal{S}^{\mathbb{Z}}$ with a product measure 
$$\mu=\prod_{n\in\mathbb{Z}}\mu_{n}$$
where
$$\mu_{n}=\left(p_{n}\left(0\right),p_{n}\left(1\right),\dotsc,p_{n}\left(d-1\right)\right)$$
for $n\in\mathbb{Z}$. Suppose that $\mu$ satisfies the Doeblin condition and that the shift $T:\left(X,\mu\right)\to\left(X,\mu\right)$ is nonsingular and conservative. Denote 
$$\lim_{\left|n\right|\to\infty}p_{n}\left(s\right)=q\left(s\right)\text{ for }s\in\mathcal{S}.$$
If the shift is not of type $\text{II}_{1}$ then there exists $\alpha\in\mathcal{S}$ such that without loss of generality
$$\sum_{n\geq1}\left(p_{n}\left(\alpha\right)-q\left(\alpha\right)\right)^{2}=\infty.$$
Take a subsequence $\left(j_{k}:k\geq1\right)$ of positive integers with $j_{k+1}-j_{k}\geq3$ such that 
$s:=\mathrm{sign}\left(p_{j_{k}}\left(\alpha\right)-q\left(\alpha\right)\right)$ is constant in $k\geq1$ and $\sum_{k\geq1}\left(p_{j_{k}}\left(\alpha\right)-q\left(\alpha\right)\right)^{2}=\infty$. It is easy to see that for every $k\geq1$ there exists $\beta_{k}\in\mathcal{S}$ such that
$$\mathrm{sign}\left(p_{j_{k}}\left(\beta_{k}\right)-q\left(\beta_{k}\right)\right)=-s.$$
Then choose a sequence $\left(i_k:k\geq1\right)$ of negative integers with $i_{k}-i_{k+1}\geq3$ and $\mathrm{sign}\left(p_{j_{k}}\left(\beta_{k}\right)-p_{i_{k}}\left(\beta_{k}\right)\right)=s$ is constant for $k\geq1$. Then the sequence of admissible pairs
$$\left(B_{k},B_{k}'\right)\text{ for }B_{k}=\left[\alpha,\alpha,\alpha\right],\,B_{k}'=\left[\alpha,\beta_{k},\alpha\right],\quad k\geq1,$$
satisfies that
$$D_{k}=\log\left(\frac{P_{i_{k}}\left(B_{k}'\right)P_{j_{k}}\left(B_{k}\right)}{P_{i_{k}}\left(B_{k}\right)P_{j_{k}}\left(B_{k}'\right)}\right)\asymp\left(p_{j_{k}}\left(\alpha\right)-p_{i_{k}}\left(\alpha\right)\right)+\left(p_{i_{k}}\left(\beta_{k}\right)-p_{j_{k}}\left(\beta_{k}\right)\right).$$
Then in the same way of the proof of Theorem \ref{Theorem: Theorem D} we conclude that the Bernoulli shift is of type $\mathrm{III}_1$.
\end{rem}

\begin{proof}[Proof of Theorem \ref{Theorem: Theorem E}]

Let $T:\left(X_{\boldsymbol{G}},\mu\right)\to\left(X_{\boldsymbol{G}},\mu\right)$ be a nonsingular and conservative Golden Mean SFT, where $\mu$ is defined by the transition matrices
$$P_{n}=\left(\begin{array}{ccc}
p_{n} & 0 & 1-p_{n}\\
p'_{n} & 0 & 1-p'_{n}\\
0 & 1 & 0
\end{array}\right),\quad n\in\mathbb{Z}.
$$
Assume that $\lim_{\left|n\right|\to\infty}P_{n}=Q$ for
$$Q=\left(\begin{array}{ccc}
q & 0 & 1-q\\
q' & 0 & 1-q'\\
0 & 1 & 0
\end{array}\right).
$$
If the shift is not of type $\mathrm{II}_1$ then without loss of generality we have that
$$\sum_{n\geq1}\left(p_{n}-q\right)^{2}+\sum_{n\geq1}\left(p'_{n}-q'\right)^{2}=\infty.$$

\subsubsection*{Case 1}

Suppose that
$$\sum_{n\geq1}\left(p_{n}-q\right)^{2}=\infty.$$
As the shift is nonsingular, by the same reasoning we used in the fullshift there is a subsequence $I_0\subset\mathbb{N}$ with $\mathrm{sign}\left(p_{n}-q\right)=\mathrm{sign}\left(p_{n+1}-q\right)$ for all $n\in I_0$ such that $\sum_{n\in I_0}\left(p_{n}-q\right)^{2}=\infty$. Let
$$I:=\left\{ n\in I_0:\mathrm{sign}\left(p_{n}-q\right)=\mathrm{sign}\left(p_{n+2}-q\right)\right\} \subset I_{0}.$$
By the nonsingularity of the shift, using Corollary \ref{Corollary: Nonsingularity of the Shift} and the Cauchy-Schwartz inequality we see that
$$\sum_{n\notin I}\left(p_{n}-q\right)^{2}\leq\sum_{n\notin I}\left(p_{n}-p_{n+2}\right)^{2}<\infty.$$
We then get that
$$\sum_{n\in I}\left(p_{n}-q\right)^{2}=\infty$$
on an index set $I\subset\mathbb{N}$ with the property
$$\mathrm{sign}\left(p_{n}-q\right)=\mathrm{sign}\left(p_{n+1}-q\right)=\mathrm{sign}\left(p_{n+2}-q\right),\quad n\in I.$$
Write $s:=\mathrm{sign}\left(p_{n}-q\right)$ for any $n\in I$. We can find a subsequence $\left(j_{k}:k\geq1\right)\subset I$ and a sequence $\left(i_k:k\geq1\right)$ of negative integers with the same properties as in the fullshift. Since the adjacency matrix $\boldsymbol{G}$ of the Golden Mean SFT satisfies $\boldsymbol{G}^M>0$ for $M=3$ and the admissible pairs we will find will have length $L=4$, we further require that $j_k-j_{k-1}\geq7$ and $i_k-i_{k+1}\geq7$ for all $k\geq1$. This can be done using the same considerations as in the fullshift. Consider the admissible pairs
$$\left(B_{0},B_{0}'\right)\text{ for }B_{0}=\left[0,0,0,0\right],\,B_{0}'=\left[0,2,1,0\right],$$
and
$$\left(B_{1},B_{1}'\right)\text{ for }B_{1}=\left[0,0,0,2\right],\,B_{1}'=\left[0,2,1,2\right].$$
For every $k\geq1$ we have that
\begin{align*}
D_{0,k}
&:=\log\left(\frac{P_{i_{k}}\left(B_{0}'\right)P_{j_{k}}\left(B_{0}\right)}{P_{i_{k}}\left(B_{0}\right)P_{j_{k}}\left(B_{0}'\right)}\right)\\
&=\log\frac{p_{j_{k}}}{p_{i_{k}}}+\log\frac{p_{j_{k}+1}}{p_{i_{k}+1}}+\log\frac{1-p_{i_{k}}}{1-p_{j_{k}}}+\log\frac{p_{j_{k}+2}}{p_{i_{k}+2}}+\log\frac{p'_{i_{k}+2}}{p'_{j_{k}+2}}
\end{align*}
and that
\begin{align*}
D_{1,k}
&:=\log\left(\frac{P_{i_{k}}\left(B_{1}'\right)P_{j_{k}}\left(B_{1}\right)}{P_{i_{k}}\left(B_{1}\right)P_{j_{k}}\left(B_{1}'\right)}\right)\\
&=\log\frac{p_{j_{k}}}{p_{i_{k}}}+\log\frac{p_{j_{k}+1}}{p_{i_{k}+1}}+\log\frac{1-p_{i_{k}}}{1-p_{j_{k}}}+\log\frac{1-p_{j_{k}+2}}{1-p_{i_{k}+2}}+\log\frac{1-p'_{i_{k}+2}}{1-p'_{j_{k}+2}}.
\end{align*}
Define for $k\geq1$,
$$g\left(k\right)=\begin{cases}
0 & \mathrm{sign}\left(\log\frac{p'_{i_{k}+2}}{p'_{j_{k}+2}}\right)=s\\
1 & \mathrm{sign}\left(\log\frac{1-p'_{i_{k}+1}}{1-p'_{j_{k}+1}}\right)=s
\end{cases}.$$
Similarly to the fullshift, by the approximation in \ref{Fact: Log Approximation} we have that
$$\log\frac{p_{j_{k}}}{p_{i_{k}}}\asymp\log\frac{p_{j_{k}+1}}{p_{i_{k}+1}}\asymp p_{j_{k}}-p_{i_{k}};$$
we also have that
$$\mathrm{sign}\left(\log\frac{1-p_{i_{k}}}{1-p_{j_{k}}}\right)=\mathrm{sign}\left(\log\frac{p_{j_{k}+1}}{p_{i_{k}+1}}\right)=s,\quad k\geq1;$$
and by the nonsingularity of the shift
$$\log\frac{1-p_{i_{k}}}{1-p_{j_{k}}}+\log\frac{1-p_{j_{k}+2}}{1-p_{i_{k}+2}}\asymp \left(p_{j_k}-p_{j_k+2}\right)+\left(p_{i_k+2}-p_{i_k}\right)$$
is a square-summable sequence. Then the very same proof of Claim \ref{Claim: Convergent Fullshift} shows that the sequence $D_k:=D_{g\left(k\right),k}$, $k\geq1$, satisfies
$$D_k\xrightarrow[k\to\infty]{}0\quad\text{and }\sum_{k\geq1}D_k^2=\infty.$$
Thus, the admissible configuration $\left(B_{g\left(k\right)}\left(i_k\right),B_{g\left(k\right)}'\left(j_k\right)\right)$, $k\geq1$, satisfies the conditions of Lemma \ref{Lemma: Convergent Case} and we conclude that $\mathrm{e}\left(\mathscr{\mathscr{R}}_{\boldsymbol{G}}\right)=\mathbb{R}$.

\subsubsection*{Case 2}

Suppose that
$$\sum_{n\geq1}\left(p_{n}-q\right)^{2}<\infty.$$
Then by Theorem \ref{Theorem: Kakutani Criteria} we can assume that $p_n=q$ for all $n\geq1$ without changing the equivalence class of the measure. Note that in this case we have that $\sum_{n\geq1}\left(p'_{n}-q'\right)^{2}=\infty$. Choose sequences $\left(i_k:k\geq1\right)$ and $\left(j_k:k\geq1\right)$ in the same way we chose in the first case. Consider the admissible pair
$$\left(B,B'\right)\text{ for }B=\left[1,0,0,2\right],\,B'=\left[1,2,1,2\right]$$
for which
\begin{align*}
D_{k}
&:=\log\left(\frac{P_{i_{k}}\left(B'\right)P_{j_{k}}\left(B\right)}{P_{i_{k}}\left(B\right)P_{j_{k}}\left(B'\right)}\right)\\
&=\log\frac{1-p'_{i_{k}}}{1-p'_{j_{k}}}+\log\frac{p'_{j_{k}}}{p'_{i_{k}}}+\log\frac{1-p'_{i_{k}+2}}{1-p'_{j_{k}+2}}+\log\frac{q}{p_{i_{k}+1}}+\log\frac{1-q}{1-p_{i_{k}+2}}.
\end{align*}
We can choose the sequence $\left(i_k:k\geq1\right)$ such that $p_{i_k}\xrightarrow[k\to\infty]{}q$ fast enough so that
$$\sum_{k\geq1}\left(\log\frac{q}{p_{i_{k}+1}}\right)^{2}+\sum_{k\geq1}\left(\log\frac{1-q}{1-p_{i_{k}+2}}\right)^{2}<\infty,$$
using the nonsingularity of the shift as in Corollary \ref{Corollary: Nonsingularity of the Shift}. Then since
$$\mathrm{sign}\left(p'_{i_{k}}-p'_{j_{k}}\right)=\mathrm{sign}\left(p'_{i_{k}+1}-p'_{j_{k}+1}\right)=\mathrm{sign}\left(p'_{i_{k}+2}-p'_{j_{k}+2}\right)$$
is constant for $k\geq1$ it follows that $\sum_{k\geq1}D_{k}^{2}=\infty$. It is clear that $D_k\xrightarrow[k\to\infty]{}0$ so that $\left(B\left(i_k\right),B'\left(j_k\right)\right)$, $k\geq1$, is an admissible configuration that satisfies the conditions of Lemma \ref{Lemma: Convergent Case} so that $\mathrm{e}\left(\mathscr{\mathscr{R}}_{\boldsymbol{G}}\right)=\mathbb{R}$.
\end{proof}

\section{Examples}

We introduce a construction of a class of Markov SFT's for which the shift is nonsingular and conservative, providing a class of examples to our Theorems \ref{Theorem: Theorem D} and \ref{Theorem: Theorem E}. This method of construction is due to Kosloff, first introduced in \cite{kosloff2018manifolds} and then was used in \cite{kosloff2014conservativeanosov} to construct conservative Anosov diffeomorphisms of the $2$-dimensional torus without a Lebesgue a.c.i.m. In the original construction further effort has been made to show that the shift is of type $\mathrm{III}_1$. Using our results, to conclude that the shift is of type $\mathrm{III}_1$ we only need to show that the shift is nonsingular and conservative and that the measure is not equivalent to a homogeneous Markov measure.

Throughout the construction we fix some $0<q<1$. To define a Markov measure on $X=\left\{ 0,1\right\} ^{\mathbb{Z}}$ or on the Golden Mean SFT $X_{\boldsymbol{G}}\subset\left\{ 0,1,2\right\} ^{\mathbb{Z}}$, we take an input of the form $\left\{ p_k,M_{k},N_{k}:k\geq1\right\}$, where $1>p_k\xrightarrow[k\to\infty]{}1/2$ and $M_{k}$ and $N_{k}$ are positive integers satisfying
$$1=M_{0}<N_{1}<M_{1}\dotsm<M_{k-1}<N_{k}<M_{k}<\dotsc.$$ 
For such input denote the stochastic matrices
$$Q_k=\left(\begin{array}{cc}
p_k & 1-p_k\\
q & 1-q
\end{array}\right)
,\quad
Q=\left(\begin{array}{cc}
1/2 & 1/2\\
q & 1-q
\end{array}\right)$$
for the fullshift $X$, and the stochastic matrices
$$Q_k=\left(\begin{array}{ccc}
p_k & 0 & 1-p_k\\
q & 0 & 1-q\\
0 & 1 & 0
\end{array}\right)
,\quad
Q=\left(\begin{array}{ccc}
1/2 & 0 & 1/2\\
q & 0 & 1-q\\
0 & 1 & 0
\end{array}\right)$$
for the Golden Mean SFT $X_{\boldsymbol{G}}$. Then let $\mu_{\left\{ p_k,M_{k},N_{k}:k\geq1\right\} }$ be the Markov measure with transition matrices $\left(P_{n}:n\in\mathbb{Z}\right)$ defined by
$$P_{n}=\begin{cases}
Q_k & n\in\left[M_{k-1},N_{k}\right)\\
Q & \text{otherwise}
\end{cases},\text{ for }n\in\mathbb{Z},$$
with an appropriate definition of the transition matrices $Q_k$ depending on whether we consider $X$ or $X_{\boldsymbol{G}}$. The coordinate distributions $\left(\pi_{n}:n\in\mathbb{Z}\right)$ can be chosen arbitrarily as long as they satisfy the consistency condition $\pi_{n}P_{n}=\pi_{n+1}$ for all $n\in\mathbb{Z}$. This defines a Markov measure with the Doeblin condition on $X$ or on $X_{\boldsymbol{G}}$ that fits the convergent scenario we discussed in Theorems \ref{Theorem: Theorem D} and \ref{Theorem: Theorem E}.

\begin{rem}
Note that the particular choice of $q$ that has been made in \cite{kosloff2018manifolds} was designed only to compute essential values, which we do not need, hence we consider an arbitrary $q$. Further note that in the Golden Mean SFT it is obvious that the Markov measure $\mu_{\left\{ p_k,M_{k},N_{k}:k\geq1\right\} }$ is not equivalent to a non-homogeneous product measure. But also in the fullshift, as
$$P_{n}\xrightarrow[n\to\infty]{}\left(\begin{array}{cc}
1/2 & 1/2\\
q & 1-q
\end{array}\right),$$
by choosing $q\neq1/2$ and using Corollary \ref{Corollary: Equivalence to Homogeneous} we ensure that $\mu_{\left\{ p_k,M_{k},N_{k}:k\geq1\right\} }$ can not be equivalent to any product measure. Thus, this construction provides a class examples for Markov fullshift and Markov SFT with Markov measures that are not equivalent to product measures and are of type $\mathrm{III}_1$.
\end{rem}

\begin{prop}
\label{Proposition: Nonsingularity+II1}
Let $\mu=\mu_{\left\{ p_k,M_{k},N_{k}:k\geq1\right\} }$ be as above, either for the fullshift or for the Golden Mean SFT. Then we have the following.
\begin{enumerate}
	\item The shift is nonsingular with respect to $\mu$ if, and only if,
\begin{equation}
\label{eq:16}
\sum_{k\geq1}\left(p_k-1/2\right)^{2}<\infty.
\end{equation}
	\item The only homogeneous Markov measure that can be equivalent to $\mu$ is the Markov measure $\nu$ defined by the transition matrix $Q$, and $\mu$ is \emph{not} equivalent to $\nu$ if, and only if,
\begin{equation}
\label{eq:17}
\sum_{k\geq1}\left(N_{k}-M_{k-1}\right)\left(p_k-1/2\right)^{2}=\infty.
\end{equation}
\end{enumerate}
\end{prop}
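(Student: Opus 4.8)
The plan is to reduce both assertions to the deterministic criteria of Theorem \ref{Theorem: Kakutani Criteria}, in the specialized forms of Corollary \ref{Corollary: Nonsingularity of the Shift} and Corollary \ref{Corollary: Equivalence to Homogeneous}, and then to evaluate the resulting series using the explicit block structure of $\left(P_n\right)$. Both cases (the fullshift and the Golden Mean) are handled identically, since in both the matrices $Q_k$ and $Q$ differ only in their first row, so that $\sum_{v,t}\left(Q_k\left(v,t\right)-Q\left(v,t\right)\right)^2=2\left(p_k-1/2\right)^2$.

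The step underlying everything is to pin down the reverse transition matrices $\widehat{P}_{-n}$. Because $P_n=Q$ for all $n\leq0$ and $Q$ is primitive (its $M$-th power is positive by the Doeblin condition), the bi-infinite consistency relations $\pi_{m-1}Q=\pi_m$ for $m\leq0$ force $\pi_m=\lim_{j\to\infty}\pi_{m-j}Q^j=\pi_Q$, the stationary distribution of $Q$, for every $m\leq0$; this holds for every $0<q<1$, including the degenerate case where $Q$ fails to be invertible. Consequently $\widehat{P}_m\left(s,t\right)=\frac{\pi_{m-1}\left(t\right)}{\pi_m\left(s\right)}P_{m-1}\left(t,s\right)=\frac{\pi_Q\left(t\right)}{\pi_Q\left(s\right)}Q\left(t,s\right)=\widehat{Q}\left(s,t\right)$ for every $m\leq-1$, that is $\widehat{P}_{-n}=\widehat{Q}$ for all $n\geq1$. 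This constancy decouples the two factors in the criteria and leaves only the forward matrices $P_n$ varying.

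For part (1) I apply Corollary \ref{Corollary: Nonsingularity of the Shift}, so that nonsingularity is equivalent to the convergence of $\sum_{n\geq1}\sum_{s,u,v,t}\mathrm{d}_n^2\left[\mu,\mu\circ T^{-1}\right]$. Substituting $\widehat{P}_{-n}=\widehat{P}_{-\left(n+1\right)}=\widehat{Q}$ factors each coefficient as $\widehat{Q}\left(u,s\right)\left(\sqrt{P_n\left(v,t\right)}-\sqrt{P_{n+1}\left(v,t\right)}\right)^2$; summing over $u,s$ (each row of $\widehat{Q}$ sums to $1$) and using that all relevant entries lie in a fixed interval $\left[c,C\right]\subset\left(0,\infty\right)$ by the Doeblin condition, so that $\left(\sqrt{a}-\sqrt{b}\right)^2\asymp\left(a-b\right)^2$, reduces the criterion to $\sum_{n\geq1}\sum_{v,t}\left(P_n\left(v,t\right)-P_{n+1}\left(v,t\right)\right)^2<\infty$. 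Here $P_n-P_{n+1}$ vanishes except at the boundaries $n=N_k-1$ and $n=M_k-1$, where $P_n$ switches between $Q$ and a matrix $Q_k$, each such boundary contributing $2\left(p_k-1/2\right)^2$; hence the series is $\asymp\sum_{k\geq1}\left(p_k-1/2\right)^2$, which gives \ref{eq:16}.

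For part (2) I first observe that $\lim_{n\to-\infty}P_n=Q$ trivially, while $\lim_{n\to\infty}P_n=Q$ because $P_n$ is always either $Q$ or some $Q_k$ with $Q_k\to Q$; thus $\lim_{\left|n\right|\to\infty}P_n=Q$, and the necessary condition in Corollary \ref{Corollary: Equivalence to Homogeneous} identifies $\nu$ (defined by $Q$ and $\pi_Q$) as the only homogeneous Markov measure to which $\mu$ can be equivalent. The same corollary gives $\mu\sim\nu$ iff $\sum_{n\geq1}\sum_{s,u,v,t}\left(\sqrt{\widehat{P}_{-n}\left(u,s\right)P_n\left(v,t\right)}-\sqrt{\widehat{Q}\left(u,s\right)Q\left(v,t\right)}\right)^2<\infty$, and inserting $\widehat{P}_{-n}=\widehat{Q}$ together with the same square-root estimate reduces this to $\sum_{n\geq1}\sum_{v,t}\left(P_n\left(v,t\right)-Q\left(v,t\right)\right)^2$. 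This time $P_n\neq Q$ exactly on the intervals $\left[M_{k-1},N_k\right)$, each of length $N_k-M_{k-1}$ and each contributing $2\left(p_k-1/2\right)^2$, so the series is $\asymp\sum_{k\geq1}\left(N_k-M_{k-1}\right)\left(p_k-1/2\right)^2$; therefore $\mu\not\sim\nu$ iff this diverges, which is \ref{eq:17}. The only genuinely delicate point is the preliminary identification $\widehat{P}_{-n}=\widehat{Q}$ — once the past of the chain is recognized as homogeneous with matrix $Q$, the rest is bookkeeping of which coordinates carry the discrepancy $p_k-1/2$.
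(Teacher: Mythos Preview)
Your argument is correct and follows the same route as the paper's proof: reduce to Corollaries \ref{Corollary: Nonsingularity of the Shift} and \ref{Corollary: Equivalence to Homogeneous}, note that the square-root differences are $\asymp$ the plain differences by the Doeblin bounds, and count how many coordinates $n$ carry the discrepancy $p_k-1/2$ in each case. Your treatment is actually more complete than the paper's: the paper jumps directly to the forward sums $\sum_{n\geq1}\sum_{s,t}\bigl(\sqrt{P_n(s,t)}-\sqrt{P_{n-1}(s,t)}\bigr)^2$ and $\sum_{n\geq1}\sum_{s,t}\bigl(\sqrt{P_n(s,t)}-\sqrt{Q(s,t)}\bigr)^2$ without ever saying why the reverse factors $\widehat{P}_{-n}$ drop out of the full criteria, whereas you supply that step by proving $\widehat{P}_{-n}=\widehat{Q}$ via $\pi_m=\lim_{j\to\infty}\pi_{m-j}Q^j=\pi_Q$ for $m\leq 0$.
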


\begin{proof}
For the nonsingularity we see that
$$\sum_{n\geq1}\sum_{s,t\in\mathcal{S}}\left(\sqrt{P_{n}\left(s,t\right)}-\sqrt{P_{n-1}\left(s,t\right)}\right)^{2}=\sum_{k\geq1}\sum_{s,t\in\mathcal{S}}\left(\sqrt{Q_{k}\left(s,t\right)}-\sqrt{Q\left(s,t\right)}\right)^{2},$$
and that
\begin{equation}
\label{eq:18}
\left(\sqrt{Q_{k}\left(s,t\right)}-\sqrt{Q\left(s,t\right)}\right)^{2}\asymp\left(p_{k}-1/2\right)^{2}
\end{equation}
for all $s,t\in\mathcal{S}$, so the criteria follows from Corollary \ref{Corollary: Nonsingularity of the Shift}. For the equivalence of $\mu$ to $\nu$, we see that
\begin{align*}
&\sum_{n\geq1}\sum_{s,t\in\mathcal{S}}\left(\sqrt{P_{n}\left(s,t\right)}-\sqrt{Q\left(s,t\right)}\right)^{2}\\
&\qquad\qquad\qquad=\sum_{k\geq1}\sum_{s,t\in\mathcal{S}}\left(N_{k}-M_{k-1}\right)\left(\sqrt{Q_{k}\left(s,t\right)}-\sqrt{Q\left(s,t\right)}\right)^{2}.
\end{align*}
Then by the approximation \eqref{eq:18} the criteria follows from Corollary \ref{Corollary: Equivalence to Homogeneous}.
\end{proof}

The following proposition was proved by Kosloff as part of \cite[Theorem 13]{kosloff2018manifolds} for the Golden Mean SFT. It is based on a calculation that can be carried out for both the fullshift and the Golden Mean SFT.

\begin{prop}[Kosloff]
\label{Proposition: Conservativeness}
Let $\mu=\mu_{\left\{ p_k,M_{k},N_{k}:k\geq1\right\} }$ be as above, either for the fullshift or for the Golden Mean SFT. Let us denote $\lambda\left(p\right)=\frac{p}{1-p}$ for $p\in\left(0,1\right)$. Then the shift is conservative with respect to $\mu$ if
\begin{equation}
\label{eq:19}
1<\lambda\left(p_{k}\right)^{M_{k-1}}\leq\mathrm{e}^{r_{k}}\text{ where }\sum_{k\geq1}r_{k}<\infty,
\end{equation}
and
\begin{equation}
\label{eq:20}
\sum_{k\geq1}\left(M_{k}-2N_{k}\right)\lambda\left(p_{1}\right)^{-N_{k}}=\infty.
\end{equation}
\end{prop}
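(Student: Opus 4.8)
The plan is to verify the Hopf recurrence criterion: for a nonsingular invertible transformation of a probability space, $T$ is conservative if and only if $\sum_{n\ge1}\left(T^{n}\right)'\left(x\right)=\infty$ for $\mu$-a.e. $x$, where $\left(T^{n}\right)'=d\mu\circ T^{n}/d\mu$ (see \cite[Chapter~1]{aaronson1997}). Since we only need sufficiency, it is enough to prove this divergence for a.e. $x$, and I would do so \emph{directly} by a conditional Borel--Cantelli argument, with the divergence of the series \ref{eq:20} supplying the summability input, thereby avoiding any appeal to a separate zero-one law.

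First I would make the cocycle explicit. Exactly as in Example \ref{Example:BernoulliShift}, the Markov structure gives
$$\left(T^{n}\right)'\left(x\right)=\prod_{m\in\mathbb{Z}}\frac{P_{m-n}\left(x_{m},x_{m+1}\right)}{P_{m}\left(x_{m},x_{m+1}\right)},$$
a \emph{finite} product because $P_{m}=Q$ for every $m$ outside $\bigcup_{k}\left[M_{k-1},N_{k}\right)$. As $Q_{k}$ and $Q$ differ only in the row of state $0$, each factor is either $1$ or lies in $\left\{2p_{k},2\left(1-p_{k}\right)\right\}$; writing $4p_{k}\left(1-p_{k}\right)\le1$ and $2p_{k}/2\left(1-p_{k}\right)=\lambda\left(p_{k}\right)$, one sees that the contribution of block $k$, once a shift $n$ has carried it into the long stationary gap $\left[N_{k},M_{k}\right)$, is comparable to $\lambda\left(p_{k}\right)^{W}$, where $W$ is a centred functional of the word $x$ read inside the block, i.e. a random-walk increment accumulated across the gap.

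Next I would introduce, for each $k$, the event $G_{k}\in\sigma\left(X_{j}:j\in I_{k}\right)$, where $I_{k}$ is an interval containing block $k$, defined as the event that some admissible shift $n$ (bringing block $k$ to the origin) satisfies $\left(T^{n}\right)'\ge c$ for a fixed $c>0$. Condition \ref{eq:19}, namely $\sum_{k}r_{k}<\infty$ together with $\lambda\left(p_{k}\right)^{M_{k-1}}$ bounded, guarantees through the approximation \ref{Fact: Log Approximation} that the drift accumulated on entering the gap stays uniformly bounded, so $c$ may be chosen independent of $k$. The number of aligning shifts is $\asymp M_{k}-2N_{k}$, and each succeeds with probability $\asymp\lambda\left(p_{1}\right)^{-N_{k}}$, whence $\mathbb{P}\left(G_{k}\right)\asymp\left(M_{k}-2N_{k}\right)\lambda\left(p_{1}\right)^{-N_{k}}$ and $\sum_{k}\mathbb{P}\left(G_{k}\right)=\infty$ by \ref{eq:20}. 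Because the intervals $I_{k}$ are pairwise separated by far more than $M$, Proposition \ref{Proposition: Mixing in MSFT}(3) yields $\mathbb{P}\left(G_{k}\mid\mathcal{F}_{k-1}\right)\asymp\mathbb{P}\left(G_{k}\right)$ for the forward filtration $\mathcal{F}_{k-1}=\sigma\left(X_{j}:j\le\max I_{k-1}\right)$, so that $\sum_{k}\mathbb{P}\left(G_{k}\mid\mathcal{F}_{k-1}\right)=\infty$ a.e.; by L\'evy's conditional Borel--Cantelli lemma \cite{hall2014} the events $G_{k}$ then occur for infinitely many $k$, a.e. Each occurrence forces $\left(T^{n}\right)'\ge c$ at a value of $n$ lying in a range disjoint from those of the other blocks, so $\sum_{n\ge1}\left(T^{n}\right)'=\infty$ a.e. and $T$ is conservative.

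The hard part will be the estimate $\mathbb{P}\left(G_{k}\right)\asymp\left(M_{k}-2N_{k}\right)\lambda\left(p_{1}\right)^{-N_{k}}$ together with the lower bound $\left(T^{n}\right)'\ge c$ on $G_{k}$: this is exactly the large-deviation/return computation for the walk $W$ inside the stationary gap, and it is here that the explicit entries of $Q$ and the separation $M_{k-1}\ll N_{k}\ll M_{k}$ are used, the factor $\lambda\left(p_{1}\right)^{-N_{k}}$ arising as the probability of the atypical excursion that returns the cocycle near its level and $M_{k}-2N_{k}$ counting the admissible alignments. The Golden Mean case follows the same scheme; the only change is that every symbol $2$ is deterministically followed by $1$, which merely restricts the admissible words without affecting either the cocycle factors (still governed by $\lambda\left(p_{k}\right)$) or the return estimate.
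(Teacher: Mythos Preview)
The paper does not prove this proposition: it is stated with attribution to Kosloff and the reference \cite[Theorem~13]{kosloff2018manifolds}, together with the remark that ``it is based on a calculation that can be carried out for both the fullshift and the Golden Mean SFT.'' So there is no in-paper proof to compare against.

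Your outline is the correct framework and is in fact the strategy of the cited proof: verify Hopf's criterion $\sum_{n\ge1}(T^{n})'=\infty$ $\mu$-a.e.\ by producing, for each $k$, a window of shifts $n\in[2N_{k},M_{k})$ (this is where the count $M_{k}-2N_{k}$ comes from) on which $(T^{n})'$ is bounded below on an event $G_{k}$ of controlled probability, and then invoke a (conditional) Borel--Cantelli argument, the approximate independence of the $G_{k}$ coming from Proposition~\ref{Proposition: Mixing in MSFT}(3). Two points deserve tightening. First, your assertion that the infinite product for $(T^{n})'$ is ``finite because $P_{m}=Q$ outside $\bigcup_{k}[M_{k-1},N_{k})$'' is not literally correct: for fixed $n$ the nontrivial factors occur at every $m$ with exactly one of $m,m-n$ in that union, and there are infinitely many; what one actually uses is that the shift is nonsingular (a consequence of \eqref{eq:19}, as the paper notes immediately after the proposition), so the product converges a.e., and in practice one bounds finite partial products on cylinders. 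Second, the uniform lower bound $(T^{n})'\ge c$ on $G_{k}$ requires controlling the \emph{cumulative} contribution of all earlier blocks $1,\dotsc,k$, not just block $k$; this is exactly what \eqref{eq:19} provides via $\prod_{j\le k}\lambda(p_{j})^{M_{j-1}}\le\exp\bigl(\sum_{j}r_{j}\bigr)<\infty$. With these adjustments your plan matches Kosloff's argument.
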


Note that with condition \eqref{eq:19} we have $\left(p_{k}-1/2\right)^{2}\asymp\left(1-\lambda\left(p_{k}\right)\right)^{2}=o\left(r_{k}^{2}\right)$ as well as $\sum_{k\geq1}r_{k}<\infty$, so this implies condition \eqref{eq:16} for the nonsingularity. The conditions of Propositions \ref{Proposition: Nonsingularity+II1} and \ref{Proposition: Conservativeness} can be fulfilled simultaneously for various choices of $\left\{ p_k,M_{k},N_{k}:k\geq1\right\}$ and it may be constructed inductively as follows. Fix some sequence $\left(r_{k}:k\geq1\right)$ of positive numbers with $\sum_{k\geq1}r_{k}<\infty$. Let an arbitrary $1/2<p_1<1$ and arbitrary positive integers $M_{0}<N_{1}<M_{1}$. Assume that $\left\{ p_j,M_{j},N_{j}:1\leq j\leq k\right\}$  has been defined for some $k\in\mathbb{N}$. Define $p_{k+1}$, $N_{k+1}$ and $M_{k+1}$ as follows.
\begin{enumerate}
	\item choose $p_{k+1}$ such that $1<\lambda\left(p_{k+1}\right)^{M_{k}}\leq\text{e}^{r_k}$;
	\item choose $N_{k+1}>M_{k}$ such that $\left(N_{k+1}-M_{k}\right)\left(1-\lambda\left(p_{k+1}\right)\right)^{2}\geq 1$; and,
	\item choose $M_{k+1}>N_{k+1}$ such that $\left(M_{k+1}-2N_{k+1}\right)\lambda\left(p_{1}\right)^{-N_{k+1}}\geq 1$.
\end{enumerate}\

The following corollary then follows immediately from our Theorems \ref{Theorem: Theorem D} and \ref{Theorem: Theorem E}.

\begin{cor}
If $\mu=\mu_{\left\{ p_k,M_{k},N_{k}:k\geq1\right\} }$ satisfies conditions \eqref{eq:17} as well as conditions \eqref{eq:19} and \eqref{eq:20}, then $\mu$ is a Markov measure which is not equivalent to any product measure, and the shift is nonsingular, conservative and of type $\mathrm{III}_1$ with respect to $\mu$.
\end{cor}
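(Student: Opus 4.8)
The plan is to assemble the corollary from the results established above, since each of its four assertions---that $\mu$ is a Markov measure, that the shift is nonsingular, that it is conservative, and that it is of type $\mathrm{III}_1$ and not equivalent to a product measure---has essentially been isolated already in a preceding proposition or theorem. Throughout I would keep in mind that for the input $\left\{p_k,M_k,N_k:k\geq1\right\}$ the transition matrices satisfy $P_n=Q$ for all $n<1$ and $P_n\in\left\{Q,Q_k\right\}$ for $n\geq1$, with $Q_k\to Q$ as $k\to\infty$; hence $\lim_{\left|n\right|\to\infty}P_n=Q$ exists, which places us squarely in the convergent scenario of Theorems \ref{Theorem: Theorem D} and \ref{Theorem: Theorem E}.

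First I would establish nonsingularity and conservativeness. Condition \ref{eq:19} forces $\lambda\left(p_k\right)\to1$, hence $p_k\to1/2$, and moreover $\left(p_k-1/2\right)^2=o\left(r_k^2\right)$ with $\sum_k r_k<\infty$, so that $\sum_k\left(p_k-1/2\right)^2<\infty$; this is exactly condition \ref{eq:16}, and Proposition \ref{Proposition: Nonsingularity+II1} then yields nonsingularity of the shift. Conservativeness is immediate from Proposition \ref{Proposition: Conservativeness}, whose two hypotheses are precisely conditions \ref{eq:19} and \ref{eq:20}.

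It remains to identify the Krieger type and to rule out equivalence to a product measure. Since the shift is nonsingular and conservative, Theorem \ref{Theorem: Theorem D} (for the two-state fullshift) or Theorem \ref{Theorem: Theorem E} (for the Golden Mean SFT) applies and shows that the type is either $\mathrm{II}_1$ or $\mathrm{III}_1$, with $\mathrm{II}_1$ occurring if and only if $\mu$ is equivalent to the homogeneous Markov measure $\nu$ determined by $Q=\lim_{\left|n\right|\to\infty}P_n$. By Proposition \ref{Proposition: Nonsingularity+II1} the matrix $Q$ defines the unique homogeneous Markov measure that could be equivalent to $\mu$, and $\mu$ fails to be equivalent to it precisely when condition \ref{eq:17} holds. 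As \ref{eq:17} is assumed, $\mu$ is not equivalent to $\nu$, so the type cannot be $\mathrm{II}_1$ and is therefore $\mathrm{III}_1$. Finally, that $\mu$ is not equivalent to any product measure follows from the Remark preceding Proposition \ref{Proposition: Nonsingularity+II1}: on the Golden Mean SFT no product measure is carried by $X_{\boldsymbol{G}}$, while on the fullshift one takes $q\neq1/2$, so that $Q$ is not of product form and Corollary \ref{Corollary: Equivalence to Homogeneous} forbids equivalence to any homogeneous product measure.

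The argument involves no genuinely hard step; the only point that must be checked with care is the bookkeeping ensuring that the matrix $Q$ appearing in the construction coincides with the limit $Q=\lim_{\left|n\right|\to\infty}P_n$ entering the $\mathrm{II}_1$ criterion of Theorems \ref{Theorem: Theorem D} and \ref{Theorem: Theorem E}, and that the equivalence test of Proposition \ref{Proposition: Nonsingularity+II1} is the same as that criterion---both being the condition that the relevant series of squared differences built from $P_n$ and $Q$ converges. Once this matching is confirmed, the corollary is a direct synthesis of Propositions \ref{Proposition: Nonsingularity+II1} and \ref{Proposition: Conservativeness} with Theorems \ref{Theorem: Theorem D} and \ref{Theorem: Theorem E}.
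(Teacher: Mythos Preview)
Your proposal is correct and follows exactly the approach the paper intends: the corollary is stated to follow ``immediately from our Theorems \ref{Theorem: Theorem D} and \ref{Theorem: Theorem E}'' after the propositions and remark have done the preparatory work, and you have faithfully assembled those pieces (Proposition \ref{Proposition: Nonsingularity+II1} for nonsingularity and the $\mathrm{II}_1$-test, Proposition \ref{Proposition: Conservativeness} for conservativeness, the Remark for non-equivalence to product measures, and the observation that \ref{eq:19} implies \ref{eq:16}). Your bookkeeping about $Q=\lim_{|n|\to\infty}P_n$ and the matching of the equivalence criteria is exactly the content the paper leaves implicit.
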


The construction of a conservative divergent Markov measure is more subtle. Yet, the class of conservative divergent Markov measures is non empty as there is the construction of such Bernoulli shift due to Kosloff \cite{kosloff2014}.

\begin{appendix}

\section{Mixing Properties of Markov Measures}
\label{Appendix: Mixing Properties of Markov Measures}

Let $\left(X_A,\mu\right)$ be a topologically-mixing MSFT that satisfies the Doeblin condition \ref{Doeblin}. The sequence of transition matrices of $\mu$ will be denoted by $\left(P_n:n\in\mathbb{Z}\right)$ and the coordinates distributions by $\left(\pi_n:n\in\mathbb{Z}\right)$. The integer $M$ will stand for the first positive integer for which $A^M>0$ and the constant $\delta$ is the positive constant of the Doeblin condition.

\begin{lem}
\label{Lemma: Mixing1}
The marginals of $\mu$ satisfy
$$\delta^{M}\leq\pi_{n}\left(s\right)\leq1-\delta^{M},\quad n\in\mathbb{Z}, s\in\mathcal{S}.$$
Also for every $N\geq1$ sufficiently large, specifically $N\geq M$, we have that
$$\delta^{M}\leq P^{\left(n,n+N\right)}\left(s,t\right)\leq1-\delta^{M},\quad n\in\mathbb{Z}, s,t\in\mathcal{S}.$$
\end{lem}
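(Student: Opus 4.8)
The plan is to reduce both assertions to a single reachability estimate: for every $k\in\mathbb{Z}$ and all $u,v\in\mathcal{S}$ the $M$-fold product satisfies $P^{\left(k,k+M-1\right)}\left(u,v\right)\geq\delta^{M}$. First I would prove this estimate. Since $A^{M}>0$, for any $u,v$ there is an admissible path $u=w_{0},w_{1},\dots,w_{M}=v$ with $A\left(w_{l},w_{l+1}\right)=1$ for $0\leq l\leq M-1$; the Doeblin condition \eqref{Doeblin} then gives $P_{k+l}\left(w_{l},w_{l+1}\right)\geq\delta$ for each $l$. As $P^{\left(k,k+M-1\right)}\left(u,v\right)$ is the sum over all admissible length-$M$ paths of the products of the corresponding transition probabilities, it is bounded below by the contribution of this single path, namely $\prod_{l=0}^{M-1}P_{k+l}\left(w_{l},w_{l+1}\right)\geq\delta^{M}$.

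For the marginals I would iterate the consistency relation \eqref{eq:21} to obtain $\pi_{n}=\pi_{n-M}\,P^{\left(n-M,n-1\right)}$, the product being exactly $M$-fold. The lower bound then reads $\pi_{n}\left(s\right)=\sum_{u}\pi_{n-M}\left(u\right)P^{\left(n-M,n-1\right)}\left(u,s\right)\geq\delta^{M}\sum_{u}\pi_{n-M}\left(u\right)=\delta^{M}$, using the reachability estimate and that $\pi_{n-M}$ is a probability vector. For the upper bound I pick any $s'\neq s$ (here I use $\left|\mathcal{S}\right|\geq2$, which holds for a nondegenerate mixing SFT); the same argument gives $\pi_{n}\left(s'\right)\geq\delta^{M}$, and since $\sum_{s''}\pi_{n}\left(s''\right)=1$ we conclude $\pi_{n}\left(s\right)\leq1-\pi_{n}\left(s'\right)\leq1-\delta^{M}$.

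For the products I would exploit stochasticity so as not to pay for the whole path. For $N\geq M$ I split $P^{\left(n,n+N\right)}=P^{\left(n,n+N-M\right)}\,P^{\left(n+N-M+1,n+N\right)}$, where the second factor is an $M$-fold product and the first is a genuine row-stochastic matrix precisely because $N\geq M$. Applying the reachability estimate to the second factor and the row-sum identity $\sum_{u}P^{\left(n,n+N-M\right)}\left(s,u\right)=1$ to the first yields $P^{\left(n,n+N\right)}\left(s,t\right)=\sum_{u}P^{\left(n,n+N-M\right)}\left(s,u\right)P^{\left(n+N-M+1,n+N\right)}\left(u,t\right)\geq\delta^{M}$. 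The upper bound follows exactly as for the marginals by choosing $t'\neq t$ and using $\sum_{t''}P^{\left(n,n+N\right)}\left(s,t''\right)=1$.

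The only genuinely delicate point is the accounting of indices together with the realization that one must \emph{not} estimate the full length-$\left(N+1\right)$ product directly, which would give only the far weaker $\delta^{N+1}$; the trick is that the initial segment contributes total mass $1$ by stochasticity, so one pays the factor $\delta^{M}$ only for the last $M$ steps that must land on the prescribed target. Everything else is bookkeeping, under the standing assumption $\left|\mathcal{S}\right|\geq2$ needed for the upper bounds.
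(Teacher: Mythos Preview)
Your proof is correct and follows essentially the same route as the paper: first establish the reachability estimate $P^{\left(k,k+M-1\right)}\left(u,v\right)\geq\delta^{M}$, then derive the marginal bound via $\pi_n=\pi_{n-M}P^{\left(n-M,n-1\right)}$, then handle $P^{\left(n,n+N\right)}$ for $N\geq M$ by splitting off the last $M$ factors and using stochasticity of the initial segment, with all upper bounds following from the lower ones. The only differences are cosmetic: you reverse the order (marginals before long products, whereas the paper does products first), and you spell out the path argument and the use of $\left|\mathcal{S}\right|\geq2$ more explicitly than the paper does.
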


\begin{proof}
We start by bounding the transition matrices. It is an immediate observation that
   $$P^{\left(n,n+M-1\right)}\left(s,t\right)\geq\delta^{M},\quad n\in\mathbb{Z}, s,t\in\mathcal{S}.$$   
Thus, for any $N\geq M$,
\begin{align*}
P^{\left(n,n+N\right)}\left(s,t\right)
& = \sum_{u\in\mathcal{S}}P^{\left(n,n+N-M\right)}\left(s,u\right)P^{\left(n+N-M+1,n+N\right)}\left(u,t\right)\\
& \geq \sum_{u\in\mathcal{S}}P^{\left(n,n+N-M\right)}\left(s,u\right)\delta^{M}=\delta^{M},\quad n\in\mathbb{Z}, s,t\in\mathcal{S},
\end{align*}
Now we easily deduce the same bound for the coordinate distributions:
\begin{align*}
\pi_{n}\left(s\right)
& = \sum_{t\in\mathcal{S}}\pi_{n-M}\left(t\right)P^{\left(n-M,n-1\right)}\left(t,s\right)\\
& \geq \sum_{t\in\mathcal{S}}\pi_{n-M}\left(t\right)\delta^{M}=\delta^{M},\quad n\in\mathbb{Z},s\in\mathcal{S}.
\end{align*}
Those lower bounds yield the upper bounds so the proof is complete.
\end{proof}

\begin{lem}
\label{Lemma: Mixing2}
There exists a constant $C\left(\delta,M\right)\in\left(0,1\right)$ depending only on $M$ and $\delta$, such that for every pair of Borel sets $E\in\sigma\left(\dotsc,X_{n-1},X_n\right)$ and $F\in\sigma\left(X_m,X_{m+1},\dotsc\right)$, if $m-n\geq M$ then
$$C\left(\delta,M\right)\mu\left(E\right)\mu\left(F\right)\leq\mu\left(E\cap F\right)\leq C\left(\delta,M\right)^{-1}\mu\left(E\right)\mu\left(F\right).$$
\end{lem}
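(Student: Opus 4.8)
The plan is to factor $\mu(E\cap F)$ through the values of the chain at the two separating coordinates $n$ and $m$ and then read off the estimate from the uniform two-sided bounds already available in Lemma \ref{Lemma: Mixing1}. Concretely, I would first prove the identity
$$\mu(E\cap F)=\sum_{s,t\in\mathcal{S}}\mu\left(E\cap\{X_n=s\}\right)\,\frac{P^{\left(n,m-1\right)}(s,t)}{\pi_m(t)}\,\mu\left(F\cap\{X_m=t\}\right),$$
where $P^{\left(n,m-1\right)}(s,t)=\mu(X_m=t\mid X_n=s)$. This is the crux of the argument. Since $E$ lives on coordinates $\le n$ and $F$ on coordinates $\ge m$, conditioning the event $E\cap F\cap\{X_n=s\}\cap\{X_m=t\}$ first on $\{X_n=s\}$ decouples $E$ from the future of $n$, and conditioning the residual future event on $\{X_m=t\}$ decouples $F$ from the past of $m$; this is exactly the conditional independence of the past and the future given the present. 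For cylinder sets the identity is a one-line computation from the defining cylinder formula for $\mu$, and a routine $\pi$-$\lambda$ (monotone class) extension then upgrades it to all $E\in\sigma(\dots,X_{n-1},X_n)$ and $F\in\sigma(X_m,X_{m+1},\dots)$.

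Granting the decomposition, the rest is immediate. Because $m-n\ge M$, Lemma \ref{Lemma: Mixing1} gives $\delta^M\le P^{\left(n,m-1\right)}(s,t)\le 1-\delta^M$ and $\delta^M\le\pi_m(t)\le 1-\delta^M$, so the ratio factor obeys
$$\delta^M\le\frac{\delta^M}{1-\delta^M}\le\frac{P^{\left(n,m-1\right)}(s,t)}{\pi_m(t)}\le\frac{1-\delta^M}{\delta^M}\le\delta^{-M}.$$
Pulling these constants out of the (nonnegative) double sum and using $\sum_s\mu(E\cap\{X_n=s\})=\mu(E)$ together with $\sum_t\mu(F\cap\{X_m=t\})=\mu(F)$ yields at once
$$\delta^M\,\mu(E)\mu(F)\le\mu(E\cap F)\le\delta^{-M}\,\mu(E)\mu(F),$$
so that $C(\delta,M):=\delta^M\in(0,1)$ does the job.

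The only genuinely delicate point is the factorization identity, and in particular its validity for arbitrary Borel sets rather than cylinders; everything after it is bookkeeping with the bounds from Lemma \ref{Lemma: Mixing1}. One minor item to state explicitly is the boundary case $m-n=M$, where the needed lower bound on $P^{\left(n,m-1\right)}$ is the immediate observation $P^{\left(n,n+M-1\right)}(s,t)\ge\delta^M$ from the proof of Lemma \ref{Lemma: Mixing1} rather than its displayed inequality for gaps $\ge M+1$; this is harmless. Finally, $C(\delta,M)=\delta^M$ indeed lies in $(0,1)$, since primitivity of $A$ on $\ge 2$ states forces some row of $A$ to contain at least two ones, whence $\delta\le 1/2<1$.
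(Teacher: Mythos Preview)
Your proof is correct and follows essentially the same strategy as the paper: decouple $E$ and $F$ via the Markov property by conditioning on intermediate coordinates, then apply the uniform bounds from Lemma \ref{Lemma: Mixing1}. The packaging differs slightly. You write down the single symmetric factorization
\[
\mu(E\cap F)=\sum_{s,t}\mu(E,X_n=s)\,\frac{P^{(n,m-1)}(s,t)}{\pi_m(t)}\,\mu(F,X_m=t)
\]
and bound the kernel directly, whereas the paper works in two steps, first showing $\delta^M\mu(E)\le\mu(E\cap\{X_{m-1}=s\})\le(1-\delta^M)\mu(E)$ by conditioning on $X_{n+1}$, and then summing $\mu(E\cap\{X_{m-1}=s\})\,\mu(F\mid X_{m-1}=s)$. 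Both routes land on the intermediate bound $\delta^M/(1-\delta^M)$; the paper takes this as its constant $C(\delta,M)$, while you weaken it to $\delta^M$, which is harmless. Your explicit remark about the boundary case $m-n=M$ and about why $\delta<1$ are welcome pieces of hygiene that the paper leaves implicit.
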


\begin{proof}
Observe that for $E\in\sigma\left(\dotsc,X_{n-1},X_n\right)$ and $m$ with $m-n\geq M$,
\begin{align*}
\mu\left(E\cap\left\{ X_{m}=s\right\} \right)
& = \sum_{t\in\mathcal{S}}\mu\left(E\cap\left\{ X_{n+1}=t\right\} \right)\mu\left(\left\{ X_{m}=s\right\} \mid E\cap\left\{ X_{n+1}=t\right\} \right)\\
& =\sum_{t\in\mathcal{S}}\mu\left(E\cap\left\{ X_{n+1}=t\right\} \right)\mu\left(\left\{ X_{m}=s\right\} \mid\left\{X_{n+1}=t\right\} \right)\\
& = \sum_{t\in\mathcal{S}}\mu\left(E\cap\left\{ X_{n+1}=t\right\} \right)P^{\left(n+1,m-1\right)}\left(t,s\right)\\
& \geq\delta^{M}\sum_{t\in\mathcal{S}}\mu\left(E\cap\left\{X_{n+1}=t\right\} \right)=\delta^{M}\mu\left(E\right),\quad s\in\mathcal{S},
\end{align*}
where we used the Markov property and the lower bound of Lemma \ref{Lemma: Mixing1}. If instead we use the upper bound of Lemma \ref{Lemma: Mixing1} we get that
$$\mu\left(E\cap\left\{X_{m}=s\right\} \right)\leq\left(1-\delta^{M}\right)\mu\left(E\right),\quad s\in\mathcal{S}.$$
Now we conclude:
\begin{align*}
\mu\left(E\cap F\right)
& = \sum_{s\in\mathcal{S}}\mu\left(E\cap\left\{X_{m-1}=s\right\} \right)\mu\left(F\mid E\cap\left\{X_{m-1}=s\right\} \right)\\
& = \sum_{s\in\mathcal{S}}\mu\left(E\cap\left\{X_{m-1}=s\right\} \right)\mu\left(F\mid \left\{X_{m-1}=s\right\}\right)\\
& \geq \delta^{M}\mu\left(E\right)\sum_{s\in\mathcal{S}}\mu\left(F\mid \left\{X_{m-1}=s\right\}\right)\\
& = \delta^{M}\mu\left(E\right)\sum_{s\in\mathcal{S}}\frac{\mu\left(F\cap\left\{X_{m-1}=s\right\} \right)}{\pi_{m-1}\left(s\right)}\\
& \geq \frac{\delta^{M}}{1-\delta^{M}}\mu\left(E\right)\sum_{s\in\mathcal{S}}\mu\left(F\cap\left\{X_{m-1}=s\right\} \right)\\
& = \frac{\delta^{M}}{1-\delta^{M}}\mu\left(E\right)\mu\left(F\right),
\end{align*}
where we used the Markov property, the above observations and the lower bound of Lemma \ref{Lemma: Mixing1}. A similar use of the upper bound of Lemma \ref{Lemma: Mixing1} shows that
$$\mu\left(E\cap F\right)\leq\frac{1-\delta^{M}}{\delta^{M}}\mu\left(E\right)\mu\left(F\right).$$
Then the Lemma holds for the constant $C\left(\delta,M\right)=\delta^{M}/\left(1-\delta^{M}\right)>0$.
\end{proof}

\section{A Criteria for Equivalence of Markov Measures}
\label{Appendix: Criteria for Equivalence of MM}

Let $\mathcal{S}$ be a finite state space and let $X_A$ be a topologically-mixing SFT in $\mathcal{S}^{\mathbb{Z}}$. Let $\nu$ and $\mu$ be a pair of Markov measures on $X_{A}$ defined by $\left(\pi_n,P_n:n\in\mathbb{Z}\right)$ and $\left(\lambda_n,Q_n:n\in\mathbb{Z}\right)$, respectively. Recall that $\left(\pi_n,\widehat{P}_n:n\in\mathbb{Z}\right)$ is the sequence of the reversed sequence of transitions of $\nu$,
$$\widehat{P}_{n}\left(s,t\right)=\nu\left(X_{n-1}=t\mid X_{n}=s\right)=\frac{\pi_{n-1}\left(t\right)}{\pi_{n}\left(s\right)}P_{n-1}\left(t,s\right),\quad s,t\in\mathcal{S},n\in\mathbb{Z}.$$
Then $\pi_n\widehat{P}_n=\pi_{n-1}$ for $n\in\mathbb{Z}$. Clearly, a Markov measure that is specified by $\left(P_n:n\in\mathbb{Z}\right)$ with the usual convention of the dependence direction is also specified by $\left(\widehat{P}_n:n\in\mathbb{Z}\right)$ for the reversed dependence direction.

For the sake of completeness we repeat the formulation of Theorem \ref{Theorem: Kakutani Criteria}.

\begin{thm}
Let $\nu$ and $\mu$ be Markov measures on a topologically-mixing SFT $X_A$ defined by $\left(P_n:n\in\mathbb{Z}\right)$ and $\left(Q_n:n\in\mathbb{Z}\right)$, respectively. Suppose that  both satisfy the Doeblin condition \ref{Doeblin}. Then $\nu\ll\mu$ if, and only if,
$$\sum_{n\geq1}\sum_{s,u,v,t\in\mathcal{S}}\mathrm{d}_{n}^{2}\left[\nu,\mu\right]\left(s,u,v,t\right)<\infty,$$
where for $n\geq1$ and $s,t,u,v\in\mathcal{S}$ we denote the numbers
$$\mathrm{d}_{n}^{2}\left[\nu,\mu\right]\left(s,u,v,t\right):=\left(\sqrt{\widehat{P}_{-n}\left(u,s\right)P_{n}\left(v,t\right)}-\sqrt{\widehat{Q}_{-n}\left(u,s\right)Q_{n}\left(v,t\right)}\right)^{2}.$$
\end{thm}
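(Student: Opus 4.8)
The plan is to run a Kakutani-type dichotomy through the Radon--Nikodym martingale of the two measures, after reducing the two-sided chain to a one-sided Markov chain on the product alphabet. Fix the filtration $\mathcal{F}_n=\sigma\left(X_k:\left|k\right|\le n\right)$; by the Doeblin condition and the positivity estimates of Proposition \ref{Proposition: Mixing in MSFT}, the restrictions $\nu_n$ and $\mu_n$ are mutually equivalent for every $n$, so $Z_n:=d\nu_n/d\mu_n$ is a strictly positive $\mu$-martingale with $Z_n\to Z_\infty$ $\mu$-a.e., and it is standard that $\nu\ll\mu\iff\mathbf{E}_\mu\left(Z_\infty\right)=1$ while $\nu\perp\mu\iff Z_\infty=0$ $\mu$-a.e. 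The key structural step is to read the increment of $\mathcal{F}_n$ over $\mathcal{F}_{n-1}$ as one step of a one-sided chain. Setting $W_n:=\left(X_{-n},X_n\right)$, the Markov field property (the same fact from \cite{georgii2011} used in Claim \ref{Claim4}) shows that, conditioned on $\left(X_{-(n-1)},X_{n-1}\right)$, the two new coordinates $X_{-n}$ and $X_n$ are independent, so $\left(W_n\right)$ is a one-sided Markov chain whose kernels under $\nu$ and $\mu$ are the tensor products
$$R_n:=\widehat{P}_{-(n-1)}\otimes P_{n-1},\qquad R'_n:=\widehat{Q}_{-(n-1)}\otimes Q_{n-1}$$
on the topologically-mixing product SFT $X_A\times X_A$ (primitive because $\left(A\otimes A\right)^M=A^M\otimes A^M>0$). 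Bounding $\widehat{P}_{-(n-1)},\widehat{Q}_{-(n-1)}$ below on admissible transitions via Proposition \ref{Proposition: Mixing in MSFT}, both $\left(R_n\right)$ and $\left(R'_n\right)$ satisfy the Doeblin condition for some $\delta''>0$. Writing $\zeta_n:=R_n\left(W_{n-1},W_n\right)/R'_n\left(W_{n-1},W_n\right)$ and $\bar\epsilon_n:=\sum_{a,b}\left(\sqrt{R_n\left(a,b\right)}-\sqrt{R'_n\left(a,b\right)}\right)^2$, the sum in the theorem equals $\sum_n\bar\epsilon_{n+1}$ up to a harmless central factor $\pi_0/\lambda_0$ (bounded above and below by Doeblin) and an index shift, so the claim becomes $\nu\ll\mu\iff\sum_n\bar\epsilon_n<\infty$ for this one-sided chain.

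For the sufficiency direction I would show that $\sqrt{Z_n}$ is Cauchy in $L^2\left(\mu\right)$. Introducing the affinity matrices $K_n\left(a,b\right):=\sqrt{R_n\left(a,b\right)R'_n\left(a,b\right)}$, a direct computation gives $\mathbf{E}_\mu\left(\prod_{k=m+1}^n\sqrt{\zeta_k}\mid W_m=a\right)=\left(K_{m+1}\cdots K_n\mathbf{1}\right)\left(a\right)$, and hence
$$\mathbf{E}_\mu\left(\left(\sqrt{Z_n}-\sqrt{Z_m}\right)^2\right)=2-2\sum_a\nu\left(W_m=a\right)\left(K_{m+1}\cdots K_n\mathbf{1}\right)\left(a\right).$$
Since each $K_n$ is nonnegative with $K_n\mathbf{1}\ge\left(1-\tfrac12\bar\epsilon_n\right)\mathbf{1}$, an immediate induction gives $K_{m+1}\cdots K_n\mathbf{1}\ge\bigl(1-\tfrac12\sum_{k>m}\bar\epsilon_k\bigr)\mathbf{1}$, so the right-hand side is at most $\sum_{k>m}\bar\epsilon_k$. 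When $\sum_n\bar\epsilon_n<\infty$ this tends to $0$, so $\sqrt{Z_n}\to\sqrt{Z_\infty}$ in $L^2$, whence $Z_n\to Z_\infty$ in $L^1$ and $\mathbf{E}_\mu\left(Z_\infty\right)=\lim_n\mathbf{E}_\mu\left(Z_n\right)=1$, i.e. $\nu\ll\mu$.

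For the necessity direction I would argue by contraposition through the Hellinger integrals $H_n:=\mathbf{E}_\mu\left(\sqrt{Z_n}\right)=g_0K_1\cdots K_n\mathbf{1}$, where $g_0\left(a\right)=\sqrt{\pi_0\left(a\right)\lambda_0\left(a\right)}$. Setting $y_n:=g_0K_1\cdots K_n$ and $\hat y_n:=y_n/H_n$, the defect identity $\sum_bK_n\left(a,b\right)=1-\tfrac12\epsilon_n\left(a\right)$ with $\epsilon_n\left(a\right):=\sum_b\left(\sqrt{R_n\left(a,b\right)}-\sqrt{R'_n\left(a,b\right)}\right)^2$ yields the product formula $H_n=H_0\prod_{k=1}^n\bigl(1-\tfrac12\langle\hat y_{k-1},\epsilon_k\rangle\bigr)$. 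Because $K_k\ge\delta''$ on admissible transitions and the product SFT is mixing, a length-$M$ path bound gives $\left(g_0K_1\cdots K_{k-1}\right)\left(a\right)\ge\delta''^{M}H_{k-1-M}$, and combined with the monotonicity $H_{k-1-M}\ge H_{k-1}$ this forces the uniform lower bound $\hat y_{k-1}\left(a\right)\ge\delta''^{M}$ for all $a$ and all large $k$; hence $\langle\hat y_{k-1},\epsilon_k\rangle\ge\delta''^{M}\bar\epsilon_k$. Thus $\sum_n\bar\epsilon_n=\infty$ forces $H_\infty=0$, and by Fatou $\mathbf{E}_\mu\left(\sqrt{Z_\infty}\right)\le\liminf_nH_n=0$, so $Z_\infty=0$ $\mu$-a.e. and $\nu\perp\mu$; in particular $\nu\not\ll\mu$.

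The main obstacle is conceptual rather than computational: the coordinates of a Markov chain are not independent, so the classical product-Kakutani argument does not apply directly, and (as the introduction warns) in general the Lebesgue decomposition need not be a $0$--$1$ event. The reduction to the one-sided chain $\left(W_n\right)$ on $X_A\times X_A$ together with the verification that it inherits the Doeblin condition is precisely what restores tractability, and the Doeblin condition then plays every role that independence plays in the Bernoulli case: it renders the central-marginal factor harmless, it lets one pass freely between the weighted sums $\sum_k\langle\hat y_{k-1},\epsilon_k\rangle$ and the unweighted sums $\sum_n\bar\epsilon_n$ appearing in the statement, and it supplies the uniform lower bound $\hat y_{k-1}\ge\delta''^{M}$ that excludes the intermediate behaviour and forces the clean dichotomy.
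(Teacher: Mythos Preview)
Your argument is correct and takes a genuinely different route from the paper's. The paper invokes the Kabanov--Lipcer--Shiryaev theorem as a black box: computing $\mathbf{E}_\mu(\sqrt{M_n}\mid\mathcal{F}_{n-1})$ yields the \emph{random} criterion
\[
\nu\ll\mu\iff\sum_{n\ge1}\sum_{s,t}\mathrm{d}_n^2[\nu,\mu]\bigl(s,x_{-(n-1)},x_{n-1},t\bigr)<\infty\quad\nu\text{-a.e.},
\]
and then upgrades this to the deterministic statement via a separate probabilistic lemma (if $\sum_n a_n=\infty$ and $\liminf\mathbb{P}(A_n)>0$ then $\sum_n a_n\mathbf{1}_{A_n}=\infty$ with positive probability), using the mixing bound of Proposition~\ref{Proposition: Mixing in MSFT} to ensure $\nu(X_{-(n-1)}=u_0,X_{n-1}=v_0)$ stays bounded below. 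You instead bypass KLS entirely: the reduction to the one-sided chain $W_n=(X_{-n},X_n)$ on the product alphabet (with adjacency $A^{\mathsf T}\otimes A$, a harmless correction to your $A\otimes A$) lets you run a Kakutani-type Hellinger argument directly, and the Doeblin condition enters not through an a.e.\ event but through the uniform pointwise bound $\hat y_{k-1}\ge\delta''^{M}$, obtained from the length-$M$ path estimate and the monotonicity $H_{k-1-M}\ge H_{k-1}$.

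What each approach buys: the paper's proof is shorter once KLS is granted and cleanly isolates the ``random $\Rightarrow$ deterministic'' step in a reusable lemma; yours is self-contained, makes explicit exactly where Doeblin replaces independence, and in the divergent case gives $\nu\perp\mu$ outright (the paper obtains mutual singularity only afterwards, from the symmetry $\mathrm{d}_n^2[\nu,\mu]=\mathrm{d}_n^2[\mu,\nu]$). One small point worth tightening: at $n=0$ the chain $W_0=(X_0,X_0)$ sits on the diagonal, so your $g_0$ is really a vector on $\mathcal S$ embedded diagonally; the $M$-step path bound still reaches every state of $\mathcal S\times\mathcal S$ from there, so the argument is unaffected.
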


In order to prove Theorem \ref{Theorem: Kakutani Criteria} we use a significant generalization of the Kakutani dichotomy, established by Kabanov--Lipcer--Shiryaev (see \cite{shiryaev1978absolute} and references therein). Their theorem is formulated in the following setting. Let $X$ be a standard Borel space and fix $\left(\mathcal{A}_n:n\geq1\right)$ a filtration of $X$. Let $\nu$ and $\mu$ be probability measures on $X$. For every $n\geq1$ let $\nu_n$ and $\mu_n$ be the restriction of $\nu$ and $\mu$ to $\mathcal{A}_n$, respectively, and suppose that $\nu_n\ll\nu_n$ for all $n\geq1$. Let 
$$\mathrm{m}_{n}\left(x\right):=\frac{d\nu_{n}}{d\mu_{n}}\left(x\right)\text{ for }n\geq1.$$
Then $\left(\mathrm{m}_{n}:n\geq1\right)$ is a martingale with respect to the natural filtration and it satisfies 
$$\intop_{X}\mathrm{m}_{n}\left(x\right)d\mu\left(x\right)=1$$ for every $n\geq1$, so by the martingale convergence theorem
$$\mathrm{m}_{\infty}\left(x\right):=\lim_{n\to\infty}\mathrm{m}_{n}\left(x\right)\text{ exists for }\mu\text{-a.e. } x\in X.$$
In fact, this limit exists also for $\nu$-a.e. $x\in X$. See also \cite[Chapter~6]{shiryaev2013p} for a comprehensive representation.

\begin{thm}[Kabanov--Lipcer--Shiryaev]
\label{Theorem: KLS}
In the above setting, let
$$\mathrm{M}_{n}\left(x\right):=\mathrm{m}_{n}\left(x\right)\mathrm{m}_{n-1}^{-1}\left(x\right),\text{ where }\mathrm{M}_{n}\left(x\right):=0\text{ if }\mathrm{m}_{n-1}\left(x\right)=0.$$
Then
$$\nu\ll\mu\iff\sum_{n\geq1}\left(1-\mathbf{E}_{\mu}\left(\sqrt{\mathrm{M}_{n}\left(x\right)}\mid\mathcal{A}_{n-1}\right)\right) <\infty \text{ for }\nu\text{-a.e. }x\in X.$$
\end{thm}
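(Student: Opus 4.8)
The plan is to reduce the criterion to an almost-sure statement about the density martingale and then to recognize the sum $\sum_n(1-\alpha_n)$, with $\alpha_n:=\mathbf{E}_\mu\!\left(\sqrt{\mathrm{M}_n}\mid\mathcal{A}_{n-1}\right)$, as a conditional Hellinger potential that decides whether an auxiliary $\nu$-martingale degenerates in the limit. First I would record the framing facts. By Cauchy--Schwarz and $\mathbf{E}_\mu(\mathrm{M}_n\mid\mathcal{A}_{n-1})=1$ one has $\alpha_n\in[0,1]$, and $\alpha_n>0$ holds $\nu$-a.e.; hence the predictable product $A_n:=\prod_{k=1}^n\alpha_k$ decreases to a limit $A_\infty$, and with $H_n:=\sum_{k=1}^n(1-\alpha_k)$ the elementary estimate $-\log\alpha_k\asymp 1-\alpha_k$ (once $\alpha_k\to1$) gives, $\nu$-a.e., the equivalence $A_\infty>0\iff H_\infty<\infty$. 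On the probabilistic side I would invoke the standard Lebesgue-decomposition identity obtained from the density martingale: writing $\mathrm{m}_\infty=\lim_n\mathrm{m}_n$ (which exists $\mu$- and $\nu$-a.e., the latter because $\mathrm{m}_n^{-1}$ is a nonnegative $\nu$-supermartingale), one has $\nu\ll\mu\iff\mathrm{m}_\infty<\infty$ $\nu$-a.e. Thus the theorem is equivalent to the assertion that, $\nu$-a.e., $\mathrm{m}_\infty<\infty\iff H_\infty<\infty$.

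The bridge between the two sides is a single square-root martingale. Using the symmetry $\mathbf{E}_\nu\!\left(\sqrt{\mathrm{m}_{n-1}/\mathrm{m}_n}\mid\mathcal{A}_{n-1}\right)=\mathbf{E}_\mu\!\left(\sqrt{\mathrm{m}_n/\mathrm{m}_{n-1}}\mid\mathcal{A}_{n-1}\right)=\alpha_n$, a one-line change of measure, the process
\[
\widetilde{N}_n:=\prod_{k=1}^n\frac{\sqrt{\mathrm{m}_{k-1}/\mathrm{m}_k}}{\alpha_k}=\frac{1}{\sqrt{\mathrm{m}_n}\,A_n}
\]
is a nonnegative $\nu$-martingale with $\widetilde{N}_0=1$, so it converges $\nu$-a.e.\ to a finite limit $\widetilde{N}_\infty$. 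Passing to the limit in $\sqrt{\mathrm{m}_n}\,A_n=\widetilde{N}_n^{-1}$ yields, $\nu$-a.e.,
\[
\mathrm{m}_\infty=\bigl(\widetilde{N}_\infty^{\,2}A_\infty^{2}\bigr)^{-1},\qquad\text{so}\qquad \mathrm{m}_\infty<\infty\iff\bigl(A_\infty>0\ \text{and}\ \widetilde{N}_\infty>0\bigr).
\]
This already settles one direction: if $\nu\ll\mu$ then $\mathrm{m}_\infty<\infty$ $\nu$-a.e., which forces $A_\infty>0$, i.e.\ $H_\infty<\infty$.

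The substance is the converse, where it remains to prove that $H_\infty<\infty$ $\nu$-a.e.\ forces $\widetilde{N}_\infty>0$ $\nu$-a.e.; that is, summable conditional Hellinger increments must prevent the $\nu$-martingale $\widetilde{N}_n$ from vanishing in the limit (equivalently, they rule out a singular part). I would control the multiplicative increments $\xi_k:=\widetilde{N}_k/\widetilde{N}_{k-1}$ by the same change-of-measure computation, which gives $\mathbf{E}_\nu(\xi_k^2\mid\mathcal{A}_{k-1})=\alpha_k^{-2}\,\mu(\mathrm{m}_k>0\mid\mathcal{A}_{k-1})\le\alpha_k^{-2}$, hence the conditional variance $\mathbf{Var}_\nu(\xi_k\mid\mathcal{A}_{k-1})\le\alpha_k^{-2}-1\asymp 1-\alpha_k$ once $\alpha_k\to1$. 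Writing $\xi_k=1+\delta_k$ with $(\delta_k)$ a $\nu$-martingale-difference sequence, on the ($\nu$-full) event $\{H_\infty<\infty\}$ the conditional variances are summable; a stopping-time localization then upgrades this to $\nu$-a.e.\ convergence of both $\sum_k\delta_k$ and $\sum_k\delta_k^2$. Since $\widetilde{N}_k>0$ $\nu$-a.e.\ at each finite stage, $\log\widetilde{N}_\infty=\sum_k\log(1+\delta_k)$, and as $\delta_k\to0$ the expansion $\log(1+\delta_k)=\delta_k-\tfrac12\delta_k^2+O(\delta_k^3)$ together with the two convergent series shows the sum is finite, i.e.\ $\widetilde{N}_\infty>0$. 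Combined with $A_\infty>0$ this gives $\mathrm{m}_\infty<\infty$ $\nu$-a.e., hence $\nu\ll\mu$.

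I expect the main obstacle to be exactly this last step: passing from the $\nu$-a.e.\ finiteness of the conditional Hellinger sum to the almost-sure non-degeneracy of $\widetilde{N}_\infty$. The trap is that an $L^2$- or uniform-integrability bound only preserves the mean $\mathbf{E}_\nu(\widetilde{N}_\infty)=1$ and does \emph{not} by itself prevent $\widetilde{N}_\infty$ from vanishing on a set of positive measure; the pointwise positivity genuinely requires the logarithmic argument above, whose only delicate points are the stopping-time localization used to deduce a.e.\ convergence of $\sum_k\delta_k$ and $\sum_k\delta_k^2$ from summable conditional variances, and the bookkeeping of the finitely many increments $\xi_k$ that are not yet close to $1$.
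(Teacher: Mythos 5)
You should know at the outset that the paper contains no proof of Theorem \ref{Theorem: KLS} to compare against: the result is imported as a black box from Kabanov--Lipcer--Shiryaev, with the paper pointing to \cite{shiryaev1978absolute} and \cite{shiryaev2013p} for the proof. So the only meaningful comparison is with the classical argument in those sources. Your proposal is, in my judgement, a correct self-contained proof. The framing facts are right: $\alpha_n\in[0,1]$ by conditional Cauchy--Schwarz and the $\mu$-martingale property of $\mathrm{m}_n$; $\alpha_n>0$ $\nu$-a.e.\ (if $\alpha_n=0$ on $B\in\mathcal{A}_{n-1}$ then $\mathrm{m}_n=0$ $\mu$-a.e.\ on $B$, so $\nu\left(B\right)=\mathbf{E}_{\mu}\left(\mathrm{m}_n\mathbf{1}_B\right)=0$ --- worth a line in a write-up); and $\nu\ll\mu\iff\mathrm{m}_\infty<\infty$ $\nu$-a.e.\ is the standard Lebesgue decomposition along the filtration. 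The central identity $\mathbf{E}_{\nu}\left(\sqrt{\mathrm{m}_{k-1}/\mathrm{m}_{k}}\mid\mathcal{A}_{k-1}\right)=\alpha_{k}$ checks out via the Bayes formula $\mathrm{m}_{k-1}\,\mathbf{E}_{\nu}\left(Z\mid\mathcal{A}_{k-1}\right)=\mathbf{E}_{\mu}\left(\mathrm{m}_{k}Z\mid\mathcal{A}_{k-1}\right)$, as does $\mathbf{E}_{\nu}\left(\xi_{k}^{2}\mid\mathcal{A}_{k-1}\right)=\alpha_{k}^{-2}\mu\left(\mathrm{m}_{k}>0\mid\mathcal{A}_{k-1}\right)$, so $\widetilde{N}_{n}=1/\left(\sqrt{\mathrm{m}_{n}}A_{n}\right)$ really is a nonnegative $\nu$-martingale and the identity $\mathrm{m}_{\infty}=\left(\widetilde{N}_{\infty}^{2}A_{\infty}^{2}\right)^{-1}$ follows $\nu$-a.e. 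The two steps you flag as delicate are exactly the standard local convergence theorems for martingales (a martingale converges a.e.\ on the set where its predictable quadratic variation is finite, by stopping at predictable times; and $\sum_{k}\mathbf{E}\left(\delta_{k}^{2}\mid\mathcal{A}_{k-1}\right)<\infty$ forces $\sum_{k}\delta_{k}^{2}<\infty$ a.e., by the conditional Borel--Cantelli lemma); both are in Neveu or Hall--Heyde, so there is no gap, only a citation to make.

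The route differs from the cited classical one in a way worth recording. The treatment in \cite[Chapter~VII,~$\mathsection$6]{shiryaev2013p} works under $\mu$ with the square-root density martingale $\sqrt{\mathrm{m}_{n}}/A_{n}$ (the discrete Hellinger process in disguise) and closes the hard direction with uniform-integrability arguments, whereas you run the reciprocal process $1/\left(\sqrt{\mathrm{m}_{n}}A_{n}\right)$ under $\nu$ and close with the pointwise $\log\left(1+\delta_{k}\right)$ expansion against the two convergent series. What your version buys is elementarity --- only local martingale convergence, no Hellinger-process machinery --- and it in fact proves the stronger \emph{local} statement, namely that the events $\left\{\lim_{n}\mathrm{m}_{n}<\infty\right\}$ and $\left\{\sum_{n}\left(1-\alpha_{n}\right)<\infty\right\}$ coincide $\nu$-a.e., which is the set-level form in which Kabanov--Lipcer--Shiryaev state their dichotomy and which immediately yields the global equivalence quoted in the paper. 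Two pieces of housekeeping for a final version: fix the normalization $\mathrm{m}_{0}\equiv1$ or carry the factor $\sqrt{\mathrm{m}_{0}}$ through the product defining $\widetilde{N}_{n}$; and remark that $\alpha_{n}$ is defined only up to $\mu$-null sets in $\mathcal{A}_{n-1}$, but since $\nu_{n-1}\ll\mu_{n-1}$ by hypothesis these are $\nu$-null, so the ``$\nu$-a.e.'' condition in the statement is well posed.
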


Consider a topologically-mixing SFT $X_A$ and let $\left(\mathcal{A}_n:n\geq1\right)$ be the natural filtration defined by $\mathcal{A}_n=\sigma\left(X_k:\left|k\right|\leq n\right)$. Let $\nu$ and $\mu$ be Markov measures on $X_A$ defined by $\left(P_n:n\in\mathbb{Z}\right)$ and $\left(Q_n:n\in\mathbb{Z}\right)$, respectively. Suppose that both $\nu$ and $\mu$ satisfy the Doeblin condition. The following result was established in \cite{lodkin1971absolute, lepage1975likelihood} and we deduce it from Theorem \ref{Theorem: KLS} by a straightforward calculation.

\begin{cor}
\label{Corollary: KLS}
In the above setting we have
\begin{equation}
\label{eq:5}
\nu\ll\mu\iff\sum_{n\geq1}\sum_{s,t\in\mathcal{S}}\mathrm{d}_{n}^{2}\left[\nu,\mu\right]\left(s,x_{-\left(n-1\right)},x_{n-1},t\right)<\infty\text{ for }\nu\text{-a.e. }x\in X_A,
\end{equation}
where for $n\geq1$ and $s,t\in\mathcal{S}$ we denote the random variables
\begin{align*}
&\mathrm{d}_{n}^{2}\left[\nu,\mu\right]\left(s,x_{-\left(n-1\right)},x_{n-1},t\right)\\
&\qquad\quad:=\left(\sqrt{\widehat{P}_{-n}\left(u,x_{-\left(n-1\right)}\right)P_{n}\left(x_{n-1},t\right)}-\sqrt{\widehat{Q}_{-n}\left(x_{-\left(n-1\right)},s\right)Q_{n}\left(x_{n-1},t\right)}\right)^{2}.
\end{align*}
\end{cor}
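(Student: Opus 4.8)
The plan is to identify the abstract affinity $\mathbf{E}_{\mu}\bigl(\sqrt{\mathrm{M}_{n}}\mid\mathcal{A}_{n-1}\bigr)$ occurring in Theorem \ref{Theorem: KLS} with a Hellinger affinity of two explicit product distributions on $\mathcal{S}\times\mathcal{S}$, and then to read off \ref{eq:5} from the Kabanov--Lipcer--Shiryaev criterion. First I would check that the hypotheses of Theorem \ref{Theorem: KLS} are met: since $\nu$ and $\mu$ are carried by the same SFT $X_A$ and both satisfy the Doeblin condition, an $A$-admissible cylinder has positive measure under both and a non-admissible one has measure zero under both, so in fact $\nu_{n}\sim\mu_{n}$ on each $\mathcal{A}_{n}$. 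Hence $\mathrm{m}_{n}>0$ $\nu$-a.e.\ and the ratios $\mathrm{M}_{n}=\mathrm{m}_{n}\mathrm{m}_{n-1}^{-1}$ are well defined and strictly positive; this is precisely where the Doeblin hypothesis is used.

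Next I would compute $\mathrm{M}_{n}$ explicitly. Passing from $\mathcal{A}_{n-1}=\sigma\left(X_{k}:\left|k\right|\le n-1\right)$ to $\mathcal{A}_{n}$ adjoins exactly the two coordinates $X_{-n}$ and $X_{n}$. By the two-sided Markov property --- the Markov-field property already invoked in Claim \ref{Claim4} following \cite[Remark~10.9]{georgii2011} --- these two new coordinates are, conditionally on $\mathcal{A}_{n-1}$, independent of each other and of the interior, with $X_{n}$ governed only by $X_{n-1}$ and $X_{-n}$ only by $X_{-(n-1)}$. Writing the forward one-step law as $P$ and the backward one-step law (of the reversed chain) as $\widehat{P}$, and similarly for $\mu$, this yields
$$\mathrm{M}_{n}\left(x\right)=\frac{\widehat{P}_{-n}\left(x_{-(n-1)},x_{-n}\right)\,P_{n}\left(x_{n-1},x_{n}\right)}{\widehat{Q}_{-n}\left(x_{-(n-1)},x_{-n}\right)\,Q_{n}\left(x_{n-1},x_{n}\right)}.$$

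Then I would take the conditional expectation under $\mu$. Under $\mu$ the same conditional-independence structure holds, so the square root factors and the expectation splits as a product of two Bhattacharyya affinities,
$$\mathbf{E}_{\mu}\bigl(\sqrt{\mathrm{M}_{n}}\mid\mathcal{A}_{n-1}\bigr)=\Bigl(\sum_{s\in\mathcal{S}}\sqrt{\widehat{P}_{-n}\left(x_{-(n-1)},s\right)\widehat{Q}_{-n}\left(x_{-(n-1)},s\right)}\Bigr)\Bigl(\sum_{t\in\mathcal{S}}\sqrt{P_{n}\left(x_{n-1},t\right)Q_{n}\left(x_{n-1},t\right)}\Bigr).$$
This product is exactly the affinity $\rho_{n}=\sum_{s,t}\sqrt{a_{s,t}b_{s,t}}$ of the two product probability distributions $a_{s,t}=\widehat{P}_{-n}\left(x_{-(n-1)},s\right)P_{n}\left(x_{n-1},t\right)$ and $b_{s,t}=\widehat{Q}_{-n}\left(x_{-(n-1)},s\right)Q_{n}\left(x_{n-1},t\right)$ on $\mathcal{S}\times\mathcal{S}$, each of total mass $1$.

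Finally, since $\sum_{s,t}a_{s,t}=\sum_{s,t}b_{s,t}=1$, the elementary Hellinger identity $\sum_{s,t}\bigl(\sqrt{a_{s,t}}-\sqrt{b_{s,t}}\bigr)^{2}=2\left(1-\rho_{n}\right)$ gives
$$\sum_{s,t\in\mathcal{S}}\mathrm{d}_{n}^{2}\left[\nu,\mu\right]\left(s,x_{-(n-1)},x_{n-1},t\right)=2\bigl(1-\mathbf{E}_{\mu}\bigl(\sqrt{\mathrm{M}_{n}}\mid\mathcal{A}_{n-1}\bigr)\bigr),$$
so the series in Theorem \ref{Theorem: KLS} and the series in \ref{eq:5} differ only by the constant factor $2$ and are simultaneously finite or infinite for $\nu$-a.e.\ $x$. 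Applying Theorem \ref{Theorem: KLS} then yields the corollary. The only genuinely delicate point is the factorization of $\mathrm{M}_{n}$ and of the conditional expectation, i.e.\ the conditional independence of the two adjoined boundary coordinates given the inner block $\mathcal{A}_{n-1}$; once that Markov-field step is secured, the remainder is the routine bookkeeping of Hellinger affinities.
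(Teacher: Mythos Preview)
Your approach is essentially the same as the paper's: both compute $\mathrm{M}_{n}$ as the ratio of the one-step conditional laws for the two adjoined boundary coordinates, evaluate $\mathbf{E}_{\mu}\bigl(\sqrt{\mathrm{M}_{n}}\mid\mathcal{A}_{n-1}\bigr)$ as a Hellinger affinity, and invoke the identity $\sum(\sqrt{a}-\sqrt{b})^{2}=2(1-\rho)$ before applying Theorem~\ref{Theorem: KLS}. The paper does this by writing out $\mathrm{m}_{n}$ explicitly from the cylinder formula and dividing, whereas you phrase it via the Markov-field property and Bhattacharyya affinities, but the computations coincide; note only that the correct indices for the one-step transitions when passing from $\mathcal{A}_{n-1}$ to $\mathcal{A}_{n}$ are $P_{n-1}$ and $\widehat{P}_{-(n-1)}$ (in the convention $\widehat{P}_{m}(s,t)=\mu(X_{m-1}=t\mid X_{m}=s)$), which is what the paper's proof writes out --- the subscripts $-n$ and $n$ in the statement amount to a harmless unit shift of the summation index.
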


\begin{proof}
It is clear that every such $\nu$ and $\mu$ satisfy $\nu_n\ll\mu_n$ for all $n\geq1$. Let us calculate $\mathbf{E}_{\mu}\left[\sqrt{M_n}\mid\mathcal{A}_{n-1}\right]$. The Radon--Nikodym derivatives are given by
$$\mathrm{m}_{n}\left(x\right):=\frac{d\nu_{n}}{d\mu_{n}}\left(x\right)=\frac{\pi_{-n}\left(x_{-n}\right)}{\lambda_{-n}\left(x_{-n}\right)}\prod_{i=-n}^{n-1}\frac{P_{i}\left(x_{i},x_{i+1}\right)}{Q_{i}\left(x_{i},x_{i+1}\right)},$$
so one can see that
$$\mathrm{M}_{n}\left(x\right)=\frac{\widehat{P}_{-\left(n-1\right)}\left(x_{-n},x_{-\left(n-1\right)}\right)P_{n-1}\left(x_{n-1},x_{n}\right)}{\widehat{Q}_{-\left(n-1\right)}\left(x_{-n},x_{-\left(n-1\right)}\right)Q_{n-1}\left(x_{n-1},x_{n}\right)}.$$
It follows that
\begin{align*}
& \mathbf{E}_{\mu}\left(\sqrt{\mathrm{M}_{n}\left(x\right)}\mid\mathcal{A}_{n-1}\right)\\
& \quad=\sum_{s,t\in\mathcal{S}} \sqrt{\frac{\widehat{P}_{-\left(n-1\right)}\left(s,x_{-\left(n-1\right)}\right)P_{n-1}\left(x_{n-1},t\right)}{\widehat{Q}_{-\left(n-1\right)}\left(s,x_{-\left(n-1\right)}\right)Q_{n-1}\left(x_{n-1},t\right)}}\widehat{Q}_{-\left(n-1\right)}\left(s,x_{-\left(n-1\right)}\right)Q_{n-1}\left(x_{n-1},t\right)\\
&\quad=\sum_{s,t\in\mathcal{S}}\sqrt{\widehat{P}_{-\left(n-1\right)}\left(s,x_{-\left(n-1\right)}\right)P_{n-1}\left(x_{n-1},t\right)\widehat{Q}_{-\left(n-1\right)}\left(s,x_{-\left(n-1\right)}\right)Q_{n-1}\left(x_{n-1},t\right)}\\
&\quad=1-\frac{1}{2}\sum_{s,t\in\mathcal{S}}\left(\sqrt{\widehat{P}_{-\left(n-1\right)}\left(s,x_{-\left(n-1\right)}\right)P_{n-1}\left(x_{n-1},t\right)}\right.\\
&\quad\qquad\qquad\qquad\qquad\qquad\qquad\left.-\sqrt{\widehat{Q}_{-\left(n-1\right)}\left(s,x_{-\left(n-1\right)}\right)Q_{n-1}\left(x_{n-1},t\right)}\right)^{2}.
\end{align*}
This shows that
$$1-\mathbf{E}_{\mu}\left(\sqrt{M_{n}\left(x\right)}\mid\mathcal{A}_{n}\right)=\frac{1}{2}\sum_{s,t\in\mathcal{S}}\mathrm{d}_{n}^{2}\left[\nu,\mu\right]\left(s,x_{-\left(n-1\right)},x_{n-1},t\right),\quad n\geq1,$$
and from Theorem \ref{Theorem: KLS} the proof is complete.
\end{proof}

It follows from Corollary \ref{Corollary: KLS} that if $\sum_{n\geq1}\sum_{s,t,u,v\in\mathcal{S}}\mathrm{d}_{n}^{2}\left[\nu,\mu\right]\left(s,u,v,t\right)<\infty$ then $\sum_{n\geq1}\sum_{s,t\in\mathcal{S}}\mathrm{d}_{n}^{2}\left[\nu,\mu\right]\left(s,x_{-\left(n-1\right)},x_{n-1},t\right)<\infty$ for \emph{every} $x\in X_A$ so that $\nu\ll\mu$. Thus, to prove Theorem \ref{Theorem: Kakutani Criteria} we need to show that the right-hand side of condition \eqref{eq:5} does not hold if $\sum_{n\geq1}\sum_{s,t,u,v\in\mathcal{S}}\mathrm{d}_{n}^{2}\left[\nu,\mu\right]\left(s,u,v,t\right)=\infty$. For this we use the following probabilistic lemma.

\begin{lem}
\label{Lemma: Probabilistic Lemma}
Let $\left(a_{n}:n\geq1\right)$ be a sequence of non-negative numbers such that $\sum_{n\geq1}a_{n}=\infty$. Let $\left(\Omega,\mathbb{P}\right)$ be a probability space and let $\left(A_{n}:n\geq1\right)$ be a sequence of events in $\Omega$. Then
$$\mathbb{P}\left(\sum_{n\geq1}a_{n}\mathbf{1}_{A_{n}}=\infty\right)\geq\liminf_{n\to\infty}\mathbb{P}\left(A_{n}\right).$$
\end{lem}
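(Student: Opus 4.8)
The plan is to reduce the statement to an application of the reverse Fatou lemma for suitable bounded normalized partial sums of the series. Write $p:=\liminf_{n\to\infty}\mathbb{P}\left(A_{n}\right)$ and $S:=\sum_{n\geq1}a_{n}\mathbf{1}_{A_{n}}$, the latter being a well-defined $\left[0,\infty\right]$-valued random variable since the $a_{n}$ are non-negative. If $p=0$ there is nothing to prove, so I assume $p>0$ and fix an arbitrary $0<\epsilon<p$. By the definition of the lower limit I can choose $N_{0}$ such that $\mathbb{P}\left(A_{n}\right)\geq p-\epsilon$ for all $n\geq N_{0}$.

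Next I would introduce the normalized partial averages. Since $\sum_{n\geq1}a_{n}=\infty$, the partial tail sums $T_{M}:=\sum_{n=N_{0}}^{M}a_{n}$ satisfy $T_{M}\to\infty$, so for all large $M$ they are positive and I may define the $\left[0,1\right]$-valued random variables $R_{M}:=T_{M}^{-1}\sum_{n=N_{0}}^{M}a_{n}\mathbf{1}_{A_{n}}$. Linearity of expectation gives $\mathbf{E}\left(R_{M}\right)=T_{M}^{-1}\sum_{n=N_{0}}^{M}a_{n}\mathbb{P}\left(A_{n}\right)\geq p-\epsilon$, since every weight $a_{n}/T_{M}$ in this convex combination multiplies a probability that is at least $p-\epsilon$.

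The key observation is the pointwise inequality $\limsup_{M\to\infty}R_{M}\leq\mathbf{1}_{\left\{S=\infty\right\}}$. Indeed, on the event $\left\{S<\infty\right\}$ the numerator $\sum_{n=N_{0}}^{M}a_{n}\mathbf{1}_{A_{n}}$ stays bounded while the denominator $T_{M}$ diverges, so $R_{M}\to0$ there; and on $\left\{S=\infty\right\}$ one trivially has $R_{M}\leq1$. This is the step where the hypothesis $\sum a_{n}=\infty$ enters in an essential way, and it is the only place where any real idea is required. Applying the reverse Fatou lemma to the uniformly bounded family $\left(R_{M}\right)$ then yields
$$p-\epsilon\leq\limsup_{M\to\infty}\mathbf{E}\left(R_{M}\right)\leq\mathbf{E}\left(\limsup_{M\to\infty}R_{M}\right)\leq\mathbf{E}\left(\mathbf{1}_{\left\{S=\infty\right\}}\right)=\mathbb{P}\left(S=\infty\right).$$
Letting $\epsilon\downarrow0$ gives $\mathbb{P}\left(S=\infty\right)\geq p$, which is the assertion.

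I do not expect any genuine obstacle: the whole content lies in the choice of the averaging sequence $R_{M}$, which collapses to $0$ off $\left\{S=\infty\right\}$, after which reverse Fatou does the work. It is worth noting why a naive Borel--Cantelli argument would \emph{not} suffice: the event that $A_{n}$ occurs infinitely often does not by itself force $S=\infty$, since the indices at which $A_{n}$ occurs could carry a summable subseries of the weights $a_{n}$. The weighting by $a_{n}$ inside $R_{M}$, together with the divergence $T_{M}\to\infty$, is precisely what converts the uniform lower bound on $\mathbb{P}\left(A_{n}\right)$ into a lower bound on $\mathbb{P}\left(S=\infty\right)$.
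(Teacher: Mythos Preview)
Your proof is correct and takes a genuinely different route from the paper's. The paper argues by contradiction: fixing $0<\epsilon<p$ and $N$ with $\mathbb{P}(A_n)\geq\epsilon$ for $n\geq N$, it supposes that the set $F_C=\{\sum_{n\geq N}a_n\mathbf{1}_{A_n}\leq C\}$ has probability exceeding $1-\epsilon$, and then computes
\[
\infty=\sum_{n\geq N}a_n\bigl(\mathbb{P}(F_C)-(1-\epsilon)\bigr)\leq\sum_{n\geq N}a_n\,\mathbb{P}(F_C\cap A_n)=\mathbf{E}\Bigl[\mathbf{1}_{F_C}\sum_{n\geq N}a_n\mathbf{1}_{A_n}\Bigr]\leq C,
\]
a contradiction; hence $\mathbb{P}(F_C)\leq 1-\epsilon$ for every $C$, and letting $C\to\infty$ and then $\epsilon\uparrow p$ finishes. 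Your argument instead introduces the normalized averages $R_M$, observes the pointwise bound $\limsup_M R_M\leq\mathbf{1}_{\{S=\infty\}}$, and lets reverse Fatou do the work. Both proofs exploit the same core observation---that on $\{S<\infty\}$ the partial sums $\sum_{n=N_0}^M a_n\mathbf{1}_{A_n}$ stay bounded while $T_M\to\infty$---but package it differently: the paper's version is a direct two-sided estimate of an expectation via monotone convergence, while yours is a cleaner one-line application of reverse Fatou that avoids the auxiliary sets $F_C$ and the contradiction setup. Your closing remark about why naive Borel--Cantelli is insufficient is also a nice touch that the paper does not include.
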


\begin{proof}
Denote $p:=\liminf_{n\to\infty}\mathbb{P}\left(A_{n}\right)$. Excluding trivialities assume $p>0$ and let $0<\epsilon<p$ be arbitrary. Fix $N\geq1$ such that $\mathbb{P}\left(A_{n}\right)\geq\epsilon$ for every $n\geq N$. For every $C>0$ let
$$F_{C}:=\left\{ \sum_{n\geq N}a_{n}\mathbf{1}_{A_{n}}\leq C\right\},$$
and suppose toward a contradiction that $\mathbb{P}\left(F_{C}\right)>1-\epsilon$ for some $C>0$. Then
$$\mathbb{P}\left(F_{C}\right)-\mathbb{P}\left(A_{n}^{\mathsf{c}}\right)\geq\mathbb{P}\left(F_{C}\right)-\left(1-\epsilon\right)>0,\quad n\geq N.$$
We then have
\begin{align*}
\infty
&=\sum_{n\geq N}a_{n}\left(\mathbb{P}\left(F_{C}\right)-\left(1-\epsilon\right)\right)\\
&\leq\sum_{n\geq N}a_{n}\left(\mathbb{P}\left(F_{C}\right)-\mathbb{P}\left(A_{n}^{\mathsf{c}}\right)\right)\\
&\leq \sum_{n\geq N}a_{n}\mathbb{P}\left(F_{C}\cap A_{n}\right)=\sum_{n\geq N}a_{n}\mathbf{E}\left[{\mathbf{1}_{F_{C}}\mathbf{1}_{A_{n}}}\right]\\
&=\mathbf{E}\left[\mathbf{1}_{F_{C}}\sum_{n\geq N}a_{n}\mathbf{1}_{A_{n}}\right]\\
&\leq C\mathbb{P}\left(F_{C}\right),
\end{align*}
where the second inequality is general: $\mathbb{P}\left(E\cap F\right)\geq\mathbb{P}\left(E\right)-\mathbb{P}\left(F^{\mathsf{c}}\right)$, the next equality is by monotone convergence and the last one is by the definition of $F_C$. This is a contradiction so that $\mathbb{P}\left(F_{C}\right)\leq1-\epsilon$ for all $C\geq 0$. It follows that
$$\mathbb{P}\left(\sum_{n\geq1}a_{n}\mathbf{1}_{A_{n}}<\infty\right)=\mathbb{P}\left(\sum_{n\geq N}a_{n}\mathbf{1}_{A_{n}}<\infty\right)\leq\limsup_{C\to\infty}\mathbb{P}\left(F_{C}\right)\leq1-\epsilon.$$
Since $0<\epsilon<p$ is arbitrary the proof is complete.
\end{proof}

\begin{proof}[Proof of Theorem \ref{Theorem: Kakutani Criteria}]
One of the implications is immediate from Corollary \ref{Corollary: KLS} as we already mentioned. For the other implication suppose that
$$\sum_{n\geq1}\sum_{s,t,u,v\in\mathcal{S}}\mathrm{d}_{n}^{2}\left[\nu,\mu\right]\left(s,u,v,t\right)=\infty.$$
Then $\sum_{n\geq1}\sum_{s,t\in\mathcal{S}}\mathrm{d}_{n}^{2}\left[\nu,\mu\right]\left(s,u_0,v_0,t\right)=\infty$ for some $u_0,v_0\in\mathcal{S}$. Consider the sets
$$A_n:=\left\{X_{-\left(n-1\right)}=u_0,X_{n+1}=v_0\right\},\quad n\geq1.$$
By the topologically-mixing, the Doeblin condition and Proposition \ref{Proposition: Mixing in MSFT},
$$\liminf_{n\to\infty}\nu\left(A_{n}\right)\geq C\left(\delta,M\right)\liminf_{n\to\infty}\pi_{-\left(n-1\right)}\left(u\right)\pi_{n+1}\left(v\right)\geq C\left(\delta,M\right)\delta^{2}>0.$$
Then by the Lemma \ref{Lemma: Probabilistic Lemma} we conclude that
\begin{align*}
&\sum_{n\geq1}\sum_{s,t\in\mathcal{S}}\mathrm{d}_{n}^{2}\left[\nu,\mu\right]\left(s,x_{-\left(n-1\right)},x_{n-1},t\right)\\
&\qquad=\sum_{n\geq1}\left(\sum_{s,t,u,v\in\mathcal{S}}\mathrm{d}_{n}^{2}\left[\nu,\mu\right]\left(s,u,v,t\right)\right)\mathbf{1}_{A_{n}}\left(x\right)=\infty
\end{align*}
on a $\nu$-positive measure set (of measure at least $C\left(\delta,M\right)\delta^{2}$). Then by condition \eqref{eq:5} $\nu$ is not absolutely continuous with respect to $\mu$.
\end{proof}

\end{appendix}

\addtocontents{toc}{\protect\setcounter{tocdepth}{-1}}

\bibliographystyle{acm}
\bibliography{References}

\begin{thebibliography}{10}

\bibitem{aaronson1997}
{\sc Aaronson, J.}
\newblock {\em An introduction to infinite ergodic theory}.
\newblock No.~50. American Mathematical Soc., 1997.

\bibitem{bjorklundKosloff2018}
{\sc Bj{\"o}rklund, M., and Kosloff, Z.}
\newblock Bernoulli actions of amenable groups with weakly mixing maharam
  extensions.
\newblock {\em arXiv preprint arXiv:1808.05991\/} (2018).

\bibitem{bjorklund2020ergodicity}
{\sc Bj{\"o}rklund, M., Kosloff, Z., and Vaes, S.}
\newblock Ergodicity and type of nonsingular bernoulli actions.
\newblock {\em Inventiones mathematicae\/} (2020), 1--53.

\bibitem{blackwell1964}
{\sc Blackwell, D., and Freedman, D.}
\newblock The tail $\sigma$-field of a markov chain and a theorem of orey.
\newblock {\em The Annals of Mathematical Statistics 35}, 3 (1964), 1291--1295.

\bibitem{blanchard2002li}
{\sc Blanchard, F., Glasner, E., Kolyada, S., and Maass, A.}
\newblock On li-yorke pairs.
\newblock {\em Journal fur die reine und angewandte Mathematik 547\/} (2002),
  51--68.

\bibitem{bowen2014}
{\sc Bowen, L.}
\newblock The type and stable type of the boundary of a gromov hyperbolic
  group.
\newblock {\em Geometriae Dedicata 172}, 1 (2014), 363--386.

\bibitem{choksi1987}
{\sc Choksi, J., Hawkins, J., and Prasad, V.}
\newblock Abelian cocycles for nonsingular ergodic transformations and the
  genericity of type iii1 transformations.
\newblock {\em Monatshefte f{\"u}r Mathematik 103}, 3 (1987), 187--205.

\bibitem{chung2015homoclinic}
{\sc Chung, N.-P., and Li, H.}
\newblock Homoclinic groups, ie groups, and expansive algebraic actions.
\newblock {\em Inventiones mathematicae 199}, 3 (2015), 805--858.

\bibitem{cohn1981}
{\sc Cohn, H.}
\newblock On a paper by doeblin on non-homogeneous markov chains.
\newblock {\em Advances in Applied Probability 13}, 2 (1981), 388--401.

\bibitem{coudene2007hopf}
{\sc Coudene, Y.}
\newblock The hopf argument.
\newblock {\em Journal of Modern Dynamics 1}, 1 (2007), 147.

\bibitem{danilenko2019weak}
{\sc Danilenko, A.}
\newblock Weak mixing for nonsingular bernoulli actions of countable amenable
  groups.
\newblock {\em Proceedings of the American Mathematical Society 147}, 10
  (2019), 4439--4450.

\bibitem{danilenko2019}
{\sc Danilenko, A.~I., and Lema{\'n}czyk, M.}
\newblock K-property for maharam extensions of non-singular bernoulli and
  markov shifts.
\newblock {\em Ergodic Theory and Dynamical Systems 39}, 12 (2019), 3292--3321.

\bibitem{danilenkosilva2008}
{\sc Danilenko, A.~I., and Silva, C.~E.}
\newblock Ergodic theory: Nonsingular transformations.
\newblock In {\em Mathematics of Complexity and Dynamical Systems\/} (2012),
  p.~329–356.

\bibitem{dobrushin1956central}
{\sc Dobrushin, R.~L.}
\newblock Central limit theorem for nonstationary markov chains. i.
\newblock {\em Theory of Probability \& Its Applications 1}, 1 (1956), 65--80.

\bibitem{feldman1977}
{\sc Feldman, J., and Moore, C.~C.}
\newblock Ergodic equivalence relations, cohomology, and von neumann algebras.
  i.
\newblock {\em Transactions of the American mathematical society 234}, 2
  (1977), 289--324.

\bibitem{georgii2011}
{\sc Georgii, H.-O.}
\newblock {\em Gibbs measures and phase transitions}, vol.~9.
\newblock Walter de Gruyter, 2011.

\bibitem{hall2014}
{\sc Hall, P., and Heyde, C.~C.}
\newblock {\em Martingale limit theory and its application}.
\newblock Academic press, 2014.

\bibitem{halmos1947invariant}
{\sc Halmos, P.~R.}
\newblock Invariant measures.
\newblock {\em Annals of Mathematics\/} (1947), 735--754.

\bibitem{hamachi1981}
{\sc Hamachi, T.}
\newblock Ergodic groups of automorphisms and krieger's theorems.
\newblock In {\em Seminar on Mathematical Sciences\/} (1981), vol.~3, Keio
  University, Department of Mathematics.

\bibitem{MR662470}
{\sc Hamachi, T.}
\newblock On a bernoulli shift with nonidentical factor measures.
\newblock {\em Ergodic Theory Dynamical Systems 1}, 3 (1981), 273--283 (1982).

\bibitem{hamachi1975}
{\sc Hamachi, T., Oka, Y., and Osikawa, M.}
\newblock Flows associated with ergodic non-singular transformation groups.
\newblock {\em Publications of the Research Institute for Mathematical Sciences
  11}, 1 (1975), 31--50.

\bibitem{kakutani1948equivalence}
{\sc Kakutani, S.}
\newblock On equivalence of infinite product measures.
\newblock {\em Annals of Mathematics\/} (1948), 214--224.

\bibitem{katznelson1991classification}
{\sc Katznelson, Y., and Weiss, B.}
\newblock The classification of non-singular actions, revisited.
\newblock {\em Ergodic Theory and Dynamical Systems 11}, 2 (1991), 333--348.

\bibitem{kosloff2014conservativeanosov}
{\sc Kosloff, Z.}
\newblock Conservative anosov diffeomorphisms of the two torus without an
  absolutely continuous invariant measure.
\newblock {\em To appear in Annales scientifiques de l'École normale
  supérieure. arXiv preprint arXiv:1410.7707\/} (2014).

\bibitem{kosloff2014}
{\sc Kosloff, Z.}
\newblock On the k property for maharam extensions of bernoulli shifts and a
  question of krengel.
\newblock {\em Israel Journal of Mathematics 199}, 1 (2014), 485--506.

\bibitem{kosloff2018manifolds}
{\sc Kosloff, Z.}
\newblock On manifolds admitting stable type iii1 anosov diffeomorphisms.
\newblock {\em Journal of Modern Dynamics 13\/} (2018), 251--270.

\bibitem{kosloff2019proving}
{\sc Kosloff, Z.}
\newblock Proving ergodicity via divergence of time averages.
\newblock {\em Studia Mathematica 248\/} (2019), 191--215.

\bibitem{krengel1970transformations}
{\sc Krengel, U.}
\newblock Transformations without finite invariant measure have finite strong
  generators.
\newblock In {\em Contributions to Ergodic Theory and Probability}. Springer,
  1970, pp.~133--157.

\bibitem{krieger1970araki}
{\sc Krieger, W.}
\newblock On the araki-woods asymptotic ratio set and non-singular
  transformations of a measure space.
\newblock In {\em Contributions to ergodic theory and probability}. Springer,
  1970, pp.~158--177.

\bibitem{lepage1975likelihood}
{\sc LePage, R., and Mandrekar, V.}
\newblock On likelihood ratios of measures given by markov chains.
\newblock {\em Proceedings of the American Mathematical Society 52}, 1 (1975),
  377--380.

\bibitem{lodkin1971absolute}
{\sc Lodkin, A.~A.}
\newblock Absolute continuity of measures corresponding to markov processes
  with discrete time.
\newblock {\em Theory of Probability \& Its Applications 16}, 4 (1971),
  690--694.

\bibitem{maharam1964}
{\sc Maharam, D.}
\newblock Incompressible transformations.
\newblock {\em Fundamenta Mathematicae 56}, 1 (1964), 35--50.

\bibitem{sarig2009lecture}
{\sc Sarig, O.}
\newblock Lecture notes on ergodic theory.
\newblock {\em Available at the author homepage
  http://www.weizmann.ac.il/math/sarigo/\/}.

\bibitem{schmidt1977}
{\sc Schmidt, K.}
\newblock {\em Cocycles on ergodic transformation groups}, vol.~1.
\newblock SG Wasani, 1977.

\bibitem{sethuraman2005}
{\sc Sethuraman, S., and Varadhan, S.}
\newblock A martingale proof of dobrushin's theorem for non-homogeneous markov
  chains.
\newblock {\em Electronic Journal of Probability 10\/} (2005), 1221--1235.

\bibitem{shiryaev1978absolute}
{\sc Shiryaev, A.~N.}
\newblock Absolute continuity and singularity of probability measures in
  functional spaces.
\newblock In {\em Proceedings of the International Congress of Mathematicians,
  Helsinki\/} (1978), pp.~209--225.

\bibitem{shiryaev2013p}
{\sc Shiryaev, A.~N.}
\newblock {\em Probability}.
\newblock Graduate Texts in Mathematics. Springer New York, 2013.

\bibitem{silva1995}
{\sc Silva, C.~E., and Thieullen, P.}
\newblock A skew product entropy for nonsingular transformations.
\newblock {\em Journal of the London Mathematical Society 52}, 3 (1995),
  497--516.

\bibitem{vaes2018bernoulli}
{\sc Vaes, S., and Wahl, J.}
\newblock Bernoulli actions of type iii 1 and l 2-cohomology.
\newblock {\em Geometric and Functional Analysis 28}, 2 (2018), 518--562.

\bibitem{wen1996}
{\sc Wen, L., and Weiguo, Y.}
\newblock An extension of shannon-mcmillan theorem and some limit properties
  for nonhomogeneous markov chains.
\newblock {\em Stochastic processes and their applications 61}, 1 (1996),
  129--145.

\bibitem{wen2004}
{\sc Wen, L., and Weiguo, Y.}
\newblock The asymptotic equipartition property for mth-order nonhomogeneous
  markov information sources.
\newblock {\em IEEE transactions on information theory 50}, 12 (2004),
  3326--3330.

\end{thebibliography}

\nocite{*}

\end{document}